\newcounter{lemma}
\newtheorem{Theorem}{Theorem}
\newtheorem{Lemma}[lemma]{Lemma}
\newtheorem{Corollary}[lemma]{Corollary}
\newtheorem{Proposition}[lemma]{Proposition}
\theoremstyle{definition}
\newtheorem{Definition}[lemma]{Definition}
\newtheorem{Notation}[lemma]{Notation}
\newtheorem{Remark}[lemma]{Remark}
\def\C{\mathbb C}
\def\H{\mathbb H}
\def\P{\mathbb P}
\def\Q{\mathbb Q}
\def\S{\mathcal S}
\def\Z{\mathbb Z}
\def\new{\mathrm{new}}
\def\mod{\  \mathrm{mod}\ }
\def\ord{\mathrm{ord}}
\def\sgn{\mathrm{sgn\,}}
\def\Im{\mathrm{Im\,}}
\def\tr{\mathit{tr\,}}
\def\JS#1#2{\left(\frac{#1}{#2}\right)}
\def\MM{\mathscr M}
\def\QQ{\mathcal Q}
\def\SL{\mathrm{SL}}
\def\M#1#2#3#4{\begin{pmatrix}#1&#2\\#3&#4\end{pmatrix}}
\def\SM#1#2#3#4{\left(\begin{smallmatrix}#1&#2\\#3&#4\end{smallmatrix}
  \right)}
\begin{document}
\title{Modular forms of half-integral weights on $\SL(2,\Z)$}
\author{Yifan Yang}
\address{Department of Applied Mathematics, National Chiao Tung
  University and National Center for Theoretical Sciences, Hsinchu
  300, Taiwan}
\email{yfyang@math.nctu.edu.tw}
\date\today
\subjclass[2000]{Primary 11F37; secondary 11F20, 11F27}

\begin{abstract} In this paper, we prove that, for an integer $r$ with
  $(r,6)=1$ and $0<r<24$ and a nonnegative even integer $s$, the set
  $$
    \{\eta(24\tau)^rf(24\tau):f(\tau)\in M_s(1)\}
  $$
  is isomorphic to
  $$
    S_{r+2s-1}^\new\left(6,-\JS8r,-\JS{12}r\right)\otimes\JS{12}\cdot
  $$
  as Hecke modules under the Shimura correspondence. Here
  $M_s(1)$ denotes the space of modular forms of weight $s$ on
  $\Gamma_0(1)=\SL(2,\Z)$, $S_{2k}^\new(6,\epsilon_2,\epsilon_3)$ is
  the space of newforms of weight $2k$ on $\Gamma_0(6)$ that are
  eigenfunctions with eigenvalues $\epsilon_2$ and $\epsilon_3$ for
  Atkin-Lehner involutions $W_2$ and $W_3$, respectively, and the
  notation $\otimes\JS{12}\cdot$ means the twist by the quadratic
  character $\JS{12}\cdot$. There is also an analogous result for the
  cases $(r,6)=3$.
\end{abstract}
\maketitle

\begin{section}{Introduction}
Let
$$
  \theta(\tau)=\sum_{n\in\Z}q^{n^2}, \qquad q=e^{2\pi i\tau},
$$
be the Jacobi theta function. Then Shimura's theory of modular forms
of half-integral weights can be described as follows. Let $k$ be a
positive integer, $N$ a positive integer and $\chi$ a Dirichlet
character modulo $4N$. We say a holomorphic function
$f:\H=\{\tau:\Im\,\tau>0\}\to\C$ is a \emph{modular form of half
  integral weight} $k+1/2$ on $\Gamma_0(4N)$ with character $\chi$ if
it is holomorphic at each cusp and satisfies 
\begin{equation*}
  \frac{f(\gamma\tau)}{f(\tau)}=\chi(d)\frac{\theta(\gamma\tau)^{2k+1}}
  {\theta(\tau)^{2k+1}}
\end{equation*}
for all $\gamma=\SM abcd\in\Gamma_0(4N)$. Let $M_{k+1/2}(4N,\chi)$
denote the space of these functions. In \cite{Shimura-correspondence},
Shimura showed how the Hecke theory can be extended to these spaces.
More importantly, he showed that if $f\in M_{k+1/2}(4N,\chi)$ is a
Hecke eigenform, then there is a corresponding Hecke eigenform of
integral weight $2k$ with character $\chi^2$ that shares the same
eigenvalues. Moreover, he conjectured that the level of this modular
form of integral weight can be taken to be $2N$. This conjecture was
later proved by Niwa \cite{Niwa}. (See also \cite{Shintani}.) In
literature, this correspondence between modular forms of half-integral
weights and modular forms of integral weights is called the
\emph{Shimura correspondence}.

In \cite{Shimura-correspondence}, the correspondence was proved by
using Weil's characterization of Hecke eigenforms in terms of
$L$-functions. From the representation-theoretical point of view, this
correspondence amounts to a correspondence from certain automorphic
representations of the metaplectic double cover of
$\mathrm{GL}(2,\mathbb A_\Q)$ to automorphic representations of
$\mathrm{GL}(2,\mathbb A_\Q)$, where $\mathbb A_\Q$ denotes the adele
ring of $\Q$. See \cite{Flicker,Gelbart,Waldspurger} for more details.

In general, the Shimura correspondence is not one-to-one. In
particular, the multiplicity-one theorem does not hold for modular
forms of half-integral weights in general. In order to get a
multiplicity-one result, in the fundamental works
\cite{Kohnen-level1,Kohnen-newform}, Kohnen introduced a subspace of
$M_{k+1/2}(4N,\chi)$ and developed a newform theory for this subspace
that is parallel to the Atkin-Lehner-Li theory of newforms for modular
forms of integral weights. To state Kohnen's result, let $\chi$ be a
Dirichlet character modulo $4N$ and set $\epsilon=\chi(-1)$. Let
$S_{k+1/2}^+(4N,\JS{4\epsilon}\cdot\chi)$ be the subspace consisting
of cusp forms of half-integral weight $k+1/2$ and character
$\JS{4\epsilon}\cdot\chi$ on $\Gamma_0(4N)$ whose Fourier expansions
are of the form
$$
  \sum_{\epsilon(-1)^kn\equiv 0,1\mod 4}a_ne^{2\pi in\tau}.
$$
Then Kohnen proved that, under the assumptions that $N$ is odd and
squarefree and $\chi$ is a quadratic character, the image of
$S_{k+1/2}^+(4N,\JS{4\epsilon}\cdot\chi)$ under the Shimura
correspondence is $S_{2k}(N)$. Moreover, there is a canonically
defined subspace $S_{k+1/2}^\new(4N,\JS{4\epsilon}\cdot\chi)\subset
S_{k+1/2}^+(4N,\JS{4\epsilon}\cdot\chi)$ such that
$S_{k+1/2}^\new(4N,\JS{4\epsilon}\cdot\chi)\simeq S_{k+1/2}^\new(N)$ as
Hecke modules. In particular, the strong multiplicity-one theorem
holds for $S_{k+1/2}^\new(4N,\JS{4\epsilon}\cdot\chi)$. 
Kohnen's work was later extended by several authors in various
directions \cite{Frechette,Frechette2,Ueda-Yamana}.

Modular forms of half-integral weights are closely related to many
problems in number theory. For example, let $p(n)$ denote the number
of ways to write a positive integer $n$ as unordered sums of positive
integers. Then the generating function of the partition function
$p(n)$ is equal to
$$
  \sum_{n=0}^\infty p(n)q^n=\prod_{m=1}^\infty\frac1{1-q^m}.
$$
If we set $q=e^{2\pi i\tau}$, then the infinite product above is
essentially the reciprocal of the Dedekind eta function, which is
well-known to be a modular form of weight $1/2$ on $\Gamma_0(576)$
with character $\JS{12}\cdot$. Using this fact, along with the Shimura
correspondence and properties of Galois representations attached to
cusp forms, Ono \cite{Ono-Annals} proved that for every prime $m$
greater than $3$, there is a positive proportion of primes $\ell$ such
that the congruence
$$
  p\left(\frac{m\ell^3n+1}{24}\right)\equiv 0\mod m
$$
holds for all integers $n$ relatively prime to $\ell$. This result was
later extended by several authors
\cite{Ahlgren-MAnn,Ahlgren-Ono,Yang-partition}. For example, in
\cite{Yang-partition}, the author of the present paper showed that for
every prime $m$ greater than $3$ and every prime different from $2$,
$3$, and $m$, there is an explicitly computable integer $k$ such that
$$
  p\left(\frac{m\ell^kn+1}{24}\right)\equiv 0\mod m
$$
for all integers $n$ relatively prime to $\ell$. A key ingredient in
\cite{Yang-partition} is the Hecke invariance of the space
\begin{equation} \label{equation: Srs}
  \S_{r,s}=\{\eta(24\tau)^rf(24\tau):~f(\tau)\in M_s(1)\},
\end{equation}
where $r$ is an odd integer between $0$ and $24$, $s$ is a nonnegative
even integer, and $M_s(1)$ is the space of modular forms of weight $s$
on $\Gamma_0(1)=\SL(2,\Z)$. That is, even though the space
$M_{k+1/2}(576,\JS{12}\cdot)$ itself has a huge dimension, it contains
several subspaces of small dimensions that are invariant under the
action of Hecke algebra. The invariance of these spaces was first
proved by Garvan \cite{Garvan} and later rediscovered by the author of
the present paper independently. (See the arxiv version of
\cite{Yang-partition} for a proof of the invariance.)

Now recall that a well-known result of Waldspurger \cite{Waldspurger2}
states that if $f$ is a Hecke eigenform of half-integral weight
$k+1/2$ and $F$ is the corresponding Hecke eigenform of integral
weight $2k$, then for squarefree $n$, the square of the $n$th Fourier
coefficient of $f$ is essentially proportional to the special value at
$s=k$ of $L(F\otimes\chi_{(-1)^kn},s)$, where $\chi_{(-1)^kn}$ is the
Kronecker character of the quadratic field $\Q(\sqrt{(-1)^kn})$.
(See also \cite{Kohnen-Zagier}.) Using this result of Waldspurger, Guo
and Ono \cite{Guo-Ono} related the arithmetic of the partition
function $p(n)$ to the arithmetic of certain motives. Specifically,
let $13\le\ell\le 31$ be a prime. Let $r$ be the unique integer
between $0$ and $24$ such that $r\equiv-\ell\mod 24$ and let
$s=(\ell-r-2)/2$. Then the space $\S_{r,s}$ defined in
\eqref{equation: Srs} is one-dimensional and spanned by
$g_\ell(\tau)=\eta(24\tau)^rE_s(24\tau)$, where $E_s(\tau)$ denotes
the Eisenstein series. It is known that
\begin{equation} \label{equation: congruence}
  g_\ell(\tau)\equiv \sum_{n=0}^\infty p\left(\frac{\ell
      n+1}{24}\right)q^n\mod \ell.
\end{equation}
Then Guo and Ono showed that if we let $G_\ell(\tau)$ be the unique
Hecke eigenform in $S_{\ell-3}(6)$ with Fourier expansion
\begin{equation*} 
  G_\ell(\tau)=q+\JS8r 2^{(\ell-5)/2}q^2+\JS{12}r 3^{(\ell-5)/2}q^3+\cdots,
\end{equation*}
then the image of $g_\ell(\tau)$ under the Shimura correspondence is
$G_\ell\otimes\JS{12}\cdot$, which is a newform of level $144$. (Note
that $g_\ell(\tau)$ is contained in Kohnen's $+$-space.) In view of
Waldspurger's result and \eqref{equation: congruence}, this means that
the values of the partition function modulo $\ell$ are related to the
values at the center point of the $L$-function of $G_\ell$ twisted by
quadratic Dirichlet characters. Thus, assuming the truth of the
Bloch-Kato conjecture, the arithmetic properties of the partition
function are related to those of certain motives associated to
$G_\ell$.

Now observe that, by \cite[Theorem 3]{Atkin-Lehner}, the function
$G_\ell(\tau)$ is contained in the Atkin-Lehner eigensubspace of
$S_{\ell-3}^\new(6)$ with eigenvalues $-\JS8r$ and $-\JS{12}r$ for the
Atkin-Lehner involutions $W_2$ and $W_3$, respectively. In other
words, for the few cases considered in \cite{Guo-Ono}, the Shimura
correspondence yields an isomorphism
$$
  \S_{r,s}\simeq S_{r+2s-1}^\new\left(6,-\JS8r,-\JS{12}r\right)
  \otimes\JS{12}\cdot
$$
as Hecke modules, where $S_{2k}^\new(6,\epsilon_2,\epsilon_3)$ denotes
the space of newforms of weight $2k$ on $\Gamma_0(6)$ that are
eigenfunctions with eigenvalues $\epsilon_2$ and $\epsilon_3$ for
$W_2$ and $W_3$. (Note that the Hecke algebras on the two sides are
isomorphic. Thus, we may talk about isomorphisms as Hecke modules.)
It is natural to ask whether this isomorphism holds in general. The
purpose of this paper is to prove that this is indeed the case.

\begin{Theorem} \label{theorem: 1} Let $r$ be an integer satisfying
  $(r,6)=1$ and $0<r<24$ and $s$ be a nonnegative even integer. Let
  $$
    \S_{r,s}=\{\eta(24\tau)^rf(24\tau):~f(\tau)\in M_s(1)\}\subset
    S_{r/2+s}\left(576,\JS{12}\cdot\right),
  $$
  where $M_s(1)$ denotes the space of modular forms of weight $s$ on
  $\Gamma_0(1)=\SL(2,\Z)$. Then the Shimura correspondence yields an
  isomorphism
  $$
    \S_{r,s}\simeq S_{r+2s-1}^\new\left(6,-\JS8r,-\JS{12}r\right)
    \otimes\JS{12}\cdot
  $$
  as Hecke modules.
\end{Theorem}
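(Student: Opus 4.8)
The plan is to determine the Shimura image of $\S_{r,s}$ in stages, using the explicit eta-quotient structure together with Kohnen's newform theory in the generalized form that allows the primes $2$ and $3$ to divide the target level \cite{Ueda-Yamana}. Since multiplication by the nonzero form $\eta(24\tau)^r$ is injective, $\dim\S_{r,s}=\dim M_s(1)$; the whole argument will therefore reduce to matching this dimension against that of the claimed newform space, after the requisite Hecke-equivariant identifications have been made.

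First I would record the weight and the Fourier support. Writing $k=(r-1)/2+s$, so that $2k=r+2s-1$, every element of $\S_{r,s}$ has weight $k+1/2$; and since $\eta(24\tau)=\sum_{n\ge1}\JS{12}n q^{n^2}$ while $f(24\tau)$ is supported on multiples of $24$, each form in $\S_{r,s}$ has Fourier expansion supported on $n\equiv r\pmod{24}$. A short check using $(r,6)=1$ gives $(-1)^k n\equiv1\pmod4$ for all such $n$, so $\S_{r,s}$ lies in the level-$576$ analogue of Kohnen's plus space. As $\S_{r,s}$ is already known to be Hecke-stable \cite{Garvan}, its image under the Shimura correspondence is a Hecke-stable subspace of $S_{2k}(288)$, and on the plus space the correspondence is injective, so this image is isomorphic to $\S_{r,s}$ as a Hecke module.

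Next I would identify that image. The support condition $n\equiv r\pmod{24}$ forces the Shimura lifts to have Fourier coefficients vanishing off the integers prime to $6$, which is exactly the signature of the twist by $\JS{12}\cdot$ of a form of level $6$; untwisting therefore produces a Hecke-stable subspace of $S_{2k}(6)$. I would then establish two facts about these untwisted forms. First, they carry no oldform contribution and so are genuinely new at level $6$. Second, their Atkin--Lehner eigenvalues are the asserted ones: the involutions $W_2$ and $W_3$ pull back to operators that stabilize $\S_{r,s}$ and act on it by scalars (each commutes with the Hecke action, and $\S_{r,s}$ is a single Atkin--Lehner eigenspace), so the scalars can be evaluated on one explicit element; the transformation law of the unary theta series $\eta(24\tau)$ under these involutions produces Gauss sums whose signs are $\JS8r$ and $\JS{12}r$, giving eigenvalues $-\JS8r$ and $-\JS{12}r$, in agreement with the one-dimensional cases of Guo and Ono \cite{Guo-Ono}.

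Finally I would close the argument by a dimension count. Using the Eichler--Selberg trace formula together with the traces of $W_2$ and $W_3$ on $S_{2k}^{\new}(6)$ \cite{Atkin-Lehner}, one computes $\dim S_{r+2s-1}^{\new}\bigl(6,-\JS8r,-\JS{12}r\bigr)$ and checks that it equals $\dim M_s(1)$ for every admissible $r$ and every even $s\ge0$. Combined with the injectivity of the Shimura correspondence on the plus space and the identifications above, this yields the stated isomorphism of Hecke modules. I expect the main obstacle to be precisely this final comparison: level $6$ is even, so the plus-space and newform dictionary at the prime $2$ falls outside the classical odd-squarefree Kohnen theory, and establishing that the image is \emph{exactly} the new subspace with the prescribed Atkin--Lehner eigenvalues---rather than something larger or containing oldforms---requires carrying the trace-formula dimension identity uniformly across all residues of $r$ modulo $24$.
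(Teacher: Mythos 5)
Your proposal takes a genuinely different route from the paper --- establish that the Shimura image lands in the twisted level-$6$ newform space and then close with a dimension count, instead of comparing traces --- but as written it is circular at exactly the points where the real difficulty lies. The pivotal input you invoke, injectivity (multiplicity one) of the Shimura correspondence on the plus space at level $576$, is not available: Kohnen's plus-space theory requires level $4N$ with $N$ odd and squarefree, and here $N=144$ is neither. Indeed, in the paper multiplicity one for $\S_{r,s}$ is a \emph{Corollary} of Theorem \ref{theorem: 1}, so assuming it amounts to assuming part of the conclusion. The same circularity appears in your Atkin--Lehner step: you argue that $W_2,W_3$ act by scalars on $\S_{r,s}$ ``because $\S_{r,s}$ is a single Atkin--Lehner eigenspace,'' which is part of what must be proved; commuting with the Hecke action only forces scalar action on each Hecke eigenspace separately, so distinct eigenforms in $\S_{r,s}$ could a priori carry different signs, and evaluating on one explicit element (as in the one-dimensional Guo--Ono cases) then determines nothing. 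Likewise, ``they carry no oldform contribution and so are genuinely new at level $6$'' is asserted rather than proved: Niwa places the lifts in $S_{2k}(288)$, and a Hecke eigenform there whose eigenvalues vanish at $2$ and $3$ need not be the $\JS{12}\cdot$-twist of a newform of exact level $6$ --- it could be a twist of a newform of level $12$, $24$, $48$, etc.; ruling this out is precisely the content of the theorem.

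Moreover, even granting the containment, the concluding dimension count cannot finish the proof without multiplicity one on the half-integral side: the equality $\dim\S_{r,s}=\dim S^{\new}_{r+2s-1}\left(6,-\JS8r,-\JS{12}r\right)$ together with ``every eigensystem occurring in $\S_{r,s}$ occurs in the newform space'' is still compatible with one eigensystem occurring twice in $\S_{r,s}$ while another newform eigensystem is missed entirely. The paper's argument is designed to avoid all of these issues at once: since both sides are semisimple modules over the commutative Hecke algebra, it suffices to prove the numerical identity $\tr(T_{n^2}|\S_{r,s}(1))=\JS{12}n\,\tr\left(T_n\big|S_{2k}^\new\left(6,-\JS8r,-\JS{12}r\right)\right)$ for all $n$ with $(n,6)=1$. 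The left side is computed with Shimura's trace formula (Section \ref{section: trace, half-integral weight}, the bulk of the paper), the right side with the Eichler--Selberg/Yamauchi formulas (Section \ref{section: trace, integral weight}), and the two are matched in Section \ref{section: comparison}; equality of traces yields the isomorphism with multiplicities, and multiplicity one then falls out as a corollary. To salvage your outline you would need independent proofs of multiplicity one and of the exact-level/twist statement at level $576$, and no such proofs are known at this level short of the trace computation itself.
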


For odd integers $r$ that are divisible by $3$, we have also an
analogous result.

\begin{Theorem} \label{theorem: 2} Let $r$ be an odd integer
  satisfying $0<r<8$ and $s$ be a nonnegative even integer. Let
  $$
    \S_{3r,s}=\{\eta(8\tau)^{3r}f(8\tau):~f(\tau)\in M_s(1)\}
    \subset S_{3r/2+s}\left(64,\JS{-4}\cdot\right).
  $$
  Then the Shimura correspondence yields an isomorphism
  $$
    \S_{3r,s}\simeq S_{3r+2s-1}^\new\left(2,-\JS8r\right)
    \otimes\JS{-4}\cdot
  $$
  as Hecke modules.
\end{Theorem}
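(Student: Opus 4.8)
The plan is to mirror the proof of Theorem \ref{theorem: 1}, with the level-$6$ analysis on $\Gamma_0(576)$ replaced by a level-$2$ analysis on $\Gamma_0(64)$. I first record the weight bookkeeping: for $f\in M_s(1)$ set $g(\tau)=\eta(8\tau)^{3r}f(8\tau)$, of weight $3r/2+s=k+1/2$ with $k=(3r-1)/2+s$. Hence a Hecke eigenform among the $g$ corresponds under the Shimura correspondence to an integral-weight eigenform of weight $2k=3r+2s-1$, matching the target. Since $\eta$ does not vanish on $\H$, the assignment $f\mapsto\eta(8\tau)^{3r}f(8\tau)$ is injective, so $\dim\S_{3r,s}=\dim M_s(1)$.

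Next I would check that $\S_{3r,s}$ lies in Kohnen's plus space. From $\eta(8\tau)^{3r}=q^{r}\prod_{m\ge1}(1-q^{8m})^{3r}$ and $f(8\tau)=\sum_n a_nq^{8n}$, every nonzero Fourier coefficient of $g$ sits on an exponent $n\equiv r\pmod 8$. The character attached to $\S_{3r,s}$ is $\JS{-4}\cdot$, for which $\epsilon=\JS{-4}{-1}=-1$; a direct check for each odd $r$ with $0<r<8$ shows that $n\equiv r\pmod8$ always satisfies the plus-space congruence $\epsilon(-1)^kn\equiv0,1\pmod4$. Together with the Hecke stability of $\S_{3r,s}$ — which holds by the same argument that establishes it for the $\S_{r,s}$ of \eqref{equation: Srs} (Garvan \cite{Garvan}; see also the arxiv version of \cite{Yang-partition}) — this makes the Shimura correspondence a well-defined Hecke-equivariant map on $\S_{3r,s}$ whose image is a Hecke-stable space of weight-$2k$ cusp forms.

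I would then match dimensions. The left-hand side has dimension $\dim M_s(1)$, read off from the standard formula. For the right-hand side I would decompose $S_{3r+2s-1}(2)$ into its $W_2$-eigenspaces and separate the new part from the level-$1$ old part via the Atkin--Lehner--Li theory, computing $\dim S_{3r+2s-1}^\new\bigl(2,-\JS8r\bigr)$ with the help of the trace of $W_2$ and the Eichler--Selberg trace formula, and then verify equality with $\dim M_s(1)$ for all admissible $r$ and $s$. Because twisting by the quadratic character $\JS{-4}\cdot$ is injective and preserves dimension on the relevant newform space, it is enough to match dimensions before twisting.

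The crux, and the step I expect to be the main obstacle, is to identify the image precisely as the $\JS{-4}\cdot$-twist of the \emph{newform} space of level exactly $2$ with $W_2$-eigenvalue $-\JS8r$. The difficulty is that Niwa's theorem \cite{Niwa} only places the lift in weight $2k$ on $\Gamma_0(32)$, and the level $64=4\cdot16$ is not of the form $4N$ with $N$ odd and squarefree, so Kohnen's newform theory does not apply directly. To descend to level $2$, I would use the Fourier support $n\equiv r\pmod 8$ to exhibit each lift as a $\JS{-4}\cdot$-twist of a form of level $2$, analyzing the interaction of the twisting operator with the oldform/newform filtration, and then invoke strong multiplicity one on $\Gamma_0(2)$ together with the dimension count of the previous step to force surjectivity onto the newform space. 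Finally, the $W_2$-eigenvalue $-\JS8r$ would be pinned down by computing the transformation of $\eta(8\tau)^{3r}$ under the Fricke involution $\tau\mapsto-1/(64\tau)$ and transporting it through the Shimura correspondence and the twist; this is consistent with the eigenvalue recorded in the one-dimensional cases via \cite{Guo-Ono} and \cite[Theorem 3]{Atkin-Lehner}.
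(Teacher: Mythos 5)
Your proposal is a program rather than a proof, and the step you yourself flag as the crux is exactly where it breaks down. The paper's (omitted) proof of Theorem \ref{theorem: 2} is the level-$2$ analogue of Sections \ref{section: trace, half-integral weight}--\ref{section: comparison}: both sides are semisimple modules over the same commutative Hecke algebra, so it suffices to prove the trace identity $\tr(T_{n^2}|\S_{3r,s})=\JS{-4}n\tr\bigl(T_n\big|S_{3r+2s-1}^\new\bigl(2,-\JS8r\bigr)\bigr)$ for all odd $n$, by running Shimura's trace formula on $\eta$-type forms on $\SL(2,\Z)$ against the Eichler--Selberg/Yamauchi formulas. Your plan replaces this computation with Niwa's level bound, a dimension count, and a descent-by-twisting argument, but none of the load-bearing steps is carried out, and the tools you name are insufficient for them. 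The support condition $n\equiv r\pmod 8$ on the Fourier coefficients of $g\in\S_{3r,s}$ does not transport through the Shimura correspondence into the statement that the level-$32$ lift is a $\JS{-4}\cdot$-twist of a level-$2$ form: that structural control of the lift is precisely what Kohnen's plus-space theory supplies, and, as you concede, it is unavailable here since $64=4\cdot 16$ with $N=16$ neither odd nor squarefree. Likewise, ``transporting'' the Fricke involution through the Shimura correspondence to pin down the $W_2$-eigenvalue is an unproved compatibility in this setting, not something one can cite.

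There is also a structural gap in the endgame. Even granting that every eigensystem of $\S_{3r,s}$ matches one of $S_{3r+2s-1}^\new\bigl(2,-\JS8r\bigr)\otimes\JS{-4}\cdot$, dimension equality plus Hecke equivariance does not yield an isomorphism of Hecke modules: you would still need each such eigensystem to occur in $\S_{3r,s}$ with multiplicity exactly one. Multiplicity one on the half-integral side is the paper's Corollary, i.e.\ a consequence of the theorem, not an available input (and it fails in general for half-integral weight), so invoking ``strong multiplicity one'' here is circular; dimension matching alone is consistent with a single eigensystem occurring $\dim M_s(1)$ times. Finally, a concrete warning that the machinery cannot be applied blindly: for $(r,s)=(1,0)$ one has $\S_{3,0}=\C\,\eta(8\tau)^3$, a unary theta series whose Shimura lift is not cuspidal, while $S_2(2)=\{0\}$, so your proposed verification that the dimensions agree ``for all admissible $r$ and $s$'' fails at this point; any complete treatment must isolate this weight-$3/2$ degeneration, which your outline does not address.
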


\begin{Corollary} The multiplicity-one property holds for the spaces
  $\S_{r,s}$ defined in Theorems \ref{theorem: 1} and \ref{theorem: 2}.
\end{Corollary}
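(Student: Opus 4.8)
The plan is to deduce the corollary directly from Theorems \ref{theorem: 1} and \ref{theorem: 2}. Recall that the multiplicity-one property for a Hecke module asks that distinct Hecke eigenforms carry distinct systems of eigenvalues, so that every eigensystem occurs at most once. The isomorphisms furnished by the two theorems are isomorphisms of Hecke modules, matching the operators $T_{p^2}$ acting on the half-integral-weight spaces $\S_{r,s}$ with the operators $T_p$ acting on the integral-weight target spaces under the Shimura correspondence. It therefore suffices to establish the multiplicity-one property for $S_{r+2s-1}^\new(6,-\JS8r,-\JS{12}r)\otimes\JS{12}\cdot$ and for $S_{3r+2s-1}^\new(2,-\JS8r)\otimes\JS{-4}\cdot$.

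First I would strip off the quadratic twist. For a quadratic character $\chi$ and a normalized newform $F$, the twist $F\otimes\chi$ is again a normalized Hecke eigenform, and for every prime $p$ not dividing the conductor of $\chi$ the $T_p$-eigenvalue of $F\otimes\chi$ equals $\chi(p)$ times that of $F$. Hence $F\mapsto F\otimes\chi$ sets up a bijection between eigensystems of the untwisted and twisted spaces that preserves distinctness at the good primes: two twists share an eigensystem exactly when the two original forms do. Thus multiplicity-one for the twisted space is equivalent to multiplicity-one for the untwisted newform space, and we are reduced to $S_{r+2s-1}^\new(6,-\JS8r,-\JS{12}r)$ and $S_{3r+2s-1}^\new(2,-\JS8r)$.

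For these untwisted newform spaces the statement is precisely the strong multiplicity-one theorem of Atkin-Lehner-Li: distinct newforms of a given weight and level differ in their $T_p$-eigenvalue for infinitely many $p$, so the full space of newforms is multiplicity-free as a Hecke module. Passing to a common eigensubspace of the Atkin-Lehner involutions $W_2$ and $W_3$ (respectively $W_2$ alone) only discards forms and can never create a coincidence of eigensystems, so these eigensubspaces remain multiplicity-free. The only point demanding care is the bookkeeping: one must fix precisely which Hecke operators are being compared across the Shimura correspondence and confirm that the twisting map intertwines the integral-weight Hecke action at the good primes, rather than merely at almost all primes. Once this dictionary is in place, the corollary follows with no further analytic input.
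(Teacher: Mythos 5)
Your proposal is correct and follows essentially the same route the paper intends: the corollary is an immediate consequence of the Hecke-module isomorphisms in Theorems \ref{theorem: 1} and \ref{theorem: 2}, since quadratic twisting is invertible on eigensystems at the good primes and the untwisted newform spaces (and their Atkin--Lehner eigensubspaces) are multiplicity-free by the Atkin--Lehner--Li strong multiplicity-one theorem. The paper states the corollary without proof precisely because this is the intended argument, and your write-up supplies the details accurately.
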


\begin{Remark} Note that the space
  $S_{2k}^\new(6,\epsilon_2,\epsilon_3)\otimes\JS{12}\cdot$ is
  contained in $S_{2k}^\new(144,-,-)$, regardless of whether
  $\epsilon_2,\epsilon_3$ are $1$ or $-1$. Also,
  $S_{2k}^\new(2,\epsilon_2)\otimes\JS{-4}\cdot$ is a subspace of
  $S_{2k}^\new(16,-)$ for both $\epsilon_2=1$ and $\epsilon_2=-1$.
\end{Remark}

It turns out that the Hecke invariance of $\S_{r,s}$ and the explicit
Shimura correspondence in Theorems \ref{theorem: 1} and \ref{theorem:
  2} are best explained in terms of modular forms of half-integral
weight of $\eta$-type. Namely, in Shimura's setting, a function is
called a modular form of half-integral weight if its transformation is
comparable with the Jacobi theta function. In a similar way, we say a
function $f(\tau)$ is a modular form of $\eta$-type if its
transformation is comparable with the Dedekind eta function, that is,
if $f(\tau)$ satisfies
$$
  \frac{f(\gamma\tau)}{f(\tau)}=(c\tau+d)^s\frac{\eta(\gamma\tau)^r}{\eta(\tau)^r}
$$
for all $\gamma=\SM abcd$ in a subgroup $\Gamma$ of $\SL(2,\Z)$, where
$s$ is assumed to be a nonnegative even integer and $r$ is an odd
integer between $0$ and $24$. Then it is easy to show that modular
forms of $\eta$-type on $\SL(2,\Z)$ are essentially just the functions
in $\S_{r,s}$ defined in Theorems \ref{theorem: 1} and \ref{theorem:
  2}. (See Proposition \ref{proposition: Srs(1)} below.) This explains
the Hecke invariance of the spaces $\S_{r,s}$.

At the first sight, the introduction of the notion of modular forms of
$\eta$-type is superficial since if $f(\tau)$ is such a function, then
$f(24\tau)$ is just a modular form of half-integral weight with
character $\JS{12}\cdot$ in the sense of Shimura, and we do not get
any new modular forms in this way. However, if a modular
form of half-integral weight on a congruence subgroup $\Gamma_0(4N)$
in the sense of Shimura happens to be a modular form of
$\eta$-type on a larger group, then the extra symmetries from this
larger group will give us additional information about the function.
This is the reason why we can determine such a precise image of
$S_{r,s}$ under the Shimura correspondence. (In the proof of Theorems
\ref{theorem: 1} and \ref{theorem: 2}, we will work with modular forms
of $\eta$-type on $\SL(2,\Z)$ instead of modular forms on the much
smaller group $\Gamma_0(576)$ in the sense of Shimura.)

Our proof of Theorems \ref{theorem: 1} and \ref{theorem: 2} is
classical. That is, since the Hecke modules involved are all
semisimple, to prove the theorems, it suffices to show that the traces
coincide for all Hecke operators. It will be interesting to have a
representation-theoretical proof of the results. Note that here we
only prove Theorem \ref{theorem: 1}; the proof of Theorem
\ref{theorem: 2} is similar, but much simpler, and will be omitted.

The rest of the paper is organized as follows. In Section
\ref{section: preliminaries}, we first define modular forms of
$\eta$-type in more details. We then define Hecke operators and
introduce several basic properties of them. We also describe Shimura's
abstract trace formula \cite{Shimura-trace}. In Section \ref{section:
  trace, half-integral weight}, we compute the traces of
Hecke operators on the space of modular forms of $\eta$-type. This
constitutes the main bulk of the paper. In Section \ref{section:
  trace, integral weight}, we determine the traces of Hecke operators
on $S_{2k}^\new(6,\epsilon_2,\epsilon_3)$. In Section \ref{section:
  comparison}, we show that the traces coincide and thereby establish
Theorem \ref{theorem: 1}.
\end{section}

\begin{section}{Preliminaries}
\label{section: preliminaries}
In this section, we first give a more detailed definition of modular
forms of $\eta$-type. We then define Hecke operators on these modular
forms and review Shimura's trace formula for Hecke operators.

\begin{subsection}{Modular forms of $(\eta^r,s)$-type}

\begin{Notation} Throughout the rest of the paper, we let $r$ and $s$
  be fixed integers with $(r,6)=1$, $0<r<24$ and $s$ even. Set also
  $k=(r-1)/2+s$. Let $\mathfrak G_{k+1/2}$ be the group of pairs
  $(A,\phi(\tau))$, where $A=\SM abcd\in\mathrm{GL}^+(2,\Q)$,
  $\phi(\tau)$ is a holomorphic function $\H\mapsto\C$ satisfying
  $$
    |\phi(\tau)|=(\det A)^{-k/2-1/4}|c\tau+d|^{k+1/2},
  $$
  and the group law is defined by
  $$
    (A,\phi(\tau))(B,\psi(\tau))=(AB,\phi(B\tau)\psi(\tau)).
  $$
  Consider the subgroup $\Gamma^\ast$ of $\mathfrak G_{k+1/2}$ defined
  by
  $$
    \Gamma^\ast=\Gamma_{r,s}^\ast=\left\{\left(\gamma,
    \frac{\eta(\gamma\tau)^r}{\eta(\tau)^r}(c\tau+d)^s\right):
   ~\gamma=\M abcd\in\SL(2,\Z)\right\}
  $$
  For an element $\gamma$ in $\SL(2,\Z)$, we let $\gamma^\ast$ denote
  the element in $\Gamma^\ast$ whose first component is $\gamma$.
  Naturally, if $G$ is a subgroup of $\SL(2,\Z)$, then we let $G^\ast$
  be the subgroup $\{\gamma^\ast:\gamma\in G\}$.
\end{Notation}

Here let us recall a well-known formula for
$\eta(\gamma\tau)/\eta(\tau)$.

\begin{Lemma}[{\cite[pp.125--127]{Weber}}] \label{lemma: eta}
  Let $\gamma=\SM abcd\in\SL(2,\Z)$ with $c\ge 0$. Then we have
  $$
    \frac{\eta(\gamma\tau)}{\eta(\tau)}=\epsilon(a,b,c,d)
   (c\tau+d)^{1/2},
  $$
  where
  \begin{equation} \label{equation: epsilon}
    \epsilon(a,b,c,d)=\begin{cases}
    \JS dc i^{(1-c)/2}e^{2\pi i\left(bd(1-c^2)+c(a+d)-3\right)/24},
      &\text{if }c\text{ is odd}, \\
    \JS cde^{2\pi i\left(ac(1-d^2)+d(b-c+3)-3\right)/24},
      &\text{if }c\text{ is even}. \end{cases}
  \end{equation}
\end{Lemma}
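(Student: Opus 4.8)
The plan is to prove the transformation law by locating the source of each ingredient — the factor $(c\tau+d)^{1/2}$, the root of unity $i^{(1-c)/2}$, and the two Jacobi symbols — and then normalizing the resulting multiplier into the stated shape. I would begin from the product expansion $\eta(\tau)=q^{1/24}\prod_{n\ge1}(1-q^n)$, which instantly gives $\eta(\tau+1)=e^{\pi i/12}\eta(\tau)$ and handles the degenerate case $c=0$, $d=1$. The substantive analytic input is encoded in the logarithmic-derivative identity $\frac{d}{d\tau}\log\eta(\tau)=\frac{\pi i}{12}E_2(\tau)$, where $E_2(\tau)=1-24\sum_{n\ge1}\sigma_1(n)q^n$ is the weight-$2$ quasimodular Eisenstein series. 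The key fact I would invoke is the Eisenstein anomaly
$$
  E_2(\gamma\tau)=(c\tau+d)^2E_2(\tau)+\frac{6c}{\pi i}(c\tau+d),
$$
which follows by comparing $E_2$ with the nonholomorphic but weight-$2$ modular function $E_2(\tau)-3/(\pi\Im\tau)$.

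Differentiating $\log\eta(\gamma\tau)$ and substituting the anomaly, the $(c\tau+d)^2$ terms cancel and the anomalous term contributes exactly $\frac{c}{2(c\tau+d)}$, so that
$$
  \frac{d}{d\tau}\log\eta(\gamma\tau)=\frac{d}{d\tau}\log\eta(\tau)+\frac{c}{2(c\tau+d)}.
$$
Integrating along a path in the simply connected region $\H$ (taking care to fix consistent branches of the logarithm and of $(c\tau+d)^{1/2}$) yields $\log\eta(\gamma\tau)=\log\eta(\tau)+\tfrac12\log(c\tau+d)+C_\gamma$ for a constant $C_\gamma$. This already produces the factor $(c\tau+d)^{1/2}$ and shows the multiplier is constant in $\tau$. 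To evaluate $C_\gamma$ I would push $\tau$ toward the cusp $i\infty$ and compare leading asymptotics, or equivalently run Siegel's contour-integral evaluation of $\int\log\eta$; either route delivers the classical Dedekind-sum form
$$
  \frac{\eta(\gamma\tau)}{\eta(\tau)}=\exp\left(\pi i\left(\frac{a+d}{12c}-s(d,c)-\frac14\right)\right)\bigl(-i(c\tau+d)\bigr)^{1/2},
$$
where $s(d,c)$ is the Dedekind sum. The reciprocity law for Dedekind sums guarantees that this assignment is a genuine multiplier system, consistent with the generators $T=\SM1101$ and $S=\SM0{-1}10$.

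The \emph{main obstacle} is purely arithmetic: converting the transcendental-looking factor $\exp\bigl(\pi i(\tfrac{a+d}{12c}-s(d,c)-\tfrac14)\bigr)$ into the explicit Jacobi-symbol expression recorded in \eqref{equation: epsilon}. Here I would split according to the parity of $c$ and appeal to the evaluation of $e^{-\pi i s(d,c)}$ in terms of quadratic Gauss sums. Concretely, the reciprocity law for quadratic Gauss sums (Landsberg--Schaar, equivalently quadratic reciprocity for the Jacobi symbol) identifies the relevant exponential of the Dedekind-sum combination with $\JS dc$ when $c$ is odd and with $\JS cd$ when $c$ is even, up to a power of $i$ and a $24$th root of unity. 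Matching the power of $i$ against the branch factor $\bigl(-i(c\tau+d)\bigr)^{1/2}=i^{-1/2}(c\tau+d)^{1/2}$ produces the prefactor $i^{(1-c)/2}$ in the odd case, and collecting the residual phase gives the exponential $e^{2\pi i(bd(1-c^2)+c(a+d)-3)/24}$ (respectively the even-$c$ expression). Essentially all the care in the argument lies in this bookkeeping of eighth and twenty-fourth roots of unity and in choosing the correct square-root branch; the two parity cases in \eqref{equation: epsilon} are precisely the two cases of quadratic reciprocity.
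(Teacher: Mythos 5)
The first thing to note is that the paper contains no proof of this lemma at all: it is quoted directly from Weber's \emph{Lehrbuch der Algebra}, so there is no internal argument to compare yours against, and your proposal must stand on its own. Its skeleton is the standard analytic route: the quasimodularity anomaly of $E_2$ (which you state correctly), the resulting identity $\frac{d}{d\tau}\log\eta(\gamma\tau)=\frac{d}{d\tau}\log\eta(\tau)+\frac{c}{2(c\tau+d)}$, integration over the simply connected domain $\H$ to obtain the multiplier up to a constant $C_\gamma$, evaluation of $C_\gamma$ in Dedekind-sum form, and finally Gauss-sum arithmetic to reach Jacobi symbols. This path can indeed be pushed to a complete proof of the lemma.

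As written, however, the proposal has three concrete defects. First, your Dedekind-sum formula double-counts an eighth root of unity: with principal branches one has $\bigl(-i(c\tau+d)\bigr)^{1/2}=e^{-\pi i/4}(c\tau+d)^{1/2}$, so the correct statement carries either the factor $\bigl(-i(c\tau+d)\bigr)^{1/2}$ \emph{or} the term $-\tfrac14$ in the exponent, not both; in a lemma whose entire purpose is to pin down a root of unity exactly, being off by $e^{-\pi i/4}$ is a genuine error, not a convention. Second, your first suggested evaluation of $C_\gamma$ --- pushing $\tau\to i\infty$ and comparing leading asymptotics --- is circular: as $\tau\to i\infty$ one has $\gamma\tau\to a/c$, and the asymptotic behaviour of $\eta$ at the rational point $a/c$ is precisely the information the transformation law encodes; only your alternative (the Siegel--Rademacher contour-integral evaluation) is a valid route. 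Third, and most seriously, the step that actually produces the statement being proved --- the identity expressing $\exp\bigl(\pi i\bigl(\tfrac{a+d}{12c}-s(d,c)\bigr)\bigr)$, for odd $c$, as $\JS dc\, i^{(1-c)/2}$ times the explicit exponential $e^{2\pi i\left(bd(1-c^2)+c(a+d)-3\right)/24}$, together with its even-$c$ companion --- is never carried out, stated precisely, or reduced to a quotable congruence; it is deferred to ``bookkeeping'' via Landsberg--Schaar. That conversion (in substance, a known but nontrivial congruence for $c\,s(d,c)$ in terms of $\JS dc$, followed by elimination using $ad-bc=1$ to produce the exponents $bd(1-c^2)+c(a+d)-3$ and $ac(1-d^2)+d(b-c+3)-3$) is where all of the content of \eqref{equation: epsilon} lives; a proof that stops at ``$\JS dc$ up to a power of $i$ and a $24$th root of unity'' has not established the lemma.
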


We now define modular forms of $\eta$-type.

\begin{Definition} Let $f:\H\to\C$ be a holomorphic function on the
  upper half-plane. We define the action of
  $\gamma^\ast=(\gamma,\phi_\gamma(\tau))$ on $f$ by
  $$
    (f|\gamma^\ast)(\tau)=\phi_\gamma(\tau)^{-1}f(\gamma\tau).
  $$
  Let $G$ be a subgroup of $\SL(2,\Z)$ of finite index. If the
  function $f$ satisfies
  $$
    (f|\gamma^\ast)(\tau)=f(\tau)
  $$
  for all $\gamma^\ast\in G^\ast$ and is holomorphic at each cusp of
  $G$, then we say $f$ is a \emph{modular form of $(\eta^r,s)$-type}
  on $G$. If, in addition, $f$ vanishes at each cusp of $G$, we say
  $f$ is a \emph{cusp form} of $(\eta^r,s)$-type. The space of cusp
  forms of $(\eta^r,s)$-type on $G$ will be denoted by $\S_{r,s}(G)$.
  If $G=\Gamma_0(N)$, we simply write it as $\S_{r,s}(N)$.
\end{Definition}

In the case $N=1$, the space $\S_{r,s}(1)$ has a very simple
description.

\begin{Proposition} \label{proposition: Srs(1)}
  For $N=1$, we have
  $$
    \S_{r,s}(1)=\{\eta(\tau)^rf(\tau):~f\in M_s(1)\},
  $$
  where $M_s(1)$ is the space of modular forms of weight $s$ on
  $\SL(2,\Z)$.
\end{Proposition}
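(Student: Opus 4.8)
The plan is to show that the two maps $f\mapsto\eta(\tau)^rf(\tau)$ and $F\mapsto F(\tau)/\eta(\tau)^r$ are mutually inverse bijections between $M_s(1)$ and $\S_{r,s}(1)$, by checking separately that each respects both the transformation law defining $(\eta^r,s)$-type and the holomorphy/vanishing conditions at the cusp. The two elementary facts about $\eta$ that drive everything are: $\eta(\tau)$ is holomorphic and nowhere vanishing on $\H$, so multiplying or dividing by $\eta(\tau)^r$ preserves holomorphy on $\H$; and $\eta(\tau)^r=q^{r/24}\prod_{n\ge1}(1-q^n)^r$ has leading term $q^{r/24}$ with $0<r/24<1$, by the standing hypothesis $0<r<24$.

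For the inclusion $\{\eta^rf:f\in M_s(1)\}\subseteq\S_{r,s}(1)$, I would take $f\in M_s(1)$, set $F=\eta^rf$, and verify the defining identity $F(\gamma\tau)=\phi_\gamma(\tau)F(\tau)$ for every $\gamma=\SM abcd\in\SL(2,\Z)$. Expanding $F(\gamma\tau)=\eta(\gamma\tau)^rf(\gamma\tau)=\eta(\gamma\tau)^r(c\tau+d)^sf(\tau)$, where the weight-$s$ modularity of $f$ produces the factor $(c\tau+d)^s$, one sees that this is exactly $\frac{\eta(\gamma\tau)^r}{\eta(\tau)^r}(c\tau+d)^s\,F(\tau)=\phi_\gamma(\tau)F(\tau)$. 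Since $F$ is holomorphic on $\H$, and $\eta^r$ vanishes at the single cusp $i\infty$ of $\SL(2,\Z)$ while $f$ is holomorphic there, $F$ vanishes at the cusp, so $F\in\S_{r,s}(1)$.

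For the reverse inclusion, given $F\in\S_{r,s}(1)$ I would set $f=F/\eta^r$, which is holomorphic on $\H$ because $\eta$ does not vanish. Dividing the relation $F(\gamma\tau)=\frac{\eta(\gamma\tau)^r}{\eta(\tau)^r}(c\tau+d)^sF(\tau)$ by $\eta(\gamma\tau)^r$ gives $f(\gamma\tau)=(c\tau+d)^sf(\tau)$ for all $\gamma\in\SL(2,\Z)$; taking $\gamma=\SM 1101$ shows $f$ is $1$-periodic, so it has a $q$-expansion $f=\sum_{n\ge n_0}b_nq^n$ with $b_{n_0}\ne0$. The only remaining point is to prove $n_0\ge0$, i.e.\ that $f$ is holomorphic at $i\infty$. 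Since $F=\eta^rf$ then has leading exponent $n_0+r/24$ in its expansion at $i\infty$, the vanishing of $F$ at the cusp forces $n_0+r/24>0$; because $n_0\in\Z$ and $0<r/24<1$, this is equivalent to $n_0\ge0$. Hence $f\in M_s(1)$, completing the proof.

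The only genuinely delicate point is this last comparison of cusp behaviour. The transformation laws match transparently because $\phi_\gamma$ is built out of $\eta^r$, but one must be careful that holomorphy across the multiplier is matched correctly: the fractional order of vanishing $r/24$ of $\eta^r$ at the cusp is precisely what makes the equivalence $n_0+r/24>0\iff n_0\ge0$ hold, so that a cusp form $F$ of $(\eta^r,s)$-type corresponds to a genuinely holomorphic (rather than merely meromorphic) modular form $f$, and conversely. The hypothesis $0<r<24$ is exactly the input that turns this into an equality of spaces.
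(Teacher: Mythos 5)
Your proof is correct and follows essentially the same route as the paper's: divide by the nowhere-vanishing $\eta(\tau)^r$ to transfer the transformation law, and use the $q$-expansion together with $0<r/24<1$ to match the cusp conditions. The only difference is that you verify both inclusions explicitly (and could add the one-line remark that $f=F\cdot\eta^{-r}$ has a Fourier expansion bounded below, being a product of two such series), whereas the paper spells out only the nontrivial direction $\S_{r,s}(1)\subseteq\{\eta^rf:f\in M_s(1)\}$.
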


\begin{proof} Assume that $g(\tau)\in\S_{r,s}(1)$. We have
  $$
    g(\tau+1)=e^{2\pi ir/24}g(\tau).
  $$
  Thus, the Fourier expansion of $g(\tau)$ takes the form
  $q^{r/24}(a_0+a_1q+\cdots)$. Since $\eta(\tau)$ is nonvanishing
  throughout $\H$, the function $g(\tau)/\eta(\tau)^r$ is holomorphic
  on $\H$. Moreover, it is easy to see that for
  $\gamma=\SM abcd\in\SL(2,\Z)$, one has
  $$
    \frac{g(\gamma\tau)}{\eta(\gamma\tau)^r}
   =(c\tau+d)^s\frac{g(\tau)}{\eta(\tau)^r}.
  $$
  Therefore, $g(\tau)/\eta(\tau)^r$ is a modular form of weight $s$ on
  $\SL(2,\Z)$. This proves the proposition.
\end{proof}

\end{subsection}

\begin{subsection}{Hecke operators on $\S_{r,s}(N)$}

\begin{Notation}
  Let $N$ be a positive integer. For a positive integer $n$, let
  \begin{equation*}
  \begin{split}
    \MM_n(N)&=\Gamma_0(N)\M100n\Gamma_0(N) \\
  &=\left\{\M abcd:~a,b,c,d\in\Z,~ad-bc=n,~N|c,~(a,N)=1,(a,b,c,d)=1\right\},
  \end{split}
  \end{equation*}
  and let $\MM_n(N)^\ast$ denote the subset
  $$
    \MM_n(N)^\ast=\Gamma_0(N)^\ast\left(\M 100n,n^{k/2+1/4}\right)
    \Gamma_0(N)^\ast
  $$
  of $\mathfrak G_{k+1/2}$.
\end{Notation}

\begin{Lemma} If $n$ is a positive integer relatively prime to $6$,
  then for each $\gamma\in\MM_{n^2}(N)$, there exists a unique element
  $\gamma^\ast$ in $\MM_{n^2}(N)^\ast$ such that the first component of
  $\gamma^\ast$ is $\gamma$.
\end{Lemma}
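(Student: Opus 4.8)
The plan is to establish the two halves of the statement separately. Existence is immediate from the double-coset description: by definition $\MM_{n^2}(N)=\Gamma_0(N)\SM100{n^2}\Gamma_0(N)$, so any $\gamma\in\MM_{n^2}(N)$ can be written as $\gamma_1\SM100{n^2}\gamma_2$ with $\gamma_1,\gamma_2\in\Gamma_0(N)$, and then $\gamma_1^\ast\,\xi\,\gamma_2^\ast$, where $\xi=\left(\SM100{n^2},n^{k+1/2}\right)$ is the distinguished element of $\MM_{n^2}(N)^\ast$, is an element whose first component is $\gamma$. For uniqueness, I first observe that any two lifts $(\gamma,\phi_1),(\gamma,\phi_2)$ of the same matrix have $|\phi_1|=|\phi_2|$, the modulus being forced by membership in $\mathfrak G_{k+1/2}$; hence $\phi_1/\phi_2$ is a holomorphic function of modulus $1$ on the connected domain $\H$, so it is a unimodular constant, and it remains only to pin this constant down to $1$.

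To do so, I would reduce uniqueness to a single consistency relation. If $\gamma_1\SM100{n^2}\gamma_2=\gamma_3\SM100{n^2}\gamma_4$ with all factors in $\Gamma_0(N)$, then setting $\alpha=\gamma_3^{-1}\gamma_1$ and $\delta=\gamma_4\gamma_2^{-1}$ one has $\alpha\SM100{n^2}=\SM100{n^2}\delta$, and since $\gamma\mapsto\gamma^\ast$ is a homomorphism on $\SL(2,\Z)$ the desired equality $\gamma_1^\ast\xi\gamma_2^\ast=\gamma_3^\ast\xi\gamma_4^\ast$ is equivalent to $\alpha^\ast\xi=\xi\delta^\ast$. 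Writing $\alpha=\SM abcd$, the relation forces $\delta=\SM a{bn^2}{c/n^2}d$, which is integral precisely because $n^2\mid c$. Comparing second components through the group law, and cancelling the common factors $n^{k+1/2}$ and $(c\tau/n^2+d)^s$, the identity $\alpha^\ast\xi=\xi\delta^\ast$ collapses (using $\delta(n^2z)=n^2\alpha z$) to the assertion that the weight-zero $\eta$-quotient $h(z)=\bigl(\eta(z)/\eta(n^2z)\bigr)^r$ is invariant under $z\mapsto\alpha z$.

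Finally I would prove this invariance using the transformation formula of Lemma \ref{lemma: eta}. Applying it to both $\eta(\alpha z)$ and $\eta(n^2\alpha z)=\eta(\delta(n^2z))$, the automorphy factors $(c\tau+d)^{1/2}$ cancel and one obtains $h(\alpha z)=(\epsilon(\alpha)/\epsilon(\delta))^r\,h(z)$, so everything reduces to showing $\epsilon(\alpha)^r=\epsilon(\delta)^r$; since $(r,6)=1$ forces $(r,24)=1$, raising to the $r$th power is a bijection on $24$th roots of unity, and this is the same as $\epsilon(\alpha)=\epsilon(\delta)$. This last equality is the heart of the matter, and it is exactly where the hypothesis $(n,6)=1$ enters: it guarantees $n^2\equiv1\pmod{24}$, so that in the explicit formula \eqref{equation: epsilon} the Jacobi-symbol factors differ only by $\JS{n^2}{d}=1$, the power of $i$ is unchanged (because $8\mid n^2-1$), and each term in the difference of the two exponents is divisible by $24$ (every such term carries a factor $n^2-1$ or $c-c/n^2=(c/n^2)(n^2-1)$). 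Carrying out this congruence bookkeeping, separately for $c$ even and $c$ odd, is the main technical obstacle, but it is a finite and routine verification once the role of $n^2\equiv1\pmod{24}$ has been isolated.
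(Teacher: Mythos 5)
Your proposal is correct and takes essentially the same route as the paper: both reduce uniqueness to the single consistency relation $\alpha^\ast\xi=\xi\delta^\ast$ whenever $\alpha\SM100{n^2}=\SM100{n^2}\delta$ (the paper phrases this as ``it suffices to prove the case $\gamma=\SM100{n^2}$''), and both settle it by the explicit multiplier of Lemma \ref{lemma: eta}, the crux being $\epsilon(a,b,c,d)=\epsilon(a,bn^2,c/n^2,d)$, which holds because $n^2\equiv 1\mod 24$ forces the Jacobi-symbol, power-of-$i$, and exponential factors to agree. Your intermediate repackagings --- the unimodular-constant remark, the reformulation as invariance of $(\eta(z)/\eta(n^2z))^r$ under $\alpha$, and the observation that $x\mapsto x^r$ permutes $24$th roots of unity since $(r,24)=1$ --- are all correct but inessential, as the paper simply computes and compares the two second components directly.
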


\begin{proof} It suffices to prove the case $\gamma=\SM100{n^2}$.
  We are required to show that if $A,B\in\Gamma_0(N)$ are matrices
  such that $A\SM100{n^2}B^{-1}=\SM100{n^2}$, then
  \begin{equation} \label{equation: lemma uniqueness 1}
    A^\ast\left(\M100{n^2},n^{k+1/2}\right)
   =\left(\M100{n^2},n^{k+1/2}\right)B^\ast.
  \end{equation}
  Assume that $A=\SM abcd$. By Lemma \ref{lemma: eta}, we have
  $$
    A^\ast\left(\M100{n^2},n^{k+1/2}\right)
   =\left(\M a{bn^2}c{dn^2},\epsilon(a,b,c,d)^r(c\tau/n^2+d)^{s+r/2}
    n^{k+1/2}\right),
  $$
  where $\epsilon(a,b,c,d)$ is defined by \eqref{equation: epsilon}.
  Now the assumption that $A\SM100{n^2}B^{-1}=\SM100{n^2}$ implies
  that if $A=\SM abcd$, then $B=\SM a{bn^2}{c/n^2}d$. In particular,
  we have $n^2|c$. Thus,
  \begin{equation*}
  \begin{split}
   &\left(\M100{n^2},n^{k+1/2}\right)B^\ast \\
   &\qquad\qquad=\left(\M a{bn^2}c{dn^2},n^{k+1/2}\epsilon(a,bn^2,c/n^2,d)^r
    (c\tau/n^2+d)^{s+r/2}\right).
  \end{split}
  \end{equation*}
  Since $n$ is assumed to be relatively prime to $6$, we have
  $n^2\equiv 1\mod 24$ and hence
  $$
    \epsilon(a,b,c,d)=\epsilon(a,bn^2,c/n^2,d)
  $$
  This establishes \eqref{equation: lemma uniqueness 1} and the lemma.
\end{proof}

The group $\Gamma_0(N)$ acts on $\MM_{n^2}(N)$ by matrix
multiplication on the left. It is clear that if
$f\in\S_{r,s}(N)$ and $\alpha$ and $\beta$ are two elements
of $\MM_{n^2}(N)$ that are equivalent under the left action of
$\Gamma_0(N)$, then
$$
  f\big|\alpha^\ast=f\big|\beta^\ast.
$$
Moreover, since $\SM100{n^2}^{-1}\Gamma_0(N)\SM100{n^2}$ and
$\Gamma_0(N)$ are commensurable, there are finitely many right cosets
in $\Gamma_0(N)\backslash\MM_{n^2}(N)$. Thus, for each positive
integer $n$ with $(n,6)=1$, we can define a linear
operator on $\S_{r,s}(N)$.

\begin{Lemma} \label{lemma: action of M} The mapping
$$
  [\MM_{n^2}(N)^\ast]:f\longmapsto f\big|[\MM_{n^2}(N)^\ast]=
   \sum_{\gamma\in\Gamma_0(N)\backslash\MM_{n^2}(N)}f\big|\gamma^\ast
$$
is a linear operator on $\S_{r,s}(N)$.
\end{Lemma}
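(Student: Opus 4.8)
The plan is to verify, in turn, that the map is well defined and that its image $g:=f\big|[\MM_{n^2}(N)^\ast]$ is holomorphic on $\H$, is invariant under $\Gamma_0(N)^\ast$, and is holomorphic and vanishing at every cusp of $\Gamma_0(N)$; linearity in $f$ is immediate since each slash action $f\mapsto f\big|\gamma^\ast$ is linear. Well-definedness—independence of the sum from the choice of coset representatives—has essentially been recorded already: if $\gamma'=A\gamma$ with $A\in\Gamma_0(N)$, then the uniqueness of lifts established in the preceding lemma forces $(\gamma')^\ast=A^\ast\gamma^\ast$, so that $f\big|(\gamma')^\ast=(f\big|A^\ast)\big|\gamma^\ast=f\big|\gamma^\ast$ because $f\big|A^\ast=f$. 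As $\Gamma_0(N)\backslash\MM_{n^2}(N)$ is finite, the sum $g$ is a well-defined function, and it remains to check that $g\in\S_{r,s}(N)$.

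Holomorphy on $\H$ is quickest. For $\gamma=\SM abcd\in\MM_{n^2}(N)$ the second component $\phi_\gamma$ of $\gamma^\ast$ satisfies $|\phi_\gamma(\tau)|=(\det\gamma)^{-k/2-1/4}|c\tau+d|^{k+1/2}>0$ for $\tau\in\H$, so $\phi_\gamma$ is holomorphic and nowhere vanishing; hence $f\big|\gamma^\ast=\phi_\gamma^{-1}f(\gamma\,\cdot)$ is holomorphic, and so is the finite sum $g$. For invariance under $\Gamma_0(N)^\ast$, fix $\delta\in\Gamma_0(N)$. The group law in $\mathfrak G_{k+1/2}$ gives $(f\big|\gamma^\ast)\big|\delta^\ast=f\big|(\gamma^\ast\delta^\ast)$, and since $\gamma^\ast\delta^\ast$ lies in $\MM_{n^2}(N)^\ast$ with first component $\gamma\delta$, uniqueness of lifts yields $\gamma^\ast\delta^\ast=(\gamma\delta)^\ast$. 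Because $\MM_{n^2}(N)\delta=\MM_{n^2}(N)$, right multiplication by $\delta$ merely permutes the cosets in $\Gamma_0(N)\backslash\MM_{n^2}(N)$; reindexing the sum by $\gamma\mapsto\gamma\delta$ therefore gives $g\big|\delta^\ast=g$.

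The delicate step, which I expect to be the main obstacle, is holomorphy and vanishing at the cusps. I would fix a cusp $\mathfrak a=\rho\infty$ of $\Gamma_0(N)$ with $\rho\in\SL(2,\Z)$ and examine $g\big|\rho^\ast=\sum_\gamma f\big|(\gamma^\ast\rho^\ast)$ as $\tau\to i\infty$. The first component $\gamma\rho\in\mathrm{GL}^+(2,\Q)$ has determinant $n^2$ and sends $\infty$ to a rational point, i.e. to some cusp of $\Gamma_0(N)$; choosing $\sigma\in\SL(2,\Z)$ with $\sigma\infty$ equal to that cusp, the matrix $\sigma^{-1}\gamma\rho$ fixes $\infty$ and is upper triangular, say $\SM\alpha\beta0\delta$ with $\alpha\delta=n^2$. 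Setting $U'=(\sigma^\ast)^{-1}\gamma^\ast\rho^\ast$, one gets $f\big|(\gamma^\ast\rho^\ast)=(f\big|\sigma^\ast)\big|U'$, which equals a polynomial automorphy factor times $(f\big|\sigma^\ast)(w)$ with $w=\frac{\alpha\tau+\beta}{\delta}\to i\infty$. Since $f$ is a cusp form, $f\big|\sigma^\ast$ has a Fourier expansion—in a fractional power of $q$, owing to the $\eta^r$-multiplier—beginning with a strictly positive exponent, so its exponential decay in $\Im w$ dominates the polynomial factor and every term tends to $0$.

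The care needed here lies precisely in tracking these fractional $q$-expansions together with the multiplier $\epsilon(a,b,c,d)^r$ of Lemma \ref{lemma: eta}, so as to confirm that the exponents occurring are strictly positive—not merely nonnegative—and that the finitely many representatives $\gamma$ reach the cusps as described, so that $g\big|\rho^\ast$ is holomorphic and vanishes at $\infty$. Granting this, $g\in\S_{r,s}(N)$, and the displayed map is a linear operator on $\S_{r,s}(N)$, as claimed.
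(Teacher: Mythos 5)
Your proof is correct and takes essentially the same route as the paper, which states this lemma without a separate proof and relies only on the two observations in the preceding paragraph: coset-independence of $f\big|\gamma^\ast$ (which, as you note, follows from the uniqueness of lifts together with $f\big|A^\ast=f$ for $A\in\Gamma_0(N)$) and the finiteness of $\Gamma_0(N)\backslash\MM_{n^2}(N)$. Your remaining verifications—$\Gamma_0(N)^\ast$-invariance by reindexing the finite sum under right multiplication, and decay at the cusps from the cusp-form property of $f$—are exactly the routine details the paper leaves implicit, and they go through as you describe; in fact the step you flag as delicate is easier than you suggest, since the automorphy factor attached to the upper-triangular matrix $U'$ has constant modulus (its first component has lower-left entry $0$), so the exponential decay of $(f\big|\sigma^\ast)(w)$ immediately forces each term, and hence $g\big|\rho^\ast$, to vanish as $\Im\tau\to\infty$.
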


We now define Hecke operators on $\S_{r,s}(N)$.

\begin{Definition}
  For a positive integer $n$ with $(n,6)=1$, the
  \emph{Hecke operator} $T_{n^2}$ on $\S_{r,s}(N)$ is defined by
$$
  T_{n^2}:f\longmapsto n^{k-3/2}\sum_{ad=n,a|d}a f\big|[\MM_{(d/a)^2}(N)]
$$
\end{Definition}

\begin{Proposition} Let $p$ be a prime such that $p\nmid 6N$. Then
  for $f(\tau)=\sum_{n=1}^\infty a_f(n)q^{n/24}\in\S_{r,s}(N)$, we have
  $$
    T_{p^2}:f(\tau)\mapsto\sum_{n=1}^\infty\left(
    a_f(p^2n)+\JS{12}p\left(\frac{(-1)^kn}{p}\right)
    p^{k-1}a_f(n)+p^{2k-1}a_f(n/p^2)\right)q^{n/24}.
  $$
\end{Proposition}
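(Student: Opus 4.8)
The plan is to compute $T_{p^2}$ directly on Fourier expansions by choosing explicit coset representatives for $\Gamma_0(N)\backslash\MM_{p^2}(N)$. Since $p$ is prime, the only pair in the defining sum with $ad=p$ and $a\mid d$ is $a=1$, $d=p$, so $T_{p^2}=p^{k-3/2}\,[\MM_{p^2}(N)]$. Because $p\nmid N$, a complete set of representatives is furnished by the upper-triangular matrices $\SM{a}{b}{0}{d}$ with $ad=p^2$, $0\le b<d$ and $\gcd(a,b,d)=1$; these split, according to the factorization $ad=p^2$, into three families: the $p^2$ matrices with $(a,d)=(1,p^2)$, the $p-1$ matrices $\SM{p}{b}{0}{p}$ with $1\le b<p$ (primitivity forces $p\nmid b$), and the single matrix $\SM{p^2}{0}{0}{1}$. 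By Lemma \ref{lemma: action of M} it suffices to evaluate $f\big|\gamma^\ast$ for each such $\gamma$, where $\gamma^\ast$ is its unique lift to $\MM_{p^2}(N)^\ast=\Gamma_0(N)^\ast\bigl(\SM{1}{0}{0}{p^2},p^{k+1/2}\bigr)\Gamma_0(N)^\ast$, then to sum and multiply by the normalizing constant $p^{k-3/2}$.

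First I would determine the automorphy factor $\phi_\gamma$ of each lift. For the family $(1,p^2)$ this is immediate: writing $\SM{1}{b}{0}{p^2}=\SM{1}{0}{0}{p^2}\,T^b$ with $T^b=\SM{1}{b}{0}{1}\in\Gamma_0(N)$ and using $\eta(\tau+1)=e^{\pi i/12}\eta(\tau)$, one finds $(T^b)^\ast=\bigl(T^b,e^{2\pi ibr/24}\bigr)$ and hence $\phi_\gamma=p^{k+1/2}e^{2\pi ibr/24}$. Because every $f=\sum a_f(n)q^{n/24}\in\S_{r,s}(N)$ is supported on $n\equiv r\pmod{24}$ (a consequence of $f\big|T^\ast=f$), substituting $\gamma\tau=(\tau+b)/p^2$ and summing the resulting geometric series over $b$ annihilates all terms except those with $p^2\mid n$, using $p^2\equiv1\pmod{24}$; after the factor $p^{k-3/2}$ this contributes exactly $a_f(p^2n)$. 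The lone representative $\SM{p^2}{0}{0}{1}$ is similar: there $\gamma\tau=p^2\tau$ picks out $a_f(n/p^2)$, and one checks that the multiplier of its lift has trivial phase (again because $p^2\equiv1\pmod{24}$), so that after normalization its contribution is precisely $p^{2k-1}a_f(n/p^2)$.

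The heart of the matter is the middle family $\SM{p}{b}{0}{p}$, which must produce $\JS{12}p\,\JS{(-1)^kn}{p}p^{k-1}a_f(n)$. Here $\gamma\tau=\tau+b/p$, so $f\big|\gamma^\ast$ introduces the phase $e^{2\pi inb/(24p)}$ against a multiplier $\phi_\gamma(b)$ of modulus one. To compute $\phi_\gamma(b)$ I would write $\SM{p}{b}{0}{p}=\alpha\,\SM{1}{0}{0}{p^2}\,\beta$ with $\alpha,\beta\in\Gamma_0(N)$ (possible since the matrix lies in $\MM_{p^2}(N)$) and evaluate the multiplier as the product of the $\eta$-multipliers of $\alpha$ and $\beta$ supplied by Lemma \ref{lemma: eta} times the base factor $p^{k+1/2}$. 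Summing $f\big|\gamma^\ast$ over $b$ then yields, for each $n$, a quadratic Gauss sum $\sum_{b\bmod p}\JS{b}{p}e^{2\pi imb/p}$ with $m$ a fixed residue modulo $p$ determined by $n$; its evaluation produces the Legendre symbol $\JS{(-1)^kn}{p}$, the accompanying $\sqrt p$ combines with $p^{k-3/2}$ to give $p^{k-1}$, and the residual roots of unity collapse to $\JS{12}p$.

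I expect the middle family to be the sole real obstacle, for two reasons. First, since $\SM{p}{b}{0}{p}$ has lower-left entry $0$, Lemma \ref{lemma: eta} does not apply to it directly; one must route through the decomposition into $\Gamma_0(N)$-matrices with nonzero, $N$-divisible lower-left entry and keep track of how the quadratic symbols and the $24$th-root-of-unity phases in \eqref{equation: epsilon} transform. Second, one must show that all the stray phases --- the factor $i^{(1-c)/2}$, the exponential in \eqref{equation: epsilon}, and the sign of the Gauss sum $g_p=\sum_{b\bmod p}\JS{b}{p}e^{2\pi ib/p}$ --- conspire to leave exactly $\JS{12}p$ and the sign $(-1)^k$, with no residual $b$-independent phase. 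Checking that the $24$-denominators cancel (which relies on $p^2\equiv1\pmod{24}$ and $n\equiv r\pmod{24}$) and that the Gauss sum sign, through $g_p^2=\JS{-1}{p}p$ and $k=(r-1)/2+s$, yields precisely $\JS{(-1)^kn}{p}\JS{12}p$ is the delicate bookkeeping that forms the bulk of the argument.
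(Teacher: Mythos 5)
Your proposal is correct, but it is a genuinely different proof from the paper's: it is precisely the ``tedious'' coset computation that the paper names and then deliberately skips. The paper's actual proof is a transfer argument: $f(24\tau)$ lies in $S_{k+1/2}(576,\JS{12}\cdot)$ in Shimura's sense, the substitution $\tau\mapsto24\tau$ intertwines the two operators $T_{p^2}$ (this is the commutative diagram in the proof), and the Fourier-coefficient formula is then simply quoted from \cite[p.~450]{Shimura-correspondence}. Your route carries out the direct computation with exactly the representatives the paper lists, and the steps you flag as delicate do close; moreover, the paper already contains your key missing ingredient, so you need not redo the decomposition $\alpha\SM100{p^2}\beta$: equation \eqref{equation: parabolic 2} in the proof of Lemma \ref{lemma: parabolic prelim}, applied with $n=p$ and $a=b$, says that the unique lift of $\SM pb0p$ is $\left(\SM pb0p,\;\JS bp e^{2\pi ir(p(b-3)+3)/24}\right)$. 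Granting this, and using that $a_f(n)$ is supported on $n\equiv r\pmod{24}$ together with $p^2\equiv1\pmod{24}$, the sum over $b$ contributes $e^{2\pi ir(p-1)/8}\sum_{b\bmod p}\JS bp e^{2\pi ibm/p}$ with $m=(n-rp^2)/24\in\Z$; since $24m\equiv n\pmod p$ this equals $e^{2\pi ir(p-1)/8}\JS{24}p\JS np g_p$, where $g_p=\sum_{b\bmod p}\JS bp e^{2\pi ib/p}$ (both sides vanish when $p\mid n$). The identity you need, $e^{2\pi ir(p-1)/8}\JS2p g_p=\JS{-1}p^{k}\sqrt p$, is a four-case check on $p\bmod 8$ using $k\equiv(r-1)/2\pmod 2$, and after multiplying by the normalization $p^{k-3/2}$ the middle family indeed contributes $\JS{12}p\JS{(-1)^kn}p p^{k-1}a_f(n)$, so your ``delicate bookkeeping'' genuinely works out. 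One small correction: the triviality of the phase in the lift of $\SM{p^2}001$ is not a consequence of $p^2\equiv1\pmod{24}$; it follows from conjugating $\left(\SM100{p^2},p^{k+1/2}\right)$ by the lift of $\SM0{-1}10$, under which the two $\eta$-multiplier phases cancel identically. As for what each approach buys: the paper's proof is a few lines and avoids all multiplier bookkeeping, at the cost of invoking Shimura's operator on $\Gamma_0(576)$ and the compatibility of the two Hecke actions; yours stays entirely inside the $\eta$-type framework, is independent of Shimura's Fourier-coefficient formula, and makes explicit where the factor $\JS{12}p\JS{(-1)^kn}p$ actually comes from.
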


\begin{proof} One way to prove the proposition would be to utilize the
  standard coset representatives of
  $\Gamma_0(N)\backslash\MM_{p^2}(N)$ given by
  $$
    \M{p^2}001, \quad \M pa0p, \quad \M1b0{p^2},\quad
    a=1,\ldots,p-1,~b=0,\ldots,p^2-1,
  $$
  and then apply formulas similar to those in \eqref{equation:
    parabolic 2} in the proof of Lemma \ref{lemma: parabolic prelim}
  below to get the conclusion. Here, however, because it is well-known
  that $f(24\tau)$ is a modular form of half-integral weight on
  $\Gamma_0(576)$ with character $\JS{12}\cdot$ in the sense
  of Shimura, we can actually skip those tedious computations. Indeed,
  from the commutativity of the diagram
  $$
  \begin{diagram}
  \node{\S_{r,s}}\arrow[3]{e,t}{T_{p^2}}\arrow{s,l}{\SM{24}001}
  \node[3]{\S_{r,s}}\arrow{s,r}{\SM{24}001} \\
  \node{S_{k+1/2}(576,\JS{12}\cdot)} \arrow[3]{e,t}{T_{p^2}}
  \node[3]{S_{k+1/2}(576,\JS{12}\cdot)}
  \end{diagram}
  $$
  and the formula given in \cite[Page 450]{Shimura-correspondence} for the
  Hecke operator $T_{p^2}$ on $S_{k+1/2}(576,\JS{12}\cdot)$ in terms
  of Fourier coefficients, we immediately get the conclusion.
\end{proof}

\begin{Remark} The linear operators $[\MM_n(N)]$ can also be defined
  for nonsquare integers $n$ and integers that are not relatively
  prime to $6$, but it turns out that they are actually the zero
  operator. The reason is that for such integers $n$, there are more
  than one elements in $\MM_n(N)$ with the same first component and
  the actions of these elements cancel out each other.
\end{Remark}
\end{subsection}

\begin{subsection}{Shimura's trace formula}

Here we state a trace formula of Shimura \cite{Shimura-trace}, adapted
to our setting. Our description of the trace formula mostly
follows that of Kohnen \cite{Kohnen-newform}.

\begin{Definition} \label{definition: equivalence}
  Let $N$ and $n$ be positive integers. For
  $\gamma\in\MM_n(N)$, we say $\gamma$ is
  \begin{enumerate}
  \item a \emph{scalar} element if $\gamma=\SM a00a$ for
    some integer $a$ (this only happens when $n=1$),
  \item a \emph{parabolic} element if the fixed point of $\gamma$ 
    is a single cusp in $\P^1(\Q)$,
  \item a \emph{hyperbolic} element if the fixed points of $\gamma$ is
    two distinct real numbers, and
  \item an \emph{elliptic} element if the fixed points of $\gamma$ is
    a pair of conjugate complex numbers.
  \end{enumerate}
  Two elements $\gamma_1$ and $\gamma_2$ in $\MM_n(N)$ are
  \emph{equivalent} if
  \begin{enumerate}
  \item $\gamma_1$ and $\gamma_2$ are scalars and $\gamma_1=\gamma_2$,
  \item $\gamma_1$ and $\gamma_2$ are hyperbolic or elliptic and there
    exists an element $\sigma\in\Gamma_0(N)$ such that
    $\sigma\gamma_1\sigma^{-1}=\gamma_2$, or
  \item $\gamma_1$ and $\gamma_2$ are parabolic and there exist
    $\sigma\in\Gamma_0(N)$ and $\alpha$ in the stabilizer subgroup
    inside $\Gamma_0(N)$ of the cusp fixed by $\gamma_2$ such that
    $\sigma\gamma_1\sigma^{-1}=\alpha\gamma_2$.
  \end{enumerate}
\end{Definition}

Now for $\gamma\in\MM_{n^2}(N)$, we define a number $J(\gamma)$ as
follows.
\begin{enumerate}
\item If $\gamma$ is a scalar, then we set
  $$
    J(\gamma)=\frac1{24}\left(k-\frac12\right)[\SL(2,\Z):\Gamma_0(N)].
  $$
\item Assume that $\gamma$ is parabolic with fixed point
  $a/c\in\P^1(\Q)$. Let $\sigma\in\SL(2,\Z)$ be a matrix such that
  $\sigma\infty=a/c$. Then the stabilizer subgroup of $a/c$ inside
  $\Gamma_0(N)$ is generated by $\sigma\SM1w01\sigma^{-1}$ and
  $\SM{-1}00{-1}$, where $w=N/(c^2,N)$ is the width of the cusp
  $a/c$. Now we write 
  $$
    \M1w01^\ast=\left(\M1w01,e^{-2\pi i\mu}\right)
  $$
  with $0\le\mu<1$. If
  $$
    {\sigma^\ast}^{-1}\gamma^\ast\sigma^\ast
    =\left(\pm\M n{nuw}0n,\eta\right),
  $$
  then let
  $$
    J(\gamma)=\begin{cases}
   -\frac1{2\eta}e^{-2\pi iu\mu}(1-2\mu), &\text{if }u\in\Z, \\
   -\frac1{2\eta}e^{-2\pi iu\mu}(1-i\cot\pi u), &\text{if }u\not\in\Z.
   \end{cases}
  $$
\item If $\gamma$ is hyperbolic and the fixed points are not cusps,
  then set $J(\gamma)=0$.
\item Assume that $\gamma$ is hyperbolic fixing (two distinct) cusps.
  Then the eigenvalues of $\gamma$ are two integers $\lambda$ and
  $\lambda'$. We assume that $|\lambda|>|\lambda'|$. Let
  $\left(\begin{smallmatrix}a\\c\end{smallmatrix}\right)$ be an
  eigenvector associated to $\lambda'$ with $a,c\in\Z$ and $(a,c)=1$.
  Find an element $\sigma\in\SL(2,\Z)$ such that
  $\sigma=\SM abcd$. Then
  $\sigma^{-1}\gamma\sigma=\SM{\lambda'}x0{\lambda}$ for some integer
  $x$. If
  $$
    \sigma^\ast\gamma^\ast{\sigma^{-1}}^\ast
   =\left(\M{\lambda'}x0\lambda,\eta\right),
  $$
  then set
  $$
    J(\gamma)=\frac12\left(\eta\left(\frac{\lambda'}\lambda-1\right)
    \right)^{-1}.
  $$
\item Assume that $\gamma=\SM abcd$ is elliptic. Then the eigenvalues
  of $\gamma$ are a pair of conjugate complex numbers $\rho$ and
  $\overline\rho$. We assume that $\sgn\Im\rho=\sgn c$. If
  $$
    \gamma^\ast=\left(\M abcd,u(c\tau+d)^{k+1/2}\right),
  $$
  then we set
  $$
    J(\gamma)=\left(wu\rho^{k-1/2}(\rho-\overline\rho)\right)^{-1},
  $$
  where $w$ denotes the number of elements in $\Gamma_0(N)$ that
  commute with $\gamma$.
\end{enumerate}

Then according to Shimura's formula \cite[Page 273]{Shimura-trace},
the trace of the operator $[\MM_{n^2}(N)]$ on $\S_{r,s}(N)$ is as
follows. (Cf. \cite[Page 49]{Kohnen-newform}.)

\begin{Proposition} \label{proposition: Shimura trace}
  For positive integers $N$ and $n$ with $(n,6N)=1$,
  the trace of the linear operator $[\MM_{n^2}(N)^\ast]$ on $\S_{r,s}(N)$
  is given by
$$
  \tr[\MM_{n^2}(N)^\ast]=\sum_\gamma J(\gamma),
$$
where the sum runs over representatives of equivalence classes as per
Definition \ref{definition: equivalence} and $J(\gamma)$ are defined
as in the paragraph preceding the proposition.
\end{Proposition}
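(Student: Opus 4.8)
The plan is to obtain the formula as a direct specialization of Shimura's abstract trace formula \cite{Shimura-trace}, so that the real work lies in fitting $\S_{r,s}(N)$ and the operator $[\MM_{n^2}(N)^\ast]$ into Shimura's axiomatic framework and then reading off the local contributions. First I would recast $\S_{r,s}(N)$ as a space of cusp forms of weight $k+1/2$ on $\Gamma_0(N)$ with the multiplier system $v(\gamma)=\epsilon(a,b,c,d)^r$ coming from the $r$th power of the eta multiplier of Lemma \ref{lemma: eta}: for $\gamma\in\SL(2,\Z)$ the second component of $\gamma^\ast$ is exactly $v(\gamma)(c\tau+d)^{k+1/2}$, since $k+1/2=r/2+s$. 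The essential point to check is that $\gamma\mapsto\phi_\gamma(\tau)$ defines a consistent factor of automorphy, and this is automatic because $\Gamma^\ast$ is a genuine subgroup of $\mathfrak G_{k+1/2}$: the cocycle identity is nothing but the group law $(A,\phi)(B,\psi)=(AB,\phi(B\tau)\psi(\tau))$, and the normalization $|\phi_\gamma(\tau)|=|c\tau+d|^{k+1/2}$ is built into the definition of $\mathfrak G_{k+1/2}$.

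Next I would identify $[\MM_{n^2}(N)^\ast]$ with the Hecke-type double-coset operator that Shimura's formula handles. Here the uniqueness lemma preceding Lemma \ref{lemma: action of M} is what makes the operator well defined: each $\gamma\in\MM_{n^2}(N)$ lifts to a unique $\gamma^\ast\in\MM_{n^2}(N)^\ast$, so the factor of automorphy extends consistently across the double coset, and the hypothesis $(n,6N)=1$ (which yields $n^2\equiv1\mod24$) is precisely what guarantees this compatibility with the eta multiplier. With these identifications in place, Shimura's theorem expresses $\tr[\MM_{n^2}(N)^\ast]$ as a sum of local terms indexed by the $\Gamma_0(N)$-equivalence classes of Definition \ref{definition: equivalence}. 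The mechanism is the Eichler--Selberg method: one represents the trace as the integral over a fundamental domain of the kernel attached to the double coset against the reproducing kernel of $\S_{r,s}(N)$, unfolds, and groups the resulting sum according to whether a representative is scalar, elliptic, hyperbolic (fixing cusps or not), or parabolic.

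The remaining task, and the main obstacle, is to verify that Shimura's abstract local contribution specializes to the explicit value $J(\gamma)$ recorded before the proposition for each type. For the scalar class this is the dimension-type main term, where the normalization $\frac1{24}(k-\frac12)[\SL(2,\Z):\Gamma_0(N)]$ must be matched against the hyperbolic volume of $\Gamma_0(N)$, which is a fixed multiple of the index $[\SL(2,\Z):\Gamma_0(N)]$. The delicate cases are the parabolic and hyperbolic classes fixing cusps: there one must conjugate the lift $\gamma^\ast$ inside $\mathfrak G_{k+1/2}$ by $\sigma^\ast$ and extract the phase $\eta$ and the shear parameter $u$, together with the multiplier invariant $\mu$ determined by $\SM1w01^\ast=(\SM1w01,e^{-2\pi i\mu})$ on the width-$w$ stabilizer. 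Tracking these phases forces one to push the explicit eta-multiplier formula \eqref{equation: epsilon} through the conjugation, which is where the character $\JS{12}\cdot$ ultimately enters and where the bookkeeping is genuinely intricate; for the elliptic classes the analogous constant $u$ in $\gamma^\ast=(\gamma,u(c\tau+d)^{k+1/2})$ and the stabilizer order $w$ must be read off in the same way. Once these specializations are checked against Shimura's general formula on \cite[Page 273]{Shimura-trace}, the proposition follows, and I expect no new analytic difficulty beyond the convergence and regularization already built into Shimura's framework and Kohnen's adaptation \cite{Kohnen-newform}.
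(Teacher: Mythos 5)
Your proposal takes essentially the same route as the paper: the paper offers no independent proof of this proposition, but simply invokes Shimura's abstract trace formula \cite[Page 273]{Shimura-trace} as adapted to the half-integral-weight setting by Kohnen \cite[Page 49]{Kohnen-newform}, which is exactly the specialization you describe (realizing $\S_{r,s}(N)$ as cusp forms with the eta-power multiplier system, using the uniqueness of lifts under $(n,6)=1$ to make the double-coset operator well defined, and reading off the scalar, parabolic, hyperbolic, and elliptic local terms). Your outline is a more detailed account of that same citation-based argument, so it is consistent with the paper.
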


\end{subsection}

\end{section}

\begin{section}{Traces of Hecke operators on $\S_{r,s}(1)$}
\label{section: trace, half-integral weight}

In this section, we will compute the trace of Hecke operators on
$\S_{r,s}(1)$. The contributions of scalar, parabolic, hyperbolic, and
elliptic classes will be determined separately in individual subsections. 

Throughout this section, we write $\MM_{n^2}(1)$ and
$\MM_{n^2}(1)^\ast$ simply as $\MM_{n^2}$ and $\MM_{n^2}^\ast$,
respectively. All equivalences mentioned here refer to the equivalence
relation described in Definition \ref{definition: equivalence}.

\begin{subsection}{Scalar cases}
Since any element $\SM abcd$ in $\MM_{n^2}$ satisfies $(a,b,c,d)=1$.
Scalar elements exist only in $\MM_1^\ast$ and they are
$\left(\pm\SM1001,1\right)$.

\begin{Proposition}
  \label{proposition: scalar}
  The contribution of scalar elements in $\MM_{n^2}$
  to the trace of $[\MM_{n^2}^\ast]$ is
  $$
  \begin{cases}
  \frac1{12}\left(k-\frac12\right), &\text{if }n=1, \\
  0, &\text{else}. \end{cases}
  $$
\end{Proposition}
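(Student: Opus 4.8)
The plan is to reduce everything to a direct application of the rule defining $J(\gamma)$ on scalars, once we have pinned down which $\MM_{n^2}$ contain a scalar at all. First I would settle the second (vanishing) case. A scalar $\SM a00a$ has determinant $a^2$, and every element of $\MM_{n^2}$ has determinant $n^2$, so $a=\pm n$; but the primitivity condition $(a,b,c,d)=1$ built into the definition of $\MM_{n^2}(1)$ forces $|a|=1$ for $\SM a00a$. Hence $n=1$, and for $n>1$ there are no scalar elements, so their contribution to $\tr[\MM_{n^2}^\ast]$ is $0$.

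For the case $n=1$ I would observe that $\MM_1(1)=\SL(2,\Z)$ and that the only scalars are $\pm\SM1001$. Since Definition \ref{definition: equivalence} calls two scalars equivalent exactly when they coincide, $\SM1001$ and $\SM{-1}00{-1}$ represent two distinct equivalence classes, both of which enter the sum. For completeness one may verify via Lemma \ref{lemma: eta}, using $c=0$, $\JS0{-1}=1$, and the evenness of $s$, that the starred lift of $-I$ is $\left(\SM{-1}00{-1},1\right)$; this confirms these are bona fide elements of $\Gamma^\ast$, although the value of $J$ on a scalar does not depend on the second component.

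It then remains to evaluate $J$ on each class. With $N=1$ we have $[\SL(2,\Z):\Gamma_0(1)]=1$, so the defining formula gives $J=\frac1{24}\left(k-\frac12\right)$ for each of the two scalar classes, and summing yields $2\cdot\frac1{24}\left(k-\frac12\right)=\frac1{12}\left(k-\frac12\right)$, which is the first case of the proposition. I expect no genuine obstacle here: the only point needing care is the bookkeeping that $I$ and $-I$ count as two separate classes rather than one. This scalar case is the simplest of the four contributions, and it mainly serves to fix the normalization used in the later parabolic, hyperbolic, and elliptic computations.
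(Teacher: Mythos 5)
Your proof is correct and follows essentially the same route as the paper: the primitivity condition $(a,b,c,d)=1$ rules out scalars unless $n=1$, in which case the two scalars $\pm I$ (each lifting to second component $1$, since $s$ is even) form two distinct equivalence classes, and each contributes $\frac1{24}\left(k-\frac12\right)[\SL(2,\Z):\Gamma_0(1)]=\frac1{24}\left(k-\frac12\right)$, giving the total $\frac1{12}\left(k-\frac12\right)$. The paper leaves exactly this reasoning implicit in the sentence preceding the proposition; your only addition is the explicit verification of the lift of $-I$, which is harmless and correct.
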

\end{subsection}

\begin{subsection}{Parabolic cases}

The contribution of the parabolic classes to the trace of $\MM_{n^2}$
is summarized in Proposition \ref{proposition: parabolic}. The proof
is divided into several steps.

\begin{Lemma} \label{lemma: inequivalent parabolic}
  The inequivalent parabolic elements in $\MM_{n^2}$ are
  $$
    \M{n}a0n, \qquad a=1,\ldots,n,~(a,n)=1.
  $$
\end{Lemma}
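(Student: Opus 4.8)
The plan is to first pin down which elements of $\MM_{n^2}$ are parabolic, and then sort these into equivalence classes in the sense of Definition \ref{definition: equivalence}. Recall that for $N=1$ the set $\MM_{n^2}=\MM_{n^2}(1)$ consists of the primitive integral matrices of determinant $n^2$. A matrix $\gamma=\M{a}{b}{c}{d}\in\MM_{n^2}$ is parabolic precisely when it has a repeated eigenvalue and is non-scalar; since its characteristic polynomial is $X^2-(a+d)X+n^2$, a repeated root forces $(a+d)^2=4n^2$, so the eigenvalue is the integer $\lambda=(a+d)/2=\pm n$. In particular the eigenvalue is rational, so the fixed point of $\gamma$ lies in $\P^1(\Q)$ and is genuinely a cusp, consistent with the definition of a parabolic element.

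Next I would move the fixed point to $\infty$. Because $\SL(2,\Z)$ acts transitively on $\P^1(\Q)$, I can choose $\sigma\in\SL(2,\Z)$ carrying the fixed point of $\gamma$ to $\infty$; then $\sigma\gamma\sigma^{-1}$ fixes $\infty$ and is therefore upper triangular, say $\M{\lambda}{b'}{0}{\lambda}$, where both diagonal entries equal $\lambda$ because the trace is $2\lambda$ and the determinant is $\lambda^2$. If $\lambda=-n$, I use that $-\M{1}{0}{0}{1}\in\SL(2,\Z)$ stabilizes $\infty$ and rewrite $\M{-n}{b'}{0}{-n}=\left(-\M{1}{0}{0}{1}\right)\M{n}{-b'}{0}{n}$; thus, up to the equivalence of Definition \ref{definition: equivalence}, every parabolic element is equivalent to one of the form $\M{n}{c}{0}{n}$. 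Since conjugation by $\SL(2,\Z)$ and left multiplication by $\pm\M{1}{m}{0}{1}$ both preserve the content (the gcd of entries) of an integral matrix, primitivity of $\gamma$ forces $\gcd(n,c)=1$.

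Finally I would determine when two normal forms $\M{n}{c_1}{0}{n}$ and $\M{n}{c_2}{0}{n}$ are equivalent. Suppose $\sigma\M{n}{c_1}{0}{n}\sigma^{-1}=\alpha\M{n}{c_2}{0}{n}$ with $\sigma\in\SL(2,\Z)$ and $\alpha$ in the stabilizer of $\infty$. The right-hand side fixes $\infty$, while the left-hand side fixes $\sigma\infty$; comparing fixed points forces $\sigma\infty=\infty$, i.e. $\sigma=\pm\M{1}{m}{0}{1}$ for some $m$. Such $\sigma$ commutes with $\M{n}{c_i}{0}{n}$, so the relation collapses to $\M{n}{c_1}{0}{n}=\alpha\M{n}{c_2}{0}{n}$ with $\alpha=\pm\M{1}{m}{0}{1}$; a direct comparison of entries rules out the sign $-$ and yields $c_1\equiv c_2\pmod n$. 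Hence the equivalence classes of parabolic elements are in bijection with the residues $c\bmod n$ satisfying $\gcd(c,n)=1$, and choosing representatives $a=1,\dots,n$ with $(a,n)=1$ gives exactly the list in the statement.

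The main obstacle is this last step: the parabolic equivalence in Definition \ref{definition: equivalence} is coarser than ordinary conjugacy because of the extra stabilizer factor $\alpha$, so I must verify both that this extra freedom does not merge distinct residues mod $n$ and that no conjugator $\sigma$ lying outside the stabilizer of $\infty$ can do so either. The comparison of fixed points is what controls this, and the sign bookkeeping coming from $-\M{1}{0}{0}{1}$ must be tracked carefully throughout.
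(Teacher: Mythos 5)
Your proof is correct and follows essentially the same route as the paper: use the fact that $\SL(2,\Z)$ has the single cusp $\infty$ to conjugate a parabolic element into the form $\pm\M na0n$ with $(a,n)=1$, then reduce modulo the stabilizer of $\infty$ (generated by $\M 1101$ and $-\M 1001$) to get the stated representatives. You simply spell out the details the paper leaves implicit, notably the fixed-point argument showing that a conjugator identifying two normal forms must itself stabilize $\infty$, so distinct residues $a \bmod n$ really are inequivalent.
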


\begin{proof}
  Since $\SL(2,\Z)$ has only one inequivalent cusp $\infty$, a
  parabolic element is conjugate to
  $$
    \pm\M{n}b0n
  $$
  for some $b$ with $(b,n)=1$. Since the stabilizer subgroup of $\infty$
  inside $\SL(2,\Z)$ is generated by $\SM1101$ and $\SM{-1}00{-1}$, we
  see that the inequivalent parabolic elements are given as in the statement.
\end{proof}



\begin{Lemma} \label{lemma: parabolic prelim}
  Let $a$ be an integer relatively prime to $n$. The
  contribution of the class of $\M{n}a0n$ to the trace of
  $[\MM_{n^2}^\ast]$ on $\S_{r,s}(1)$ is
  $$
    \begin{cases}
    -(r-12)/24, &\text{if }n=a=1, \\ \displaystyle
    -(-i)^{r(n-1)/2}\JS{-a}n\frac{e^{-2\pi i(r\ell+1)a/n}}{1-e^{-2\pi ia/n}},
    &\text{if }n>1. \end{cases}
  $$
  where $\ell=(n^2-1)/24$.
\end{Lemma}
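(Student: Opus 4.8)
The plan is to evaluate Shimura's quantity $J(\gamma)$ for $\gamma=\M{n}a0n$ directly from the parabolic recipe. Since $\SL(2,\Z)$ has the single cusp $\infty$ and $\gamma$ fixes $\infty$, I take $\sigma$ to be the identity and the width is $w=1$. The first thing to record is the automorphy factor of the width generator: from the definition of $\Gamma^\ast$ and $\eta(\tau+1)=e^{2\pi i/24}\eta(\tau)$ one gets $\M1101^\ast=\left(\M1101,e^{2\pi ir/24}\right)$, so that $e^{-2\pi i\mu}=e^{2\pi ir/24}$ forces $\mu=(24-r)/24$. Writing ${\sigma^\ast}^{-1}\gamma^\ast\sigma^\ast=\gamma^\ast$ in the form $\left(\M{n}{nuw}0n,\eta\right)$ with $w=1$ identifies $u=a/n$ and leaves $\eta$, the second component of the canonical lift of $\gamma$ to $\MM_{n^2}^\ast$, as the only unknown. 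In the degenerate case $n=a=1$ the matrix $\gamma$ is itself the width generator, so $u=1\in\Z$, $\eta=e^{2\pi ir/24}$, and the integer branch $J(\gamma)=-\frac1{2\eta}e^{-2\pi iu\mu}(1-2\mu)$ applies; substituting $\mu=(24-r)/24$ (so that $1-2\mu=(r-12)/12$ and the exponentials cancel) gives $-(r-12)/24$.

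For $n>1$ we have $u=a/n\notin\Z$, so the relevant branch is $J(\gamma)=-\frac1{2\eta}e^{-2\pi iu\mu}(1-i\cot\pi u)$. I first rewrite $1-i\cot\pi u=2/(1-e^{-2\pi ia/n})$, which already produces the denominator appearing in the statement; everything else reduces to computing $\eta$. To find $\eta$, I choose $b,d$ with $ad-bn=1$ and use the factorization
$$
  \M{n}a0n=\M abnd\,\M100{n^2}\,\M{dn}1{-1}0,
$$
both outer matrices lying in $\SL(2,\Z)$. Applying the group law in $\mathfrak G_{k+1/2}$ together with the canonical element $\left(\M100{n^2},n^{k+1/2}\right)$ and Weber's formula (Lemma \ref{lemma: eta}) for the two $\SL(2,\Z)$ factors expresses $\eta$ as a product of the symbols $\epsilon(a,b,n,d)^r$ and $\epsilon(dn,1,-1,0)^r$ times $(c\tau+d)$-type factors that must collapse to a constant since $|\eta|=1$.

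The computation then proceeds by three simplifications. Using $(n,6)=1$, hence $n^2\equiv1\mod24$, kills the term $bd(1-n^2)$ inside Weber's exponent, and the surviving exponential collects to $e^{2\pi iarn/24}$. The Jacobi symbol $\JS{d}{n}$ produced by the odd-$c$ case of Lemma \ref{lemma: eta} is converted to $\JS{-a}{n}$ via $ad\equiv1\mod n$ (so $\JS{d}{n}=\JS{a}{n}$) together with $\JS{-1}{n}=(-1)^{(n-1)/2}$, the latter being exactly what reconciles the factor $i^{r(1-n)/2}$ from $\epsilon(a,b,n,d)$ with the target power of $i$. This yields $\eta=(-i)^{-r(n-1)/2}\JS{-a}{n}\,e^{2\pi iarn/24}$. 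Finally, substituting this $\eta$ and $\mu=(24-r)/24$ into $-e^{-2\pi iu\mu}/\bigl(\eta(1-e^{-2\pi ia/n})\bigr)$ and using $r\ell=r(n^2-1)/24$ to simplify the combined exponent to $-2\pi i(r\ell+1)a/n$ produces the stated formula.

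I expect the main obstacle to be the careful bookkeeping of the branches of the half-integral powers $(c\tau+d)^{k+1/2}$ that arise when multiplying the three elements in $\mathfrak G_{k+1/2}$: although $|\eta|=1$ guarantees that all $\tau$-dependence cancels, pinning down the resulting root of unity (equivalently, fixing the sign in $e^{\pm i\pi(k+1/2)}=(\pm i)^r$) is delicate and is precisely what determines the factor $(-i)^{r(n-1)/2}$. The quadratic-reciprocity-type interplay among $\JS{d}{n}$, $\JS{-1}{n}$, and the powers of $i$ is the other place where sign errors are easy to make.
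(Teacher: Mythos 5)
Your proposal is correct and follows essentially the same route as the paper: identify $\mu=(24-r)/24$, $u=a/n$, width $1$, and compute $\eta$ by factoring $\SM na0n$ into $\SL(2,\Z)$-matrices around the canonical lift of $\SM100{n^2}$ and applying Lemma \ref{lemma: eta}; your factorization is the paper's up to a redistribution of signs (the paper writes $\SM na0n=\SM a{-\alpha}n\beta\SM{-1}00{-n^2}\SM{-n\beta}{-1}10$ precisely so that both outer matrices have $c\ge 0$), and your intermediate value $\eta=(-i)^{-r(n-1)/2}\JS{-a}n e^{2\pi iarn/24}$ agrees with the paper's $\JS an e^{2\pi ir(n(a-3)+3)/24}$, so the final substitution gives the stated formula. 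The only detail to patch is that Lemma \ref{lemma: eta} is stated for $c\ge0$ while your right-hand factor $\SM{dn}1{-1}0$ has $c=-1$; replace it by its negative and use $(-I)^\ast=(-I,1)$ (valid since $s$ is even and $\eta(\tau)$ is fixed by $-I$), which is exactly the sign bookkeeping the paper's choice of factorization avoids.
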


\begin{proof} Assume first that $n=a=1$. We have
  $\SM1101^\ast=\left(\SM1101,e^{2\pi ir/24}\right)$. Then the numbers
  $\mu$, $\eta$, and $u$ in the definition of $J(\gamma)$ in
  Proposition \ref{proposition: Shimura trace} are
  $$
    \mu=\frac{24-r}{24}, \quad \eta=e^{2\pi ir/24}, \quad u=1,
  $$
  respectively. Thus, the contribution to the trace is $-(r-12)/24$.

  Now assume that $n>1$. We have $(n,a)=1$. Let $\alpha$ and $\beta$
  be integers such that $\alpha n+\beta a=1$ and $\beta>0$. We have
  $$
    \M{n}a0n=\M a{-\alpha}n\beta\M{-1}00{-n^2}\M{-n\beta}{-1}10.
  $$
  By Lemma \ref{lemma: eta}, we have
  $$
    \M a{-\alpha}n\beta^\ast=\left(\M a{-\alpha}n\beta,
    \JS ani^{r(1-n)/2}e^{2\pi ir(n(a+\beta)-3)/24}(n\tau+\beta)^{k+1/2}\right)
  $$
  and
  $$
    \M{-n\beta}{-1}10^\ast=\left(\M{-n\beta}{-1}10,e^{2\pi
        ir(-n\beta-3)/24}\tau^{k+1/2}\right).
  $$
  Then
  $$
    \M{-1}00{-n^2}^\ast\M{-n\beta}{-1}10^\ast
   =\left(\M{n\beta}1{-n^2}0,e^{2\pi ir(-n\beta-3)/24}
    (n\tau)^{k+1/2}\right)
  $$
  and
  \begin{equation} \label{equation: parabolic 2}
  \begin{split}
    \M{n}a0n^\ast
  &=\left(\M na0n,\JS ani^{r(1-n)/2}e^{2\pi ir(na-6)/24}
    \left(-\frac1{n\tau}\right)^{k+1/2}(n\tau)^{k+1/2}\right) \\
  &=\left(\M na0n,\JS ane^{2\pi ir(n(a-3)+3)/24}\right).
  \end{split}
  \end{equation}

  Now the stabilizer of $\infty$ is generated by
  $\left(\SM1101,e^{2\pi ir/24}\right)$ and $\left(\SM{-1}00{-1},1\right)$. Thus,
  according to Proposition \ref{proposition: Shimura trace} and
  \eqref{equation: parabolic 2}, the contribution of the class of
  $\SM{n}a0n$ to the trace is
  $$
    -\frac12\JS{a}ne^{-2\pi ir(n(a-3)+3)/24}
    e^{-2\pi ia(24-r)/(24n)}(1-i\cot(\pi a/n)).
  $$
  Simplifying the expression, we get the lemma.
\end{proof}

The proof of Proposition \ref{proposition: parabolic} involves sums of
the form $\sum_{u\le N/j}\chi(u)$ for some Dirichlet character $\chi$.
Here we recall a formula from \cite{SUZ} for such sums. Note that for
a Dirichlet character $\chi$ modulo $M$, the generalized Bernoulli
numbers $B_{m,\chi}$ are defined by the power series
$$
  \sum_{a=1}^M\frac{\chi(a)te^{at}}{e^{Mt}-1}=\sum_{m=0}^\infty
  B_{m,\chi}\frac{t^m}{m!}.
$$
If $\chi$ is the trivial character modulo $1$, then $B_{m,\chi}$ is
just the Bernoulli numbers $B_m$. We have
\begin{equation} \label{equation: B0}
  B_{0,\chi}=\frac1M\sum_{a=1}^M\chi(a)=\begin{cases}
  \phi(M)/M, &\text{if }\chi\text{ is principal}, \\
  0, &\text{else}. \end{cases}
\end{equation}
Also, $B_{1,\chi}=0$ if $\chi$ is an even character. Moreover, if
$\chi$ is an imprimitive Dirichlet character induced from $\chi_1$,
then we have the relation
\begin{equation} \label{equation: Bernoulli relation}
   B_{1,\chi}=B_{1,\chi_1}\sum_{d|M}\mu(d)\chi_1(d)
   =B_{1,\chi_1}\prod_{p|M}(1-\chi_1(p)).
\end{equation}
Also, if $d<0$ is a fundamental discriminant, then we have
\begin{equation} \label{equation: class number formula}
  B_{1,\JS d\cdot}=-H(d),
\end{equation}
where $H(d)$ is the Hurwitz class number.

\begin{Lemma}[{\cite[Equation (6), Page 276]{SUZ}}]
  \label{lemma: SUZ}
  Let $M$ be a positive integer and $\chi$ be a Dirichlet character
  modulo $M$. Let $N>0$ be a multiple of $M$ and $j$ be a positive
  integer relatively prime to $N$. Then we have
  $$
    \sum_{0\le u<N/j}\chi(u)=-B_{1,\chi}
   +\frac{\overline{\chi(j)}}{\phi(j)}
    \sum_{\psi\mod j}\overline{\psi(-N)}B_{1,\chi\psi}(N),
  $$
  where the sum runs over all Dirichlet characters $\psi$ modulo
  $j$ and
  \begin{equation*}
  \begin{split}
    B_{1,\chi\psi}(N)&=B_{0,\chi\psi}N+B_{1,\chi\psi} \\
  &=\begin{cases}
    B_{1,\chi\psi}, &\text{if }\chi\psi\text{ is a nonprincipal
      character modulo }jM, \\
    N\phi(jM)/jM, &\text{if }\chi\psi\text{ is the principal
      character modulo }jM.\end{cases}
  \end{split}
  \end{equation*}
\end{Lemma}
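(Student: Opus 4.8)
The plan is to treat the left-hand side as an incomplete character sum and convert it into complete (twisted) sums, each of which evaluates to a generalized Bernoulli number. Since the statement is quoted from \cite{SUZ}, one may of course simply cite it; the following sketch indicates how I would recover it directly. First I would record the crucial observation that $(j,M)=1$: indeed $M\mid N$ and $(j,N)=1$, so $\chi(j)\neq 0$, the symbol $\overline{\chi(j)}$ makes sense, and by the Chinese remainder theorem every product $\chi\psi$ with $\chi\bmod M$ and $\psi\bmod j$ is a well-defined Dirichlet character modulo $jM$. I would then substitute $v=ju$, which turns the range $0\le u<N/j$ into $0\le v<N$ subject to $j\mid v$, and, using $\chi(v)=\chi(j)\chi(v/j)$, gives
$$ \sum_{0\le u<N/j}\chi(u)=\overline{\chi(j)}\sum_{\substack{0\le v<N\\ j\mid v}}\chi(v). $$
This already isolates the global factor $\overline{\chi(j)}$ that appears on the right-hand side.

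The heart of the argument is to detect the divisibility condition $j\mid v$ while simultaneously completing the partial sum. I would expand the indicator of $j\mid v$ into additive characters, $\mathbf 1[j\mid v]=\tfrac1j\sum_{t=0}^{j-1}e^{2\pi itv/j}$, and then re-express the additive characters through the multiplicative characters $\psi\bmod j$ by Gauss sums. Interchanging the order of summation produces, for each $\psi\bmod j$, a twisted sum $\sum_{0\le v<N}(\chi\psi)(v)$ running over a range that is a multiple of the modulus $jM$ (here $M\mid N$ is used a second time). Such a complete twisted sum is precisely what the generalized Bernoulli number computes: the affine (``main'') part of the counting contributes $B_{0,\chi\psi}N$ and the sawtooth part contributes $B_{1,\chi\psi}$, which is exactly why the quantity $B_{1,\chi\psi}(N)=B_{0,\chi\psi}N+B_{1,\chi\psi}$ is the natural output. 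The Gauss-sum manipulation is what attaches the twist $\overline{\psi(-N)}$ and the normalizing factor $1/\phi(j)$ to each character $\psi$.

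Finally I would separate the principal character $\psi=\psi_0$ from the rest. For nonprincipal $\psi$ the product $\chi\psi$ is a nonprincipal character modulo $jM$, so $B_{0,\chi\psi}=0$ by \eqref{equation: B0} and $B_{1,\chi\psi}(N)=B_{1,\chi\psi}$; these terms reassemble the sawtooth $-\overline{B}_1(N/j)$. For $\psi=\psi_0$ the character $\chi\psi$ is principal modulo $jM$, $B_{0,\chi\psi}=\phi(jM)/jM$ survives, and the main term becomes $N\phi(jM)/jM$, matching the bifurcation in the statement. The half-integer endpoint correction left over from the completion collects into the single constant $-B_{1,\chi}$ (one checks this against the trivial case $M=1$, where $-B_{1,\chi}=1/2$ and the sum indeed equals $\lceil N/j\rceil$).

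The step I expect to be the main obstacle is the Gauss-sum bookkeeping in the second paragraph: one must carefully track how the additive characters $e^{2\pi itv/j}$ convert into the multiplicative characters $\psi\bmod j$, including the contributions of $v$ not coprime to $j$, so that the normalization $\overline{\chi(j)}/\phi(j)$ and the twist $\overline{\psi(-N)}$ emerge with exactly the right constants. Closely related is the treatment of the boundary terms where the sawtooth argument is an integer — these occur precisely when $jM\mid(N-ja)$, i.e. when $\chi\psi$ is principal — so that the split in the definition of $B_{1,\chi\psi}(N)$ is reproduced without sign or off-by-one errors.
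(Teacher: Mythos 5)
Your opening substitution is fine ($(j,M)=1$, $\chi(u)=\overline{\chi(j)}\chi(ju)$, so the sum equals $\overline{\chi(j)}\sum_{0\le v<N,\,j\mid v}\chi(v)$), but the step that is supposed to do the real work is false as stated. You claim that interchanging sums leaves, for each $\psi\bmod j$, a twisted sum $\sum_{0\le v<N}(\chi\psi)(v)$ ``running over a range that is a multiple of the modulus $jM$''. The hypotheses force the opposite: $(j,N)=1$ gives $j\nmid N$ for $j>1$, so $N$ is a multiple of $M$ but never of $jM$; your parenthetical ``here $M\mid N$ is used a second time'' conflates $M\mid N$ with $jM\mid N$. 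The error is fatal, not cosmetic. A character sum over a range that genuinely is a multiple of its modulus equals $B_{0,\chi\psi}N$ exactly (each full period contributes $jM\,B_{0,\chi\psi}$ by \eqref{equation: B0}); a complete sum has no ``sawtooth part contributing $B_{1,\chi\psi}$'' --- for instance an odd nonprincipal $\xi$ modulo $q$ has $\sum_{0\le v<q}\xi(v)=0\neq B_{1,\xi}$. So if your completion claim were right, every nonprincipal $\chi\psi$ would contribute $0$ and the right-hand side of the lemma would lose all its $B_{1,\chi\psi}$ terms; concretely, for $M=1$, $j=3$, $N=1$ that evaluation yields $\frac12+\frac13=\frac56$, while the left-hand side is $\chi(0)=1$ (the correct formula recovers the missing $\frac16$ from the odd character modulo $3$). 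The $B_{1,\chi\psi}$ terms exist precisely because $[0,N)$ is \emph{incomplete} modulo $jM$: they are the corrections from the truncated final period, and $\overline{\psi(-N)}$ records where, modulo $j$, the truncation occurs. Evaluating these incomplete tails is the entire content of the identity, and your sketch never does it --- it is exactly what you defer to ``the main obstacle'', together with the difficulty (which you flag but do not resolve) that the additive-to-multiplicative conversion breaks down on residues not coprime to $j$, i.e.\ on every single $v$ in your sum, since they all satisfy $j\mid v$.

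For the record, the paper's own proof is of a different kind: for $M>1$ it is a citation --- the statement is Equation (6) of \cite{SUZ}, page 276, with $m=1$ --- and its only mathematical content concerns $M=1$, where SUZ's generating function $\mathcal L_\chi(t)$ must be redefined as $\sum_{n\ge 0}\chi(n)e^{nt}$ so that the term $u=0$ (where $\chi(0)=1$ for the trivial character modulo $1$) is counted, after which SUZ's derivation applies. So even your remark that one ``may of course simply cite it'' needs this caveat: the $M=1$ case is not literally in \cite{SUZ}, and it is a case the paper actually uses (namely $n_0=1$, i.e.\ $n$ a perfect square). If you want a self-contained proof, the workable route is the classical one: split $\{0\le u<N/j\}$ into residue classes $a\bmod M$, write each count by means of the fractional parts $\{(N-ja)/(jM)\}$, expand those by orthogonality over characters $\Psi$ modulo $jM$ via $\frac1{\phi(jM)}\sum_\Psi\overline{\Psi(c)}\,B_{1,\Psi}=\{c/(jM)\}-\frac12$ for $(c,jM)=1$, and then use $(j,M)=1$ and the $\chi(a)$-weighting to collapse $\Psi$ to the products $\chi\psi$ with $\psi\bmod j$. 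It is this collapse --- not a Gauss-sum manipulation --- that produces the prefactor $\overline{\chi(j)}/\phi(j)$ and the twist $\overline{\psi(-N)}$ (because $N-ja\equiv N\pmod j$).
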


\begin{proof} If $M>1$, then the formula is just Equation (6) of
  \cite{SUZ} on Page 276 with $m=1$. If $M=1$, that is, if $\chi(u)=1$
  for all $u\in\Z$, then we should modify the definition of $\mathcal
  L_\chi(t)$ on Page 274 of \cite{SUZ} to $\mathcal
  L_\chi(t)=\sum_{n=0}^\infty\chi(n)e^{nt}$. Then following the argument
  from Page 274 up to Equation (6) of Page 276 of \cite{SUZ}, we see that
  our formula holds. The details of proof are omitted.
\end{proof}

\begin{Lemma} \label{lemma: parabolic prelim 0}
  Let $n$ be a positive odd integer greater than $1$. If $n\equiv
  1\mod4$, then
  $$
    \sum_{a\mod n}\JS an\frac1{1-e^{2\pi ia/n}}
   =\begin{cases}\phi(n)/2, &\text{if }n\text{ is a square},  \\
    0, &\text{else}. \end{cases}
  $$
  If $n\equiv 3\mod 4$, then
  $$
    \sum_{a\mod n}\JS an\frac1{1-e^{2\pi ia/n}}=i\sqrt nH(-n).
  $$
  Here $H(-n)$ is the Hurwitz class number, i.e., $H(-3)=1/3$,
  $H(-4)=1/2$, and $H(-n)$ is the ideal class number of the quadratic
  order of discriminant $-n$ if $n\neq3,4$.
\end{Lemma}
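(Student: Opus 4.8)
The plan is to begin from the partial-fraction identity
$$
  \frac1{1-e^{2\pi ia/n}}=\frac12+\frac i2\cot\frac{\pi a}n,
$$
valid for $a\not\equiv0\mod n$. Since $\JS an=0$ whenever $(a,n)>1$, the term $a\equiv0$ contributes nothing and the sum runs effectively over $a\in(\Z/n\Z)^\times$; substituting the identity splits it as $\frac12\Sigma_1+\frac i2\Sigma_2$, where $\Sigma_1=\sum_{a\mod n}\JS an$ and $\Sigma_2=\sum_{a\mod n}\JS an\cot(\pi a/n)$.

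First I would dispose of $\Sigma_1$ and exploit parity. The character $\JS\cdot n$ is the principal character modulo $n$ exactly when $n$ is a perfect square and is nonprincipal otherwise, so \eqref{equation: B0} gives $\Sigma_1=\phi(n)$ if $n$ is a square and $\Sigma_1=0$ otherwise. Because $\JS{-1}n=(-1)^{(n-1)/2}$, the character $\JS\cdot n$ is even for $n\equiv1\mod4$ and odd for $n\equiv3\mod4$, whereas $\cot(\pi a/n)$ is odd in $a$. Hence for $n\equiv1\mod4$ the summand of $\Sigma_2$ is odd and the terms cancel in pairs $a\leftrightarrow n-a$, so $\Sigma_2=0$ and the sum equals $\frac12\Sigma_1$, which is the asserted value. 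For $n\equiv3\mod4$ the square case is impossible, so $\Sigma_1=0$ and the sum equals $\frac i2\Sigma_2$; it then remains to show that this equals $i\sqrt n\,H(-n)$.

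For $n$ squarefree (so that $\chi=\JS\cdot n$ is primitive) I would evaluate the sum directly through the finite Fourier expansion $\frac1{1-\zeta^a}=-\frac1n\sum_{b=1}^{n-1}b\zeta^{ab}$, with $\zeta=e^{2\pi i/n}$. Interchanging the order of summation and using the Gauss-sum relation $\sum_a\chi(a)\zeta^{ab}=\chi(b)\tau(\chi)$ (valid for primitive real $\chi$) collapses the sum to $-\tau(\chi)B_{1,\chi}$, where $B_{1,\chi}=\frac1n\sum_a a\chi(a)$. Since $n\equiv3\mod4$ is squarefree, $-n$ is a fundamental discriminant with $\JS\cdot n=\JS{-n}\cdot$, so $\tau(\chi)=i\sqrt n$ and $B_{1,\chi}=-H(-n)$ by \eqref{equation: class number formula}; this yields the sum $=-\tau(\chi)B_{1,\chi}=i\sqrt n\,H(-n)$, as claimed. (The vanishing noted above for even $\chi$ is consistent, since $B_{1,\chi}=0$ in that case.)

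The hard part will be the non-squarefree case, where $\chi=\JS\cdot n$ is imprimitive---induced by the primitive character $\chi_0=\JS\cdot{n^\ast}$ modulo $n^\ast=\prod_{p\mid n,\ \ord_p(n)\ \mathrm{odd}}p$---and where $H(-n)$ is the class number of the non-maximal order of discriminant $-n$ rather than a fundamental class number. Here the Gauss-sum shortcut fails (indeed $\tau(\chi)=0$), so I would instead run the computation through the uniform identity $\Sigma_2=\frac{2n}\pi L(1,\chi)$ (obtained from the same split together with the standard evaluation of $L(1,\chi)$ for odd $\chi$) and the analytic class number formula $L\bigl(1,\JS\cdot n\bigr)=\pi H(-n)/\sqrt n$ for orders. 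The delicate step is reconciling the two sides of this formula for non-fundamental $-n$: the imprimitivity Euler factors relating $L(1,\chi)$ to $L(1,\chi_0)$ must be matched against the index-and-ramification factors relating the order class number $H(-n)$ to $H(-n^\ast)$, with \eqref{equation: Bernoulli relation} and \eqref{equation: class number formula} supplying the fundamental-discriminant input. Checking a small non-squarefree case such as $n=27$ confirms that the normalization comes out exactly as stated.
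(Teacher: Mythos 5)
Your proposal is correct, but it takes a genuinely different route from the paper in the hardest case, so a comparison is worthwhile. For $n\equiv1\mod4$ your parity argument is exactly the paper's (the paper pairs $a$ with $n-a$; your cotangent split encodes the same cancellation), and for squarefree $n\equiv3\mod 4$ your computation via $\frac1{1-\zeta^a}=-\frac1n\sum_{b=1}^{n-1}b\zeta^{ab}$ plus the Gauss-sum relation is in substance the paper's telescoping argument ($S(t)-S(t+1)=T(t)$, summed over $t$): both amount to $S=-\tau(\chi)B_{1,\chi}=i\sqrt n\,H(-n)$. The real divergence is the non-squarefree case $n=m^2n_0$. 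The paper stays entirely finite: it reduces to the squarefree case using the partial-fraction identity $\ell/(1-x^\ell)=\sum_{j=0}^{\ell-1}(1-xe^{2\pi ij/\ell})^{-1}$ and M\"obius inversion over divisors of $m$, then applies \eqref{equation: class number}. You instead go analytic, converting the whole sum to $\frac{in}{\pi}L\bigl(1,\JS\cdot n\bigr)$ via the cotangent expansion and invoking the class-number formula for non-maximal orders. Your route does close: $L\bigl(1,\JS\cdot n\bigr)=L\bigl(1,\JS\cdot{n_0}\bigr)\prod_{p|m}\bigl(1-\frac1p\JS{-n_0}p\bigr)$, and dividing \eqref{equation: class number} by $\sqrt n=m\sqrt{n_0}$ converts the fundamental-discriminant evaluation $L\bigl(1,\JS\cdot{n_0}\bigr)=\pi H(-n_0)/\sqrt{n_0}$ into exactly $\pi H(-n)/\sqrt n$, which is the reconciliation you describe but leave unexecuted. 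Note, though, that this matching of imprimitivity Euler factors against conductor factors is precisely the computation the paper performs in its M\"obius step, merely transplanted into the $L$-function. So what your approach buys is conceptual transparency---Dirichlet's formula makes it obvious why $H(-n)$ rather than $H(-n_0)$ appears---at the cost of analytic overhead: you must justify the rearrangement of conditionally convergent series behind $\Sigma_2=\frac{2n}\pi L(1,\chi)$ (principal-value cotangent expansion with Abel summation, which is legitimate for imprimitive $\chi$ since only oddness and periodicity are used), whereas the paper's argument never leaves finite sums.
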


\begin{proof}
  Assume that $n\equiv 1\mod 4$. Then $\JS\cdot n$ is an even function.
  Let $S$ denote the sum. We have
  $$
    2S=\sum_{a\mod n}\JS an\left(\frac1{1-e^{2\pi
        ia/n}}+\frac1{1-e^{-2\pi ia/n}}\right)
      =\sum_{a\mod n}\JS an.
  $$
  If $n$ is a square, then $\JS\cdot n$ is the principal Dirichlet
  character modulo $n$ and we have $S=\phi(n)/2$. If $n$ is not a square,
  then $\JS\cdot n$ is a nonprincipal Dirichlet character modulo $n$ and
  the sum $S$ vanishes.

  Now assume that $n\equiv 3\mod 4$. Set $z=e^{2\pi i/n}$. We first
  consider the case $n$ is squarefree. For an integer $t$, set
  $$
    S(t)=\sum_{a\mod n}\JS an\frac{z^{ta}}{1-z^a}, \qquad
    T(t)=\sum_{a\mod n}\JS anz^{ta}.
  $$
  The two sums are related by
  $$
    S(t)-S(t+1)=\sum_{a\mod n}\JS an\frac{z^{ta}(1-z^a)}{1-z^a}
      =\sum_{a\mod n}\JS anz^{ta}=T(t).
  $$
  Since $n$ is assumed to be squarefree, we have
  $T(t)=\JS tn i\sqrt n$. It follows that
  \begin{equation*}
  \begin{split}
    \sum_{t=1}^n(S(0)-S(t))&=\sum_{t=1}^n\sum_{a=0}^{t-1}T(a)
   =\sum_{a=0}^{t-1}T(a)(n-a) \\
  &=i\sqrt n\sum_{a=0}^{n-1}\JS an(n-a)=in\sqrt nH(-n).
  \end{split}
  \end{equation*}
  Since
  $$
    \sum_{t=1}^nS(t)=\sum_{a=1}^n\JS an\frac{1+z^a+\cdots+z^{(n-1)a}}{1-z^a}=0,
  $$
  we conclude that $S=S(0)=i\sqrt nH(-n)$ for the case $n$ is
  squarefree.

  Now if $n$ is not squarefree, we write $n$ as $m^2n_0$ with
  squarefree $n_0$. Then using the partial fraction decomposition
  $$
    \frac\ell{1-x^{\ell}}=\sum_{j=0}^{\ell-1}\frac1{1-xe^{2\pi ij/\ell}},
  $$
  we get
  \begin{equation*}
  \begin{split}
    S&=\sum_{d|m}\mu(d)\sum_{a=1}^{n/d}\JS{ad}{n_0}\frac1{1-z^{ad}} \\
     &=\sum_{d|m}\mu(d)\JS d{n_0}\sum_{a=1}^{n_0-1}\JS a{n_0}
       \sum_{j=0}^{n/dn_0-1}\frac1{1-e^{2\pi id(a+jn_0)/n}} \\
     &=\sum_{d|m}\mu(d)\JS d{n_0}\sum_{a=1}^{n_0}\JS a{n_0}
       \frac{n/dn_0}{1-e^{2\pi ia/n_0}} \\
     &=im^2\sqrt{n_0}H(-n_0)\sum_{d|m}\frac{\mu(d)}d\JS d{n_0},
  \end{split}
  \end{equation*}
  where in the last step we use the result for the squarefree
  case computed earlier. Now recall that, for $d>0$ with $d\equiv
  0,3\mod 4$, the Hurwitz class numbers
  $H(-m^2d)$ and $H(-d)$ are related by the formula
  \begin{equation} \label{equation: class number}
  \begin{split}
    H(-m^2d)=H(-d)m\prod_{p|m}\left(1-\frac1p\JS{-d}p\right)
   =H(-d)m\sum_{a|m}\frac{\mu(a)}a\JS{-d}a.
  \end{split}
  \end{equation}
  Therefore,
  $$
    S=im\sqrt{n_0}H(-n)=i\sqrt nH(-n).
  $$
  This completes the proof of the lemma.
\end{proof}

\begin{Lemma} For a positive integer $n>1$ with $(n,6)=1$, let
  $\ell=(n^2-1)/24$. If $n\equiv 1\mod 4$, then
  \begin{equation} \label{equation: parabolic n mod 4=1}
  \begin{split}
  &\frac1{\sqrt n}\sum_{a\mod n}\JS an\frac{e^{2\pi i(r\ell+1)a/n}}
     {1-e^{2\pi ia/n}} \\
  &\qquad=-\frac18\JS{24}n\sum_{u=-3,-4,-8,-24}\JS ur
   \left(1-\JS{un}2\right)\left(1-\JS{un}3\right)H(un).
  \end{split}
  \end{equation}
  If $n\equiv 3\mod 4$, then
  \begin{equation} \label{equation: parabolic n mod 4=3}
  \begin{split}
  &\frac1{i\sqrt n}\sum_{a\mod n}\JS an\frac{e^{2\pi i(r\ell+1)a/n}}
     {1-e^{2\pi ia/n}} \\
  &\qquad=\frac18\JS{24}n\sum_{u=1,8,12,24}\JS ur
   \left(1-\JS{-un}2\right)\left(1-\JS{-un}3\right)H(-un).
  \end{split}
  \end{equation}
\end{Lemma}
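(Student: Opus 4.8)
The plan is to reduce the twisted sum to a partial sum of the quadratic character $\JS\cdot n$ and then evaluate that partial sum by the formula of Lemma \ref{lemma: SUZ}. Write $t=r\ell+1$, let $t_0=t\bmod n$ with $0\le t_0<n$, and let $g_n=\sum_{a\bmod n}\JS an e^{2\pi ia/n}$ be the Gauss sum, so that $g_n=\sqrt n$ if $n\equiv1\pmod4$ and $g_n=i\sqrt n$ if $n\equiv3\pmod4$ (for squarefree $n$). Following the proof of Lemma \ref{lemma: parabolic prelim 0}, with $S(x)=\sum_{a\bmod n}\JS an e^{2\pi ixa/n}/(1-e^{2\pi ia/n})$ one has $S(x)-S(x+1)=\sum_{a\bmod n}\JS an e^{2\pi ixa/n}=\JS xn g_n$, and summing over $0\le x<t$ gives
$$S(t)=S(0)-g_n\sum_{0\le b<t}\JS bn=S(0)-g_nF(t_0),\qquad F(y)=\sum_{0\le c<y}\JS cn,$$
the last equality because complete periods of the non-principal character $\JS\cdot n$ contribute nothing. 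Since $S(0)=-g_nB_{1,\JS\cdot n}$ by Lemma \ref{lemma: parabolic prelim 0} and \eqref{equation: class number formula}, this yields $S(t)=-g_n\bigl(B_{1,\JS\cdot n}+F(t_0)\bigr)$. The non-squarefree case is handled exactly as in Lemma \ref{lemma: parabolic prelim 0}, by passing to the radical of $n$ and invoking \eqref{equation: class number}.

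The key step is to recognize $F(t_0)$ as a partial sum of the type in Lemma \ref{lemma: SUZ}. Because $24t\equiv24-r\pmod n$, the number $j_0=(24(t_0-1)+r)/n$ is a well-defined integer, and since $(r,24)=1$ and $(n,24)=1$ one checks $\gcd(j_0,24)=\gcd(24(t_0-1)+r,24)=\gcd(r,24)=1$. For any sufficiently large $k$ put $N=n(j_0+24k)$, a positive multiple of $M=n$ that is coprime to $j=24$. A direct computation gives $N/24=t_0-1+r/24+nk$, whence $\lceil N/24\rceil=t_0+nk$, and since complete periods contribute nothing we obtain $\sum_{0\le u<N/24}\JS un=F(t_0)$.

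Now apply Lemma \ref{lemma: SUZ} with $M=n$, $\chi=\JS\cdot n$, $j=24$, and this $N$. The term $-B_{1,\JS\cdot n}$ it produces cancels the one from the first paragraph, leaving
$$S(t)=-\,g_n\,\frac{\JS{24}n}{8}\sum_{\psi\bmod24}\overline{\psi(-N)}\,B_{1,\chi\psi}(N),$$
using $\phi(24)=8$ and $\overline{\JS{24}n}=\JS{24}n$. All eight characters modulo $24$ are quadratic, hence Kronecker characters $\JS u\cdot$ with $u$ a fundamental discriminant dividing $24$ in absolute value; the product $\chi\psi$ is attached to the discriminant $-un$ when $n\equiv3\pmod4$ (to $un$ when $n\equiv1\pmod4$), and it is odd and non-principal exactly when that discriminant is negative, i.e. for $u\in\{1,8,12,24\}$ when $n\equiv3\pmod4$ and $u\in\{-3,-4,-8,-24\}$ when $n\equiv1\pmod4$; the other four $\psi$ give even characters with $B_{1,\chi\psi}=0$. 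Since $-N\equiv-r\pmod{24}$ one finds $\overline{\psi(-N)}=\psi(-r)=\JS u{-r}$, which equals $\JS ur$ up to the sign $\JS u{-1}$ that combines with the sign of $g_n$ to produce the two stated overall signs. Finally \eqref{equation: Bernoulli relation} supplies the Euler factors $\bigl(1-\JS{-un}2\bigr)\bigl(1-\JS{-un}3\bigr)$ at the ramified primes $2,3$, and \eqref{equation: class number formula} together with \eqref{equation: class number} turns the resulting primitive $B_1$ into the Hurwitz class number $H(-un)$ (resp.\ $H(un)$). Dividing by $g_n$ then gives both displayed formulas.

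The main obstacle is the bookkeeping of the last paragraph: identifying the eight quadratic characters modulo $24$ with their fundamental discriminants, evaluating $\overline{\psi(-N)}$ by Kronecker reciprocity so as to extract exactly $\JS ur$ together with the correct global sign in each residue class of $n$ modulo $4$, and applying the imprimitivity relation \eqref{equation: Bernoulli relation} and the order relation \eqref{equation: class number} so as to land precisely on $\bigl(1-\JS{-un}2\bigr)\bigl(1-\JS{-un}3\bigr)H(-un)$. One must also treat separately the perfect-square case of $n\equiv1\pmod4$, where $\JS\cdot n$ is principal, the period sums no longer vanish, and the reduction of the first paragraph and the value of $S(0)$ must be adjusted as in Lemma \ref{lemma: parabolic prelim 0}.
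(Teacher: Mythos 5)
For squarefree $n$ your argument is correct and is essentially the paper's own proof: the paper also telescopes against Gauss sums (its decomposition $S=-\sum_{t=0}^{r\ell}T(t)+S'$ in \eqref{equation: parabolic 1} is your $S(t)=S(0)-g_n\sum_{0\le b<t}\JS bn$), then applies Lemma \ref{lemma: SUZ} with $j=24$, identifies the eight characters $\psi_u$ modulo $24$, and converts $B_{1,\chi\psi_u}$ into Hurwitz class numbers via \eqref{equation: Bernoulli relation}, \eqref{equation: class number formula}, and \eqref{equation: class number}; your sign bookkeeping ($\psi_u(-N)=\psi_u(-r)$, with the extra $\JS u{-1}=-1$ exactly for the four negative $u$ that survive when $n\equiv1\mod4$) does land on the two stated formulas. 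Your detour through $t_0=t\bmod n$ and the auxiliary $N=n(j_0+24k)$ is valid but unnecessary: since $t-1=r\ell$ and $0<r<24$, one has $\sum_{0\le b<t}\JS bn=\sum_{0\le b<rn^2/24}\JS bn$, so $N=rn^2$ feeds directly into Lemma \ref{lemma: SUZ}, which is what the paper does.

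The genuine gap is the composite case, which the lemma must cover and which is roughly half of the paper's computation. Your claim that non-squarefree $n$ is "handled exactly as in Lemma \ref{lemma: parabolic prelim 0}, by passing to the radical of $n$" does not work as stated: that lemma's reduction rests on the partial-fraction identity $\ell/(1-x^\ell)=\sum_{j=0}^{\ell-1}(1-xe^{2\pi ij/\ell})^{-1}$, which needs the summand's numerator to be constant on each residue class modulo $n_0$, whereas your numerator $e^{2\pi i(r\ell+1)a/n}$ has period $n$, not $n_0$. What is actually forced (and what the paper does) is this: writing $n=m^2n_0$, the imprimitive Gauss sums $T(x)=\sum_a\JS an z_n^{xa}$ vanish unless $(m^2/d)\mid x$ for some $d\mid m$ (see \eqref{equation: T(t)}), so the telescoped sum splits into a M\"obius-weighted family of partial sums of the \emph{primitive} character $\JS{\cdot}{n_0}$ over the ranges $0\le u<(rn^2d/m^2)/24$, as in \eqref{equation: sum T(t)}; each divisor $d$ then requires its own application of Lemma \ref{lemma: SUZ}, and only after summing over $d$ does \eqref{equation: class number} reassemble the resulting class numbers into the $H(un)$ of the non-maximal order of discriminant $un$. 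The perfect-square case $n_0=1$, which you defer, needs this same machinery plus one further cancellation that your single-application setup does not provide: there $\chi$ is principal, so your $-B_{1,\chi}$ cancellation disappears and the principal character $\psi_1$ contributes $B_{1,\chi\psi_1}(N)$, which grows linearly in $N$ (so with $N=n(j_0+24k)$ the output would depend on $k$); in the paper this term is killed only by $\sum_{d\mid m}\mu(d)=0$ across the M\"obius family, not by any local adjustment of $S(0)$ or of the period reduction.
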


\begin{proof} Let $S$ denote the sum in question and for a positive
  integer $\ell$, let $z_\ell$ denote the $\ell$th primitive root of
  unity $e^{2\pi i/\ell}$. Also, write $n$ as $n=m^2n_0$ with
  squarefree $n_0$. We have
  \begin{equation} \label{equation: parabolic 1}
  \begin{split}
    S&=-\sum_{a\mod n}\JS an\frac{1-z_n^{(r\ell+1)a}}{1-z_n^a}
     +\sum_{a\mod n}\JS an\frac1{1-z_n^a} \\
     &=-\sum_{t=0}^{r\ell}\sum_{a\mod n}\JS anz_n^{ta}+
      \sum_{a\mod n}\JS an\frac1{1-z_n^a}=-\sum_{t=0}^{r\ell}T(t)+S',
  \end{split}
  \end{equation}
  say. The sum $S'$ has been evaluated in Lemma \ref{lemma: parabolic
    prelim 0}. We now consider $T(t)$. We have
  \begin{equation*}
  \begin{split}
    T(t)&=\sum_{d|m}\mu(d)\sum_{a\mod n/d}\JS{ad}{n_0}z_n^{tad} \\
    &=\sum_{d|m}\mu(d)\JS d{n_0}\sum_{a\mod n_0}\JS a{n_0}z_n^{tad}
      \sum_{b=0}^{m^2/d-1}z_{m^2/d}^{bt}.
  \end{split}
  \end{equation*}
  The inner sum is $0$ if $(m^2/d)\nmid t$. Then
  \begin{equation} \label{equation: T(t)}
  \begin{split}
    T(t)&=m^2\sum_{d|m,(m^2/d)|t}\frac{\mu(d)}d\JS d{n_0}\sum_{a\mod n_0}\JS a{n_0}
    z_{n_0}^{at/(m^2/d)} \\
  &=\epsilon m^2\sqrt{n_0}\sum_{d|m,(m^2/d)|t}\frac{\mu(d)}d\JS d{n_0}
    \JS{t/(m^2/d)}{n_0},
  \end{split}
  \end{equation}
  where
  $$
    \epsilon=\begin{cases}1&\text{if }n\equiv 1\mod 4, \\
    i, &\text{if }n\equiv 3\mod 4. \end{cases}
  $$
  From \eqref{equation: T(t)}, we obtain
  \begin{equation*}
  \begin{split}
    \sum_{t=0}^{r\ell}T(t)=\epsilon m^2\sqrt{n_0}\sum_{d|m}
    \frac{\mu(d)}d\JS d{n_0}\sum_{0\le u\le r\ell/(m^2/d)}\JS{u}{n_0}.
  \end{split}
  \end{equation*}
  Now $\ell=(n^2-1)/24$ and $r$ is assumed to be in the range $0<r<24$.
  Therefore, the sum above can also be written as
  \begin{equation} \label{equation: sum T(t)}
    \sum_{t=0}^{r\ell}T(t)=\epsilon m^2\sqrt{n_0}\sum_{d|m}
    \frac{\mu(d)}d\JS d{n_0}\sum_{0\le u<(rn^2d/m^2)/24}\JS u{n_0}.
  \end{equation}
  Now we apply Lemma \ref{lemma: SUZ} to
  \eqref{equation: sum T(t)} with $\chi=\chi_{n_0}=\JS\cdot{n_0}$,
  $M=n_0$, $N=rn^2d/m^2$, and $j=24$. We consider the three cases
  \begin{enumerate}
  \item $n_0=1$,
  \item $n\equiv 1\mod 4$ and $n_0\neq 1$,
  \item $n\equiv 3\mod 4$,
  \end{enumerate}
  separately.

  When $n_0=1$, an application of Lemma \ref{lemma: SUZ} yields
  \begin{equation} \label{equation: sum T(t) 2}
  \begin{split}
    \sum_{t=0}^{r\ell}T(t)=m^2\sum_{d|m}\frac{\mu(d)}d
    \left(-B_1+\frac18\sum_{\psi\mod 24}\overline{\psi(-rm^2d)}
    B_{1,\psi}(rm^2d)\right) \\
   =\frac12m\phi(m)+\frac18m^2\sum_{d|m}\frac{\mu(d)}d
    \sum_{\psi\mod 24}\overline{\psi(-rm^2d)}B_{1,\psi}(rm^2d).
  \end{split}
  \end{equation}
  If we let $\chi_0$ denote the principal Dirichlet
  character modulo $24$, then the Dirichlet characters $\psi$ modulo
  $24$ are given by
  \begin{equation} \label{equation: definition psiu}
    \psi_u=\chi_0(\cdot)\JS u\cdot, \qquad u=1,8,12,24,-3,-4,-8,-24.
  \end{equation}
  By \eqref{equation: B0}, \eqref{equation: Bernoulli relation}, and
  \eqref{equation: class number formula}, we have
  \begin{equation} \label{equation: n0=1 temp}
    B_{1,\psi_u}(rm^2d)=\begin{cases}
    rm^2d\phi(24)/24=rm^2d/3, &\text{if }u=1, \\
    0, &\text{if }u=8,12,24, \\
    -\left(1-\JS u2\right)\left(1-\JS u3\right)H(u),
    &\text{if }u=-3,-4,-8,-24.
    \end{cases}
  \end{equation}
  The contribution from the character $\psi_1$ to \eqref{equation: sum
    T(t) 2} is
  \begin{equation} \label{equation: psi1}
    \frac{rm^4}{24}\sum_{d|m}\mu(d)=0,
  \end{equation}
  since $m=\sqrt n$ is assumed to be greater than $1$. The
  contributions from $\psi_u$, for $u=-3,-4,-8,-24$, are
  \begin{equation} \label{equation: psiu}
  \begin{split}
    &\frac{m^2}8\JS ur\left(1-\JS u2\right)\left(1-\JS u3\right)H(u)
     \sum_{d|m}\frac{\mu(d)}d\JS ud \\
    &\qquad\qquad=\frac m8\JS ur\left(1-\JS u2\right)\left(1-\JS
      u3\right)H(un),
  \end{split}
  \end{equation}
  where we have utilized formula \eqref{equation: class number}.
  Combining \eqref{equation: parabolic 1}, \eqref{equation: sum T(t)
    2}, \eqref{equation: n0=1 temp}, \eqref{equation: psi1}, and
  \eqref{equation: psiu} and using the formula for $S'$ given in Lemma
  \ref{lemma: parabolic prelim 0}, we get formula \eqref{equation:
    parabolic n mod 4=1} for the case $n=m^2$.

  Now let us consider the case $n\equiv 1\mod 4$ but not a perfect
  square. That is, $n=m^2n_0$ with squarefree $n_0\neq 1$ and
  $n_0\equiv 1\mod 4$. In this case, we have
  $B_{1,\JS\cdot{n_0}}=B_{1,\JS\cdot{n_0}\psi_u}=0$
  for $u=1,8,12,24$, where $\psi_u$ are defined by \eqref{equation:
    definition psiu}. Then an application of Lemma \ref{lemma:
    SUZ} to \eqref{equation: sum T(t)} yields
  \begin{equation*}
  \begin{split}
    \sum_{t=0}^{r\ell}T(t)
  &=\frac{m^2\sqrt{n_0}}8\sum_{d|m}\frac{\mu(d)}d\JS d{n_0}
    \JS{24}{n_0}\sum_{u=-3,-4,-8,-24}\psi_u(-rn^2d/m^2)
    B_{1,\JS\cdot{n_0}\psi_u} \\
  &=-\frac{m\sqrt{n}}8\JS{24}n\sum_{d|m}\frac{\mu(d)}d\JS{n_0}d
    \sum_{u=-3,-4,-8,-24}\JS u{rd}B_{1,\JS{n_0u}\cdot} \\
  &\qquad\qquad\qquad\times
   \left(1-\JS{un_0}2\right)\left(1-\JS{un_0}3\right),
  \end{split}
  \end{equation*}
  where we have used \eqref{equation: Bernoulli relation}. Then by
  \eqref{equation: class number formula} and \eqref{equation: class
    number}, we get
  \begin{equation*}
  \begin{split}
  &\sum_{t=0}^{r\ell}T(t)=\frac{\sqrt n}8\JS{24}n
   \sum_{u=-3,-4,-8,-24}\JS ur\left(1-\JS{un}2
   \right)\left(1-\JS{un}3\right)H(un).
  \end{split}
  \end{equation*}
  This gives the evaluation of the sum of $T(t)$ in \eqref{equation:
    parabolic 1}. The term $S'$ in \eqref{equation: parabolic 1} is
  shown to be $0$ in Lemma \ref{lemma: parabolic prelim 0}. This
  establishes \eqref{equation: parabolic n mod 4=1} for the case $n$
  is not a square.

  Now assume that $n\equiv 3\mod 4$. Then $B_{1,\JS\cdot n\psi_u}=0$
  for $u=-3,-4,-8,-24$ and an application of Lemma
  \ref{lemma: SUZ} to \eqref{equation: sum T(t)} gives us
  \begin{equation*}
  \begin{split}
    \sum_{t=0}^{r\ell}T(t)&=im\sqrt n\sum_{d|m}\frac{\mu(d)}d\JS d{n_0}
      \bigg(-B_{1,\JS\cdot{n_0}} \\
  &\qquad\qquad+\frac18\JS{24}{n_0}
      \sum_{u=1,8,12,24}\psi_u(-rn^2d/m^2)B_{1,\JS\cdot{n_0}\psi_u}\bigg)
  \end{split}
  \end{equation*}
  Using \eqref{equation: Bernoulli relation}, \eqref{equation: class
    number formula}, and \eqref{equation: class number} again, we get
  \begin{equation*}
  \begin{split}
    \sum_{t=0}^{r\ell}T(t)&=i\sqrt n\Bigg(H(-n) \\
  &\qquad-\frac18\JS{24}n
    \sum_{u=1,8,12,24}\JS{u}r\left(1-\JS{-un}2\right)
    \left(1-\JS{-un}3\right)H(-un)\Bigg).
  \end{split}
  \end{equation*}
  Combining this, \eqref{equation: parabolic 1}, and Lemma \ref{lemma:
    parabolic prelim 0}, we arrive at the claimed formula. This
  completes the proof of the lemma.
\end{proof}

\begin{Proposition} \label{proposition: parabolic}
  The total contribution of the parabolic classes of $\MM_{n^2}$ to
  the trace of the linear operator $[\MM_{n^2}^\ast]$ on $\S_{r,s}(1)$
  is
  \begin{equation*}
  \begin{split}
  \frac{\sqrt n}8\JS{12}n\sum_{e=1,2,3,6}\JS{-4e}r
  \left(1-\JS{-en}3\right)(H(-4en)-H(-en))
  \end{split}
  \end{equation*}
  where, for a negative integer $-d$, we let
  $H(-d)$ denote the Hurwitz class number of the imaginary quadratic
  order of discriminant $-d$. (If $-d$ is not a discriminant, then set
  $H(-d)=0$.)
\end{Proposition}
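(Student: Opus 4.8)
The plan is to sum the per-class contributions of Lemma~\ref{lemma: parabolic prelim} over the inequivalent parabolic classes listed in Lemma~\ref{lemma: inequivalent parabolic}, and then to recognize the resulting exponential sum as the one already evaluated in \eqref{equation: parabolic n mod 4=1} and \eqref{equation: parabolic n mod 4=3}. First I would dispose of the case $n=1$ directly: there Lemma~\ref{lemma: parabolic prelim} gives $-(r-12)/24$, and one checks that the claimed formula specializes to $(12-r)/24$ at $n=1$ by inserting $H(-4)=1/2$, $H(-3)=1/3$, $H(-12)=1$, $H(-24)=2$ and using that $H$ vanishes on the non-discriminants $-1,-2,-6$. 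For $n>1$, since $\JS{-a}n=0$ whenever $(a,n)>1$, I would extend the summation over $a$ to all residues modulo $n$, so that the total parabolic contribution becomes
$$
  P_n=-(-i)^{r(n-1)/2}\sum_{a\bmod n}\JS{-a}n
  \frac{e^{-2\pi i(r\ell+1)a/n}}{1-e^{-2\pi ia/n}}.
$$

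Replacing the summation variable $a$ by $-a$ turns the summand into $\JS an\,e^{2\pi i(r\ell+1)a/n}/(1-e^{2\pi ia/n})$, which is precisely the sum appearing on the left-hand side of \eqref{equation: parabolic n mod 4=1} and \eqref{equation: parabolic n mod 4=3}; I may therefore substitute those evaluations, which contribute a factor $\sqrt n$ (resp.\ $i\sqrt n$) together with the character $\JS{24}n$. It then remains to combine the scalar $-(-i)^{r(n-1)/2}$ with these factors. Writing $\JS{24}n=\JS2n\JS{12}n$, the prefactor collapses to $\tfrac{\sqrt n}8\JS{12}n$ once two sign identities are verified: for $n\equiv1\bmod4$ that $(-i)^{r(n-1)/2}=\JS2n$ (here $(n-1)/2$ is even and $r$ odd, so the left side is $(-1)^{(n-1)/4}$), and for $n\equiv3\bmod4$ that $-(-i)^{r(n-1)/2}\,i\,\JS2n=\JS{-1}r$, which follows by reducing $r(n-1)/2$ modulo $4$ and $n$ modulo $8$.

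The remaining task is to match the character sum over $u$ with the sum over $e\in\{1,2,3,6\}$ in the statement. Under the dictionary $u=-4e$ when $n\equiv1\bmod4$ and $u=4e$ when $n\equiv3\bmod4$ for the three even values of $u$, together with the identities $\JS{-3}r=\JS{-12}r$ and $\JS{-4}r=\JS{-1}r$ for the single odd value (namely $u=-3$ and $u=1$, respectively), one obtains $\JS ur=\JS{-4e}r$ in the first case and $\JS{-4e}r=\JS{-1}r\JS ur$ in the second; the stray $\JS{-1}r$ is exactly the one produced by the sign identity of the previous paragraph. For each even $u$ the $2$-adic factor $(1-\JS{\cdot}2)$ is $1$, the $3$-adic factor equals $(1-\JS{-en}3)$ because $\JS{-4en}3=\JS{-en}3$ (as $\JS43=1$), and the class number is $H(-4en)$; moreover $H(-en)=0$ there since $-en\equiv2,3\bmod4$ is not a discriminant, so the term $-H(-en)$ in the statement vanishes. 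For the single odd $u$ the term instead carries the factor $(1-\JS{-en}2)$, and here the class number relation \eqref{equation: class number} with $m=2$ gives $H(-4en)=H(-en)(2-\JS{-en}2)$, whence $H(-en)(1-\JS{-en}2)=H(-4en)-H(-en)$, which is precisely the $e$-th summand of the target. Assembling these contributions yields the claimed formula.

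The main obstacle will be the sign and quadratic-symbol bookkeeping, especially controlling the root of unity $(-i)^{r(n-1)/2}$ across the residue classes of $n$ modulo $8$; but the genuine conceptual point is recognizing that the lone odd-$u$ term carries a $2$-adic rather than a $3$-adic Euler factor, so that the class number doubling relation \eqref{equation: class number} is exactly what converts it into the combination $H(-4en)-H(-en)$ appearing in the statement.
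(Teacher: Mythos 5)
Your proposal is correct and follows essentially the same route as the paper's own proof: the $n=1$ case by direct verification, and for $n>1$ summing Lemma~\ref{lemma: parabolic prelim} over the classes of Lemma~\ref{lemma: inequivalent parabolic}, substituting the evaluations \eqref{equation: parabolic n mod 4=1} and \eqref{equation: parabolic n mod 4=3}, and converting the $u$-sums to the $e$-sums via the relation \eqref{equation: class number} and the vanishing of $H$ at non-discriminants. Your sign identities are equivalent to the paper's (it writes $\JS8n$ and $\JS{-4}r$ where you write $\JS2n$ and $\JS{-1}r$), so this is the same argument with the bookkeeping made more explicit.
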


\begin{proof} We first consider the case $n=1$. By Lemmas \ref{lemma:
    inequivalent parabolic} and \ref{lemma: parabolic prelim}, we find
  that the total contribution is $-(r-12)/24$. We then verify case by
  case that
  $$
   -\frac{r-12}{24}=\frac18\sum_{e=1,2,3,6}\JS{-4e}r
    \left(1-\JS{-e}3\right)(H(-4e)-H(-e)).
  $$

  We next consider the cases $n>1$. Again, using Lemmas \ref{lemma:
    inequivalent parabolic} and \ref{lemma: parabolic prelim}, we find
  that the total contribution is
  $$
    -(-i)^{r(n-1)/2}\sum_{a=1}^n\JS{-a}n
    \frac{e^{-2\pi i(r\ell+1)a/n}}{1-e^{-2\pi ia/n}}
  $$
  Now for $n\equiv 1\mod 4$, we have
  $$
    (-i)^{r(n-1)/2}=\JS8n.
  $$
  Thus, by \eqref{equation: parabolic n mod 4=1}, the contribution to
  the trace is
  \begin{equation*}
  \begin{split}
  &\frac{\sqrt n}8\JS{12}n\sum_{u=-3,-4,-8,-24}\JS ur
   \left(1-\JS{un}2\right)\left(1-\JS{un}3\right)H(un).
  \end{split}
  \end{equation*}
  Since $H(-12n)=H(-3n)\left(2-\JS{-3n}2\right)$,
  $H(-n)=H(-2n)=H(-6n)=0$, the formula above is equal to that in
  the statement of the proposition.

  Now assume that $n\equiv 3\mod 4$. We have
  $$
    i(-i)^{r(n-1)/2}=-\JS{-4}r\JS8n.
  $$
  Then from Lemma \ref{lemma: parabolic prelim}, \eqref{equation:
    parabolic n mod 4=3}, and $H(-4n)=H(-n)\left(2-\JS{-n}2\right)$,
  we get the claimed formula. This proves the proposition.
\end{proof}

\end{subsection}

\begin{subsection}{Hyperbolic cases}

\begin{Lemma} \label{lemma: hyperbolic}
  A complete set of representatives of inequivalent hyperbolic
  elements in $\MM_{n^2}$ whose fixed points are cusps is
  $$
    \left\{\pm\M a{b+ma}0d: \,b=1,\ldots,h,\,(b,h)=1,\,m=1,\ldots,(d-a)/h
      \text{ with }h=(a,d)\right\},
  $$
  where in the set we let $a$ and $d$ run over all integers satisfying
  $ad=n^2$ and $0<a<d$.
\end{Lemma}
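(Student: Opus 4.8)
The plan is to classify the $\SL(2,\Z)$-conjugacy classes of hyperbolic elements of $\MM_{n^2}$ whose fixed points lie in $\P^1(\Q)$ (this is exactly the equivalence of Definition \ref{definition: equivalence} for the hyperbolic case), and to exhibit the listed matrices as canonical representatives. First I would observe that $\gamma\in\MM_{n^2}$ is hyperbolic with cuspidal fixed points precisely when its two eigenvalues are rational; being roots of the monic polynomial $x^2-(\tr\gamma)x+n^2$, they are then integers $\lambda,\lambda'$ with $\lambda\lambda'=n^2$ and $\lambda\neq\lambda'$. Since $n^2>0$, the two eigenvalues share a sign, so after recording an overall $\pm$ (which is a conjugation invariant, as conjugation preserves eigenvalues and cannot flip their sign) I may assume they are positive and write them as $a<d$ with $ad=n^2$.

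Next I would reduce to upper-triangular form. Taking a primitive integer eigenvector $\binom pq$ for the smaller eigenvalue $a$ as the first column of a matrix $\sigma\in\SL(2,\Z)$ yields $\sigma^{-1}\gamma\sigma=\M ax0d$ for some $x\in\Z$; this matches the normalization in the definition of $J(\gamma)$, with the smaller eigenvalue in the top-left entry and fixed point $\infty$. Because conjugation by $\SL(2,\Z)$ preserves $\MM_{n^2}$, the entries still satisfy $(a,x,d)=1$, equivalently $(x,h)=1$ with $h=(a,d)$.

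I would then determine the residual freedom. If two matrices $\M{a}{x_1}0d$ and $\M a{x_2}0d$ (same $a<d$) are conjugate by $\sigma\in\SL(2,\Z)$, then $\sigma$ must carry the $a$-eigenline of the first to that of the second; as $a\neq d$ both eigenlines are spanned by $\binom10$, so $\sigma$ fixes $\infty$ and equals $\pm\M1m01$. A direct computation gives $\M1m01\M ax0d\M1{-m}01=\M a{x+m(d-a)}0d$, so the top-right entry is a well-defined invariant modulo $d-a$, and two such representatives are equivalent exactly when their top-right entries agree modulo $d-a$. Conversely every $\gamma$ is equivalent to one with $(x,h)=1$ and $x$ in a fixed residue system modulo $d-a$. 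Distinct pairs $(a,d)$ give distinct eigenvalue multisets, hence inequivalent classes, which settles inequivalence across different diagonal types as well as completeness.

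The final, and most delicate, step is to match the parametrization $x=b+ma$ with $1\le b\le h,\ (b,h)=1,\ 1\le m\le (d-a)/h$ to the residues $x\bmod(d-a)$ that are coprime to $h$. Writing $a=ha'$ and $d=hd'$ with $(a',d')=1$, so that $d-a=h(d'-a')$ and $(a',d'-a')=1$, I would check that reduction modulo $h$ sends $b+ma$ to $b$, recovering both the coprimality condition and the value of $b$; and that $(b,m)\mapsto b+ma\pmod{d-a}$ is injective, since $(m_1-m_2)a\equiv 0\pmod{d-a}$ forces $(m_1-m_2)a'\equiv0\pmod{d'-a'}$ and hence $m_1=m_2$. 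As the number of pairs, $\phi(h)\cdot(d-a)/h$, equals the number of residues modulo $d-a$ coprime to $h$, injectivity upgrades to a bijection, so the listed matrices represent each class exactly once. I expect this counting-and-bijection step to be the main obstacle, since it is where the interplay between the gcd $h$ and the modulus $d-a$ must be tracked carefully.
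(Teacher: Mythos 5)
Your proof is correct and takes essentially the same route as the paper's: identify the eigenvalues as integers $a,d$ with $ad=n^2$, conjugate to upper-triangular form $\pm\SM ax0d$ with $(x,(a,d))=1$, and observe that the residual conjugation freedom is by $\pm\SM1m01$, making the top-right entry an invariant modulo $d-a$. The only difference is that you explicitly carry out the final bijection and counting argument matching the parametrization $x=b+ma$ with the residues modulo $d-a$ coprime to $h$, a step the paper dismisses as ``straightforward to verify.''
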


\begin{proof} Let $M$ be a hyperbolic element in $\MM_{n^2}$ whose
  fixed points are cusps. Then the eigenvalues of $M$ are two integers
  $a$ and $d$ satisfying $ad=n^2$. Without loss of generality, we
  assume that $|d|>|a|>0$. Let $\alpha$ be the cusp corresponding to the
  eigenvalue $a$ and choose an element $\gamma\in\SL(2,\Z)$ such that
  $\sigma\infty=\alpha$. Then
  $$
    \sigma M\sigma^{-1}=\M ab0d
  $$
  for some integer $b$. The integer $b$ must satisfy $(a,b,d)=1$. In
  other words, a hyperbolic element in $\MM_{n^2}$ whose fixed points
  are cusps is conjugate to a matrix of the form $\SM ab0d$ with
  $ad=n^2$, $|d|>|a|>0$, and $(b,(a,d))=1$. It is clear that if two such
  matrices $\SM ab0d$ and $\SM{a'}{b'}0{d'}$
  are conjugate, then we must have $a=a'$ and $d=d'$. Furthermore, by
  considering the eigenvector
  $\left(\begin{smallmatrix}1\\0\end{smallmatrix}\right)$, it is easy
  to see that two matrices $\SM ab0d$ and $\SM a{b'}0d$ are conjugate
  if and only they are conjugate by $\SM1m01$ for some $m\in\Z$, that
  is, if and only if $(d-a)|(b'-b)$. With these informations, it is
  straightforward to verify that the set in the lemma is a complete
  set of representatives. This proves the lemma.
\end{proof}

\begin{Proposition}
  \label{proposition: hyperbolic}
  The total contribution of hyperbolic elements in
  $\MM_{n^2}$ to the trace is $0$.
\end{Proposition}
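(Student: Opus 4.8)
The plan is to evaluate $J(\gamma)$ on the explicit representatives from Lemma~\ref{lemma: hyperbolic} and to exhibit the vanishing as a sum of geometric series running over complete periods of a $24$th root of unity. By item~(3) in the definition of $J(\gamma)$ preceding Proposition~\ref{proposition: Shimura trace}, every hyperbolic $\gamma$ whose fixed points are not cusps contributes $0$, so only the representatives $\pm\SM a{b+ma}0d$ with $ad=n^2$, $0<a<d$, $h=(a,d)$, $(b,h)=1$, and $1\le m\le (d-a)/h$ matter. For such a $\gamma=\SM acod$ the vector $\binom10$ is an eigenvector for the smaller eigenvalue $a$, so in item~(4) we may take $\sigma=I$; then $\gamma^\ast=(\gamma,\eta_\gamma)$, where the magnitude condition forces the second component to be the constant $\eta_\gamma$ of modulus $n^{-k-1/2}d^{k+1/2}$, and
$$
  J(\gamma)=\frac12\bigl(\eta_\gamma(a/d-1)\bigr)^{-1}=\frac{d}{2(a-d)\,\eta_\gamma}.
$$
A short computation with $(-I)^\ast=(-I,1)$ and the group law shows $\eta_{-\gamma}=\eta_\gamma$, hence $J(-\gamma)=J(\gamma)$; thus the two signs only produce an overall factor of $2$ and it suffices to treat the $+$ representatives.

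The key step is to track the dependence of $\eta_\gamma$ on the off-diagonal entry. Writing $T=\SM1101$, one has $T^\ast=(T,e^{2\pi ir/24})$ because $\eta(\tau+1)^r/\eta(\tau)^r=e^{2\pi ir/24}$ and $s$ is even. Since $\SM a{c+a}0d=\SM acod\,T$ and $\eta_\gamma$ is a constant, the group law in $\mathfrak G_{k+1/2}$ gives
$$
  \eta_{\SM a{c+a}0d}=e^{2\pi ir/24}\,\eta_{\SM acod}.
$$
Iterating from $c=b$ yields $\eta_{\SM a{b+ma}0d}=e^{2\pi irm/24}\,\eta_{\SM ab0d}$, so that the total contribution of the classes with fixed $(a,d,b)$ equals
$$
  \frac{d}{2(a-d)\,\eta_{\SM ab0d}}\sum_{m=1}^{(d-a)/h}e^{-2\pi irm/24}.
$$

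Finally I would show this geometric sum vanishes. Writing $a=ha'$, $d=hd'$ with $(a',d')=1$, the relation $a'd'=(n/h)^2$ together with coprimality forces $a'=\alpha^2$ and $d'=\delta^2$ to be perfect squares; since $(n,6)=1$, both $\alpha$ and $\delta$ are prime to $24$, whence $\alpha^2\equiv\delta^2\equiv1\mod 24$ and $(d-a)/h=d'-a'=\delta^2-\alpha^2\equiv0\mod 24$. Because $(r,24)=1$, the number $e^{-2\pi ir/24}$ is a primitive $24$th root of unity, so summing $e^{-2\pi irm/24}$ over the $(d-a)/h$ values of $m$ — a multiple of $24$ — runs over complete periods and gives $0$. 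Summing over $b$, over the factorizations $ad=n^2$, and over the two signs then yields total contribution $0$. The main obstacle is the bookkeeping in the first two steps, namely verifying that the second component $\eta_\gamma$ is genuinely a constant and that it transforms by exactly $e^{2\pi ir/24}$ under $c\mapsto c+a$; once this multiplier is pinned down, the congruence $24\mid\delta^2-\alpha^2$ makes the cancellation immediate.
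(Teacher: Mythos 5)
Your proof is correct and follows essentially the same route as the paper's: both reduce to the cusp-fixing representatives of Lemma \ref{lemma: hyperbolic}, use the factorization $\SM a{b+ma}0d=\SM ab0d\SM 1m01$ so that the contribution of the class picks up the multiplier $e^{\mp 2\pi irm/24}$, and then cancel the geometric sum over $m$ because $24\mid(d-a)/h$. You merely fill in details the paper leaves implicit (the constancy of $\eta_\gamma$, the identity $J(-\gamma)=J(\gamma)$, and the verification that $24\mid(d-a)/h$ since $a/h$ and $d/h$ are squares of integers prime to $6$), all of which are correct.
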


\begin{proof} By Shimura's formula, the contribution from hyperbolic
  elements whose fixed points are not cusp is $0$. Furthermore, by
  Lemma \ref{lemma: hyperbolic}, a complete set of inequivalent
  hyperbolic elements whose fixed points are cusps is
  $$
    \left\{\pm\M a{b+ma}0d: \,b=1,\ldots,h,\,(b,h)=1,\,m=1,\ldots,(d-a)/h
      \text{ with }h=(a,d)\right\},
  $$
  where in the set we let $a$ and $d$ run over all integers satisfying
  $ad=n^2$ and $0<a<d$. Now we have
  $$
    \M a{b+ma}0d=\M ab0d\M1m01.
  $$
  Thus, if the contribution from the conjugacy class $\SM ab0d$ is
  $A$, then the contribution from the class $\SM a{b+ma}0d$ is
  $e^{2\pi irm/24}A$. Since $24|(d-a)/h$, as $m$ runs from $1$ to
  $(d-a)/h$, the contributions from the classes $\SM a{b+ma}0d$ cancel
  out each other. We therefore conclude that the total contribution
  from hyperbolic elements is $0$.
\end{proof}
\end{subsection}

\begin{subsection}{Elliptic cases}
\begin{subsubsection}{\bf Quadratic forms}
Assume that $\gamma=\SM abcd\in\MM_{n^2}$ is elliptic. Then $t=a+d$,
$f=(b,c,d-a)$, and $\sgn c$ are invariants under conjugation by
elements of $\SL(2,\Z)$. This can be seen from the one-to-one
correspondence between the set
$$
  \Gamma_{n,t,f}=\left\{\M abcd\in\MM_{n^2}:~a+d=t,~f=(b,c,d-a),
  c>0\right\}
$$
and the set $\QQ_{(t^2-4n^2)/f^2}$ of all primitive positive definite
quadratic forms of discriminant $(t^2-4n^2)/f^2$, where
$$
  \QQ_D=\{Ax^2+Bxy+Cy^2:~B^2-4AC=D,~A>0\}.
$$
and the correspondence is given by
\begin{equation} \label{equation: correspondence}
  \M abcd\longleftrightarrow
  \frac1f(cx^2+(d-a)xy-by^2).
\end{equation}
Elements $\gamma$ of $\SL(2,\Z)$ act on $\Gamma_{n,t,f}$ by
conjugation and on $\QQ_{(t^2-4n^2)/f^2}$ by change of variable $\left(
\begin{smallmatrix}x\\y\end{smallmatrix}\right)\mapsto\gamma\left(
\begin{smallmatrix}x\\y\end{smallmatrix}\right)$. Moreover, the two
group actions are compatible with respect to the correspondence above.

\begin{Lemma} \label{lemma: elliptic prelim}
  The total contribution of elliptic elements in $\MM_{n^2}^\ast$ to
  the trace of the linear operator $[\MM_{n^2}^\ast]$ on $\S_{r,s}(1)$
  is
  $$
    2\sum_{t,f}\sum_{\gamma\in\Gamma_{n,t,f}/\SL(2,\Z)}J(\gamma),
  $$
  where the outer sum runs over all integers $t$ with $t^2<4n^2$ and
  all positive integers $f$ such that $(t^2-4n^2)/f^2\equiv 0,1\mod 4$,
  the inner sum runs over all class representatives of
  $\Gamma_{n,t,f}$ under conjugation by $\SL(2,\Z)$, and $J(\gamma)$
  is defined as in Definition \ref{definition: equivalence}.
\end{Lemma}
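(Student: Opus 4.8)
The plan is to apply Shimura's trace formula (Proposition~\ref{proposition: Shimura trace}) to the operator $[\MM_{n^2}^\ast]$ on $\S_{r,s}(1)$ and to organize the elliptic contribution according to the sign of the lower-left entry $c$. An elliptic element $\gamma=\SM abcd\in\MM_{n^2}$ has non-real eigenvalues, so it satisfies $c\neq0$ and $t^2<4n^2$; conversely these conditions single out the elliptic classes. Since $\sgn c$ is invariant under $\SL(2,\Z)$-conjugation, the elliptic classes split into those with $c>0$ and those with $c<0$. The correspondence~\eqref{equation: correspondence} identifies, for each admissible pair $(t,f)$, the $c>0$ classes of trace $t$ and content $f$ with $\Gamma_{n,t,f}/\SL(2,\Z)$; hence the contribution of all $c>0$ classes is exactly $\sum_{t,f}\sum_{\gamma\in\Gamma_{n,t,f}/\SL(2,\Z)}J(\gamma)$, the inner quantity in the statement. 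It therefore remains to show that the $c<0$ classes contribute the same total, which produces the factor $2$.

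First I would exhibit the bijection between the two halves via the map $\gamma\mapsto-\gamma$. This map sends $\MM_{n^2}$ to itself, commutes with $\SL(2,\Z)$-conjugation (because $-I$ is central), and reverses $\sgn c$; being an involution, it gives a bijection between the $c>0$ and the $c<0$ elliptic classes, even when $t=0$. Consequently $\sum_{c<0}J(\gamma)=\sum_{c>0}J(-\gamma)$, and the whole claim reduces to the pointwise identity $J(-\gamma)=J(\gamma)$.

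The key step is this identity. Because $-I=\SM{-1}00{-1}$ fixes every point of $\H$ and $s$ is even, one computes $\phi_{-I}=1$, so its $\ast$-lift is $(-I)^\ast=(\SM{-1}00{-1},1)$; by the uniqueness of $\ast$-lifts proved earlier, $(-\gamma)^\ast=(-I)^\ast\gamma^\ast$, whence $\phi_{-\gamma}(\tau)=\phi_\gamma(\tau)$. Writing $\gamma^\ast=(\gamma,u(c\tau+d)^{k+1/2})$ and $(-\gamma)^\ast=(-\gamma,u_-(-(c\tau+d))^{k+1/2})$ and comparing branches gives $u_-=u\,e^{\pi i(k+1/2)}$. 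On the eigenvalue side, if $\rho$ is the eigenvalue of $\gamma$ with $\sgn\Im\rho=\sgn c$, then the distinguished eigenvalue of $-\gamma$ is $-\rho$, with $(-\rho)^{k-1/2}=\rho^{k-1/2}e^{-\pi i(k-1/2)}$ and $(-\rho)-\overline{(-\rho)}=-(\rho-\overline\rho)$, while the number $w$ of commuting elements is unchanged. Substituting into the elliptic value $J(\gamma)=(wu\rho^{k-1/2}(\rho-\overline\rho))^{-1}$, the two phase factors combine to $e^{\pi i[(k+1/2)-(k-1/2)]}=-1$, which exactly cancels the sign from $-(\rho-\overline\rho)$, yielding $J(-\gamma)=J(\gamma)$.

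The main obstacle is precisely this branch computation: one must fix a single convention for the half-integer powers $(c\tau+d)^{k+1/2}$ and $\rho^{k-1/2}$ and track it consistently through the rotation $z\mapsto-z$ carrying the upper half-plane to the lower. The saving feature is that only the \emph{difference} of the exponents, $(k+1/2)-(k-1/2)=1$, survives in the final phase, so the net factor is $e^{\pm\pi i}=-1$ regardless of the chosen branch cut. This makes $J(-\gamma)=J(\gamma)$ robust and completes the reduction of the elliptic contribution to $2\sum_{t,f}\sum_{\gamma\in\Gamma_{n,t,f}/\SL(2,\Z)}J(\gamma)$.
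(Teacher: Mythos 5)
Your proof is correct and takes essentially the same route as the paper: both split the elliptic classes by the sign of $c$ into $\Gamma_{n,t,f}$ and $-\Gamma_{n,t,f}$, pair them via $\gamma\mapsto-\gamma$, and reduce everything to the identity $J(-\gamma)=J(\gamma)$, verified by the same phase computation (your factors $e^{\pi i(k+1/2)}$ and $e^{-\pi i(k-1/2)}$ are exactly the paper's $e^{2\pi i(2k+1)/4}$ and $e^{2\pi i(2k-1)/4}$). Your explicit justification that $\phi_{-\gamma}=\phi_\gamma$ via the lift $(-I)^\ast=\left(\SM{-1}00{-1},1\right)$, using that $s$ is even, merely spells out a step the paper leaves implicit.
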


\begin{proof} From the discussion preceding the lemma, we easily see
  that every elliptic element in $\MM_{n^2}$ falls in precisely one of
  $\Gamma_{n,t,f}$ and $-\Gamma_{n,t,f}$. Also, it is clear that if
  $\gamma_1,\ldots,\gamma_m$ are class representatives of
  $\Gamma_{n,t,f}$, then $-\gamma_1,\ldots,-\gamma_m$ are class
  representatives of $-\Gamma_{n,t,f}$. We now show that
  $J(-\gamma)=J(\gamma)$. Then the lemma follows.

  Assume that $\gamma=\SM abcd\in\Gamma_{n,t,f}$ and
  $$
    \gamma^\ast=\left(\M abcd,u(c\tau+d)^{k+1/2}\right).
  $$
  Then by Proposition \ref{proposition: Shimura trace}, the
  contribution of the class of $\gamma$ to the trace is
  $$
    J(\gamma)=\frac{\rho^{1/2-k}}{wu(\rho-\overline\rho)},
  $$
  where $\rho=(t+\sqrt{t^2-4n^2})/2$. Now we have
  \begin{equation*}
  \begin{split}
    (-\gamma)^\ast&=\left(\M{-a}{-b}{-c}{-d}, u(c\tau+d)^{k+1/2}\right)\\
  &=\left(\M{-a}{-b}{-c}{-d},ue^{2\pi i(2k+1)/4}(-c\tau-d)^{k+1/2}\right).
  \end{split}
  \end{equation*}
  Thus,
  $$
    J(-\gamma)=\frac{(-\rho)^{1/2-k}}
    {wue^{2\pi i(2k+1)/4}(-\rho+\overline\rho)}
   =\frac{e^{2\pi i(2k-1)/4}\rho^{1/2-k}}
    {wue^{2\pi i(2k+1)/4}(-\rho+\overline\rho)}=J(\gamma).
  $$
  This proves the lemma.
\end{proof}

\begin{Remark} \label{remark: (t,f)} Since for $\SM abcd\in\MM_{n^2}$,
  we have $(a,b,c,d)=1$, the integers $n$, $t$, and $f$ need to
  satisfy the following conditions.
  \begin{enumerate}
  \item The common divisor $(n,t,f)$ must be $1$.
  \item The integer $(t,f)$ can only be $1$ or $2$.
  Moreover, if $(t,f)=2$, then because $(t^2-4n^2)/4$ is a
  discriminant, we must have $2|t$, but $4\nmid t$.
  \item The integer $(f,t+2n,t-2n)$ can only be $1$, $2$, or $4$
  and if the latter two cases occur, then $2|t$, but $4\nmid t$.
  \end{enumerate}
\end{Remark}

We first choose a suitable representative $\gamma$ in each conjugacy
class in $\Gamma_{n,t,f}$ and determine $\gamma^\ast$.

\begin{Lemma} \label{lemma: class representatives}
  Each conjugacy class of $\Gamma_{n,t,f}$ contains an
  element $\SM abcd$ such that $c=fp$ for some prime $p>4n$ and
  $(c,d)=1$.

  Moreover, in the case $2|f$ (which occurs only when $2\|t$), we may
  further require that $d$ satisfies the following congruences
  $$
    d\equiv\begin{cases}
    t'\mod 8, &\text{if }2\|f, \\
    t'\mod 16, &\text{if }4\|f, \\
    t'\mod 8, &\text{if }8\|f\text{ and }(t^2-4n^2)/64
     \text{ is even}, \\
    t'+4\mod 8, &\text{if }8\|f\text{ and }(t^2-4n^2)/64
     \text{ is odd}, \\
    t'\mod 8, &\text{if }16|f,\end{cases}  
  $$
  where $t'=t/2$.
\end{Lemma}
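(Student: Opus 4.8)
The plan is to translate the statement into the language of binary quadratic forms via the correspondence \eqref{equation: correspondence}, and then produce the desired representative in two stages: first arrange $c=fp$ with $p$ a large prime (which will simultaneously force $(c,d)=1$), and then exploit the residual conjugation freedom to pin down $d$ modulo a power of $2$. Recall that under \eqref{equation: correspondence} a conjugacy class of $\Gamma_{n,t,f}$ corresponds to an $\SL(2,\Z)$-equivalence class of primitive positive definite forms of discriminant $D=(t^2-4n^2)/f^2$, with the matrix entries recovered from a form $Ax^2+Bxy+Cy^2$ by $c=fA$, $d=(t+fB)/2$, $a=(t-fB)/2$, and $b=-fC$. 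Since a primitive positive definite binary quadratic form represents infinitely many primes, I would choose a prime $p>4n$ represented by the form of the given class; the representation is automatically proper because $p$ is prime, so completing the representing vector to a matrix in $\SL(2,\Z)$ yields an equivalent form with leading coefficient $p$, hence a representative of the conjugacy class with $c=fp$.

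For the coprimality $(c,d)=1$, first note that $p>4n\ge n$ gives $p\nmid n^2$, so reducing $ad-bc=n^2$ modulo $p$ (using $c=fp$) shows $p\nmid d$; moreover $f^2\mid 4n^2-t^2\le 4n^2<p^2$ forces $f<p$, so $p\nmid f$. It therefore suffices to prove $(f,d)=1$. If a prime $q$ divided both $f$ and $d$, then from $f\mid b$, $f\mid c$, and $f\mid(d-a)$ we would obtain $q\mid a,b,c,d$, whence $q^2\mid ad$ and $q^2\mid bc$ give $q\mid n$, while $q\mid a+d=t$; this contradicts $(n,t,f)=1$ from Remark \ref{remark: (t,f)}. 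Combining these, $(c,d)=(fp,d)=1$.

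For the congruence conditions I would first establish that $2\mid f$ forces $2\|t$: writing $t=2t'$ and comparing $2$-adic valuations in $t^2-4n^2=Df^2$ (with $n$ odd, so $n^2\equiv1\bmod8$) shows $t$ is even, and that $t'$ even would make $D\equiv3\bmod4$, impossible for a discriminant; hence $t'$ is odd. The residual freedom preserving $c=fp$ is conjugation by $\pm\SM1m01$, which fixes $c$ and sends $d\mapsto d-mfp$; since $p$ is odd, $d$ may thus be adjusted by an arbitrary multiple of $2^{v}$, where $v=\ord_2 f$, so that $d\bmod 2^{v}$ is a class invariant while all higher $2$-adic digits of $d$ are free to choose. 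Writing $d=t'+(f/2)B$ and using that the parity of the middle coefficient $B$ is fixed by $D\bmod4$ (even precisely when $D\equiv0\bmod4$), the entire statement reduces to determining $d\bmod 2^{v}$ in terms of $\ord_2 f$ and $D\bmod4$.

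The main obstacle is precisely this $2$-adic bookkeeping, which I expect to carry out case by case on $v=\ord_2 f$, the governing input being the valuation count $\ord_2(t'^2-n^2)\ge3$ (with the exact value controlled by which of $t'\mp n$ is divisible by $4$). When $16\mid f$ the factor $f/2$ is already divisible by $8$, so $d\equiv t'\bmod8$ outright. When $2\|f$ one finds $\ord_2 D\ge3$, hence $B$ is even and the single forced digit of $d$ equals $t'$, after which the free higher digits give $d\equiv t'\bmod8$; when $4\|f$ the valuation count gives $\ord_2 D\ge1$, which combined with $D\equiv0,1\bmod4$ forces $D\equiv0\bmod4$, so again $B$ is even, the two forced digits agree with $t'$, and the free digits yield $d\equiv t'\bmod16$. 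The only delicate case is $8\|f$: here $v=3$, so $d\bmod8$ is entirely rigid, and from $d\equiv t'+4B\bmod8$ together with the identity $(t^2-4n^2)/64=Df_3^2$ (where $f=8f_3$ with $f_3$ odd) one reads off that $d\equiv t'\bmod8$ or $d\equiv t'+4\bmod8$ according as $(t^2-4n^2)/64$ is even or odd, which is exactly the dichotomy asserted in the statement.
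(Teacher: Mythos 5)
Your proposal is correct and takes essentially the same route as the paper's proof: primes represented by the primitive form give a representative with $c=fp$ and $(c,d)=1$, and then unipotent conjugation combined with $2$-adic analysis of the discriminant pins down $d$ modulo the required power of $2$. Your bookkeeping via the parity of the middle coefficient $B$ (using $d=t'+(f/2)B$) is the same computation as the paper's use of $(d-a)^2=t^2-4n^2-4bc$, since $d-a=fB$.
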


\begin{proof} It is well-known that a primitive positive definite
  quadratic form represents infinity many primes, which implies that
  each class of quadratic forms in $\QQ_{(t^2-4n^2)/f^2}$
  contains elements of the form $px^2+uxy+vy^2$ for infinitely many
  primes $p$. The matrix corresponding to this form under
  \eqref{equation: correspondence} is
  $$
    \M{(t-fu)/2}{-fv}{fp}{(t+fu)/2}.
  $$
  Now if $p>4n$, then the relation
  $$
    f^2(u^2-4pv)=t^2-4n^2
  $$
  implies that $p\nmid(t+fu)$ and hence $(fp,(t+fu)/2)=(f,(t+fu)/2)$.
  By Remark \ref{remark: (t,f)}, this can only be $1$ or $2$. However,
  because the determinant of the matrix is the odd integer $n^2$,
  $(fp,(t+fu)/2)$ cannot be even. That is, $(fp,(t+fu)/2)=1$. This
  proves the first part of the statement.

  Now assume that $2\|f$. From
  \begin{equation} \label{equation: 2|f tmp}
    \M1{-u}01\M abcd\M1u01=\M{a-cu}{-cu^2+u(a-d)+b}c{d+cu}
  \end{equation}
  it is clear that we can further assume that $d\equiv t'\mod 8$.

  Assume that $4\|f$. Since $32|(t^2-4n^2)$ and $(t^2-4n^2)/16$ is a
  discriminant, we must have $64|(t^2-4n^2)$. Then the equality
  $t^2-4n^2=(d-a)^2+4bc$ shows that $64|(d-a)^2$, i.e., $8|(d-a)$. Then
  $$
    d=\frac12(t+(d-a))\equiv t'\mod 4.
  $$
  By \eqref{equation: 2|f tmp} again, we may assume that $d\equiv
  t'\mod 16$.

  Assume that $8\|f$. We have $64|(t^2-4n^2)$ and $8|b,c$. Then
  $(d-a)^2=t^2-4n^2-4bc\equiv (t^2-4n^2)\mod 256$. If $(t^2-4n^2)/64$ is
  even, then $d\equiv a\mod 16$ and $d=(t+d-a)/2\equiv t'\mod 8$. If
  $(t^2-4n^2)/64$ is odd, then $d\equiv a+8\mod 16$ and $d\equiv
  t'+4\mod 8$.

  Finally, if $16|f$, by a computation similar to the case $8\|f$, we
  find $d\equiv t'\mod 8$. This proves the lemma.
\end{proof}

For our purpose, we also need to recall some properties of genus
characters of integral binary quadratic forms. Let $D<0$ be a discriminant,
i.e., $D\equiv 0,1\mod 4$. For each odd prime divisor $p$ of $D$, one
can associate to $\QQ_D$ a character by
$$
  \chi:Ax^2+Bxy+Cy^2\longmapsto\JS A{p}=\JS{p^\ast}A, \qquad
  p^\ast=(-1)^{(p-1)/2}p.
$$
For $D\equiv 0\mod 4$, there may also exist characters of the form
$$
  Ax^2+Bxy+Cy^2\longmapsto\JS{u}A, \qquad u\in\{-4,8,-8\},
$$
depending on the residue of $D/4$ modulo $8$. Any product of these
characters is called a \emph{genus character}. All genus characters
can be written as
$$
  \chi_{D_1}:Ax^2+Bxy+Cy^2\longmapsto\JS{D_1}A
$$
for some (positive or negative) divisor $D_1$ of $D$ such that $D_1$
and $D/D_1$ are both discriminants and vice versa. General properties
of genus characters can be found in \cite[Chapter 1]{Cox}. Here we
only quote some properties relevant to our calculation of traces.

\begin{Lemma} \label{lemma: genus character} Let $D$ and $D_1$ be as
  above and $\chi_{D_1}:\QQ_D\to\{\pm 1\}$ be a genus character. Then
  we have the following properties.
  \begin{enumerate}
  \item The value of $\chi_{D_1}$ for a quadratic form
    $Ax^2+Bxy+Cy^2\in\QQ_D$ depends only on the genus in which the
    quadratic form lies. In particular, every quadratic form in the
    same class takes the same value of $\chi_{D_1}$.
  \item $\chi_{D_1}$ is a trivial character (i.e., mapping every
    quadratic form to $1$) if and only if $D_1$ or $D/D_1$ is a
    square.
  \item The sum of $\chi_{D_1}$ over a complete set of class
    representatives of $\QQ_D/\SL(2,\Z)$ is
  $$
    \sum_{Ax^2+Bxy+Cy^2\in\QQ_D/\SL(2,\Z)}\JS{D_1}A
   =\begin{cases}
    h(D), &\text{if }D_1\text{ or }D/D_1\text{ is a square}, \\
    0, &\text{else},
    \end{cases}
  $$
  where $h(D)$ is the number of classes in $\QQ_D$. 
  \end{enumerate}
\end{Lemma}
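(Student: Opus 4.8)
The plan is to treat $\chi_{D_1}$ as a genus character in the classical sense of Gauss's theory of binary quadratic forms and to read off the three assertions from the structure of the form class group $C(D)=\QQ_D/\SL(2,\Z)$, which for $D<0$ is a finite abelian group of order $h(D)$ under Gauss composition. The single algebraic input I would isolate at the outset is the identity
\[
  4A\,(Ax^2+Bxy+Cy^2)=(2Ax+By)^2-Dy^2,
\]
valid for any form of discriminant $D=B^2-4AC$. Its consequence is that whenever a form properly represents an integer $m$ with $(m,D)=1$, the number $D$ is a square modulo $m$; in particular $\JS Dm=1$, and finer congruences hold prime by prime. This identity is what converts statements about represented values into statements about Kronecker symbols.

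For part (1) I would argue in two steps. First, $\JS{D_1}A$ is independent of which represented value $A$ coprime to $D_1$ is chosen: the residues modulo $D$ coprime to $D$ that are represented by a fixed class form a single coset of the subgroup $H\subset(\Z/D)^\ast$ of residues represented by the principal class (this is immediate from composition, since representing $m$ by $[Q]$ and $m'$ by $[Q']$ yields $mm'$ represented by $[Q][Q']$, and $[Q]=[Q']$ gives $mm'$ in $H$), and a direct reciprocity computation shows $\JS{D_1}\cdot$ is trivial on $H$ because $D_1$ is assumed to be a discriminant dividing $D$. Hence $\JS{D_1}\cdot$ descends to a function on $(\Z/D)^\ast/H$, which by genus theory is exactly the set of genera; this gives the genus-invariance, and constancy on classes is the special case. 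The homomorphism property $\chi_{D_1}([Q_1][Q_2])=\chi_{D_1}([Q_1])\chi_{D_1}([Q_2])$ I would then obtain from Dirichlet composition, choosing representing values $m_1,m_2$ with $m_1m_2$ represented by the composed class.

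Part (2) splits into the easy and the substantive direction. If $D_1=e^2$ is a square then $\JS{D_1}A=\JS eA^2=1$ whenever $(A,e)=1$, so $\chi_{D_1}$ is trivial; and if $D/D_1$ is a square, the displayed identity gives $\JS{D_1}A\JS{D/D_1}A=\JS DA=1$ on represented $A$ coprime to $D$, so $\chi_{D_1}=\chi_{D/D_1}$, again trivial. The converse is the heart of the matter: if neither $D_1$ nor $D/D_1$ is a square I must produce a form with $\chi_{D_1}=-1$. Here I would invoke the nondegeneracy built into genus theory, namely that the assigned characters $\JS{p^\ast}\cdot$ for odd $p\mid D$ together with the relevant characters $\JS{-4}\cdot,\JS{\pm8}\cdot$ at the prime $2$ are independent on $(\Z/D)^\ast/H$, their only relation being that their total product agrees with $\JS D\cdot$ and hence is trivial on represented residues. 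Writing $\chi_{D_1}$ as the product of assigned characters over the prime-discriminant factors of $D_1$, this independence forces $\chi_{D_1}\neq 1$ unless that subset of factors is trivial modulo the single relation, i.e. unless $D_1$ or $D/D_1$ is a square; equivalently I would use surjectivity onto the dual of $(\Z/D)^\ast/H$ to exhibit a genus on which the product is $-1$.

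Finally, part (3) is pure orthogonality. Once (1) establishes that $\chi_{D_1}$ is a well-defined character of the finite abelian group $C(D)$ of order $h(D)$, summing a character over the group gives $h(D)$ if the character is trivial and $0$ otherwise; combining this with the triviality criterion of (2) yields exactly the stated value. I expect parts (1) and the converse of (2) to be the main obstacles, since they encode the full content of Gauss's genus theory (well-definedness on genera and the independence of the assigned characters), whereas part (3) is then a formality.
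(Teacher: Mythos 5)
The paper does not actually prove this lemma: it quotes these facts from Cox \cite[Chapter 1]{Cox}, so there is no internal argument to compare against. Your proposal reconstructs the classical genus-theory proof, which is essentially what the cited source contains, and its architecture is right: represented residues coprime to $D$ of a fixed class form a coset of the subgroup $H\subset(\Z/D)^\ast$ of residues represented by the principal class; $\JS{D_1}\cdot$ is trivial on $H$, so $\chi_{D_1}$ descends to a character of the class group; then (3) is orthogonality combined with the triviality criterion (2).

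Three steps need repair before this is a proof. First, your parenthetical justification of the coset property fails as written: if $m,m'$ are both represented by the same class $[Q]$, composition gives $mm'$ represented by $[Q]^2$, which is not the principal class in general. You need the extra observation that the opposite form $Q(x,-y)$ lies in $[Q]^{-1}$ and represents exactly the same integers as $Q$, so that $mm'$ is represented by $[Q][Q]^{-1}$ and hence lies in $H$. Second, triviality of $\JS{D_1}\cdot$ on $H$ does not follow from ``$D_1$ is a discriminant dividing $D$'' alone; at the prime $2$ you must invoke the standing hypothesis that $D/D_1$ is also a discriminant. Concretely, for $D=-8$ and $D_1=-4$ (a discriminant dividing $D$, but with $D/D_1=2$ not a discriminant), the principal form $x^2+2y^2$ represents both $1$ and $3$, and $\JS{-4}1=1$ while $\JS{-4}3=-1$, so $\chi_{-4}$ is not even constant on the principal class of $\QQ_{-8}$; the condition $D/D_1\equiv 0,1\mod 4$ is precisely what excludes this. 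Third, the converse half of (2) --- exhibiting a class with $\chi_{D_1}=-1$ when neither $D_1$ nor $D/D_1$ is a square --- is not a formality about ``independence of assigned characters''; it requires an existence input. The standard route: primitivity of the Kronecker symbol of the fundamental part shows that $\JS{D_1}\cdot$ is then distinct, as a character mod $D$, from both the trivial character and $\JS D\cdot$, so some residue class $m$ satisfies $\JS Dm=1$ and $\JS{D_1}m=-1$; Dirichlet's theorem on primes in arithmetic progressions supplies a prime $p$ in this class, and any prime $p$ with $\JS Dp=1$ is represented by some form of discriminant $D$ since $D$ is a square modulo $4p$. With these three points made explicit, your outline becomes the standard complete proof of the lemma.
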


Finally, the following formula frequently occurs in our computation.

\begin{Lemma} \label{lemma: class sum}
  Let $D$ be the discriminant of an imaginary quadratic order.
  Let $u$ be a positive integer. Then we have
  $$
    \sum_{f|u}\JS DfH\left(\frac{u^2}{f^2}D\right)=uH(D).
  $$
\end{Lemma}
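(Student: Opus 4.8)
The plan is to derive the identity directly from the multiplicative relation \eqref{equation: class number} between Hurwitz class numbers, followed by a M\"obius-inversion rearrangement of a double sum. First, since $f\mid u$ guarantees that $u/f$ is a positive integer and $(u/f)^2D$ is again the discriminant of an imaginary quadratic order, I would apply \eqref{equation: class number} with $m=u/f$ to write
$$
  H\left(\frac{u^2}{f^2}D\right)=H\bigl((u/f)^2D\bigr)
  =H(D)\,\frac uf\sum_{a\mid u/f}\frac{\mu(a)}a\JS Da .
$$
Substituting this into the left-hand side of the lemma and pulling out the common factor $uH(D)$ reduces the claim to showing that
$$
  \sum_{f\mid u}\frac1f\JS Df\sum_{a\mid u/f}\frac{\mu(a)}a\JS Da=1 .
$$

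Next, I would collapse the double sum by collecting terms according to $b=af$. As $(f,a)$ ranges over the pairs with $f\mid u$ and $a\mid u/f$, the product $b=af$ ranges over the divisors of $u$, and for each fixed $b\mid u$ the admissible pairs correspond to the divisors $f\mid b$ with $a=b/f$. Using $\tfrac1f\cdot\tfrac1{b/f}=\tfrac1b$ together with the complete multiplicativity of the Kronecker symbol $\JS D\cdot$ in its lower argument, which gives $\JS Df\JS D{b/f}=\JS Db$, the double sum becomes
$$
  \sum_{b\mid u}\frac1b\JS Db\sum_{f\mid b}\mu(b/f).
$$
The inner sum is $\sum_{f\mid b}\mu(b/f)=\sum_{d\mid b}\mu(d)$, which equals $1$ when $b=1$ and $0$ otherwise, so only the term $b=1$ survives and the whole expression equals $\frac11\JS D1=1$. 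This establishes the reduced identity and hence the lemma.

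The computation is routine once \eqref{equation: class number} is invoked, and there is no serious obstacle. The only points requiring a little care are, first, checking that $(u/f)^2D$ is always a genuine discriminant so that \eqref{equation: class number} legitimately applies for every divisor $f$ (this holds because multiplying a discriminant $D\equiv0,1\bmod4$ by a perfect square preserves the congruence class modulo $4$), and second, confirming that the reindexing $b=af$ is a genuine bijection between the index set of the double sum and the set of pairs $(b,f)$ with $b\mid u$ and $f\mid b$, which is immediate from the transitivity of divisibility.
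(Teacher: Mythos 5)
Your proposal is correct and follows essentially the same route as the paper: both apply the relation \eqref{equation: class number} with $m=u/f$ to expand each Hurwitz class number, then collapse the resulting double sum by the substitution $b=af$ (the paper writes $n=fm$) and invoke $\sum_{d\mid b}\mu(d)=0$ for $b>1$. The only cosmetic difference is that you factor out $uH(D)$ first and reduce to a normalized identity, whereas the paper carries the factor $u/n$ inside the sum; the underlying argument is identical.
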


\begin{proof} We have
  \begin{equation*}
  \begin{split}
    \sum_{f|u}\JS DfH\left(\frac{u^2}{f^2}D\right)
  &=H(D)\sum_{f|u}\JS Df\frac uf\sum_{m|(u/f)}\frac{\mu(m)}m\JS Dm \\
  &=H(D)\sum_{n|u}\JS Dn\frac un\sum_{m|n}\mu(m)
  \end{split}
  \end{equation*}
  The sum $\sum_{m|n}\mu(m)$ is nonzero only when $n=1$. From this, we
  get the claimed formula.
\end{proof}

\end{subsubsection}

\begin{subsubsection}{\bf Preliminary calculation and notations}
\begin{Lemma} \label{lemma: eta multiplier}
  If $\SM abcd\in\MM_{n^2}$ satisfies $(c,d)=1$ and $c>0$, then
  $$
    \M abcd^\ast=\left(\M abcd,n^{-k-1/2}\epsilon(a,b,c,d)^r
  (c\tau+d)^{k+1/2}\right),
  $$
  where
  $$
    \epsilon(a,b,c,d)=\begin{cases}
    \JS dc e^{2\pi i\left(bd(1-c^2)+c(a+d-3)\right)/24},
      &\text{if }c\text{ is odd}, \\
    \JS cde^{2\pi i\left(ac(1-d^2)+d(b-c+3)-3\right)/24},
      &\text{if }c\text{ is even}. \end{cases}
  $$
\end{Lemma}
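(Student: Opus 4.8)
The plan is to reduce the second component of $\gamma^\ast$ to the $\SL(2,\Z)$ transformation formula of Lemma \ref{lemma: eta} by factoring $\gamma$ through the generator $\SM 100{n^2}$ of $\MM_{n^2}^\ast$. Since every matrix in $\MM_{n^2}$ has $(a,b,c,d)=1$, the Smith normal form of $\gamma$ is $\mathrm{diag}(1,n^2)$, so $\gamma=A\SM 100{n^2}B$ for some $A,B\in\SL(2,\Z)$. Using the hypothesis $(c,d)=1$ I can make this explicit: complete $(c,d)$ to $\sigma=\SM xycd\in\SL(2,\Z)$ with $xd-yc=1$; a direct multiplication gives $\gamma\sigma^{-1}=\SM{n^2}j01$ with $j=bx-ay$, and inserting $\SM{n^2}j01=\SM 1j01\,S\,\SM 100{n^2}\,S^{-1}$ with $S=\SM 0{-1}10$ yields
$$
  \gamma=A\M 100{n^2}B,\qquad A=\M j{-1}10,\quad B=\M cd{-x}{-y},
$$
both in $\SL(2,\Z)$, since $\det B=xd-yc=1$.

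By the definition of $\MM_{n^2}^\ast$ we then have $\gamma^\ast=A^\ast\bigl(\SM 100{n^2},n^{k+1/2}\bigr)B^\ast$, and the group law of $\mathfrak G_{k+1/2}$ gives $\phi_\gamma(\tau)=\phi_A\!\left(\SM 100{n^2}B\tau\right)\,n^{k+1/2}\,\phi_B(\tau)$, where $\phi_A,\phi_B$ are the second components of $A^\ast,B^\ast$. These are read off from Lemma \ref{lemma: eta}: since $r/2+s=k+1/2$, one has $\phi_A(\tau)=\epsilon(j,-1,1,0)^r\tau^{k+1/2}$ and $\phi_B(\tau)=\epsilon(c,d,-x,-y)^r(-x\tau-y)^{k+1/2}$. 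Because $\SM 100{n^2}$ sends $z\mapsto z/n^2$, evaluating $\phi_A$ at $\SM 100{n^2}B\tau$ produces the factor $n^{-2k-1}$, which combines with the generator's $n^{k+1/2}$ to give exactly the predicted normalization $n^{-k-1/2}$; a quick magnitude check confirms $|\phi_\gamma(\tau)|=n^{-k-1/2}|c\tau+d|^{k+1/2}$, as required. It then remains to identify the accumulated phase $\epsilon(j,-1,1,0)^r\,\epsilon(c,d,-x,-y)^r$, together with the root of unity arising from the branch cuts, with $\epsilon(a,b,c,d)^r$.

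This last identification is where the real work lies and is the main obstacle. Lemma \ref{lemma: eta} requires the lower-left entry to be nonnegative, so when $-x<0$ I must replace $B$ by $-B$ and account for the factor coming from $(-I)^\ast=(-I,1)$ and the branch of $(c\tau+d)^{k+1/2}$; likewise the identity $(B\tau)^{k+1/2}(-x\tau-y)^{k+1/2}=\zeta\,(c\tau+d)^{k+1/2}$ introduces a controlled root of unity $\zeta$ that must be pinned down. Moreover, the $\epsilon$ of Lemma \ref{lemma: eta} is a product of a Jacobi symbol and a $24$th root of unity, and the symbols appearing above (involving $x,y,j$) must be converted into the symbols $\JS dc$ and $\JS cd$ of the statement by quadratic reciprocity, using $xd-yc=1$.

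Finally, all of the $n$-dependent contributions to the exponents are collapsed using $(n,6)=1$, hence $n^2\equiv1\mod 24$ — the same congruence already exploited in the uniqueness lemma for $\MM_{n^2}(N)^\ast$ — which is precisely what makes the resulting multiplier agree with the determinant-one formula. I expect to carry out the phase bookkeeping separately in the two cases $c$ odd and $c$ even, which is exactly the dichotomy in the statement, thereby recovering the stated expression for $\epsilon(a,b,c,d)$ and completing the proof.
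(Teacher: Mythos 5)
Your reduction is set up correctly: the factorization $\gamma=A\M 100{n^2}B$ with $A=\M j{-1}10$, $B=\M cd{-x}{-y}$ is valid, the group-law computation in $\mathfrak G_{k+1/2}$ is right, and the magnitude bookkeeping producing the normalization $n^{-k-1/2}$ is correct. But the proof stops exactly where the content of the lemma lies. The identity you still need,
$$
  \epsilon(j,-1,1,0)^r\,\epsilon(c,d,-x,-y)^r\,\zeta=\epsilon(a,b,c,d)^r
$$
(with $\zeta$ your branch-cut constant, and with the additional corrections from $(-I)^\ast$ when $-x<0$), is only announced, never established; you yourself call it ``the main obstacle.'' In your decomposition the symbols produced by Lemma \ref{lemma: eta} involve $x$, $y$, $j$ rather than $c$, $d$, so matching them with $\JS dc$ or $\JS cd$ requires quadratic reciprocity plus a case analysis on residues modulo $8$ and $3$ of all four of $c,d,x,y$ --- in effect a re-derivation of the consistency of the $\eta$-multiplier system --- and none of that bookkeeping is carried out. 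As written, this is a plan for a proof, not a proof: nothing in the text forces the accumulated phase to equal $\epsilon(a,b,c,d)^r$; it is asserted that the computation will work out.

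The gap is avoidable, and this is where the paper's argument differs decisively: it chooses the decomposition so that the $\SL(2,\Z)$ factor has the \emph{same} bottom row as $\gamma$. Picking $\alpha,\beta$ with $\alpha d+\beta c=1$, one has
$$
  \M abcd=\M 1{a\beta+b\alpha}01\M{n^2}001\M\alpha{-\beta}cd ,
$$
so Lemma \ref{lemma: eta} is applied to $\M\alpha{-\beta}cd$ with the same $c>0$ and $d$ as in $\gamma$ (hence the same Jacobi symbol, no reciprocity, no sign or branch issues), while the unipotent factor contributes only $e^{2\pi ir(a\beta+b\alpha)/24}$. Since
$$
  \M1{a\beta+b\alpha}01\M\alpha{-\beta}cd=\M{a+(1-n^2)\alpha}{b+(n^2-1)\beta}cd ,
$$
the required phase identity $e^{2\pi ir(a\beta+b\alpha)/24}\epsilon(\alpha,-\beta,c,d)^r=\epsilon(a,b,c,d)^r$ follows at once from the multiplicativity of $\gamma\mapsto\gamma^\ast$ on $\SL(2,\Z)$ together with the observation that $\epsilon(\cdot,\cdot,c,d)$ depends on its first two arguments only modulo $24$, because $n^2\equiv1\mod 24$ when $(n,6)=1$. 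If you replace your factorization by this one, the phase bookkeeping you postponed disappears entirely; if you keep yours, the reciprocity computation must actually be done before the lemma can be considered proved.
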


\begin{proof} Let $\alpha,\beta\in\Z$ be integers such that $\alpha
  d+\beta c=1$. We have
  $$
    \M abcd=\M 1{a\beta+b\alpha}01\M{n^2}001\M\alpha{-\beta}cd.
  $$
  Now
  $$
    \M\alpha{-\beta}cd^\ast=\left(\M\alpha{-\beta}cd,
    \epsilon(\alpha,-\beta,c,d)^r(c\tau+d)^{k+1/2}\right)
  $$
  and
  $$
    \M 1{a\beta+b\alpha}01^\ast=\left(\M1{a\beta+b\alpha}01,
    e^{2\pi ir(a\beta+b\alpha)/24}\right).
  $$
  Then from
  $$
    \M1{a\beta+b\alpha}01\M\alpha{-\beta}cd
   =\M{a+(1-n^2)\alpha}{b+(n^2-1)\beta}cd
  $$
  we deduce that
  \begin{equation*}
  \begin{split}
    e^{2\pi ir(a\beta+b\alpha)/24}\epsilon(\alpha,-\beta,c,d)^r
  &=\epsilon(a+(1-n^2)\alpha,b+(n^2-1)\beta,c,d)^r \\
  &=\epsilon(a,b,c,d)^r.
  \end{split}
  \end{equation*}
  Then the lemma follows.
\end{proof}


All the $24$th roots of unity can be expressed in terms of Jacobi
symbols, which we give in the next lemma for future reference.

\begin{Lemma} \label{lemma: roots of unity} If $(t,24)=1$, then
  $$
    e^{2\pi it/24}=\frac{\sqrt2}4\JS8{t}+\frac{\sqrt6}4\JS{24}{t}
   -\frac{i\sqrt 2}4\JS{-8}t+\frac{i\sqrt 6}4\JS{-24}{t}.
  $$
  If $(t,12)=1$, then
  $$
    e^{2\pi it/12}=\frac{\sqrt 3}2\JS{12}t+\frac i2\JS{-4}t.
  $$
  If $(t,8)=1$, then
  $$
    e^{2\pi it/8}=\frac{\sqrt 2}2\JS8t+\frac{i\sqrt 2}2\JS{-8}t.
  $$
  If $(t,6)=1$, then
  $$
    e^{2\pi it/6}=\frac12+\frac{i\sqrt 3}2\JS{-3}t.
  $$
  If $(t,4)=1$, then
  $$
    e^{2\pi it/4}=i\JS{-4}t.
  $$
  If $(t,3)=1$, then
  $$
    e^{2\pi it/3}=-\frac12+\frac{i\sqrt 3}2\JS{-3}t.
  $$
\end{Lemma}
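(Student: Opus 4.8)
The plan is to observe that each of the six identities asserts the equality of two functions of $t$ that factor through the finite group $(\Z/m\Z)^\times$, where $m\in\{3,4,6,8,12,24\}$ is the modulus in question. The left-hand side $e^{2\pi it/m}$ manifestly depends only on $t\bmod m$, while on the right-hand side every Kronecker symbol $\JS{D}{\cdot}$ is a Dirichlet character whose conductor divides $m$, so it too depends only on $t\bmod m$. Hence each identity is a statement about two functions on a set of $\phi(m)$ elements, and it suffices to verify it on one representative of each residue class prime to $m$. For $m=3,4,6$ this is two cases, for $m=8,12$ four cases, and for $m=24$ eight cases: a finite, entirely mechanical check. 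This already constitutes a complete (if uninspiring) proof.

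Conceptually, the right-hand sides are precisely the finite Fourier expansions of $e^{2\pi it/m}$ over the character group of $(\Z/m\Z)^\times$. The decisive point is that for exactly these moduli $(\Z/m\Z)^\times$ is an elementary abelian $2$-group, so each of its characters is real and equals a Kronecker symbol $\JS{D}{\cdot}$ for a suitable discriminant $D$. Writing $e^{2\pi it/m}=\sum_\chi a_\chi\,\chi(t)$ and using orthogonality, the coefficient is the normalized Gauss sum $a_\chi=\phi(m)^{-1}\sum_{t}\chi(t)e^{2\pi it/m}$. For $\chi$ induced by the primitive character $\chi^*$ of conductor $f\mid m$ this Gauss sum equals $\mu(m/f)\,\chi^*(m/f)\,g(\chi^*)$, where $g(\chi^*)=\sqrt{D}$ when $D>0$ and $g(\chi^*)=i\sqrt{|D|}$ when $D<0$ by the classical quadratic Gauss sum evaluation. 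This reproduces the real coefficients $\sqrt{2}/4,\ \sqrt{6}/4,\dots$ and the imaginary ones exactly; the characters that do not appear (for $m=24$ these are the principal character together with $\JS{-3}{\cdot}$, $\JS{-4}{\cdot}$, $\JS{12}{\cdot}$) are precisely those for which this Gauss sum vanishes, either because $m/f$ is not squarefree or because $\chi^*(m/f)=0$.

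A third and perhaps cleanest route reduces everything to the three atomic cases $m=3,4,8$. Writing $1/m$ as a $\Z$-linear combination of the reciprocals $1/p^a$ over the prime powers $p^a\|m$ factors $e^{2\pi it/m}$, via the Chinese Remainder Theorem, as a product of an $8$th-, a $4$th-, and a $3$rd-power root of unity. One then multiplies out the base-case formulas and collapses the resulting products of Kronecker symbols using multiplicativity in the upper entry, $\JS{D_1D_2}{\cdot}=\JS{D_1}{\cdot}\JS{D_2}{\cdot}$, to obtain the formulas for $m=6,12,24$. In this approach only the three prime-power identities require the direct two- or four-case verification, and the remaining ones follow formally.

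There is no deep obstacle here; the lemma is elementary no matter which route is chosen. The only place where care is genuinely required, and where sign errors most easily creep in, is the bookkeeping of conventions: the branch of $\sqrt{D}$ (equivalently the sign in $g(\chi^*)=\pm\sqrt{D}$ or $\pm i\sqrt{|D|}$), the value of the Kronecker symbol at negative and even arguments such as $\JS{D}{-1}=\sgn D$ and $\JS{2}{\cdot}$ modulo $8$, and, in the Fourier route, the exact determination of which Gauss sums vanish. My plan would be to fix these conventions once at the outset and then let whichever verification I adopt run to completion.
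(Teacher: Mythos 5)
Your proposal is correct. The paper states this lemma with no proof at all (it is offered as an elementary fact "for future reference"), and the implicit justification is exactly your first route: both sides of each identity are functions of $t$ modulo $m$, since every Kronecker symbol appearing has conductor dividing $m$, so a check on the $\phi(m)$ residue classes suffices; your Gauss-sum and Chinese-Remainder routes are valid and more conceptual (the Gauss-sum computation correctly accounts for the signs and for the four characters modulo $24$ that do not appear), but they go beyond anything the paper does.
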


Let class representatives
  $\SM abcd$ of $\Gamma_{n,t,f}$ be chosen as in Lemma \ref{lemma:
    class representatives}. Then by Proposition \ref{proposition:
    Shimura trace} and Lemma \ref{lemma: eta multiplier}, the
  contribution of the class of $\SM abcd$ to the trace is
  \begin{equation} \label{equation: J(gamma)}
    J(\gamma)=\frac{n^{k+1/2}}{w_{n,t,f}}
    \epsilon(a,b,c,d)^{-r}\frac{\rho^{1/2-k}}{\rho-\overline\rho},
  \end{equation}
  where
  \begin{equation} \label{equation: wntf}
    w_{n,t,f}=\begin{cases}
    6, &\text{if }(t^2-4n^2)/f^2=-3, \\
    4, &\text{if }(t^2-4n^2)/f^2=-4, \\
    2, &\text{else},
    \end{cases}
  \end{equation}
  is the number of elements in $\SL(2,\Z)$ commuting with $\gamma$,
  $$
    \epsilon(a,b,c,d)=\JS dce^{-2\pi ic/8}e^{2\pi i(bd(1-c^2)+ct)/24}
  $$
  and $\rho=(t+\sqrt{t^2-4n^2})/2$. In view of Lemma \ref{lemma: roots
    of unity}, sums of the form $\sum\JS dc\JS ec$,
  $e=\pm1,\pm2,\pm3,\pm6$, will appear frequently in our computation.
  So we first compute such sums.

\begin{Notation} \label{notation: LM}
  For $e=\pm1,\pm2,\pm3,\pm6$ and nonnegative integers $\ell$, we set
  \begin{equation} \label{equation: L(n,t)}
  \begin{split}
    L_{e,\ell}(n,t)&:=\frac{\rho^{1/2-k}}{\rho-\overline\rho}
      \Bigg(\sum_{f:2^\ell\|f,(n,t,f)=1}\frac1{w_{n,t,f}}
      \sum_{\SM abcd\in\Gamma_{n,t,f}/\SL(2,\Z)}\JS d{c'}\JS e{c'} \\
    &\qquad-3\sum_{f:2^\ell\|f,3|f,(n,t,f)=1}\frac1{w_{n,t,f}}
      \sum_{\SM abcd\in\Gamma_{n,t,f}/\SL(2,\Z)}\JS d{c'}\JS e{c'}\Bigg),
  \end{split}
  \end{equation}
  where in the inner sums, the representatives $\SM abcd$ are chosen
  according to Lemma \ref{lemma: class representatives},
  $c'=c/2^\ell$ (that is, $c'$ is the odd part of $c$), and
  $\rho=(t+\sqrt{t^2-4n^2})/2$.

  Moreover, for $e=1,2,3,6$ and integers $u$, we set
  \begin{equation} \label{equation: M(n,u)}
  \begin{split}
    M_{e,\ell}(n,u)&:=\frac{(\tau/\sqrt e)^{1-2k}}{\tau-\overline\tau}
      \Bigg(\sum_{g:2^\ell\|g,(n,u,g)=1}
      H\left(\frac{e^2u^2-4en}{g^2}\right) \\
    &\qquad\qquad-3\sum_{g:2^\ell\|g,3|g,(n,u,g)=1}
      H\left(\frac{e^2u^2-4en}{g^2}\right)\Bigg),
  \end{split}
  \end{equation}
  and
  \begin{equation} \label{equation: M'(n,u)}
  \begin{split}
    M'_{e,\ell}(n,u)&:=
      \frac{(\overline\tau/\sqrt e)^{1-2k}}{\tau-\overline\tau}
      \Bigg(\sum_{g:2^\ell\|g,(n,u,g)=1}
     H\left(\frac{e^2u^2-4en}{g^2}\right) \\
    &\qquad\qquad-3\sum_{g:2^\ell\|g,3|g,(n,u,g)=1}
      H\left(\frac{e^2u^2-4en}{g^2}\right)\Bigg),
  \end{split}
  \end{equation}
  where $\tau=(eu+\sqrt{e^2u^2-4en})/2$ and $g$ runs over all positive
  integers satisfying the given conditions such that $(e^2u^2-4en)/g^2$
  is a discriminant.
\end{Notation}

\begin{Lemma} \label{lemma: t,e conditions}
  Let $n$ be a positive integer relatively prime to $6$ and $t$ be an
  integer satisfying $t^2<4n^2$. Let
  $$
    e\in\begin{cases}
    \{\pm1,\pm3\}, &\text{if }(t,24)=1, \\
    \{\pm1,\pm2,\pm3,\pm6\}, &\text{if }(t,24)=2, \\
    \{\pm1\}, &\text{if }(t,24)=3, \\
    \{\pm2,\pm6\}, &\text{if }(t,24)=4\text{ or }(t,24)=8, \\
    \{\pm1,\pm2\}, &\text{if }(t,24)=6, \\
    \{\pm2\}, &\text{if }(t,24)=12\text{ or }(t,24)=24.
    \end{cases}
  $$
  If $e(t+2n)$ is a discriminant, i.e., if $e(t+2n)\equiv0,1\mod 4$,
  then there exists a rational number $s$ such that $t^2-4n^2$
  decomposes into a product of two discriminants $s^2e(t+2n)$ and
  $(t-2n)/(es^2)$.
\end{Lemma}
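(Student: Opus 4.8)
The plan is to reduce the statement to a purely local question about $p$-adic valuations, solve the local problems independently, and then assemble a single rational $s$. First note that the product identity $\bigl(s^2e(t+2n)\bigr)\cdot\bigl((t-2n)/(es^2)\bigr)=(t+2n)(t-2n)=t^2-4n^2$ holds for every $s$, so the only content is to find a rational $s$ for which both factors are integers that are $\equiv 0,1\mod 4$. Writing $D=t^2-4n^2$ (which is itself a discriminant, since $D\equiv t^2\mod 4$), I would set $D_1=s^2e(t+2n)$ and $D_2=D/D_1=(t-2n)/(es^2)$ and observe that for an odd prime $p$ the condition ``discriminant'' imposes nothing modulo $4$; it only requires $v_p(D_1),v_p(D_2)\ge 0$. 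Since $v_p(D_1)=v_p(e(t+2n))+2v_p(s)$ may be shifted by any even amount while the two valuations sum to $v_p(D)\ge 0$, an admissible $v_p(s)$ exists precisely when either $v_p(e(t+2n))$ is even or $v_p(D)\ge 1$. Because the prime valuations of a rational $s$ may be prescribed independently, it suffices to settle each prime separately.

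For $p\ge 5$ this is automatic: here $v_p(e(t+2n))=v_p(t+2n)$, and if this is odd then $p\mid t+2n$, whence $v_p(D)\ge 1$. For $p=3$ I would use $(n,6)=1$. When $3\nmid t$, exactly one of $t\pm2n$ is divisible by $3$ (since $t+2n\equiv t-n$ and $t-2n\equiv t+n\mod 3$ cannot both vanish when $3\nmid tn$), so $v_3(D)\ge 1$ and the balancing works even when $3\mid e$. When $3\mid t$, the table forces $3\nmid e$, so $v_3(e(t+2n))=v_3(t+2n)$ and the previous argument again applies. Thus all odd primes are dispatched using only $(n,6)=1$.

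The heart of the proof is the prime $2$, where the allowed sets for $e$ and the hypothesis that $e(t+2n)$ is a discriminant are exactly what is needed. Since $n$ is odd, a short computation of $v_2(t\pm 2n)$ gives
\[
  v_2(D)=\begin{cases}0,&v_2(t)=0,\\ \ge 5,&v_2(t)=1,\\ 2,&v_2(t)\ge 2,\end{cases}
\]
so $v_2(D)$ is never $1,3$, or $4$. In the middle case $v_2(D)\ge 5$ I would simply balance: one can choose $v_2(s)$ so that $v_2(D_1)$ lies in $[2,v_2(D)-2]$ with the forced parity $v_2(D_1)\equiv v_2(e)+v_2(t+2n)\mod 2$, because that interval contains integers of both parities, and then $v_2(D_1),v_2(D_2)\ge 2$ make both factors $\equiv 0\mod 4$ automatically. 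The two delicate cases are $v_2(D)=0$ (where the table forces $e$ odd) and $v_2(D)=2$ (where it forces $e$ even); in both, one factor has $v_2=0$ and must be shown $\equiv 1\mod 4$. Here I would use that the residues mod $4$ of the two odd parts multiply to the residue mod $4$ of the relevant odd part of $D$: when $v_2(t)=0$ that odd part is $\equiv 1\mod 4$ and the hypothesis $e(t+2n)\equiv 1\mod 4$ gives $D_1\equiv D_2\equiv 1\mod 4$ directly; when $v_2(t)\ge 2$ the odd part of $D/4$ is $\equiv 3\mod 4$, so exactly one of the odd integers $e(t+2n)/4$ and $(t-2n)/e$ is $\equiv 1\mod 4$, and I choose $v_2(s)\in\{0,-1\}$ accordingly.

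The step I expect to be the main obstacle is precisely this $2$-adic bookkeeping in the cases $v_2(t)\le 1$: one must check, going through the entries of the table for $(t,24)$, that the prescribed parity of $e$ (odd when $2\nmid t$, even when $4\mid t$) is consistent with $e(t+2n)$ being a discriminant, and that the ``exactly one odd part $\equiv 1\mod 4$'' dichotomy always selects an admissible $s$. The remaining cases and the recombination of the local choices of $v_p(s)$ into one global rational $s$ are then routine.
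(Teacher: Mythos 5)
Your proof is correct, but it takes a genuinely different route from the paper's. The paper's proof is a short explicit case check: it writes out only the case $(t,24)=2$, and for each admissible $e$ exhibits the decomposition directly, taking $s\in\{1,1/2,1/3,1/6\}$ according to whether $8\mid(t+2n)$ or $8\mid(t-2n)$ and whether $3\mid(t+2n)$ or $3\mid(t-2n)$; the remaining residue classes of $(t,24)$ are declared similar and omitted. You instead run a local-global argument: prescribe $v_p(s)$ prime by prime, note that for odd $p$ only the inequalities $v_p(D_1),v_p(D_2)\ge 0$ matter and can always be met because either $v_p(e(t+2n))$ is even or $p\mid t^2-4n^2$ (with $(n,6)=1$ doing the work at $p=3$), and then concentrate the real content at $p=2$, splitting on $v_2(t)\in\{0,1,\ge 2\}$, which is exactly where the table's parity constraints on $e$ and the hypothesis $e(t+2n)\equiv 0,1\pmod 4$ enter. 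Your version buys a uniform treatment of all six cases (nothing left ``similar but omitted'') and isolates precisely where each hypothesis is used; the paper's version buys brevity and explicit values of $s$, which is all that the later application in Lemma \ref{lemma: elliptic main lemma} needs. One small point of wording: in your $v_2(t)\ge 2$ case the quantities $e(t+2n)/4$ and $(t-2n)/e$ need not be integers (e.g.\ $e=6$ with $3\nmid t-2n$); they are $2$-adic units, and the mod-$4$ dichotomy should be stated $2$-adically, with the odd-prime valuations of $s$ fixed separately --- which your framework already accommodates, since multiplying by the square of a $2$-adic unit does not change residues modulo $8$.
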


\begin{proof} Here we only consider the
  case $(t,24)=2$. When $e=\pm1$, we have $4|(t+2n),(t-2n)$ and the
  statement holds obviously. When $e=\pm 2$, we can decompose
  $t^2-4n^2$ as $2(t+2n)\cdot(t-2n)/2$ or $(t+2n)/2\cdot2(t-2n)$
  according to whether $8|(t-2n)$ or $8|(t+2n)$. When $e=\pm3$, the
  decomposition is $3(t+2n)\cdot(t-2n)/3$ or $(t+2n)/3\cdot3(t-2n)$
  according to whether $3|(t-2n)$ or $3|(t+2n)$. When $e=\pm 6$, the
  rational number $s$ can be one of $1$, $1/2$, $1/3$, or $1/6$,
  depending one whether $8|(t+2n)$ and whether $3|(t+2n)$.
\end{proof}

We now evaluate $L_{e,\ell}(n,t)$. The computation mostly follows that
in \cite{Kohnen-newform}.

\begin{Lemma} \label{lemma: elliptic main lemma}
  Let $n$ be a positive integer relatively prime to $6$. For
  $e\in\{1,2,3,6\}$ and integers $u$, let
  $$
    \mu_{e}(n,u)=\begin{cases}
    1, &\text{if }3\nmid u, \\ \displaystyle
    1+\JS{en}3, &\text{if }3|u.
    \end{cases}
  $$
  Then we have the following formulas.

  \begin{enumerate}
  \item Let $t$ and $e$ be integers satisfying the assumptions in Lemma
  \ref{lemma: t,e conditions}.

  If $e(t+2n)\equiv0,1\mod 4$, then
  \begin{equation} \label{equation: L(n,t) 1}
  \begin{split}
   L_{e,0}(n,t)=\frac12\begin{cases}
   0, &\text{if }e(t+2n)\text{ is not a square}, \\
   \mu_{e}(n,u)M_{e,0}(n,u),
   &\text{if }t+2n=eu^2. \end{cases}
  \end{split}
  \end{equation}
  where $u$ is the positive square root of $(t+2n)/e$.

  Also, if $-e(t+2n)\equiv0,1\mod 4$, then
  \begin{equation} \label{equation: L(n,t) 1-}
  \begin{split}
    L_{-e,0}(n,t)=\frac i2\JS{-4}r\begin{cases}
      0, &\text{if }e(2n-t)\text{ is not a square}, \\
      \mu_{e}(n,u)M'_{e,0}(n,u)
   &\text{if }2n-t=eu^2, \end{cases}
  \end{split}
  \end{equation}
  where $u$ is the positive square root of $(2n-t)/e$.
  \item Assume that $2\|t$ and $e\in\{\pm1,\pm3\}$. Then
  $$
    L_{e,1}(n,t)=\frac12 L_{e,0}(n,t).
  $$
  \item Assume that $2\|t$, $e\in\{1,3\}$, and $\ell\ge 2$.
  Then
  $$
    L_{e,\ell}(n,t)=\frac12\begin{cases}
    0, &\text{if }e(t+2n)\text{ is not a square}, \\
    \mu_{e}(n,u)M_{e,\ell-1}(n,u)/2,
   &\text{if }t+2n=eu^2\text{ and }2\|u, \\
    \mu_{e}(n,u)M_{e,1}(n,u)/2^{\ell-1},
   &\text{if }t+2n=eu^2\text{ and }2^{\ell-1}\|u, \\
    \mu_{e}(n,u)M_{e,0}(n,u)/2^\ell,
   &\text{if }t+2n=eu^2\text{ and }2^\ell|u. \end{cases}
  $$
  and
  $$
    L_{-e,\ell}(n,t)=\frac i2\JS{-4}r\begin{cases}
    0, &\text{if }e(2n-t)\text{ is not a square}, \\
    \mu_{e}(n,u)M'_{e,\ell-1}(n,u)/2,
   &\text{if }2n-t=eu^2\text{ and }2\|u, \\
    \mu_{e}(n,u)M'_{e,1}(n,u)/2^{\ell-1},
   &\text{if }2n-t=eu^2\text{ and }2^{\ell-1}\|u, \\
    \mu_{e}(n,u)M'_{e,0}(n,u)/2^\ell,
   &\text{if }2n-t=eu^2\text{ and }2^\ell|u, \end{cases}
  $$
  \end{enumerate}
\end{Lemma}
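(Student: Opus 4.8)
The plan is to convert everything into sums over classes of binary quadratic forms and to recognize the Jacobi symbols as genus characters, so that Lemmas \ref{lemma: genus character} and \ref{lemma: class sum} become applicable. Fix $t$ and a single $f$ and set $D=(t^2-4n^2)/f^2$. Via the correspondence \eqref{equation: correspondence} the classes of $\Gamma_{n,t,f}$ match the classes of $\QQ_D$, and for the representative $\SM abcd$ of Lemma \ref{lemma: class representatives} with $c=fp$, $p>4n$ prime, the associated form is $px^2+uxy+vy^2$ with leading coefficient $A=p$. First I would use quadratic reciprocity together with $a+d=t$ and $ad-bc=n^2$ --- which yield $(2d-t)^2\equiv t^2-4n^2\pmod p$ and $\JS dp=\JS ap$ --- to rewrite $\JS d{c'}\JS e{c'}$ as a genus character $\chi_{D_1}$ of $\QQ_D$ evaluated at the form, times factors that are constant on each class; by Lemma \ref{lemma: t,e conditions} the relevant discriminant divisor is, up to square factors, $D_1=e(t+2n)$ in the $+e$ case and $D_1=e(2n-t)$ in the $-e$ case.

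Next I would apply Lemma \ref{lemma: genus character}(3): over a complete set of class representatives, $\sum\chi_{D_1}=h(D)$ when $D_1$ or $D/D_1$ is a perfect square, and $0$ otherwise. Since $e(t+2n)$ (resp.\ $e(2n-t)$) is a perfect square exactly when $t+2n=eu^2$ (resp.\ $2n-t=eu^2$), this produces the case split in the statement. Dividing by $w_{n,t,f}$ and using $h(D)/w_{n,t,f}=\frac12H(D)$ --- valid because $w_{n,t,f}$ in \eqref{equation: wntf} is the number of units and $H(-3)=1/3$, $H(-4)=1/2$ --- converts the class number into $\frac12H(D)$, accounting for the prefactor $\frac12$. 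The transcendental prefactors then match exactly: writing $\tau=(eu+\sqrt{e^2u^2-4en})/2$, one checks $\rho=\tau^2/e$ in the $+e$ case and $\rho=-\overline\tau^2/e$ in the $-e$ case, whence $\rho^{1/2-k}=(\tau/\sqrt e)^{1-2k}$ (resp.\ $i\JS{-4}r(\overline\tau/\sqrt e)^{1-2k}$, the factor $i\JS{-4}r$ coming from $(-1)^{1/2-k}$ since $k\equiv(r-1)/2\bmod2$) and $\rho-\overline\rho=u(\tau-\overline\tau)$. This is precisely the prefactor of $M_{e,\ell}(n,u)$ (resp.\ $M'_{e,\ell}(n,u)$) divided by $u$, and it also explains the factor $\frac i2\JS{-4}r$ in \eqref{equation: L(n,t) 1-}.

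It then remains to sum over $f$ and to isolate the prime $3$. Since $t+2n=eu^2$ gives $t^2-4n^2=u^2(e^2u^2-4en)$, the discriminant attached to $f$ is $D=(e^2u^2-4en)/(f/u)^2$, so writing $f=\delta g$ with $\delta\mid u$ the genus-character weights assemble into $\sum_{\delta\mid u}\JS{e^2u^2-4en}\delta H((u/\delta)^2(e^2u^2-4en)/g^2)$, to which Lemma \ref{lemma: class sum} applies and yields a factor $u$ that cancels the $1/u$ from the prefactor. The only place the telescoping is incomplete is at the prime $3$: the correction term $-3\sum_{3\mid f}$ built into \eqref{equation: L(n,t)} combines with the $3$-local part of Lemma \ref{lemma: class sum} to leave exactly the factor $\mu_e(n,u)$, which is $1$ when $3\nmid u$ and $1+\JS{en}3$ when $3\mid u$. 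This establishes part (1).

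Finally, parts (2) and (3) are the $2$-adic refinement, and I expect this to be the main obstacle. When $2^\ell\|f$ with $\ell\ge1$ (forcing $2\|t$), the odd part $c'=c/2^\ell$ no longer contains all of $c$, so the even part of $\JS d{c'}$ must be evaluated separately; this is precisely why Lemma \ref{lemma: class representatives} pins down $d$ modulo $8$ or $16$ according to $v_2(f)$ and the parity of $(t^2-4n^2)/2^{2\ell}$. The delicate point is tracking how $v_2(f)$ interacts with $v_2(u)$ (where $t+2n=eu^2$) in the substitution $f=\delta g$: the constraint $2^\ell\|f$ versus $2^\ell\|g$ shifts the index set and produces the descending powers $1/2$, $1/2^{\ell-1}$, $1/2^{\ell}$ in part (3), while the even part of $\JS d{c'}$, controlled by the congruences above, must be checked to reduce to the constant $+1$ so that the odd-part evaluation from part (1) carries over unchanged. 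Once these $2$-adic contributions are shown to equal the stated powers of $2$ times the part-(1) answer, the formulas in (2) and (3) follow by the same genus-character and telescoping argument.
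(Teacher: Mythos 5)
Your treatment of part (1) is essentially the paper's own proof: the congruence $d(t+2n)\equiv(n+d)^2\pmod c$ turns $\JS dc\JS ec$ into a genus character times class-invariant factors, Lemma \ref{lemma: genus character}(3) gives the square/non-square dichotomy, the identities $\rho=\tau^2/e$ and $\rho-\overline\rho=u(\tau-\overline\tau)$ (with $i^{1-2k}=i\JS{-4}r$ in the $-e$ case) convert the prefactors, $h(D)/w_{n,t,f}=\tfrac12H(D)$ supplies the $\tfrac12$, and Lemma \ref{lemma: class sum} telescopes the sum over $f$, with the prime $3$ correction yielding $\mu_e(n,u)$. At the level of a sketch, that part is sound and matches the paper.

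Parts (2) and (3), however, are not proved: you only state what ``must be checked,'' and your diagnosis of where the difficulty lies is partly off target. There is no ``even part of $\JS d{c'}$'' to control --- $c'$ is odd by definition, and since $c'\mid c$ the manipulation behind \eqref{equation: JS dc} carries over verbatim with $c$ and $f$ replaced by their odd parts; likewise the congruences modulo $8$ or $16$ in Lemma \ref{lemma: class representatives} play no role in this lemma (they are needed only in Lemma \ref{lemma: eta 2||t}, to evaluate the eta-multiplier). What actually must be done is twofold. First, one must analyze how the condition $2^\ell\|f$ interacts with the factorization $f=f_1f_2$, $f_1^2\mid(t+2n)=eu^2$, $f_2^2\mid(t-2n)$: since $v_2(f_1)\le v_2(u)$, the admissible splittings of $2^\ell$ between $f_1$ and $f_2$ are exactly what separates the three cases $2\|u$, $2^{\ell-1}\|u$, $2^\ell\mid u$ of part (3), and the residual power of $2$ left inside $f_2$ is why the answers involve $M_{e,\ell-1}$, $M_{e,1}$, $M_{e,0}$ respectively. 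Second, one needs the conductor relation for Hurwitz class numbers at the prime $2$, namely $H(4D)=2H(D)$ whenever $4\mid D$ (a special case of \eqref{equation: class number}); it is this relation, applied to terms such as $H\left(\frac{(u/2)^2}{f_1^2}\frac{\Delta}{f_2^2}\right)$, that produces the factors $1/2$, $1/2^{\ell-1}$, $1/2^\ell$ --- they do not come from the shift of the index set alone, as your sketch suggests. Without these two ingredients carried out, the precise powers of $2$ in (2) and (3) are not established.
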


\begin{proof} We first prove \eqref{equation: L(n,t) 1}. Note that in
  the sum defining $L_{e,0}(n,t)$, the integers $f$ are always odd
  and according to the choice of representatives given in Lemma
  \ref{lemma: class representatives}, we have $c'=c$. Then from Remark 
  \ref{remark: (t,f)}, we know that $(f,t+2n,t-2n)=1$. Thus, if
  $f_1=(f,t+2n)$ and $f_2=(f,t-2n)$, then $f=f_1f_2$, $f_1^2|(t+2n)$ and
  $f_2^2|(t-2n)$. Write $\JS dc$ as
  $$
    \JS dc=\JS d{c/f}\JS d{f_1}\JS d{f_2}.
  $$
  Since
  $$
    \JS d{c/f}\JS{t+2n}{c/f}=\JS{ad+2nd+d^2}{c/f}
   =\JS{n^2+2nd+d^2}{c/f}=1,
  $$
  we have
  $$
    \JS d{c/f}=\JS{(t+2n)/f_1^2}{c/f}.
  $$
  By the same token, we also have
  $$
    \JS d{f_1}=\JS{(t-2n)/f_2^2}{f_1}, \qquad
    \JS d{f_2}=\JS{(t+2n)/f_1^2}{f_2},
  $$
  and hence
  \begin{equation} \label{equation: JS dc}
    \JS dc\JS ec=\JS{e(t+2n)/f_1^2}{c/f}\JS{e(t-2n)/f_2^2}{f_1}
    \JS{e(t+2n)/f_1^2}{f_2}.
  \end{equation}
  Now $e(t+2n)$ is assumed to be a discriminant. By Lemma \ref{lemma:
    t,e conditions}, we can decompose
  $t^2-4n^2$ into a product $es^2(t+2n)\cdot(t-2n)/es^2$ of two
  discriminants. So by Lemma \ref{lemma: genus character},
  \begin{equation} \label{equation: temp 1}
  \begin{split}
    &\frac1{w_{n,t,f}}\sum_{\SM abcd\in\Gamma_{n,t,f}/\SL(2,\Z)}
    \JS{e(t+2n)/f_1^2}{c/f} \\
  &\qquad=\begin{cases}
    H((t^2-4n^2)/f^2)/2, &\text{if }e(t+2n)=(eu)^2\text{ is a square}, \\
    0, &\text{else}. \end{cases}
  \end{split}
  \end{equation}
  In the case $e(t+2n)=(eu)^2$, we have
  \begin{equation} \label{equation: rho 1}
    \rho=\frac{t+\sqrt{t^2-4n^2}}2
   =\left(\frac{\sqrt{e(t+2n)}+\sqrt{e(t-2n)}}{2\sqrt e}\right)^2
   =\left(\frac\tau{\sqrt e}\right)^2
  \end{equation}
  and
  \begin{equation} \label{equation: rho 2}
    \rho-\overline\rho=\sqrt{t^2-4n^2}=u\sqrt{e^2u^2-4en}
 =u(\tau-\overline\tau),
  \end{equation}
  where $\tau=(eu+\sqrt\Delta)/2$ with $\Delta=e^2u^2-4en$. Then for
  the first sum $\sum_f$ defining $L_{e,0}(n,t)$ in \eqref{equation:
    L(n,t)}, we have
  \begin{equation} \label{equation: ell=0}
  \begin{split}
   &\sum_{(f,2)=1,(n,t,f)=1}\frac1{w_{n,t,f}}\sum_\gamma\JS dc\JS ec \\
   &\qquad=\frac12\sum_{(f,2)=1,(n,t,f)=1}\JS{e(t+2n)/f_1^2}{f_2}
    \JS{e(t-2n)/f_2^2}{f_1}H\left(\frac{t^2-4n^2}{f^2}\right) \\
   &\qquad=\frac12\sum_{(f_2,2)=1,(n,t,f_2)=1}\sum_{f_1|u,(f_1,2)=1,(n,t,f_1)=1}
    \JS{\Delta/f_2^2}{f_1}H\left(\frac{u^2}{f_1^2}
    \frac\Delta{f_2^2}\right).
  \end{split}
  \end{equation}
  The inner sum running over $f_1$ is the same as
  \begin{equation} \label{equation: temp 6}
    \sum_{f_1|u}\JS{\Delta/f_2^2}{f_1}H\left(\frac{u^2}{f_1^2}
    \frac\Delta{f_2^2}\right)
  \end{equation}
  since if $2|f_1$ or $(n,t,f_1)>1$ then $\JS{\Delta/f_2^2}{f_1}=0$.
  Then by Lemma \ref{lemma: class sum}, the first sum in
  \eqref{equation: L(n,t)} is equal to
  \begin{equation} \label{equation: L(n,t) sum 1}
    \sum_{(f,2)=1,(n,t,f)=1}\frac1{w_{n,t,f}}\sum_\gamma\JS dc\JS ec
   =\frac u2\sum_{(f_2,2)=1,(n,t,f_2)=1}H\left(\frac\Delta{f_2^2}\right).
  \end{equation}

  For the second sum in \eqref{equation: L(n,t)}, we have two cases
  $$
    \begin{cases}
    3|f_1,3\nmid f_2, &\text{if }3|u, \\
    3|f_2,3\nmid f_1, &\text{if }3\nmid u.
    \end{cases}
  $$
  If $3\nmid u$, then a computation similar to \eqref{equation:
    ell=0}--\eqref{equation: L(n,t) sum 1} yields
  \begin{equation} \label{equation: L(n,t) sum 2-1}
    \sum_{(f,2)=1,3|f,(n,t,f)=1}\frac1{w_{n,t,f}}\sum_\gamma\JS dc\JS
    ec=\frac u2\sum_{(f_2,2)=1,3|f_2,(n,t,f_2)=1}H\JS\Delta{f_2^2}.
  \end{equation}
  If $3|u$, then
  \begin{equation*}
  \begin{split}
    &\sum_{(f,2)=1,3|f,(n,t,f)=1}\frac1{w_{n,t,f}}\sum_\gamma\JS dc\JS
    ec \\
    &\qquad=\frac12\sum_{(f,6)=3,(n,t,f)=1}\JS{e(t+2n)/f_1^2}{f_2}
     \JS{e(t-2n)/f_2^2}{f_1}H\JS{t^2-4n}{f^2} \\
    &\qquad=\frac12\sum_{(f_2,2)=1,(n,t,f_2)=1}\sum_{f_1|u,(f_1,6)=3,(n,t,f)=1}
     \JS{\Delta/f_2^2}{f_1}H\left(\frac{u^2}{f_1^2}
     \frac{\Delta}{f_2^2}\right).
  \end{split}
  \end{equation*}
  By Lemma \ref{lemma: class sum} the inner sum is equal to
  $$
    \sum_{h|(u/3),(h,2)=1,(n,t,h)=1}\JS{\Delta/f_2^2}{3h}
   H\left(\frac{(u/3)^2}{f_1^2}\frac\Delta{f_2^2}\right)
   =\frac u3\JS{\Delta}3H\JS\Delta{f_2^2}
  $$
  and we have
  \begin{equation} \label{equation: L(n,t) sum 2-2}
    \sum_{(f,2)=1,3|f,(n,t,f)=1}\frac1{w_{n,t,f}}\sum_\gamma\JS dc\JS
    ec=\frac
    u6\JS\Delta3\sum_{(f_2,2)=1,(n,t,f_2)=1}H\JS\Delta{f_2^2}.
  \end{equation}
  Combining \eqref{equation: JS dc}--\eqref{equation: rho 2},
  \eqref{equation: L(n,t) sum 1}, \eqref{equation: L(n,t) sum 2-1},
  and \eqref{equation: L(n,t) sum 2-2}, we get the formula
  \eqref{equation: L(n,t) 1}.
  
  The proof of \eqref{equation: L(n,t) 1-} follows the same line of
  calculation. Equation \eqref{equation: JS dc}
  continues to hold when we replace $e$ by $-e$. By Lemma \ref{lemma:
    genus character}, \eqref{equation: temp 1} also remains valid,
  provided that the condition that $e(t+2n)$ is a square is change to
  that $e(2n-t)$ is a square. The computations in \eqref{equation: rho 2},
  \eqref{equation: L(n,t) sum 1}, \eqref{equation: L(n,t) sum 2-1},
  and  \eqref{equation: L(n,t) sum 2-2} are almost the same.
  The only significant difference lies at \eqref{equation: rho 1}.
  Instead of \eqref{equation: rho 1}, we have
  $$
    \frac{t+\sqrt{t^2-4n^2}}2
   =\left(i\frac{\sqrt{e(2n-t)}-\sqrt{-e(t+2n)}}{2\sqrt e}\right)^2
   =\left(i\frac{\overline\tau}{\sqrt e}\right)^2
  $$
  and
  \begin{equation} \label{equation: tau bar}
  \begin{split}
    \left(\frac{t+\sqrt{t^2-4n^2}}2\right)^{1/2-k}
   =i^{1-2k}\left(\frac{\overline\tau}{\sqrt e}\right)^{1-2k}
   =i\JS{-4}r\left(\frac{\overline\tau}{\sqrt e}\right)^{1-2k},
  \end{split}
  \end{equation}
  where $\tau=(eu+\sqrt\Delta)/2$ with $\Delta=e^2u^2-4en$. Here in
  the last step, we have used the assumption $k=(r-1)/2+s$. This
  establishes \eqref{equation: L(n,t) 1-}.

  We now prove the second part of the lemma. Here we only consider the
  case $e=1,3$. The integers $f$ in the sum defining $L_{e,1}(n,t)$
  satisfies $2\|f$. Let $f'=f/2$ and set $f_1=(f',t+2n)$ and
  $f_2=(f',t-2n)$. Then as before, we have $f'=f_1f_2$,
  $f_1^2|(t+2n)$, $f_2^2|(t-2n)$, and \eqref{equation: JS dc} remains
  valid if we replace $c$ by $c'$ and $f$ by $f'$. For the first sum
  in $L_{e,\ell}(n,t)$, the computation in
  \eqref{equation: temp 1}--\eqref{equation: rho 2} remain unchanged.
  Now instead of \eqref{equation: ell=0}, we have
  \begin{equation*}
  \begin{split}
   &\sum_{2\|f,(n,t,f)=1}\frac1{w_{n,t,f}}\sum_\gamma\JS dc\JS ec \\
   &\qquad=\frac12\sum_{2\|f,(n,t,f)=1}\JS{e(t+2n)/f_1^2}{f_2}
     \JS{e(t-2n)/f_2^2}{f_1}H\left(\frac{t^2-4n^2}{f^2}\right) \\
   &\qquad=\frac12\sum_{(f_2,2)=1,(n,t,f_2)=1}\sum_{f_1|u,(f_1,2)=1,(n,t,f_1)=1}
    \JS{\Delta/f_2^2}{f_1}H\left(\frac{(u/2)^2}{f_1^2}
    \frac\Delta{f_2^2}\right).
  \end{split}
  \end{equation*}
  Now since $4|(\Delta/f_2^2)=(e^2u^2-4en)/f_2^2$, we have
  $$
    H\left(\frac{(u/2)^2}{f_1^2}\frac{\Delta}{f_2^2}\right)
   =\frac12H\left(\frac{u^2}{f_1^2}\frac{\Delta}{f_2^2}\right).
  $$
  Following the remaining computation, we easily see that the first
  sum in $L_{e,1}(n,t)$ is equal to one half of that of
  $L_{e,0}(n,t)$. Similarly, the second sum in $L_{e,1}(n,t)$ is equal
  to one half of that of $L_{e,0}(n,t)$. This proves the second part
  of the lemma.

  The proof of the third part is also similar. Again, we only consider
  the case $e>0$. The computation for the first sum in
  $L_{e,\ell}(n,t)$ differs from that for the first sum in
  $L_{e,0}(n,t)$ mainly at \eqref{equation: ell=0}. In the 
  case $2\|u$, instead of \eqref{equation: ell=0}, we have
  \begin{equation*}
  \begin{split}
   &\sum_{2^\ell\|f,(n,t,f)=1}\frac1{w_{n,t,f}}\sum_\gamma\JS dc\JS ec \\
   &\qquad=\frac12\sum_{(f_2,2)=1,(n,t,f_2)=1}\sum_{f_1|u,(f_1,2)=1,(n,t,f_1)=1}
    \JS{\Delta/f_2^2}{f_1}H\left(\frac{(u/2)^2}{f_1^2}
    \frac\Delta{(2^{\ell-1}f_2)^2}\right),
  \end{split}
  \end{equation*}
  where $\Delta=e^2u^2-4en$. Then, by Lemma
  \ref{lemma: class sum}, it is equal to
  $$
    \sum_{2^\ell\|f,(n,t,f)=1}\frac1{w_{n,t,f}}\sum_\gamma\JS dc\JS ec
   =\frac u4\sum_{(f_2,2)=1,(n,t,f_2)=1}
   H\left(\frac{\Delta}{(2^{\ell-1}f_2)^2}\right),
  $$
  The computation for the case $2^{\ell-1}\|u$ is almost the same. In
  this case, we have
  $$
    \sum_{2^\ell\|f,(n,t,f)=1}\frac1{w_{n,t,f}}\sum_\gamma\JS dc\JS ec
   =\frac u{2^\ell}\sum_{(f_2,2)=1,(n,t,f_2)=1}
   H\left(\frac{\Delta}{(2f_2)^2}\right),
  $$
  Now if $2^\ell|u$, then instead of \eqref{equation: ell=0}, we have
  \begin{equation*}
  \begin{split}
   &\sum_{2^\ell\|f,(n,t,f)=1}\frac1{w_{n,t,f}}\sum_\gamma\JS dc\JS ec \\
   &\qquad=\frac12\sum_{(f_2,2)=1,(n,t,f_2)=1}\sum_{f_1|u,(f_1,2)=1,(n,t,f_1)=1}
    \JS{\Delta/f_2^2}{f_1}H\left(\frac{(u/2^\ell)^2}{f_1^2}
    \frac\Delta{f_2^2}\right).
  \end{split}
  \end{equation*}
  Since $4|(\Delta/f_2^2)$, we have
  $$
   H\left(\frac{(u/2^\ell)^2}{f_1^2}
    \frac\Delta{f_2^2}\right)=\frac1{2^\ell}
   H\left(\frac{u^2}{f_1^2}\frac{\Delta}{f_2^2}\right)
  $$
  and by Lemma \ref{lemma: class sum},
  \begin{equation*}
  \begin{split}
    \sum_{2^\ell\|f,(n,t,f)=1}\frac1{w_{n,t,f}}\sum_\gamma\JS dc\JS ec
   =\frac u{2^{\ell+1}}
    \sum_{(f_2,2)=1,(n,t,f_2)=1}H\JS\Delta{f_2^2}.
  \end{split}
  \end{equation*}
  Together with \eqref{equation: rho 1} and \eqref{equation: rho 2},
  this completes the computation for the first sum in
  $L_{e,\ell}(n,t)$. The computation for the second sum is analogous 
  and is skipped.
\end{proof}

We now utilize Lemma \ref{lemma: elliptic main lemma} to compute the
contribution of $\Gamma_{n,t,f}$ to the trace of $[\MM_{n^2}]$. This
will be done according to the greatest common divisor of $t$ and $24$.
To summarize our computation, we fix the following notations.

\begin{Notation} \label{notation: ABCD}
  Given a positive integer $n$ relatively prime to $6$,
  $e\in\{1,2,3,6\}$, and an integer $u$ with $e^2u^2<4en$, we let
  \begin{equation} \label{equation: P}
    P_k(e,n,u)=\frac{\tau^{2k-1}-\overline\tau^{2k-1}}{\tau-\overline\tau},
    \qquad\tau=\frac{eu+\sqrt{e^2u^2-4en}}2.
  \end{equation}
  For nonnegative integers $\ell$ and $m$, we set
  \begin{equation*}
  \begin{split}
    A_{\ell,m}(n)&=\JS{12}n\sum_{3\nmid u}\left(
      \sum_{g}H\JS{\Delta}{g^2}-3\sum_{3|g}H\JS\Delta{g^2}\right)
      P_k(1,n,u) \\
    &\qquad+\JS{12}n\sum_{3|u}\left(1-\JS{\Delta}3\right)
    \sum_gH\left(\frac{\Delta}{g^2}\right)P_k(1,n,u),
  \end{split}
  \end{equation*}
  where $\Delta=u^2-4n$, the outer sums run over all integers $u$ satisfying
  $$
    u^2<4n,\qquad 2^\ell\|u,
  $$
  and the given conditions, and the inner sums run
  over all positive integers $g$ such that
  $$
    \Delta/g^2\equiv 0,1\mod 4, \qquad 2^m\|g, \qquad (n,u,g)=1,
  $$
  and the specified conditions are met.
  We also set
  $$
    A_{\ell,m}^\ast(n)=\sum_{\ell\le j<\infty}A_{j,m}(n).
  $$
  Similarly, we define
  \begin{equation*}
  \begin{split}
    B_{\ell}(n)&=\frac1{2^{k-1}}\JS{12}n\JS8r\sum_{3\nmid u}\left(\sum_{g}
    H\JS\Delta{g^2}-3\sum_{3|g}H\JS\Delta{g^2}\right)P_k(2,n,u) \\
    &\qquad+\frac1{2^{k-1}}\JS{12}n\JS8r
    \sum_{3|u}\left(1-\JS{\Delta}{3}\right)\sum_gH\JS\Delta{g^2}P_k(2,n,u),
  \end{split}
  \end{equation*}
  where $\Delta=4u^2-8n$, the outer sums run over all integers $u$ such that
  $$
    4u^2<8n, \qquad 2^\ell\|u,
  $$
  and the inner sums run over all positive integers $g$ such that
  $$
    \Delta/g^2\equiv 0,1\mod 4, \qquad (n,u,g)=1.
  $$
  Also, we set
  $$
    B_{\ell}^\ast(n)=\sum_{\ell\le j<\infty}B_{\ell}(n).
  $$
  Likewise, define
  $$
    C_{\ell,m}(n)=\frac1{3^{k-1}}\JS{12}n\JS{12}r\sum_{9u^2<12n,2^\ell\|u}
    \sum_{g}H\JS\Delta{g^2}P_k(3,n,u),
  $$
  where $\Delta=9u^2-12n$, and the inner
  sum runs over all positive integers $g$ with
  $$
    \Delta/g^2\equiv0,1\mod 4, \qquad 2^m\|g, \qquad
    (n,u,g)=1,
  $$
  and let
  $$
    C_{\ell,m}^\ast(n)=\sum_{\ell\le j<\infty}C_{j,m}(n).
  $$
  Finally, we let
  $$
    D_{\ell}(n)=\frac1{6^{k-1}}\JS{12}n\JS{24}r
    \sum_{36u^2<24n,2^\ell\|u}\sum_gH\JS\Delta{g^2}P_k(6,n,u),
 $$
  where $\Delta=36u^2-24n$, and the inner sum runs over all positive
  integer $g$ such that
  $$
    \Delta/g^2\equiv0,1\mod 4, \qquad (n,u,g)=1.
  $$
  Also, let
  $$
    D_\ell^\ast(n)=\sum_{\ell\le j<\infty}D_j(n).
  $$
\end{Notation}

Here we give alternative expressions for $A_{\ell,m}(n)$ and
$B_\ell(n)$, which will be used in a later section.

\begin{Lemma} \label{lemma: alternative AB}
For a discriminant $\Delta$ of an imaginary quadratic order, we let
$\Delta_0$ denote the discriminant of the field $\Q(\sqrt\Delta)$. Then
we have
$$
  A_{\ell,m}(n)=\beta(n)\JS{12}n\sum_u\left(1-\JS{\Delta_0}3\right)
  \sum_{g,3\nmid\Delta/(\Delta_0g^2)}H\JS{\Delta}{g^2}P_k(1,n,u),
$$
where
$$
  \beta(n)=\begin{cases}
  1, &\text{if }n\equiv 1\mod 3, \\
  1/2, &\text{if }n\equiv 2\mod 3, \end{cases}
$$
the double sum runs over the same $u$ and $g$ as in the
definition of $A_{\ell,m}$ satisfying the additional condition that $3$ does
not divide $\Delta/(\Delta_0g^2)$, where $\Delta=u^2-4n$. Analogously,
we also have
$$
  B_\ell(n)=\frac{\gamma(n)}{2^{k-1}}\JS{12}n\JS8r\sum_u\left(1-\JS{\Delta_0}3\right)
  \sum_{g,3\nmid\Delta/(\Delta_0g^2)}H\JS\Delta{g^2}P_k(2,n,u),
$$
where
$$
  \gamma(n)=\begin{cases}
  1/2, &\text{if }n\equiv 1\mod 3, \\
  1, &\text{if }n\equiv 2\mod 3. \end{cases}
$$
\end{Lemma}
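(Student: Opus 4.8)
The plan is to compare the two formulas for $A_{\ell,m}(n)$ coefficient by coefficient. On both sides the factor $\JS{12}n$ is common and, after collecting terms, both sides are sums over the \emph{same} range of $u$ of a scalar times $P_k(1,n,u)$; so it suffices to fix $u$, write $\Delta=u^2-4n$, and match the coefficient of $P_k(1,n,u)$. First I would record, using $(n,6)=1$, the class of $\Delta$ modulo $3$ in the four cases cut out by $3\mid u$ versus $3\nmid u$ and by $n\bmod3$. A direct computation shows that $3\mid\Delta$ happens exactly when $3\nmid u$ and $n\equiv1\pmod3$; in the three remaining cases $3\nmid\Delta$. Note that $\Delta_0$, being the field discriminant of $\Delta$, is constant as $g$ varies for fixed $u$.

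In each case with $3\nmid\Delta$ the matching is elementary and I would dispose of it first. Here the conductor of $\Delta$ is prime to $3$, so $\JS{\Delta_0}3=\JS\Delta3$, the condition $3\nmid\Delta/(\Delta_0g^2)$ is vacuous, and (when $3\nmid u$) the sum $\sum_{3\mid g}$ is empty. When $3\nmid u$ (which forces $n\equiv2$) one has $\Delta\equiv2\pmod3$, so $\beta(n)\bigl(1-\JS{\Delta_0}3\bigr)=\tfrac12\bigl(1-(-1)\bigr)=1$, matching the original coefficient $\sum_gH(\Delta/g^2)$. When $3\mid u$ one uses that $1-\JS\Delta3$ equals $2$ if $n\equiv1$ (where $\beta(n)=1$) and $0$ if $n\equiv2$; either way it agrees with $\beta(n)\bigl(1-\JS{\Delta_0}3\bigr)$. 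For $B_\ell(n)$ the same analysis applies verbatim with $\Delta=4u^2-8n$; the residues $n\equiv1$ and $n\equiv2$ trade roles, which is precisely why $\gamma$ is defined oppositely to $\beta$.

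The substantive case is $3\mid\Delta$, where $\beta(n)=1$. Writing $\Delta=\Delta_0c^2$ with $a=\ord_3 c$, one has $\Delta/(\Delta_0g^2)=(c/g)^2$, so the condition $3\nmid\Delta/(\Delta_0g^2)$ singles out the admissible $g$ with $\ord_3 g=a$. I would group the admissible $g$ by $j=\ord_3 g\in\{0,\dots,a\}$ while holding the prime-to-$3$ part of $g$ fixed; this is legitimate because $3\nmid u$ makes the conditions $(n,u,g)=1$ and $2^m\|g$ insensitive to the $3$-part of $g$. For a fixed prime-to-$3$ part put $H_j=H(\Delta/g^2)$ with $\ord_3 g=j$. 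The conductor formula \eqref{equation: class number}, applied with fundamental base $\Delta_0$, yields the recursion
$$
  H_j=3H_{j+1}\quad(0\le j\le a-2),\qquad
  H_{a-1}=\Bigl(3-\JS{\Delta_0}3\Bigr)H_a.
$$
Hence the inner combination telescopes,
$$
  \sum_{j=0}^aH_j-3\sum_{j=1}^aH_j
  =\sum_{j=0}^{a-1}\bigl(H_j-3H_{j+1}\bigr)+H_a
  =\Bigl(1-\JS{\Delta_0}3\Bigr)H_a,
$$
because every term with $j\le a-2$ vanishes and the $j=a-1$ term is $-\JS{\Delta_0}3\,H_a$. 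Summing over the prime-to-$3$ parts (and pulling out the constant $1-\JS{\Delta_0}3$) turns the original coefficient into $\bigl(1-\JS{\Delta_0}3\bigr)\sum_{g:\,3\nmid\Delta/(\Delta_0g^2)}H(\Delta/g^2)$, which is the new coefficient. The case $3\mid\Delta$ for $B_\ell(n)$ (which occurs when $n\equiv2$, so $\gamma(n)=1$) is identical; the factor $4$ in $\Delta=4u^2-8n$ affects only the $2$-adic part and is swallowed by the unchanged constant $\frac1{2^{k-1}}\JS{12}n\JS8r$.

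The main obstacle is precisely this case $3\mid\Delta$: one must cleanly isolate the $3$-adic tower of orders above $\Delta_0$, derive the recursion for $H_j$ from \eqref{equation: class number} (the distinguished step at $j=a-1$ records whether $3\mid\Delta_0$, i.e.\ whether $\JS{\Delta_0}3$ vanishes), and verify that the genus-type side conditions $(n,u,g)=1$ and $2^m\|g$ decouple from the $3$-part so that $j$ may range freely over $\{0,\dots,a\}$. Granting the telescoping identity, reconciling the constants $\beta(n)$ and $\gamma(n)$ over the remaining residue cases is routine.
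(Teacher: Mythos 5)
Your proposal is correct and is essentially the paper's own argument: the same reduction by the residues of $n$ and $u$ modulo $3$ isolates the single substantive case $3\mid\Delta$, and there your telescoping along the $3$-adic tower of orders is a regrouped form of the paper's computation, which pairs $H(h^2\Delta_0)$ against $3H(h^2\Delta_0/9)$ and invokes the same relation $H(9D)=\left(3-\JS{D}{3}\right)H(D)$ from \eqref{equation: class number}. The only cosmetic differences are that the paper splits that case into the sub-cases $3\|\Delta$ and $9\mid\Delta$ (which your tower argument with $a=\ord_3 c$ handles uniformly, the sub-case $a=0$ being degenerate since then $\JS{\Delta_0}{3}=0$) and that the paper proves only the $A_{\ell,m}(n)$ identity, leaving $B_\ell(n)$ to the analogous argument you spell out.
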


\begin{proof} Here we prove only the case $A_{\ell,m}(n)$. Consider
  the case $n\equiv 1\mod 3$ first. Assume that $3\nmid u$. We have
  $\Delta\equiv 0\mod 3$. If $3\|\Delta$, then $3|\Delta_0$ and the
  sum $\sum_{3|g}$ is empty, which yields
  $$
    \sum_{g}H\JS\Delta{g^2}-\sum_{3|g}H\JS\Delta{g^2}
   =\left(1-\JS{\Delta_0}3\right)\sum_gH\JS{\Delta}{g^2}
  $$
  If $9|\Delta$, we have
  \begin{equation*}
  \begin{split}
   &\sum_gH\JS\Delta{g^2}-3\sum_{3|g}H\JS{\Delta}{g^2}
   =\sum_hH(h^2\Delta_0)-3\sum_{3|h}H\JS{h^2\Delta_0}9 \\
  &\qquad\qquad=\sum_{(h,3)=1}H(h^2\Delta_0)+\sum_{3|h}
   \left(H(h^2\Delta_0)-3H\JS{h^2\Delta_0}9\right)\\
  &\qquad\qquad=\sum_{(h,3)=1}H(h^2\Delta_0)-\sum_{3|h}\JS{h^2\Delta_0/9}3
    H\JS{h^2\Delta_0}9 \\
  &\qquad\qquad=\sum_{(h,3)=1}H(h^2\Delta_0)-\sum_{(h,3)=1}\JS{h^2\Delta_0}3H(h^2\Delta_0) \\
  &\qquad\qquad=\left(1-\JS{\Delta_0}3\right)\sum_{(h,3)=1}H(h^2\Delta_0).
  \end{split}
  \end{equation*}
  Either way, we find that the first double sum in the definition of
  $A_{\ell,m}(n)$ is equal to
  $$
    \sum_{3\nmid
      u}\left(1-\JS{\Delta_0}3\right)\sum_{g,3\nmid\Delta/(g^2\Delta_0)}
    H\JS{\Delta}{g^2}P_k(1,n,u).
  $$
  Now if $3|u$, then $\Delta$ is not a multiple of $3$ and hence
  $$
    \JS\Delta3=\JS{\Delta_0}3.
  $$
  There is nothing to do in these cases. This proves the case $n\equiv
  1\mod 3$.

  Now assume that $n\equiv 2\mod 3$. We have $3\nmid\Delta$. Thus, the
  sum $\sum_{3|g}$ is empty and
  $$
    1-\JS{u^2-4n}3=\begin{cases}
    2, &\text{if }3\nmid u, \\
    0, &\text{if }3|u. \end{cases}
  $$
  That is, the second sum in the definition of $A_{\ell,m}(n)$
  vanishes. The assertion follows.
\end{proof}

\end{subsubsection}

\begin{subsubsection}{Case $(t,24)=1$}
\begin{Lemma} \label{lemma: (t,24)=1 summary}
  For the cases $(t,24)=1$, we have
  \begin{equation*}
  \begin{split}
   \sum_{(t,24)=1}\sum_f\sum_{\gamma\in\Gamma_{n,t,f}/\SL(2,\Z)}J(\gamma)
 =-\frac{n^{3/2-k}}{8}\begin{cases}
   \left(A_{0,0}(n)+C_{0,0}(n)\right), &\text{if }n\equiv 1\mod 3, \\
   C_{0,0}(n), &\text{if }n\equiv 2\mod 3. \end{cases}
  \end{split}
  \end{equation*}
  Here the outermost sums run over all integers $t$ satisfying
  $t^2<4n^2$ and $(t,24)=1$, the middle sums run over positive
  integers $f$ such that $(t^2-4n^2)/f^2$ is a discriminant, and the
  functions $A_{0,0}(n)$ and $C_{0,0}(n)$ are defined as in Notation
  \ref{notation: ABCD}.
\end{Lemma}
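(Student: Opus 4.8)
The plan is to begin from the closed form \eqref{equation: J(gamma)} for the contribution of a class representative $\SM abcd$ chosen as in Lemma \ref{lemma: class representatives}, and to resolve the $\eta$-multiplier $\epsilon(a,b,c,d)^{-r}$ into Jacobi symbols by Lemma \ref{lemma: roots of unity}. Because $r$ is odd, $\JS dc^{-r}=\JS dc$, so the entire effect of $\epsilon^{-r}$ is to multiply the genus-type symbol $\JS dc$ by the root of unity $e^{2\pi irc/8}e^{-2\pi ir(bd(1-c^2)+ct)/24}$. For $(t,24)=1$, Remark \ref{remark: (t,f)} forces $f$, and hence $c=fp$, to be odd, so $\ell=0$ and $c'=c$; I would reduce this phase modulo $24$ using $8\mid(1-c^2)$ together with a split on whether $3\mid c$, and then expand it through Lemma \ref{lemma: roots of unity}. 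This turns $\sum_f\frac1{w_{n,t,f}}\sum_\gamma\epsilon^{-r}(\cdots)$ into a short linear combination, with scalar coefficients built from Jacobi symbols of $r$, of the weighted sums $\sum_f\frac1w\sum_\gamma\JS dc\JS ec$ for the admissible $e\in\{\pm1,\pm3\}$ singled out by Lemma \ref{lemma: t,e conditions}.

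These weighted sums are exactly the quantities $L_{\pm1,0}(n,t)$ and $L_{\pm3,0}(n,t)$ of Notation \ref{notation: LM}; the auxiliary $-3\sum_{3\mid f}$ built into their definition is precisely what the $3\mid c$ part of the phase reduction produces, so that the genus character at $3$ is correctly isolated. I would then invoke part~(1) of Lemma \ref{lemma: elliptic main lemma} to evaluate $L_{e,0}$ through $M_{e,0}(n,u)$ and $L_{-e,0}$ through $M'_{e,0}(n,u)$, and reindex the outer sum over $t$ by the substitutions $t+2n=eu^2$ and $2n-t=eu^2$. Running $t$ over both signs then pairs the $M_{e,0}(n,u)$ arising from $t=eu^2-2n$ with the $M'_{e,0}(n,u)$ arising from $t=2n-eu^2$, trading the index $(t,f)$ for $(u,g)$.

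Next I would combine each such pair. By the relations $\rho=(\tau/\sqrt e)^2$, $\tau\overline\tau=en$, and $\rho-\overline\rho=u(\tau-\overline\tau)$ recorded in \eqref{equation: rho 1} and \eqref{equation: rho 2}, the factors $(\tau/\sqrt e)^{1-2k}$ and $(\overline\tau/\sqrt e)^{1-2k}$ turn into $\overline\tau^{\,2k-1}$ and $\tau^{\,2k-1}$ up to an explicit power of $n$, so that $\frac12\mu_e(n,u)M_{e,0}+\frac i2\JS{-4}r\mu_e(n,u)M'_{e,0}$ collapses to a scalar multiple of $P_k(e,n,u)$ times the class-number sums; the prefactor $n^{k+1/2}$ of \eqref{equation: J(gamma)} and this power of $n$ multiply to the announced $n^{3/2-k}$. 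The $e=1$ pieces then assemble into $A_{0,0}(n)$ and the $e=3$ pieces into $C_{0,0}(n)$ of Notation \ref{notation: ABCD}: for $e=3$ one has $3\|(9u^2-12n)$, whence $\mu_3(n,u)\equiv1$ and the $-3\sum_{3\mid g}$ term is empty, matching the shape of $C_{0,0}(n)$; for $e=1$ with $3\mid u$ one has $3\nmid\Delta$, so that term is again empty, and $\mu_1(n,u)=1+\JS n3=1-\JS{\Delta}3$ via $\Delta=u^2-4n\equiv2n\mod 3$, matching the $3\mid u$ summand of $A_{0,0}(n)$.

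Finally, the dichotomy in $n\mod 3$ falls out of this last step. When $n\equiv2\mod3$, the constraint $(t,24)=1$ together with $t+2n=u^2$ forces $u^2\equiv2n\equiv1\mod3$, i.e. $3\mid u$, where $\mu_1(n,u)=1+\JS23=0$; hence the entire $e=1$ contribution drops out and only $C_{0,0}(n)$ survives, whereas for $n\equiv1\mod3$ every admissible odd $u$ survives and reconstitutes all of $A_{0,0}(n)$. I expect the genuine obstacle to be the phase bookkeeping of the first paragraph: tracking $bd(1-c^2)+ct$ modulo $24$ through the $3\mid c$ split, and pinning down the exact Jacobi-symbols-in-$r$ and the $-3\sum_{3\mid f}$ weighting, so that after Lemma \ref{lemma: elliptic main lemma} the class-number sums line up on the nose with Notation \ref{notation: ABCD}. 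Verifying that all the scalar constants collapse to the single factor $-\frac18$, and that the reindexing $(t,f)\mapsto(u,g)$ is a bijection onto the ranges defining $A_{0,0}(n)$ and $C_{0,0}(n)$, are the remaining points requiring care.
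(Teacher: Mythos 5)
Your proposal is correct and follows essentially the same route as the paper's own proof: reduce the multiplier to $\JS dc e^{2\pi irc/8}e^{-2\pi irct/24}$ (splitting on $3\mid c$ so that the $-3\sum_{3\mid f}$ weighting of Notation \ref{notation: LM} appears), expand by Lemma \ref{lemma: roots of unity} into the four sums $L_{\pm1,0}(n,t)$, $L_{\pm3,0}(n,t)$, evaluate these by part (1) of Lemma \ref{lemma: elliptic main lemma}, pair the contributions at $t$ and $-t$ to form $M_{e,0}-M'_{e,0}$ and hence $P_k(e,n,u)$, and read off the mod-$3$ dichotomy from $\mu_1(n,u)=1+\JS n3$. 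Two transcription slips do not affect the argument but should be fixed: the paired expression must carry the outer coefficients from the phase expansion, i.e.\ it is $\delta_3(t)\JS 8tL_{1,0}+i\delta_1(t)\JS{-4}r\JS 8tL_{-1,0}=\frac12\JS{-4}n\mu_1(n,u)\left(M_{1,0}(n,u)-M'_{1,0}(n,u)\right)$ that collapses to a $P_k$-multiple (the bare sum $L_{1,0}+L_{-1,0}$ as you displayed it does not, since the needed minus sign comes from $(i\JS{-4}r)\cdot\frac i2\JS{-4}r=-\frac12$), and in the mod-$3$ step the constraint $3\nmid t$ forces $u^2\not\equiv2n\equiv1\pmod 3$, hence $3\mid u$.
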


\begin{proof} Since $(t,24)=1$, the integers $f$ are always odd. Let
  class representatives $\SM abcd$ of $\Gamma_{n,t,f}/\SL(2,\Z)$ be
  chosen as per Lemma \ref{lemma: class representatives}. By Lemma
  \ref{lemma: eta multiplier}, we have
  $$
    \M abcd^\ast=\left(\M abcd,
      n^{-k-1/2}\epsilon(a,b,c,d)^r(c\tau+d)^{k+1/2}\right),
  $$
  where
  $$
      \epsilon(a,b,c,d)=\JS dce^{2\pi i(bd(1-c^2)+c(a+d-3))/24}
  $$
  If $3\nmid f$, then $(c,6)=1$ and $bd(1-c^2)\equiv 0\mod 24$. If
  $3|f$, then $3|b$ and $8|(1-c^2)$. Either way, we get
  $\epsilon(a,b,c,d)=\JS dce^{2\pi ict/24}e^{-2\pi ic/8}$
  and
  \begin{equation} \label{equation: (t,24)=1 1}
    J(\gamma)=\frac{n^{k+1/2}}{w_{n,t,f}}\JS dce^{2\pi irc/8}e^{-2\pi irct/24}
    \frac{\rho^{1/2-k}}{\rho-\overline\rho},
  \end{equation}
  where $\rho=(t+\sqrt{t^2-4n^2})/2$.

  Now for $3\nmid f$, we have, by the formulas in Lemma
  \ref{lemma: roots of unity},
  \begin{equation*}
  \begin{split}
  &e^{2\pi irc/8}e^{-2\pi irct/24} \\
  &\ =\frac14\left(\JS8{rc}+i\JS{-8}{rc}\right)
    \left(\JS8{-rct}+\sqrt3\JS{24}{-rct}-i\JS{-8}{-rct}
   +i\sqrt3\JS{-24}{-rct}\right) \\
  &\ =\frac14\left(\JS8t-\JS{-8}t\right)
   +\frac i4\JS{-4}{rc}\left(\JS8t+\JS{-8}t\right) \\
  &\qquad+\frac{\sqrt 3}4\JS{12}{rc}\left(\JS{24}t+\JS{-24}t\right)
   +\frac{i\sqrt3}4\JS{-3}{rc}\left(\JS{24}t-\JS{-24}t\right) \\
  &\ =\frac12\JS8t\left(\delta_3(t)+i\delta_1(t)\JS{-4}{rc}\right)
    +\frac{\sqrt3}2\JS{24}t\JS{12}{rc}\left(\delta_1(t)+i\delta_3(t)\JS{-4}{rc}\right),
  \end{split}
  \end{equation*}
  where
  \begin{equation} \label{equation: delta}
    \delta_j(t)=\begin{cases}
    1, &\text{if }t\equiv j\mod 4, \\
    0, &\text{else}. \end{cases}
  \end{equation}
  For $3|f$, we have $3|c$ and
  \begin{equation*}
  \begin{split}
    e^{2\pi irc/8}e^{-2\pi irct/24}
  &=\frac12\left(\JS8{rc}+i\JS{-8}{rc}\right)
    \left(\JS8{rct/3}-i\JS{-8}{rct/3}\right) \\
  &=\frac12\left(-\JS8t+\JS{-8}t\right)
   -\frac i2\JS{-4}{rc}\left(\JS8t+\JS{-8}t\right) \\
  &=-\delta_3(t)\JS8t-i\delta_1(t)\JS8t\JS{-4}{rc}.
  \end{split}
  \end{equation*}
  Thus,
  \begin{equation} \label{equation: (t,24)=1 3}
  \begin{split}
  &\sum_{(t,24)=1}\sum_{f}\sum_{\gamma\in\Gamma_{n,t,f}/\SL(2,\Z)}J(\gamma) \\
  &=\frac{n^{k+1/2}}2\sum_{(t,24)=1}\Bigg(
   \delta_3(t)\JS8tL_{1,0}(n,t)+i\delta_1(t)\JS{-4}r\JS8tL_{-1,0}(n,t) \\
  &\qquad+\sqrt 3\delta_1(t)\JS{12}r\JS{24}tL_{3,0}(n,t)
  +i\sqrt3\delta_3(t)\JS{-3}r\JS{24}tL_{-3,0}(n,t)\Bigg),
  \end{split}
  \end{equation}
  where $L_{e,\ell}(n,t)$ are defined by \eqref{equation:
    L(n,t)}. (Note that the second sums in the definition of
  $L_{3,0}(n,t)$ and $L_{-3,0}(n,t)$ are empty.)

  For the term $\delta_3(t)L_{1,0}(n,t)$, $\delta_3(t)\neq 0$ implies
  that $t+2n$ is a discriminant and Lemma \ref{lemma:
    elliptic main lemma} applies. That is, $\delta_3(t)L_{1,0}(n,t)$
  is nonzero only when $t+2n=u^2$ for some (odd) integer $u$. If this
  situation occurs, then $t+2n\equiv 1\mod 8$ and
  \begin{equation} \label{equation: 8t=-4n}
    \JS8t=(-1)^{(t^2-1)/8}=(-1)^{n(n-1)/2}=\JS{-4}n.
  \end{equation}
  Also observe that since $(t,24)=1$, if $n\equiv 2\mod 3$, any
  integer $u$ such that $u^2=t+2n$ must be divisible by $3$. These
  informations, together with Lemma \ref{lemma: elliptic main lemma},
  yield
  \begin{equation} \label{equation: (t,24)=1 sum 1}
  \begin{split}
   &\sum_{(t,24)=1}\delta_3(t)\JS8tL_{1,0}(n,t)
    =\frac{\lambda_1(n)}2\JS{-4}n\sum_{(u,6)=1}M_{1,0}(n,u)\\
   &\qquad\qquad+\frac12\JS{-4}n\sum_{(u,6)=3}
    \left(1+\JS{n}3\right)M_{1,0}(n,u),
  \end{split}
  \end{equation}
  where the sums run over all \emph{positive} integers $u$ such
  that $u^2<4n$ satisfying the given conditions,
  $M_{e,\ell}(n,u)$ are defined by \eqref{equation: M(n,u)} and for $j=1,2$, 
  \begin{equation} \label{equation: lambda}
    \lambda_j(n)=\begin{cases}
    1, &\text{if }n\equiv j\mod 3, \\
    0, &\text{else}. \end{cases}
  \end{equation}

  Likewise, for the term $\delta_1(t)L_{-1,0}(n,t)$ in
  \eqref{equation: (t,24)=1 3}, $\delta_1(t)\neq 0$ implies that
  $-(t+2n)$ is a discriminant. Therefore, assuming $\delta_1(t)=1$,
  Lemma \ref{lemma: elliptic main lemma} shows that $L_{-1,0}(n,t)$
  is nonzero only when $2n-t=u^2$ is a square. In such cases,
  \eqref{equation: 8t=-4n} continues to hold. Then Lemma \ref{lemma:
    elliptic main lemma} yields
  \begin{equation} \label{equation: (t,24)=1 sum 2}
  \begin{split}
   &\sum_{(t,24)=1}\delta_1(t)\JS8tL_{-1,0}(n,t)
    =i\frac{\lambda_1(n)}2\JS{-4}{rn}\sum_{(u,6)=1}M'_{1,0}(n,u) \\
   &\qquad\qquad+\frac i2\JS{-4}{rn}\sum_{(u,6)=3}
    \left(1+\JS{n}3\right)M'_{1,0}(n,u).
  \end{split}
  \end{equation}

  The computation for the rest of terms in \eqref{equation: (t,24)=1
    3} is similar. We find for $t$ with $\delta_1(t)=1$,
  $3(t+2n)$ is a discriminant. For such $t$, $L_{3,0}(n,t)$
  is nonzero only when $3(t+2n)=(3u)^2$ is a
  square. Then $t+2n\equiv 3\mod 24$ and
  \begin{equation*}
  \begin{split}
    \JS{24}t&=\JS{-3}t\JS{-4}t\JS8t=\JS{-3}n(-1)^{(t-1)/2+(t^2-1)/8} \\
   &=\JS{-3}n(-1)^{n(n-1)/2}=\JS{12}n.
  \end{split}
  \end{equation*}
  Then by Lemma \ref{lemma: elliptic main lemma},
  \begin{equation} \label{equation: (t,24)=1 sum 3}
  \begin{split}
    \sum_{(t,24)=1}\delta_1(t)\JS{24}tL_{3,0}(n,t)
    =\frac12\JS{12}{n}\sum_{(u,2)=1}M_{3,0}(n,u)
  \end{split}
  \end{equation}
  where the sum runs over all \emph{positive} integers $u$ satisfying
  $9u^2<12n$ and the given conditions. By a similar computation and
  the same lemma, we also have
  \begin{equation} \label{equation: (t,24)=1 sum 4}
  \begin{split}
    \sum_{(t,24)=1}\delta_3(t)\JS{24}tL_{-3,0}(n,t)
  =\frac i2\JS{-4}r\JS{12}{n}\sum_{(u,2)=1}M'_{3,0}(n,u).
  \end{split}
  \end{equation}
  Combining \eqref{equation: (t,24)=1 3},
  \eqref{equation: (t,24)=1 sum 1}, \eqref{equation: (t,24)=1 sum 2},
  \eqref{equation: (t,24)=1 sum 3}, and \eqref{equation: (t,24)=1 sum
    4}, we get
  \begin{equation} \label{equation: (t,24)=1 summary 1}
  \begin{split}
   &\sum_{(t,24)=1}\sum_{f}
    \sum_{\gamma\in\Gamma_{n,t,f}/\SL(2,\Z)}J(\gamma) \\
   &\ =\frac{\lambda_1(n)n^{k+1/2}}4\JS{-4}n
    \sum_{(u,6)=1}\left(M_{1,0}(n,u)-M'_{1,0}(n,u)\right) \\
   &\ \qquad+\frac{n^{k+1/2}}4\JS{-4}n\sum_{(u,6)=3}
    \left(1+\JS{n}3\right)\left(M_{1,0}(n,u)-M'_{1,0}(n,u)\right) \\
   &\ \qquad+\frac{3^kn^{k+1/2}}4\JS{12}n\JS{12}r
    \sum_{(u,2)=1}\left(M_{3,0}(n,u)-M'_{3,0}(n,u)\right).
  \end{split}
  \end{equation}
  Notice that for $\tau=(eu+\sqrt{e^2u^2-4en})/2$ we have
  $$
    \frac{\tau^{1-2k}-\overline\tau^{1-2k}}{\tau-\overline\tau}
  =-(en)^{1-2k}\frac{\tau^{2k-1}-\overline\tau^{2k-1}}{\tau-\overline\tau}.
  $$
  Now if $n\equiv 1\mod 3$, then $\JS{-4}n=\JS{12}n$ and
  $$
    \sum_{(t,24)=1}\sum_f\sum_\gamma J(\gamma)=
   -\frac{n^{3/2-k}}8\left(A_{0,0}(n)+C_{0,0}(n)\right).
  $$
  (Note that in the definition of $A_{\ell,m}(n)$, the integers $u$
  can be positive or negative, but in \eqref{equation: (t,24)=1
    summary 1}, the integers $u$ are always positive. This explains
  the additional factor $1/2$ above.) If $n\equiv2\mod 3$, then the
  factor $1+\JS{n}3$ in the middle sum in \eqref{equation: (t,24)=1
    summary 1} is equal to $0$ and we have
  $$
    \sum_{(t,24)=1}\sum_f\sum_\gamma J(\gamma)=
   -\frac{n^{3/2-k}}8C_{0,0}(n).
  $$
  This completes the proof.
\end{proof}

\end{subsubsection}

\begin{subsubsection}{Case $(t,24)=3$}
\begin{Lemma} \label{lemma: (t,24)=3 summary} We have
$$
  \sum_{(t,24)=3}\sum_f\sum_{\gamma\in\Gamma_{n,t,f}/\SL(2,\Z)}J(\gamma)
 =-\frac{n^{3/2-k}}4\begin{cases}
  0, &\text{if }n\equiv 1\mod 3, \\
  A_{0,0}(n), &\text{if }n\equiv 2\mod 3,
  \end{cases}
$$
where $A_{0,0}(n)$ is defined as in Notation \ref{notation: ABCD}.
\end{Lemma}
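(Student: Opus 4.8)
The plan is to rerun the argument of Lemma \ref{lemma: (t,24)=1 summary}, exploiting the simplifications forced by $3\mid t$. Since $(t,24)=3$ makes $t$ odd, Remark \ref{remark: (t,f)} shows that every admissible $f$ is odd, so the representatives $\SM abcd$ furnished by Lemma \ref{lemma: class representatives} have odd $c$ with $(c,d)=1$. Exactly as in the case $(t,24)=1$, the summand $bd(1-c^2)$ inside $\epsilon(a,b,c,d)$ is divisible by $24$ whether or not $3\mid f$, so Lemma \ref{lemma: eta multiplier} gives $\epsilon(a,b,c,d)=\JS dc e^{2\pi ict/24}e^{-2\pi ic/8}$, and $J(\gamma)$ is again of the form \eqref{equation: (t,24)=1 1}.

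The only new phenomenon is the shape of the root of unity $e^{2\pi irc/8}e^{-2\pi irct/24}$. Because $3\mid t$ I would write $e^{-2\pi irct/24}=e^{-2\pi irc(t/3)/8}$, exhibiting the multiplier as a product of two $8$th roots of unity. Feeding both factors through the $(t,8)=1$ case of Lemma \ref{lemma: roots of unity}, only $\JS8{\cdot}$ and $\JS{-8}{\cdot}$ appear, and their $c$-parts cancel in pairs: the two diagonal products are $c$-independent (so they carry the trivial symbol $\JS1{c'}$ of $e=1$), while the two cross products carry $\JS{-1}c$, the symbol of $e=-1$. Hence only $L_{1,0}(n,t)$ and $L_{-1,0}(n,t)$ of Notation \ref{notation: LM} occur, matching the range $e\in\{\pm1\}$ of Lemma \ref{lemma: t,e conditions}; no $\JS{24}{\cdot}$ or $\JS{12}{\cdot}$ survives, so no $C$-type term appears. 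I would then package the result into an analogue of \eqref{equation: (t,24)=1 3} carrying only the $e=\pm1$ pieces, with the $\delta$-selectors now reading the residue of $t/3$ modulo $4$.

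The dichotomy in $n$ is then governed by a clean obstruction. As $3\mid t$ and $(n,3)=1$, both $t+2n$ and $2n-t$ are congruent to $2n\mod 3$. If $n\equiv 1\mod 3$ this residue is $2$, a nonresidue modulo $3$, so neither $t+2n$ nor $2n-t$ is a perfect square; by Lemma \ref{lemma: elliptic main lemma} both $L_{1,0}(n,t)$ and $L_{-1,0}(n,t)$ vanish, and the total contribution is $0$. If $n\equiv 2\mod 3$ the residue is $1$, so whenever $t+2n=u^2$ or $2n-t=u^2$ one has $3\nmid u$ (and $u$ odd, i.e. $(u,6)=1$); then $\mu_1(n,u)=1$ and $\Delta=u^2-4n\equiv 2\mod 3$, so the inner sums $\sum_{3\mid g}$ are empty and only the $3\nmid u$ branch of $A_{0,0}(n)$ is activated. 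I would assemble the surviving $M_{1,0}(n,u)$ and $M'_{1,0}(n,u)$ as in \eqref{equation: (t,24)=1 summary 1}, pass to $P_k$ via $(\tau^{1-2k}-\overline\tau^{1-2k})/(\tau-\overline\tau)=-n^{1-2k}P_k(1,n,u)$, and identify the outcome with $-\frac{n^{3/2-k}}4A_{0,0}(n)$; the alternative formula of Lemma \ref{lemma: alternative AB} with $\beta(n)=1/2$ confirms this, since $\Delta\equiv 2\mod 3$ forces $1-\JS{\Delta_0}3=2$.

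I expect the one genuine obstacle to be the constant-and-sign bookkeeping, in particular the factor that upgrades the $\frac18$ of Lemma \ref{lemma: (t,24)=1 summary} to the $\frac14$ claimed here. Its source is transparent once isolated: replacing the order-$24$ root of unity by an order-$8$ one trades the expansion coefficients $\frac{\sqrt2}4$ of the $(t,24)=1$ case of Lemma \ref{lemma: roots of unity} for the larger coefficients $\frac{\sqrt2}2$ of the $(t,8)=1$ case, so each surviving product of leading terms doubles. The remaining steps are clerical and parallel \eqref{equation: 8t=-4n}--\eqref{equation: (t,24)=1 sum 4}: evaluating the $t$-dependent characters on the constraint $t+2n=u^2$ (the analogue of \eqref{equation: 8t=-4n}) so that they collapse to $\JS{12}n$, inserting the factor $1/2$ that passes from positive $u$ to the symmetric range of $A_{0,0}$, and tracking signs through the $\JS{-4}{rc}$ cross terms.
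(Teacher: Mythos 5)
Your proposal is correct and follows essentially the same route as the paper: write $e^{-2\pi irct/24}=e^{-2\pi irc(t/3)/8}$, expand via the $(t,8)=1$ case of Lemma \ref{lemma: roots of unity} so that only the $e=\pm1$ terms $L_{\pm1,0}(n,t)$ survive, use the observation that $2n\pm t\equiv 2n\mod 3$ kills everything when $n\equiv 1\mod 3$, and for $n\equiv 2\mod 3$ assemble the surviving $M_{1,0},M'_{1,0}$ into $-\tfrac{n^{3/2-k}}4A_{0,0}(n)$, with the doubled root-of-unity coefficients accounting for $\tfrac14$ in place of $\tfrac18$. Your description of the $\delta$-selectors as reading $t/3$ modulo $4$ is equivalent to the paper's selectors in $t$ (since $\delta_1(t/3)=\delta_3(t)$), and is consistent with the discriminant conditions required by Lemma \ref{lemma: elliptic main lemma}.
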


\begin{proof} Since $(t,24)=3$, we have $(f,6)=1$. Then for class
  representatives $\gamma=\SM abcd$ given in Lemma
  \ref{lemma: class representatives}, we have, by Lemma \ref{lemma: eta
    multiplier},
  $$
    J(\gamma)=\frac{n^{k+1/2}}{w_{n,t,f}}\JS dc
    e^{2\pi irc/8}e^{-2\pi irc(t/3)/8}\frac{\rho^{1/2-k}}{\rho-\overline\rho}.
  $$
  Now
  \begin{equation*}
  \begin{split}
    e^{2\pi irc/8}e^{-2\pi irc(t/3)/8}
  &=\frac12\left(\JS8{rc}+i\JS{-8}{rc}\right)
    \left(\JS8{rc(t/3)}-i\JS{-8}{rc(t/3)}\right) \\
  &=\frac12\left(-\JS8t+\JS{-8}t\right)
   +\frac i2\JS{-4}{rc}\left(-\JS8t-\JS{-8}t\right) \\
  &=-\delta_3(t)\JS8t-i\delta_1(t)\JS{-4}{rc}\JS8t,
  \end{split}
  \end{equation*}
  where $\delta_1(t)$ and $\delta_3(t)$ are defined as
  \eqref{equation: delta}. Then
  \begin{equation*}
  \begin{split}
   &\sum_{(t,24)=3}\sum_f\sum_{\gamma\in\Gamma_{n,t,f}/\SL(2,\Z)}J(\gamma) \\
   &\qquad=n^{k+1/2}\sum_{(t,24)=3}\left(-\delta_3(t)\JS8tL_{1,0}(n,t)
    -i\delta_1(t)\JS{-4}r\JS8tL_{-1,0}(n,t)\right).
  \end{split}
  \end{equation*}
  By Lemma \ref{lemma: elliptic main lemma}, the term
  $\delta_3(t)L_{1,0}(n,t)$ is nonzero only when $t+2n$ is a
  square and $\delta_1(t)L_{1,0}(n,t)$ is nonzero only when
  $2n-t$ is a square. Since $3|t$, if $n\equiv 1\mod 3$, then $2n\pm
  t\equiv 2\mod 3$ and $2n\pm t$ can never be a square. Therefore, if
  $n\equiv 1\mod 3$, then the sum is $0$. If $n\equiv 2\mod 3$ and
  $2n\pm t$ is a square, we have, as in \eqref{equation: 8t=-4n},
  $$
    \JS8t=\JS{-4}n=-\JS{12}n.
  $$
  Then following the computation in \eqref{equation: (t,24)=1 sum 1}
  and \eqref{equation: (t,24)=1 sum 2}, we get
  \begin{equation*}
  \begin{split}
    \sum_{(t,24)=3}\sum_f\sum_\gamma J(\gamma)
  &=\frac{n^{k+1/2}}2\JS{12}n\sum_{(u,6)=1}\left(M_{1,0}(n,u)-M_{1,0}'(n,u)\right) \\
  &=-\frac{n^{3/2-k}}4A_{0,0}(n).
  \end{split}
  \end{equation*}
  (Note that here since $3|t$, the integer $u$ such that $2n\pm t=u^2$
  is never a multiple of $3$, while the second sum in the definition
  of $A_{0,0}(n)$ is just $0$. Also, since $n\equiv 2\mod 3$,
  $\Delta=u^2-4n$ is never a multiple of $3$. So the sum over $g$ with
  $3|g$ in the definition of $A_{0,0}(n)$ is empty.) This completes
  the proof.
\end{proof}
\end{subsubsection}

\begin{subsubsection}{Case $(t,24)=4$}

\begin{Lemma} \label{lemma: (t,24)=4 summary}
  We have
  $$
    \sum_{(t,24)=4}\sum_f\sum_{\gamma\in\Gamma_{n,t,f}/\SL(2,\Z)}J(\gamma)
   =-\frac{n^{3/2-k}}{16}
    \begin{cases}
    D_0(n), &\text{if }n\equiv 1\mod 4, \\
    0, &\text{if }n\equiv 7 \mod 12, \\
    B_0(n), &\text{if }n\equiv 11\mod 12.
    \end{cases}
  $$
\end{Lemma}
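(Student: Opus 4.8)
The plan is to follow verbatim the template of the cases $(t,24)=1$ and $(t,24)=3$ settled in Lemmas~\ref{lemma: (t,24)=1 summary} and~\ref{lemma: (t,24)=3 summary}, now with $t\equiv 4\mod 8$ and $3\nmid t$. First I would record the simplifications forced by $(t,24)=4$. Since $4\mid t$, Remark~\ref{remark: (t,f)} gives $(t,f)=1$, so every admissible $f$ is odd and the representative $c=fp$ is odd; hence only the case $\ell=0$ of Notation~\ref{notation: LM} occurs, and by Lemma~\ref{lemma: t,e conditions} the only relevant values are $e\in\{\pm2,\pm6\}$. By Lemma~\ref{lemma: eta multiplier} and \eqref{equation: J(gamma)} (using $bd(1-c^2)\equiv0\mod24$, as in the earlier cases), the class of $\SM abcd$ contributes $J(\gamma)=\frac{n^{k+1/2}}{w_{n,t,f}}\JS dc\,e^{2\pi i rc/8}e^{-2\pi i rct/24}\frac{\rho^{1/2-k}}{\rho-\overline\rho}$. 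Writing $t=4s$ with $s$ odd and $3\nmid s$ turns the second exponential into $e^{-2\pi i rcs/6}$.

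Next I would expand $e^{2\pi i rc/8}e^{-2\pi i rcs/6}$ by Lemma~\ref{lemma: roots of unity}, separating $3\nmid c$ from $3\mid c$. For $3\nmid c$ the product is a combination of $\JS{\pm2}c$ (coefficients free of $s$) and $\JS{\pm6}c$ (coefficients carrying a factor $\JS{-3}{-rs}$); for $3\mid c$ one has $e^{-2\pi i rcs/6}=(-1)^{rcs/3}=-1$, so only $\JS{\pm2}c$ survive, while $\JS{\pm6}c=0$ automatically. As in the $(t,24)=1$ case, the ratio of the $3\mid c$ coefficient to the $3\nmid c$ coefficient of each $\JS{\pm2}c$ is $-2$, which is exactly what makes the $\JS{\pm2}c$ and $\JS{\pm6}c$ pieces assemble, after multiplying by $\JS dc$ and summing over $f$ and over the classes, into $L_{\pm2,0}(n,t)$ and $L_{\pm6,0}(n,t)$ of \eqref{equation: L(n,t)} (the second sum in $L_{\pm6,0}$ being empty). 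This writes $\sum_{(t,24)=4}\sum_f\sum_\gamma J(\gamma)$ as $n^{k+1/2}$ times an explicit combination of $L_{\pm2,0}(n,t)$ and $L_{\pm6,0}(n,t)$ summed over $t$.

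Then I would invoke Lemma~\ref{lemma: elliptic main lemma}: $L_{e,0}(n,t)$ is nonzero only when $e(t+2n)$ (for $e>0$) or $e(2n-t)$ (for $e<0$) is a perfect square. A $2$-adic check using $t\equiv 4\mod 8$ and $n$ odd shows that $t+2n=2u^2$ or $2n-t=2u^2$ forces $n\equiv 3\mod4$, while $t+2n=6u^2$ or $2n-t=6u^2$ forces $n\equiv1\mod4$; thus exactly one of the pairs $\{e=\pm2\}$, $\{e=\pm6\}$ survives, according to $n\mod4$. For $n\equiv1\mod4$ I would combine the $e=\pm6$ terms: reindexing the $t$-sum by the positive square root $u$ and pairing the $t$ and $-t$ contributions (which give $L_{6,0}$ and $L_{-6,0}$ for the same $u$), the sign change of $\JS{-3}{-rs}$ under $t\mapsto-t$ (since $s=t/4$) together with the $i^2$ from $L_{-6,0}$ converts the naive $M_{6,0}+M'_{6,0}$ into $M_{6,0}-M'_{6,0}$. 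Then $\tau\overline\tau=6n$, the identity $\frac{\tau^{1-2k}-\overline\tau^{1-2k}}{\tau-\overline\tau}=-(6n)^{1-2k}P_k(6,n,u)$, the facts $\mu_6(n,u)\equiv1$ and $\sum_{3\mid g}H=0$ (valid since $3\nmid n$), and the convention that the sums of Notation~\ref{notation: ABCD} run over $u$ of both signs (an extra $1/2$) produce $-\tfrac{n^{3/2-k}}{16}D_0(n)$. For $n\equiv3\mod4$ the analogous combination of $e=\pm2$ gives, per $u$, a multiple of $\mu_2(n,u)\,(M_{2,0}-M'_{2,0})$ (here the $i^2$ alone suffices), whose two-part shape matches the $\sum_g-3\sum_{3\mid g}$ and $(1-\JS\Delta3)\sum_g$ pieces defining $B_\ell(n)$.

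The decisive input is the $3$-adic bookkeeping in this last case. For $n\equiv 2\mod3$ (so $n\equiv11\mod12$) one has $\JS{12}n=1$ and $\mu_2$ reproduces $B_0(n)$ in full, yielding $-\tfrac{n^{3/2-k}}{16}B_0(n)$; for $n\equiv1\mod3$ (so $n\equiv7\mod12$) the constraint $3\nmid t$ forces $3\mid u$ in every admissible term, where $\mu_2(n,u)=1+\JS{2n}3=0$, so the entire contribution vanishes. I expect the main obstacle to be exactly this interplay: the $t\equiv4\mod8$ square-parity constraint that selects $e=\pm2$ versus $e=\pm6$ by $n\mod4$, the sign flip of $\JS{-3}{-rs}$ under $t\mapsto-t$ that is what makes $M-M'$ (hence $P_k$) appear in the $D_0$ case, and the vanishing $\mu_2=0$ responsible for the zero case, all while keeping the constant $1/16$ straight through the $\sqrt2,\sqrt6$ factors, the powers of $2$ and $6$, and the both-signs-of-$u$ convention.
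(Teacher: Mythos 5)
Your proposal is correct and follows essentially the same route as the paper's proof: the same reduction (odd $f$, hence $\ell=0$ and $e\in\{\pm2,\pm6\}$), the same roots-of-unity expansion with the $3\mid c$ coefficient being $-2$ times the $3\nmid c$ one so that everything assembles into $L_{\pm2,0}$ and $L_{\pm6,0}$, the same application of Lemma \ref{lemma: elliptic main lemma} with the $2$-adic dichotomy selecting $e=\pm2$ or $e=\pm6$ according to $n\bmod 4$, and the same $3$-adic bookkeeping ($\mu_2=0$ when $n\equiv 1\bmod 3$) producing the three cases. Your constant-tracking (the $\sqrt2,\sqrt6$ factors, $\tau\overline\tau=en$, and the both-signs-of-$u$ convention) is in fact cleaner than the paper's displayed intermediate formulas, whose coefficients $2^k/2$ and $6^k/2$ in \eqref{equation: (t,24)=4 sum 1}--\eqref{equation: (t,24)=4 sum 2} appear to be typos for $\sqrt2/2$ and $\sqrt6/2$, the values consistent with the final statement.
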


\begin{proof} Let class representatives $\SM abcd$ be chosen according
  to Lemma \ref{lemma: class representatives}. Since $(t,24)=4$, the
  integers $f$ must be odd, and the entries $c$ in the class
  representatives are always odd. Regardless of whether $3|f$ or not,
  we have $bd(1-c^2)\equiv 0\mod 24$ so that the term $J(\gamma)$ in
  Proposition \ref{proposition: Shimura trace} is equal to
  $$
    J(\gamma)=\frac{n^{k+1/2}}{w_{n,t,f}}\JS dc
    e^{2\pi irc/8}e^{-2\pi irc(t/4)/6}\frac{\rho^{1/2-k}}{\rho-\overline\rho},
  $$
  where $\rho=(t+\sqrt{t^2-4n^2})/2$. Now if $3\nmid f$, then $3\nmid
  c$ and
  \begin{equation*}
  \begin{split}
   &e^{2\pi irc/8}e^{-2\pi irc(t/4)/6} \\
  &\qquad=\frac1{2\sqrt 2}\left(\JS8{rc}+i\JS{-8}{rc}\right)
    \left(1-i\sqrt 3\JS{-3}{rc(t/4)}\right) \\
  &\qquad=\frac{\sqrt2}4\JS8{rc}+i\frac{\sqrt2}4\JS{-8}{rc}
   +\frac{\sqrt6}4\JS{24}{rc}\JS{-3}t
  -i\frac{\sqrt6}4\JS{-24}{rc}\JS{-3}t
  \end{split}
  \end{equation*}
  If $3|f$, then $3|c$ and
  $$
    e^{2\pi irc/8}e^{-2\pi irc(t/4)/6}=-\frac{\sqrt2}2\left(
    \JS8{rc}+i\JS{-8}{rc}\right).
  $$
  Therefore,
  \begin{equation} \label{equation: (t,24)=4 1}
  \begin{split}
   &\sum_{(t,24)=4}\sum_{f}
    \sum_{\gamma\in\Gamma_{n,t,f}/\SL(2,\Z)}J(\gamma) \\
   &\quad=\frac{n^{k+1/2}}4\sum_{(t,24)=4}
    \Bigg(\sqrt 2\JS8rL_{2,0}(n,t)+i\sqrt2\JS{-8}rL_{-2,0}(n,t) \\
   &\quad\qquad+\sqrt 6\JS{24}r\JS{-3}tL_{6,0}(n,t)
  -i\sqrt 6\JS{-24}r\JS{-3}tL_{-6,0}(n,t)\Bigg).
  \end{split}
  \end{equation}
  By Lemma \ref{lemma: elliptic main lemma}, $L_{2,0}(n,t)$ is
  nonzero only when $(t+2n)/2$ is a square. Since $4\|t$, this can
  possibly happen only when $n\equiv 3\mod 4$. Also, since $3\nmid t$,
  when $n\equiv 1\mod 3$ and $(t+2n)/2$ is indeed a square, this
  square must be a multiple of $3$. By the same reasoning,
  $L_{-2,0}(n,t)$ can possibly be nonzero only when $n\equiv 3\mod
  4$. Moreover, if $n\equiv1\mod3$ and $(2n-t)/2$ is a square, then this
  square is a multiple of $3$. Thus, by Lemma \ref{lemma: elliptic
    main lemma}, we have
  \begin{equation} \label{equation: (t,24)=4 sum 1}
  \begin{split}
   &\sum_{(t,24)=4}\left(\sqrt2\JS8rL_{2,0}(n,t)+i\sqrt2\JS{-8}r
    L_{-2,0}(n,t)\right) \\
  &\qquad=\frac{2^k\delta_3(n)}2\JS8r\Bigg(\lambda_2(n)\sum_{(u,6)=1}
    \left(M_{2,0}(n,u)-M'_{2,0}(n,u)\right) \\
  &\qquad\qquad+\sum_{(u,6)=3}\left(1-\JS{n}3\right)
    \left(M_{2,0}(n,u)-M'_{2,0}(n,u)\right)\Bigg),
  \end{split}
  \end{equation}
  where the sums run over all \emph{positive} integers $u$
  satisfying $4u^2<8n$ and the given conditions, and $\delta_j(n)$ and
  $\lambda_j(n)$ are defined by \eqref{equation: delta} and
  \eqref{equation: lambda}, respectively.

  Similarly, $L_{6,0}(n,t)$ (respectively, $L_{-6,0}(n,t)$) can
  possibly be nonzero only when $n\equiv 1\mod 4$ and $(t+2n)/6$
  (respectively, $(2n-t)/6$) is a square. Moreover, when $(t+2n)/6$ is
  a square, we have $\JS{-3}t=\JS{-3}n=\JS{12}n$ and when $(2n-t)/6$
  is a square, we have $\JS{-3}t=-\JS{-3}n=-\JS{12}n$. Then, by Lemma
  \ref{lemma: elliptic main lemma},
  \begin{equation} \label{equation: (t,24)=4 sum 2}
  \begin{split}
   &\sum_{(t,24)=4}\left(\sqrt6\JS{24}r\JS{-3}tL_{6,0}(n,t)
      -i\sqrt6\JS{-24}r\JS{-3}tL_{-6,0}(n,t)\right) \\
   &\qquad=\frac{6^k\delta_1(n)}2\JS{24}r\JS{12}n
    \sum_{(u,2)=1}\left(M_{6,0}(n,u)-M'_{6,0}(n,u)\right),
  \end{split}
  \end{equation}
  where the sum runs over all \emph{positive} odd integer $u$
  satisfying $36u^2<24n$. Substituting
  \eqref{equation: (t,24)=4 sum 1} and
  \eqref{equation: (t,24)=4 sum 2} into \eqref{equation: (t,24)=4 1}
  and simplifying, we obtain the claimed formula. (The reader is
  reminded again that the integers $u$ in the sums in \eqref{equation:
    (t,24)=4 sum 1} and \eqref{equation: (t,24)=4 sum 2} are all 
  positive, while the integers $u$ in the definition of
  $B_{\ell}(n)$ and $D_\ell(n)$ can be positive or negative.)
\end{proof}
\end{subsubsection}

\begin{subsubsection}{Case $(t,24)=8$}
\begin{Lemma} \label{lemma: (t,24)=8 summary}
  We have
  $$
    \sum_{(t,24)=4}\sum_f\sum_{\gamma\in\Gamma_{n,t,f}/\SL(2,\Z)}J(\gamma)
   =-\frac{n^{3/2-k}}{16}
    \begin{cases}
    0, &\text{if }n\equiv 1\mod 12, \\
    B_0(n), &\text{if }n\equiv 5\mod 12, \\
    D_0(n), &\text{if }n\equiv 3\mod 4. \end{cases}
  $$
\end{Lemma}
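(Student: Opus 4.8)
The plan is to run the argument of the case $(t,24)=4$ (Lemma~\ref{lemma: (t,24)=4 summary}) essentially verbatim, tracking the two systematic changes forced by the stronger divisibility $8\mid t$. First I would fix class representatives $\SM abcd$ of the classes in $\Gamma_{n,t,f}/\SL(2,\Z)$ as in Lemma~\ref{lemma: class representatives}. Since $(t,24)=8$ gives $4\mid t$, Remark~\ref{remark: (t,f)} forces $(t,f)=1$, so every $f$, and hence every $c=fp$, is odd; then $8\mid(1-c^2)$, and a short check (separating $3\nmid f$ from $3\mid f$) gives $bd(1-c^2)\equiv0\bmod24$ just as before. Thus Lemma~\ref{lemma: eta multiplier} yields
\begin{equation*}
  J(\gamma)=\frac{n^{k+1/2}}{w_{n,t,f}}\JS dc\,
  e^{2\pi irc/8}e^{-2\pi irct/24}\,\frac{\rho^{1/2-k}}{\rho-\overline\rho},
  \qquad\rho=\frac{t+\sqrt{t^2-4n^2}}2 .
\end{equation*}

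The one genuinely new input is the second exponential. Writing $t=8(t/8)$ we have $ct/24=c(t/8)/3$, so $e^{-2\pi irct/24}$ is now the \emph{cube} root $e^{-2\pi irc(t/8)/3}$ rather than the sixth root $e^{-2\pi irc(t/4)/6}$ of the case $(t,24)=4$. Expanding it by Lemma~\ref{lemma: roots of unity} and using $\JS{-3}{t/8}=-\JS{-3}t$ (because $\JS{-3}8=-1$), I expect the sum to collapse, exactly as in \eqref{equation: (t,24)=4 1}, to a combination
\begin{equation*}
\begin{split}
  &\sum_{(t,24)=8}\sum_f\sum_{\gamma\in\Gamma_{n,t,f}/\SL(2,\Z)}J(\gamma)
  =\frac{n^{k+1/2}}4\sum_{(t,24)=8}\Big(c_2\JS8rL_{2,0}(n,t) \\
  &\qquad+c_{-2}\JS{-8}rL_{-2,0}(n,t)+c_6\JS{24}r\JS{-3}tL_{6,0}(n,t)
  +c_{-6}\JS{-24}r\JS{-3}tL_{-6,0}(n,t)\Big),
\end{split}
\end{equation*}
with the $L_{e,0}(n,t)$ of \eqref{equation: L(n,t)}, where the constants $c_{\pm2},c_{\pm6}$ differ from those in \eqref{equation: (t,24)=4 1} by the sign changes coming from the $-1/2$ (rather than $+1/2$) real part of the cube root.

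I would then evaluate the four $L_{e,0}(n,t)$ by Lemma~\ref{lemma: elliptic main lemma}. The second structural change is that $8\mid t$ \emph{reverses} the parity conditions of the case $4\|t$: since $(t\pm2n)/2\equiv n\bmod4$, the terms $L_{\pm2,0}$ are nonzero only for $n\equiv1\bmod4$, while the square conditions $t\pm2n=6u^2$ force $n\equiv3\bmod4$, so $L_{\pm6,0}$ contributes only for $n\equiv3\bmod4$ -- opposite to \eqref{equation: (t,24)=4 sum 1}--\eqref{equation: (t,24)=4 sum 2}. On each square locus I convert the surviving symbols as in \eqref{equation: 8t=-4n}; in particular $\JS{-3}t=\JS{-3}n$ (valid for both signs of $t$ since the character $\JS{-3}{\cdot}$ is odd), which equals $-\JS{12}n$ when $n\equiv3\bmod4$. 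After packaging the $e=\pm2$ sums by the factors $\mu_2(n,u)$ and $1-\JS n3$ and the $e=\pm6$ sums by $\mu_6(n,u)$, and passing from $M_{e,0}-M'_{e,0}$ to $P_k$ exactly as in the case $(t,24)=4$, the $n\equiv1\bmod4$ locus splits by $n\bmod3$ into a vanishing contribution for $n\equiv1\bmod12$ and a $B_0(n)$-contribution for $n\equiv5\bmod12$, while the $n\equiv3\bmod4$ locus produces $D_0(n)$, as in Notation~\ref{notation: ABCD}.

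The hard part will be the sign bookkeeping in this last step. One must verify that the global sign flips introduced by the cube root combine correctly with (i) the reversed $n\bmod4$ conditions and (ii) the changed values of $\JS8t$ and $\JS{-3}t$ -- which differ from the case $(t,24)=4$ precisely because now $t\equiv0\bmod8$ rather than $t\equiv4\bmod8$ -- so that both $B_0(n)$ and $D_0(n)$ emerge with the overall factor $-n^{3/2-k}/16$ stated in the lemma. For the $e=\pm6$ terms the sign change of the cube root is absorbed by the flip $\JS{-3}n=-\JS{12}n$ forced by $n\equiv3\bmod4$; the $e=\pm2$ terms, which carry no $t$-dependent quadratic symbol, are the delicate case and will demand the most careful accounting. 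Everything else -- the genus-character evaluations via Lemmas~\ref{lemma: genus character} and \ref{lemma: class sum}, and the reduction of the inner sums over $g$ -- is identical to the case $(t,24)=4$ and can be transcribed with only notational changes.
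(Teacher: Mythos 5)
Your proposal is correct and takes essentially the same route as the paper, whose entire proof of this lemma is the remark that it is almost identical with the case $(t,24)=4$: your two structural changes (the sixth root of unity becoming a cube root, which flips the overall sign of the expansion, and the reversal of the $n\mod 4$ conditions attached to the square loci $t\pm2n=2u^2$ and $t\pm2n=6u^2$) are exactly what happens. The delicate $e=\pm2$ sign you flag does close up as claimed: the cube-root sign flip is absorbed by the factor $\JS{12}n$ appearing in the definition of $B_0(n)$, since $\JS{12}n=-1$ for $n\equiv 5\mod 12$ here, whereas $\JS{12}n=+1$ for $n\equiv 11\mod 12$ in Lemma \ref{lemma: (t,24)=4 summary}.
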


\begin{proof} The proof is almost identical with the proof of the case
  $(t,24)=4$. We omit the proof.
\end{proof}
\end{subsubsection}

\begin{subsubsection}{Case $(t,24)=12$}
\begin{Lemma} \label{proposition: (t,24)=12 summary}
We have
$$
  \sum_{(t,24)=12}\sum_f\sum_{\gamma\in\Gamma_{n,t,f}/\SL(2,\Z)}=
  -\frac{n^{3/2-k}}8\begin{cases}
   B_{0}(n), &\text{if }n\equiv 7\mod 12, \\
  0, &\text{else}, \end{cases}
$$
where $B_{0}(n)$ is defined as in Notation \ref{notation: ABCD}.
\end{Lemma}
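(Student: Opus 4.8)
The plan is to follow the template of the case $(t,24)=4$ (Lemma~\ref{lemma: (t,24)=4 summary}), which becomes considerably simpler here because $3|t$ trivializes the relevant root of unity. Note first that $(t,24)=12$ forces $4\|t$ and $3|t$; since $4|t$, Remark~\ref{remark: (t,f)} gives $(t,f)=1$, so $f$ is odd and every representative $\SM abcd$ chosen as in Lemma~\ref{lemma: class representatives} has $c=fp$ odd. Because $8|(1-c^2)$, and $3|c$ forces $3|b$ while $3\nmid c$ forces $3|(1-c^2)$, we have $bd(1-c^2)\equiv0\mod24$, so Lemma~\ref{lemma: eta multiplier} yields exactly the form \eqref{equation: J(gamma)}, namely $J(\gamma)=\frac{n^{k+1/2}}{w_{n,t,f}}\JS dc\,e^{2\pi irc/8}e^{-2\pi irct/24}\frac{\rho^{1/2-k}}{\rho-\overline\rho}$ with $\rho=(t+\sqrt{t^2-4n^2})/2$.

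The decisive simplification is that, writing $t=12m$ with $m$ odd, one has $rct/24=rcm/2$ and hence $e^{-2\pi irct/24}=(-1)^{rcm}=-1$, since $r,c,m$ are all odd. Thus the phase collapses, by Lemma~\ref{lemma: roots of unity}, to $e^{2\pi irc/8}e^{-2\pi irct/24}=-\frac{\sqrt2}2\left(\JS8{rc}+i\JS{-8}{rc}\right)$, with no residual $t$-dependent Jacobi symbol (this is what makes the present case cleaner than $(t,24)=1$). Writing $\JS8c=\JS2c$ and $\JS{-8}c=\JS{-2}c$ and multiplying through by $\JS dc$, I obtain $\sum_{(t,24)=12}\sum_f\sum_\gamma J(\gamma)=-\frac{n^{k+1/2}\sqrt2}2\sum_{(t,24)=12}\left(\JS8r\,L_{2,0}(n,t)+i\JS{-8}r\,L_{-2,0}(n,t)\right)$, with $L_{e,\ell}$ as in \eqref{equation: L(n,t)}; this is consistent with Lemma~\ref{lemma: t,e conditions}, which allows only $e=\pm2$ when $(t,24)=12$.

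Next I apply Lemma~\ref{lemma: elliptic main lemma}(1): $L_{2,0}(n,t)$ (resp.\ $L_{-2,0}(n,t)$) is nonzero only when $(t+2n)/2$ (resp.\ $(2n-t)/2$) is a perfect square. For $m$ odd both quantities are $\equiv n+6\mod12$, which is a quadratic residue modulo $12$ only when $n\equiv7\mod12$; this yields the ``else'' case of the statement. When $n\equiv7\mod12$, writing $t+2n=2u^2$ (resp.\ $2n-t=2u^2$) forces $u^2\equiv1\mod12$, so $(u,6)=1$ and $\mu_2(n,u)=1$; as $u$ ranges over the positive integers coprime to $6$ with $u^2<2n$, the two families of $t$ (negative and positive) run over all relevant classes exactly once. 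Lemma~\ref{lemma: elliptic main lemma} then gives $L_{2,0}=\tfrac12M_{2,0}(n,u)$ and $L_{-2,0}=\tfrac i2\JS{-4}r\,M'_{2,0}(n,u)$, and using $i^2\JS{-8}r\JS{-4}r=-\JS8r$ the two terms combine into $-\frac{n^{k+1/2}\sqrt2}4\JS8r\sum_{u>0,(u,6)=1}\left(M_{2,0}(n,u)-M'_{2,0}(n,u)\right)$.

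It remains to recognize this as $-\tfrac{n^{3/2-k}}8B_0(n)$. Since $\tau\overline\tau=2n$ for $\tau=(2u+\sqrt{4u^2-8n})/2$, the difference $M_{2,0}-M'_{2,0}$ is, as in the passage following \eqref{equation: (t,24)=1 summary 1}, a fixed dyadic multiple of $n^{1-2k}P_k(2,n,u)$ times the Hurwitz-class-number combination occurring in $B_0(n)$ (Notation~\ref{notation: ABCD}); since $P_k(2,n,u)$ and $\Delta=4u^2-8n$ are even in $u$, the positive-$u$ sum doubles to the signed sum defining $B_0(n)$. Tracking the powers $\sqrt2\cdot2^{-(2k-1)/2}=2^{1-k}$ and $n^{k+1/2}\cdot n^{1-2k}=n^{3/2-k}$ gives $\frac{n^{3/2-k}}8\JS{12}nB_0(n)$, which equals $-\frac{n^{3/2-k}}8B_0(n)$ because $\JS{12}n=-1$ for $n\equiv7\mod12$. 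I expect the main obstacle to be exactly this final bookkeeping: one must check that the $3|u$ part of $B_0(n)$ vanishes (it does, as $1-\JS\Delta3=0$ when $n\equiv1\mod3$) and that the $g$-summation ranges of $M_{2,0}$ match those of $B_0(n)$ after Lemma~\ref{lemma: class sum}, so that the factors $\JS8r$, $\JS{12}n$, and the powers of $2$ cancel precisely. This is routine but error-prone, and entirely parallel to the $e=\pm2$ portion of the proof of Lemma~\ref{lemma: (t,24)=4 summary}.
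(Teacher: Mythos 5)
Your proof is correct and takes essentially the same route as the paper's: the paper likewise reduces the multiplier to $-\frac{\sqrt2}2\JS dc\left(\JS8{rc}+i\JS{-8}{rc}\right)$, expresses the sum through $L_{\pm2,0}(n,t)$, and then invokes Lemma \ref{lemma: elliptic main lemma} together with the observation that $(t\pm2n)/2$ can be a square only when $n\equiv 7\mod 12$. The only difference is that you work out in full the conversion of $M_{2,0}-M'_{2,0}$ into $B_0(n)$ (including the vanishing of the $3|u$ part and the sign $\JS{12}n=-1$), which the paper dispatches by declaring it ``similar to the case $(t,24)=4$ and $3|f$.''
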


\begin{proof}
We must have $(f,6)=1$. If $\SM abcd\in\Gamma_{n,t,f}$ is a class
representative given in Lemma \ref{lemma: class representatives}, then
$$
  \epsilon(a,b,c,d)^{-r}=-\JS dce^{2\pi irc/8}
=-\frac{\sqrt2}2\JS dc\left(\JS8{rc}+i\JS{-8}{rc}\right),
$$
so that
\begin{equation*}
\begin{split}
  &\sum_{(t,24)=12}\sum_f\sum_\gamma J(\gamma) \\
  &\qquad=-\frac{n^{k+1/2}}2\sum_{(t,24)=12}\left(\sqrt2\JS8rL_{2,0}(n,t)
  +i\sqrt2\JS{-8}rL_{-2,0}(n,t)\right).
\end{split}
\end{equation*}
The rest of computation is similar to that of the case $(t,24)=4$ and
$3|f$. We find that $L_{1,2,0}(n,t)$ is nonzero only when $(t+2n)/2$
is a square. Since $3|t$ and $4\|t$, $(t+2n)/2$ can never be a square
unless $n$ satisfies $n\equiv1\mod 3$ and $n\equiv 3\mod 4$, i.e.,
unless $n\equiv 7\mod 12$. Applying Lemma \ref{lemma: elliptic main
  lemma}, we get the claimed formula.
\end{proof}
\end{subsubsection}

\begin{subsubsection}{Case $(t,24)=24$}
\begin{Lemma} \label{lemma: (t,24)=24 summary}
We have
\begin{equation*}
\begin{split}
 \sum_{(t,24)=24}\sum_f\sum_{\gamma\in\Gamma_{n,t,f}/\SL(2,\Z)}J(\gamma)
=-\frac{n^{3/2-k}}8\begin{cases}
  B_{0}(n), &\text{if }n\equiv 1\mod 12, \\
  0, &\text{else}. \end{cases}
\end{split}
\end{equation*}
where $B_{0}(n)$ is defined as in Notation \eqref{notation: ABCD}.
\end{Lemma}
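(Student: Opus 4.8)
The plan is to follow the computation of the case $(t,24)=12$ almost verbatim, the only structural change being that $24\mid t$ now annihilates the $t$-dependent phase entirely. First I would fix a class representative $\gamma=\SM abcd$ of each class in $\Gamma_{n,t,f}/\SL(2,\Z)$ as in Lemma \ref{lemma: class representatives}. Since $(t,24)=24$ gives $8\mid t$, Remark \ref{remark: (t,f)} excludes $(t,f)=2$ and forces $f$ to be odd; and since $3\mid t$ while $3\nmid n$, one has $t^2-4n^2\equiv 2\mod 3$, so $3\nmid f$ and hence $(c,6)=1$. Consequently $bd(1-c^2)\equiv 0\mod 24$, and Lemma \ref{lemma: eta multiplier} gives $\epsilon(a,b,c,d)^{-r}=\JS dc e^{2\pi irc/8}e^{-2\pi irct/24}$. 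The key point is that $24\mid t$ makes $e^{-2\pi irct/24}=1$, so that $\epsilon(a,b,c,d)^{-r}=\JS dc e^{2\pi irc/8}$ with no residual twist by $t$, in contrast to the case $(t,24)=12$, where $t/12$ is odd and the same factor equals $-1$.

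Next I would expand $e^{2\pi irc/8}$ by the $(rc,8)=1$ case of Lemma \ref{lemma: roots of unity} as $\frac{\sqrt2}2\JS8{rc}+\frac{i\sqrt2}2\JS{-8}{rc}$, so that only the genus characters with $e=\pm2$ occur. Summing $J(\gamma)$ over $f$ and over the classes, and recalling that the $3\mid f$ part of each $L_{e,0}(n,t)$ is empty here, this reduces the contribution to $\frac{n^{k+1/2}\sqrt2}2\sum_{(t,24)=24}\bigl(\JS8r L_{2,0}(n,t)+i\JS{-8}r L_{-2,0}(n,t)\bigr)$, exactly the expression appearing for $(t,24)=12$ but without the overall minus sign.

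Then I would invoke Lemma \ref{lemma: elliptic main lemma}: $L_{2,0}(n,t)$ is nonzero only when $(t+2n)/2$ is a perfect square and $L_{-2,0}(n,t)$ only when $(2n-t)/2$ is. Writing $t=\pm(2u^2-2n)$ and using $8\mid t$, $3\mid t$, a short congruence analysis of $u^2=(t/2)+n$ (respectively $u^2=n-(t/2)$) forces $n\equiv 1\mod 4$ and $n\equiv 1\mod 3$, i.e. $n\equiv 1\mod 12$; otherwise every term vanishes and the contribution is $0$. For $n\equiv 1\mod 12$, pairing the value $t=2u^2-2n$ with its negative and substituting $L_{2,0}=\frac12\mu_2(n,u)M_{2,0}(n,u)$ and $L_{-2,0}=\frac i2\JS{-4}r\mu_2(n,u)M'_{2,0}(n,u)$ from Lemma \ref{lemma: elliptic main lemma}, and using $\JS{-8}r\JS{-4}r=\JS8r$, collapses both terms into $\frac12\JS8r\mu_2(n,u)(M_{2,0}(n,u)-M'_{2,0}(n,u))$. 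Finally, converting $M_{2,0}-M'_{2,0}$ into $P_k(2,n,u)$ via $\tau\overline\tau=2n$ and $M_{2,0}-M'_{2,0}=-2^{1/2-k}n^{1-2k}\,(H\text{-sum})\,P_k(2,n,u)$, and matching $\mu_2(n,u)\cdot(H\text{-sum})$ against the integrand of $B_0(n)$ in Notation \ref{notation: ABCD}, yields the claimed $-\frac{n^{3/2-k}}8 B_0(n)$.

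The hard part is purely bookkeeping: keeping every power of $2$ straight. Three separate factors of $2$ (or $\sqrt 2$) must be reconciled, namely the $\sqrt2$ from the root-of-unity expansion, the $2^{1/2-k}$ from rewriting $M_{e,0}-M'_{e,0}$ in terms of $P_k$, and a factor $2$ coming from the fact that the pairing $t\leftrightarrow -t$ produces one term per positive $u$ whereas $B_0(n)$ sums over all integers $u$ (its summand being even in $u$, since $P_k(2,n,-u)=P_k(2,n,u)$). After these cancel to give the constant $-\frac18$, one is left with a stray factor $\JS{12}n$; the verification is completed by noting that $n\equiv 1\mod 12$ forces $\JS{12}n=\JS3n=1$, so this factor disappears and the stated formula emerges.
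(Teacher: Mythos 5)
Your proposal is correct and is essentially the paper's own (omitted) argument: the paper disposes of this case by declaring it "almost identical with the case $(t,24)=12$," and your write-up is precisely that computation with the two compensating changes correctly identified — the phase $e^{-2\pi irct/24}$ becomes $+1$ rather than $-1$, while the factor $\JS{12}n$ hidden in $B_0(n)$ is $+1$ for $n\equiv1\bmod 12$ rather than $-1$ as for $n\equiv7\bmod12$, so the same constant $-\tfrac18$ emerges. The congruence analysis forcing $n\equiv1\bmod12$, the use of Lemma \ref{lemma: elliptic main lemma} with $e=\pm2$, and the power-of-$2$ bookkeeping all check out.
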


\begin{proof} This case is almost identical with the case
  $(t,24)=12$. We omit the proof.
\end{proof}
\end{subsubsection}

\begin{subsubsection}{Case $(t,24)=2$ and $2\nmid f$}
\begin{Lemma} \label{lemma: (t,24)=2 f odd summary}
  When $(t,24)=2$ and $f$ is odd, we have
  $$
    \sum_{(t,24)=2}\sum_{f\text{ odd}}\sum_{\gamma\in\Gamma_{n,t,f}/\SL(2,\Z)}J(\gamma)
   =-\frac{n^{3/2-k}}{16}\begin{cases}
    D_1^\ast(n), &\text{if }n\equiv 1\mod 3, \\
    (B_1^\ast(n)+D_1^\ast(n)), &\text{if }n\equiv 2\mod 3.\end{cases}
  $$
\end{Lemma}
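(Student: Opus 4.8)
The plan is to run the same argument as in the cases $(t,24)=4$ and $(t,24)=1$ treated above, adapted to the situation $2\|t$. Write $t=2t'$, so that $(t,24)=2$ means precisely $(t',12)=1$. Since $f$ is odd and the prime $p>4n$ furnished by Lemma \ref{lemma: class representatives} is odd, every chosen representative $\SM abcd$ has $c=fp$ odd, and I would apply Lemma \ref{lemma: eta multiplier} in its $c$-odd branch. The first step is to check that $bd(1-c^2)\equiv0\mod 24$ in all subcases: $c$ odd gives $8\mid(1-c^2)$, while either $3\nmid c$ gives $3\mid(1-c^2)$, or $3\mid c$ (that is, $3\mid f$) forces $3\mid b$ through the correspondence \eqref{equation: correspondence}. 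Hence the multiplier collapses to
$$
  \epsilon(a,b,c,d)^{-r}=\JS dc\,e^{2\pi irc/8}e^{-2\pi irct'/12}.
$$

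Next I would expand the two roots of unity with Lemma \ref{lemma: roots of unity}, splitting off $3\nmid f$ and $3\mid f$ as in the case $(t,24)=4$. For $3\nmid f$ one multiplies the modulus-$8$ expansion of $e^{2\pi irc/8}$ by the modulus-$12$ expansion of $e^{-2\pi irct'/12}$; the four cross terms carry coefficients $\sqrt6,\,i\sqrt2,\,i\sqrt6,\,-\sqrt2$ and, after folding the $c$-dependent symbols into $\JS dc$ via identities like $\JS{-8}c\JS{-4}c=\JS2c$, feed exactly the sums $L_{6,0},L_{-2,0},L_{-6,0},L_{2,0}$ of Notation \ref{notation: LM}. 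For $3\mid f$ one has $3\mid c$, so $e^{-2\pi irct'/12}=-i\JS{-4}{rct'/3}$ reduces to a single $\pm i$, and only $L_{\pm2,0}$ appear; a short computation shows the $3\mid f$ coefficient is $-2$ times the $3\nmid f$ one, which is precisely the relative weight needed for the two pieces to reassemble into the operators $L_{e,0}$, whose definition attaches weight $-3$ to the $3\mid f$ part. Consequently only $e\in\{\pm2,\pm6\}$ survive, so only $B$ and $D$ can occur in the final answer.

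I would then evaluate each $L_{e,0}(n,t)$ by Lemma \ref{lemma: elliptic main lemma}. Nonvanishing of $L_{\pm2,0}$ requires $(t\pm2n)/2=u^2$ and that of $L_{\pm6,0}$ requires $(t\pm2n)/6=u^2$; in each case, because $2\|t$ and $n$ is odd, $u$ is forced to be even. This is exactly the mechanism producing the tails $B_1^\ast(n)=\sum_{j\ge1}B_j(n)$ and $D_1^\ast(n)=\sum_{j\ge1}D_j(n)$ rather than $B_0,D_0$: for even $u$ one checks $2^3\|(e^2u^2-4en)$, so only odd $g$ can occur in $B_\ell$ and $D_\ell$, matching the $g$-odd normalization built into $M_{e,0}$. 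Passing from $M_{e,0}-M'_{e,0}$ to $P_k(e,n,u)$ by means of $\tau\overline\tau=en$ and the identity $(\tau^{1-2k}-\overline\tau^{1-2k})/(\tau-\overline\tau)=-(en)^{1-2k}(\tau^{2k-1}-\overline\tau^{2k-1})/(\tau-\overline\tau)$ supplies the prefactor $n^{3/2-k}$ and the constant $1/16$.

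The residue split then falls out of the factors $\mu_e(n,u)$. For the $B$-part ($e=2$), the requirement $3\nmid t$ together with $t+2n=2u^2$ forces $3\mid u$ when $n\equiv1\mod3$, whence $\mu_2(n,u)=1+\JS{2n}3=0$ and the whole $B$-contribution vanishes; when $n\equiv2\mod3$ the factor is nonzero and $B_1^\ast(n)$ survives in full. For the $D$-part ($e=6$) one has $\mu_6(n,u)\equiv1$, so $D_1^\ast(n)$ is present for both residues, giving the stated formula. I expect the main obstacle to be the bookkeeping of the second paragraph: confirming that the $3\nmid f$ and $3\mid f$ Jacobi-symbol coefficients recombine with the correct relative weight $-2$ into $L_{e,0}$, and tracking the residue identities (such as $\JS{-4}{t'}=-\JS{-4}n$, valid once $t+2n=2u^2$) needed to line up $\JS8r$ and $\JS{-4}{t'}$ with the symbols appearing in the definitions of $B_\ell$ and $D_\ell$. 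A sign error at this stage would silently corrupt either the constant $1/16$ or the $n\bmod3$ dichotomy.
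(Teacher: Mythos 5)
Your proposal follows the paper's own proof essentially step for step: the same collapse of the multiplier via Lemma \ref{lemma: eta multiplier} using $bd(1-c^2)\equiv 0\mod 24$, the same roots-of-unity expansion whose $3\nmid f$ and $3\mid f$ pieces recombine with net relative weight $-2$ into $L_{\pm2,0}$ and $L_{\pm6,0}$ (the $e=\pm6$ symbols vanishing when $3\mid c$), the same application of Lemma \ref{lemma: elliptic main lemma} with the forced evenness of $u$ producing the tails $B_1^\ast(n)$, $D_1^\ast(n)$, and the same $\mu_e$-factor analysis giving the $n\bmod 3$ dichotomy. The only blemish is a pair of sign slips in your quoted cross-term coefficients (the paper's assembly has $\sqrt2\,\JS8r\JS{-4}{t'}$, $-i\sqrt2\,\JS{-8}r\JS{-4}{t'}$, $\sqrt6\,\JS{12}r\JS{12}{t'}$, $i\sqrt6\,\JS{-3}r\JS{12}{t'}$ attached to $L_{2,0}$, $L_{-2,0}$, $L_{6,0}$, $L_{-6,0}$), which falls squarely within the bookkeeping you yourself flagged and does not affect the structure or conclusion of the argument.
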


\begin{proof} Let the class representatives $\gamma=\SM abcd$ be
  chosen according to Lemma \ref{lemma: class representatives}. Set
  $t'=t/2$. The integers $c$ are odd and regardless of whether $3|f$
  or not, we have $bd(1-c^2)\equiv0\mod24$ so that
  $$
    J(\gamma)=\frac{n^{k+1/2}}{w_{n,t,f}}\JS dce^{2\pi irc/8}
    e^{-2\pi irct'/12}\frac{\rho^{1/2-k}}{\rho-\overline\rho}.
  $$
  If $3\nmid f$, then
  \begin{equation*}
  \begin{split}
    e^{2\pi irc/8}e^{-2\pi irct'/12}
  &=\frac{\sqrt2}4\left(\JS8{rc}+i\JS{-8}{rc}\right)
    \left(\sqrt3\JS{12}{rct'}-i\JS{-4}{rct'}\right) \\
  &=\frac{\sqrt2}4\JS8{rc}\JS{-4}{t'}\left(1-i\JS{-4}{rc}\right) \\
  &\qquad\qquad
  +\frac{\sqrt6}4\JS{24}{rc}\JS{12}{t'}\left(1+i\JS{-4}{rc}\right).
  \end{split}
  \end{equation*}
  If $3|f$, then
  \begin{equation*}
  \begin{split}
    e^{2\pi irc/8}e^{-2\pi irct'/12}
  &=\frac{i\sqrt2}2\JS{-4}{-rct'/3}\left(\JS8{rc}+i\JS{-8}{rc}\right)
  \\
  &=\frac{\sqrt2}2\JS8{rc}\JS{-4}{t'}\left(-1+i\JS{-4}{rc}\right).
  \end{split}
  \end{equation*}
  Thus,
  \begin{equation} \label{equation: (t,24)=2 f odd temp}
  \begin{split}
   &\sum_{(t,24)=2}\sum_{(f,2)=1}\sum_\gamma J(\gamma) \\
   &=\frac{n^{k+1/2}}4\sum_{(t,24)=2}\Bigg(
    \sqrt2\JS{8}r\JS{-4}{t'}L_{2,0}(n,t)
  -i\sqrt2\JS{-8}r\JS{-4}{t'}L_{-2,0}(n,t) \\
   &\qquad\qquad+\sqrt6\JS{12}r\JS{12}{t'}L_{6,0}(n,t)
   +i\sqrt6\JS{-3}r\JS{12}{t'}L_{-6,0}(n,t)\Bigg).
  \end{split}
  \end{equation}
  By Lemma \ref{lemma: elliptic main lemma}, $L_{2,0}(n,t)$ is nonzero
  only when $(t+2n)/2=t'+n$ is a square. Since $2|(t'+n)$, if $t'+n$
  is indeed a square, then we have $t'+n\equiv0\mod4$ and
  $\JS{-4}{t'}=-\JS{-4}n$. Note also that since $3\nmid t$, if
  $n\equiv 1\mod 3$, then this square is necessarily a multiple of
  $3$. Likewise, $L_{-2,0}(n,t)$ is nonzero only
  when $n-t'$ is a square. If $n-t'$ is indeed a square, then we have
  $\JS{-4}{t'}=\JS{-4}n$. Then by Lemma \ref{lemma: elliptic main
    lemma},
  \begin{equation} \label{equation: (t,24)=2 f odd 1}
  \begin{split}
   &\sum_{(t,24)=2}\left(\JS{8}r\JS{-4}{t'}L_{2,0}(n,t)
    -i\JS{-8}r\JS{-4}{t'}L_{-2,0}(n,t)\right) \\
   &\qquad=-\JS8r\JS{-4}n\Bigg(\lambda_2(n)\sum_{2|u,3\nmid u}
    \left(M_{2,0}(n,u)-M'_{2,0}(n,u)\right) \\
   &\qquad\qquad+\sum_{6|u}\left(1-\JS{n}3\right)
    \left(M_{2,0}(n,u)-M'_{2,0}(n,u)\right)\Bigg),
  \end{split}
  \end{equation}
  where the sums run over all positive integers $u$ such that
  $4u^2<8n$ and the given conditions are satisfied, and $\lambda_2(n)$
  is defined by \eqref{equation: lambda}.

  Similarly, by Lemma \ref{lemma: elliptic main lemma}, the term
  $L_{6,0}(n,t)$ (respectively, $L_{-6,0}(n,t)$) is nonzero only when
  $(t+2n)/6$ (respectively, $(2n-t)/6$) is a square. If $(t+2n)/6$
  (respectively, $(2n-t)/6$) is indeed a square, this square must be a
  multiple of $4$ since $2\nmid t'$. It follows that $t'+n\equiv 0\mod12$
  (respectively, $n-t'\equiv0\mod 12$) and $\JS{12}{t'}=\JS{12}n$.
  Thus, by Lemma \ref{lemma: elliptic main lemma},
  \begin{equation} \label{equation: (t,24)=2 f odd 2}
  \begin{split}
   &\sum_{(t,24)=2}\left(\sqrt6\JS{24}r\JS{12}{t'}L_{6,0}(n,t)
    +i\sqrt6\JS{-24}r\JS{12}{t'}L_{-6,0}(n,t)\right) \\
   &\qquad=\JS{24}r\JS{12}n\sum_{2|u}\left(M_{6,0}(n,u)-M'_{6,0}(n,u)\right),
  \end{split}
  \end{equation}
  where the sum runs over all positive integers $u$ satisfying
  $36u^2<24n$ and $2|u$.

  Inserting \eqref{equation: (t,24)=2 f odd 1} and \eqref{equation:
    (t,24)=2 f odd 2} into \eqref{equation: (t,24)=2 f odd temp} and
  simplifying, we get the claimed formula.
\end{proof}
\end{subsubsection}

\begin{subsubsection}{Case $(t,24)=2$ and $2|f$}

\begin{Lemma} \label{lemma: eta 2||t} Assume that $2\|t$ and write
  $t=2t'$. For class representatives $\SM abcd$ of
  $\Gamma_{n,t,f}/\SL(2,\Z)$ given in Lemma \ref{lemma: class
    representatives}, define $\epsilon(a,b,c,d)$ as in Lemma
  \ref{lemma: eta multiplier}. Denote the odd part of $c$ by $c'$.
  \begin{enumerate}
  \item If $2\|f$, then
  $$
    \epsilon(a,b,c,d)=\JS2n\JS{c'}d
    e^{-2\pi ic't'/12}e^{2\pi i(t'-1)/8},
  $$
  \item If $4\|f$, then
  $$
    \epsilon(a,b,c,d)=-(-1)^{(t^2-4n^2)/64}
    \JS{c'}de^{2\pi ic't'/3}e^{2\pi i(t'-1)/8},
  $$
  \item If $2^v\|f$, $v\ge 3$, then
  $$
    \epsilon(a,b,c,d)=\JS2{t'}^v\JS{c'}d
    e^{2\pi i2^{v-2}c't'/3}e^{2\pi i(t'-1)/8}.
  $$
  \end{enumerate}
\end{Lemma}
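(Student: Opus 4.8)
The plan is to run a direct computation starting from the $c$-even branch of the $\eta$-multiplier formula in Lemma~\ref{lemma: eta multiplier}. For the representatives $\SM abcd$ supplied by Lemma~\ref{lemma: class representatives} we have $c=fp$ with $p$ an odd prime, so $c$ is even with $v_2(c)=v_2(f)=\ell$, and I would write $c=2^\ell c'$ with $c'$ odd. The first step is to split the Jacobi symbol as $\JS cd=\JS2d^{\,\ell}\JS{c'}d$ and to evaluate $\JS2d=(-1)^{(d^2-1)/8}$ using the congruences imposed on $d$ in Lemma~\ref{lemma: class representatives}, which pin down $d$ modulo $8$ (resp.\ modulo $16$). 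Since $2\nmid d$, these congruences let me replace $d$ by $t'$ in every computation that only sees $d$ modulo $8$.

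The core of the argument is to reduce the exponent $E=ac(1-d^2)+d(b-c+3)-3$ modulo $24$, which I would do by the Chinese Remainder Theorem, treating the mod-$8$ and mod-$3$ contributions separately. For the mod-$8$ part the key simplification is that $d$ is odd, so $8\mid(1-d^2)$ and hence $ac(1-d^2)\equiv0\pmod{16}$ because $c$ is even; thus $E\equiv d(b-c+3)-3\pmod 8$, and only the residues of $d$, $dc$, and $db$ modulo $8$ matter. Here I would use $a+d=t=2t'$ together with $bc=ad-n^2$ to control $db$ and $dc$, and the identity $(d-a)^2=t^2-4n^2-4bc$ to locate $bc$ (equivalently $bc'$) $2$-adically. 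For the mod-$3$ part I use $3\nmid t$ (forced by $(t,24)=2$) and again $a+d=t$, $ad-bc=n^2$ to rewrite $ac(1-d^2)+d(b-c)$, producing a factor $e^{2\pi i c't'/3}$ (resp.\ $e^{-2\pi ic't'/12}$ when $2\|f$, where the surviving single power of $2$ in $c$ also contributes to the mod-$8$ piece).

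I would then assemble the three cases according to $\ell=v_2(f)$. When $\ell=2$ or $\ell\ge3$ we have $dc\equiv4$ or $0\pmod 8$, which cleanly separates the mod-$3$ exponential; when $\ell=1$ the term $dc=2dc'$ forces the mixed denominator $12$ in the stated formula. The leftover $\JS2d^{\,\ell}$ becomes $\JS2{t'}^{\,\ell}$, the common factor $e^{2\pi i(t'-1)/8}$ emerges from $3d-3\equiv3(t'-1)$ in the mod-$8$ reduction, and $\JS{c'}d$ survives untouched. The step I expect to be the main obstacle is exactly this mod-$8$ bookkeeping in the cases $4\|f$ and $2^v\|f$ with $v\ge3$: the sign $-(-1)^{(t^2-4n^2)/64}$ in the former arises only after using $8\mid(d-a)$ and $(d-a)^2=t^2-4n^2-4bc$ to determine whether $bc'\equiv0$ or $4\pmod 8$ (equivalently the parity of $(t^2-4n^2)/64$), and the clean form $\JS2{t'}^{\,v}$ in the latter requires the refined congruence $d\equiv t'$ or $t'+4\pmod 8$ from Lemma~\ref{lemma: class representatives}, whose offset must be shown to be absorbed by the mod-$8$ exponential. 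Keeping these fourth-roots-of-unity and sign contributions consistent across the subcases is the delicate part; everything else is routine.
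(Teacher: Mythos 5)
Your strategy is sound and is essentially the paper's own proof: a direct evaluation of the multiplier on the representatives of Lemma \ref{lemma: class representatives}, organized by $v_2(f)$, using the congruences on $d$, the divisibility $f\mid b$, and the identity $(d-a)^2=t^2-4n^2-4bc$ to locate $b$ $2$-adically; the offset $d\equiv t'+4\pmod 8$ being absorbed into the eighth-root factor is handled exactly as in the paper. The only organizational difference is cosmetic: the paper first uses $ad-bc=n^2\equiv 1\pmod{24}$ to rewrite the exponent as $bd(1-c^2)+ct-3cd+3d-3$ (comparing with the odd-$c$ branch of Lemma \ref{lemma: eta multiplier}), whereas you reduce the original exponent $ac(1-d^2)+d(b-c+3)-3$ directly by a mod-$8$/mod-$3$ splitting; your mod-$3$ claim is correct ($E\equiv ct\pmod 3$ in all subcases, using $a+d=t$ and $ad-bc=n^2$), and your mixed factors $e^{-2\pi ic't'/12}$, $e^{2\pi ic't'/3}$ come out right under the CRT weights.

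One imprecision should be fixed in the $4\|f$ case: you claim the sign $-(-1)^{(t^2-4n^2)/64}$ follows from $8\mid(d-a)$ together with $(d-a)^2=t^2-4n^2-4bc$. That is not enough. With $4\|c$ and $4\mid b$ we have $64\mid 4bc$, so the congruence $4bc\equiv t^2-4n^2\pmod{64}$ coming from $8\mid(d-a)$ merely reproduces $64\mid(t^2-4n^2)$ and leaves the parity of $b/4$ undetermined. You must invoke the full strength of the representative's congruence $d\equiv t'\pmod{16}$ (so also $a=2t'-d\equiv t'\pmod{16}$), which gives $16\mid(d-a)$, hence $(d-a)^2\equiv 0\pmod{256}$ and $\frac b4\frac c4\equiv\frac{t^2-4n^2}{64}\pmod 4$; only then is $(-1)^{b/4}=(-1)^{(t^2-4n^2)/64}$, which is precisely how the paper argues. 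Since you already observed that $d$ is pinned down modulo $16$ in this case, the repair is immediate, but as written this step of your sketch would not go through.
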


\begin{proof} We have
\begin{equation*}
\begin{split}
 &\left(ac(1-d^2)+d(b-c+3)-3\right)-
  \left(bd(1-c^2)+c(a+d)\right) \\
 &\qquad=-(n^2+2)cd+3d-3\equiv -3cd+3d-3 \mod 24.
\end{split}
\end{equation*}
Thus,
\begin{equation} \label{equation: 2|f epsilon}
  \epsilon(a,b,c,d)=\JS cde^{2\pi i\left(bd(1-c^2)+ct\right)/24}
  e^{2\pi i(-cd+d-1)/8}.
\end{equation}
From now on, we let $c'$, $f'$, and $t'$ be the odd parts of $c$, $f$,
and $t$, respectively.

Consider the case $2\|f$. By Lemma \ref{lemma: class representatives},
we may assume that $2\|c$ and $d\equiv t'\mod 8$. For such
representatives, we have $a=t-d=2t'-d\equiv t'\mod 8$ and
$$
  t^2-4n^2=(d-a)^2+4bc\equiv 4bc\mod 64.
$$
Now
$$
  t^2-4n^2=32\frac{(t')^2-n^2}8\equiv
  \begin{cases}
  32\mod 64, &\text{if }\JS2{t'}\JS2n=-1, \\
  0 \mod 64, &\text{if }\JS2{t'}\JS2n=1. \end{cases}
$$
which shows that $b$ is divisible by $4$ and $8|b$ if and only if
$\JS2{t'}\JS2n=1$. Now if $3\nmid f$, then $c^2\equiv 4\mod 24$ so
that
$$
  bd(1-c^2)\equiv \begin{cases}
  12 \mod 24, &\text{if }4\|b, \\
  0\mod 24, &\text{if }8|b. \end{cases}
$$
The same congruences also hold when $3|f$. Therefore, from
\eqref{equation: 2|f epsilon}, we have
\begin{equation*}
\begin{split}
  \epsilon(a,b,c,d)&=\JS2d\JS{c'}d\JS2n\JS2{t'}
  e^{2\pi ic't'/6}e^{2\pi i(-ct'+t'-1)/8} \\
  &=\JS2n\JS{c'}de^{-2\pi ic't'/12}e^{2\pi i(t'-1)/8}.
\end{split}
\end{equation*}
This proves Part (1).

We next consider the case $4\|f$. By Lemma \ref{lemma: class
  representatives}, we may assume that $d\equiv
t'\mod 16$. For such representatives, we also have $a=2t'-d\equiv
t'\mod 16$. Thus, from $t^2-4n^2=(d-a)^2+4bc$, we see that
$4bc\equiv t^2-4n^2\mod 256$. That is,
$$
  \frac b4\frac c4\equiv\frac{t^2-4n^2}{64}\mod 4,
$$
from which we obtain $e^{2\pi ibd(1-c^2)/24}=(-1)^{b/4}=(-1)^{(t^2-4n^2)/64}$.
It follows that
\begin{equation*}
\begin{split}
  \epsilon(a,b,c,d)&=(-1)^{(t^2-4n^2)/64}\JS{c}de^{2\pi ict/24}
  e^{-2\pi icd/8}e^{2\pi i(d-1)/8} \\
 &=-(-1)^{(t^2-4n^2)/64}\JS{c'}de^{2\pi ic't'/3}e^{2\pi i(t'-1)/8}.
\end{split}
\end{equation*}
This proves Part (2).

For $8\|f$, class representatives given in Lemma \ref{lemma: class
  representatives} satisfy either $d\equiv t'+4\mod 8$ or $d\equiv
t'\mod 8$. In either case, we have
$$
  \JS2{d}e^{2\pi i(d-1)/8}=\JS{2}{t'}e^{2\pi i(t'-1)/8}.
$$
Then from \eqref{equation: 2|f epsilon}, we get
$$
  \epsilon(a,b,c,d)=\JS2d\JS{c'}de^{2\pi ict/24}e^{2\pi i(d-1)/8}
 =\JS2{t'}\JS{c'}de^{4\pi ic't'/3}e^{2\pi i(t'-1)/8}.
$$
This proves the case $8\|f$. The proof of the case $16|f$ is similar
and is omitted.
\end{proof}

\begin{Lemma} \label{lemma: (t,24)=2 2||f summary} We have
\begin{equation*}
\begin{split}
 &\sum_{(t,24)=2}\sum_{2\|f}\sum_{\gamma\in\Gamma_{n,t,f}/\SL(2,\Z)}J(\gamma)
 \\
 &\qquad=-\frac{n^{3/2-k}}{16}
  \begin{cases}
  A_{2,0}^\ast(n)+C_{1,0}(n),
    &\text{if }n\equiv 1\mod 12, \\
  C_{1,0}(n), &\text{if }n\equiv 5\mod 12, \\
  A_{1,0}(n)+C_{2,0}^\ast(n),
    &\text{if }n\equiv 7\mod 12, \\
  C_{2,0}^\ast(n), &\text{if }n\equiv 11\mod 12. \end{cases}
\end{split}
\end{equation*}
\end{Lemma}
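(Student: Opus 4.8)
The plan is to follow the template already used for the cases $(t,24)=1$ and $(t,24)=4$, now feeding in the refined $\eta$-multiplier of Lemma~\ref{lemma: eta 2||t}. First I would fix class representatives $\gamma=\SM abcd$ of $\Gamma_{n,t,f}/\SL(2,\Z)$ as in Lemma~\ref{lemma: class representatives}; since $(t,24)=2$ and $2\|f$, these have $2\|c$ and $d\equiv t'\pmod8$, where $t'=t/2$. Writing $c'=c/2$ for the odd part of $c$, Part~(1) of Lemma~\ref{lemma: eta 2||t} gives $\epsilon(a,b,c,d)=\JS2n\JS{c'}de^{-2\pi ic't'/12}e^{2\pi i(t'-1)/8}$, so that by \eqref{equation: J(gamma)} and the oddness of $r$,
$$
  J(\gamma)=\frac{n^{k+1/2}}{w_{n,t,f}}\JS2n\JS{c'}d
  e^{2\pi irc't'/12}e^{-2\pi ir(t'-1)/8}\frac{\rho^{1/2-k}}{\rho-\overline\rho}.
$$
The factors $\JS2n$ and $e^{-2\pi ir(t'-1)/8}$ do not depend on $c'$ and will be pulled out of the sum over classes.

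The second step is to expand the $c'$-dependent phase $e^{2\pi irc't'/12}$ through Lemma~\ref{lemma: roots of unity}, treating $3\nmid f$ (so $(c',12)=1$, using the $(\cdot,12)=1$ formula) and $3\mid f$ (so $3\mid c'$, using the $(\cdot,4)=1$ formula) separately, exactly as in the proof of Lemma~\ref{lemma: (t,24)=2 f odd summary}. After applying quadratic reciprocity to $\JS{c'}d$, this rewrites $\sum_\gamma J(\gamma)$ as a linear combination of the sums $L_{e,1}(n,t)$ of Notation~\ref{notation: LM} for $e\in\{\pm1,\pm3\}$: the $\JS{12}{\cdot}$-part of the expansion produces the $e=\pm3$ terms and the $\JS{-4}{\cdot}$-part the $e=\pm1$ terms, with coefficients built from $\JS{-4}{t'}$, $\JS{12}{t'}$ and the phase $e^{-2\pi ir(t'-1)/8}$ (itself rewritten by Lemma~\ref{lemma: roots of unity}).

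I would then evaluate each $L_{e,1}(n,t)$ by Part~(2) of Lemma~\ref{lemma: elliptic main lemma}, which reduces it to $\tfrac12L_{e,0}(n,t)$, followed by Part~(1). Thus $L_{\pm1,1}$ is nonzero only when $t+2n$ (respectively $2n-t$) is a square, $L_{\pm3,1}$ only when $3(t+2n)$ (respectively $3(2n-t)$) is a square, and the outputs are the quantities $M_{e,0}(n,u)$, $M'_{e,0}(n,u)$. Because $2\|t$, writing $t+2n=eu^2$ forces $u$ even; setting $u=2u_1$ gives $t'=2eu_1^2-n$, which fixes $t'\bmod4$ from the parity of $u_1$ and so links $v_2(u)$ to $n\bmod4$ and to the now-definite value of the coefficient $\JS{-4}{t'}$ (made definite by the square condition, as in \eqref{equation: 8t=-4n}). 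The $e=\pm1$ contributions assemble into $A_{\ell,0}(n)$-sums and the $e=\pm3$ ones into $C_{\ell,0}(n)$-sums; I would convert $M-M'$ to $P_k$ via $\tau\overline\tau=en$ and invoke Lemma~\ref{lemma: alternative AB} to reconcile the $3\mid u$ pieces with the defining form of $A$. A clean simplification is that when $n\equiv2\pmod3$ every solution of $t+2n=u^2$ has $3\mid u$ (since $(t,24)=2$), whence $\mu_1(n,u)=1+\JS n3=0$ and the whole $e=1$ contribution drops out; this is why no $A$-term survives for $n\equiv5,11\pmod{12}$.

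The hard part will be the $2$-adic bookkeeping in this last step: showing that for each residue of $n\bmod12$ the surviving contributions package precisely into the starred sums $A^\ast_{2,0}(n)$, $C^\ast_{2,0}(n)$ versus the single-index sums $A_{1,0}(n)$, $C_{1,0}(n)$ in the statement. Concretely, one must check that the coefficient from the root-of-unity expansion (a Jacobi symbol in $t'$, rendered definite by the square condition) selects the family $v_2(u)\ge2$ when $n\equiv1\pmod4$ and the single stratum $v_2(u)=1$ when $n\equiv3\pmod4$ for the $e=1$ family, with the complementary selection for the $e=3$ family, and that the imaginary parts coming from $L_{e,1}$ and $L_{-e,1}$ cancel so that only the real combinations $M_{e,0}-M'_{e,0}$ remain. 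Keeping the powers of $2$ consistent across the $\eta$-multiplier, the reduction $L_{e,1}=\tfrac12L_{e,0}$, and the definitions of the starred sums is the delicate point; once it is in place, splitting into the four classes $n\equiv1,5,7,11\pmod{12}$ and collecting the constant $-n^{3/2-k}/16$ yields the asserted formula.
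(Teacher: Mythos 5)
Your proposal follows the paper's own proof essentially step by step: the same use of Lemma~\ref{lemma: eta 2||t}(1) for the multiplier, the same reciprocity manipulation (the paper's \eqref{equation: reciprocity}), the same root-of-unity expansion split into $3\nmid f$ and $3\mid f$ leading to the sums $L_{\pm1,1}(n,t)$ and $L_{\pm3,1}(n,t)$, and the same evaluation via Lemma~\ref{lemma: elliptic main lemma}, with your selection rules ($4\mid u$ for the $e=\pm1$ family when $n\equiv1\bmod4$ and $2\|u$ when $n\equiv3\bmod4$, the complementary choice for $e=\pm3$, and the vanishing of all $A$-terms when $n\equiv2\bmod3$) agreeing exactly with the paper's \eqref{equation: (t,24)=2 2||f sum 1 n=1}--\eqref{equation: (t,24)=2 2||f sum 2 n=3}. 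The bookkeeping you flag as the delicate point is precisely what the paper carries out there, so the plan is sound and matches the published argument.
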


\begin{proof} Let class representatives $\gamma=\SM abcd$ be chosen as per
  Lemma \ref{lemma: class representatives}. In particular, we have
  $d\equiv(t/2)\mod 8$. Let $c'$ and $t'$ denote
  the odd parts of $c$ and $t$, respectively. By \eqref{equation:
    J(gamma)} and Lemma \ref{lemma: eta 2||t}, we have
  $$
    J(\gamma)=\frac{n^{k+1/2}}{w_{n,t,f}}\JS2n\JS{c'}d
    e^{2\pi irc't'/12}e^{-2\pi ir(t'-1)/8}\frac{\rho^{1/2-k}}{\rho-\overline\rho},
  $$
  where $\rho=(t+\sqrt{t^2-4n^2})/2$. We check directly using the
  quadratic reciprocity law that
  \begin{equation} \label{equation: reciprocity}
    \JS{c'}de^{-2\pi ir(t'-1)/8}=\JS d{c'}\JS8{t'}\begin{cases}
    1, &\text{if }t'\equiv 1\mod 4, \\ \displaystyle
    i\JS{-4}{rc'}, &\text{if }t'\equiv 3\mod 4. \end{cases}
  \end{equation}
  If $3\nmid f$, then
  \begin{equation*}
  \begin{split}
   &\JS{c'}de^{2\pi irc't'/12}e^{-2\pi ir(t'-1)/8} \\
   &\qquad=\frac12\JS d{c'}\JS8{t'}\left(\sqrt3\JS{12}{rc't'}+i\JS{-4}{rc't'}
    \right)\left(\delta_1(t')+i\delta_3(t')\JS{-4}{rc'}\right) \\
   &\qquad=\frac12\JS d{c'}\JS8{t'}\left(
    \delta_3(t')+i\delta_1(t')\JS{-4}{rc'}\right) \\
   &\qquad\qquad+\frac{\sqrt3}2\JS d{c'}\JS{24}{t'}\JS{12}{rc'}
    \left(\delta_1(t')+i\delta_3(t')\JS{-4}{rc'}\right)
  \end{split}
  \end{equation*}
  where $\delta_1(t')$ and $\delta_3(t')$ are defined by
  \eqref{equation: delta}.
  If $3|f$, then $3|c'$ and
  \begin{equation*}
  \begin{split}
    \JS{c'}de^{2\pi irc't'/12}e^{-2\pi ir(t'-1)/8}
   &=i\JS d{c'}\JS8{t'}\JS{-4}{rc't'/3}
    \left(\delta_1(t')+i\delta_3(t')\JS{-4}{rc'}\right) \\
   &=-\JS d{c'}\JS8{t'}\left(\delta_3(t')+i\delta_1(t')\JS{-4}{rc'}\right).
  \end{split}
  \end{equation*}
  It follows that
  \begin{equation} \label{equation: (t,24)=2 2||f J}
  \begin{split}
   &\sum_{(t,24)=2}\sum_{2\|f}\sum_\gamma J(\gamma) \\
   &\quad=\frac{n^{k+1/2}}2\JS2n\sum_{(t,24)=2}\Bigg(\JS8{t'}\left(
    \delta_3(t')L_{1,1}(n,t)+i\delta_1(t')\JS{-4}rL_{-1,1}(n,t)\right)
  \\
   &\quad\quad+\sqrt3\JS{24}{t'}\JS{12}r\left(
    \delta_1(t')L_{3,1}(n,t)+i\delta_3(t')\JS{-4}rL_{-3,1}(n,t)\right)
    \Bigg)
  \end{split}
  \end{equation}

  By Lemma \ref{lemma: elliptic main lemma}, $L_{1,1}(n,t)$ is nonzero
  only when $t+2n$ is a square. If $t+2n=u^2$ is indeed a square,
  then we have $u^2/2=t'+n\equiv 0,2\mod 8$. Then the condition
  $\delta_3(t')\neq 0$ forces $u$ to satisfy
  \begin{equation} \label{equation: (t,24)=2 2||f temp 1}
    \begin{cases}
    4|u, &\text{if }n\equiv 1\mod 4, \\
    2\|u, &\text{if }n\equiv 3\mod 4, \end{cases}
  \end{equation}
  and also
  \begin{equation} \label{equation: (t,24)=2 2||f temp 2}
    \JS8{t'}=\JS{-4}n\JS8n.
  \end{equation}
  Furthermore, since $3\nmid t$, when $n\equiv2\mod 3$, we must have
  $3|u$.

  Similarly, $L_{-1,1}(n,t)$ is nonzero only when $2n-t$ is a
  square. If $2n-t=u^2$ is indeed a square, then the condition
  $\delta_1(t)=1$ forces \eqref{equation: (t,24)=2 2||f temp 1} and
  \eqref{equation: (t,24)=2 2||f temp 2} to hold. 
  Furthermore, if $n\equiv 2\mod 3$, then we must have $3|u$. Thus,
  by Lemma \ref{lemma: elliptic main lemma}, when $n\equiv 1\mod 4$,
  \begin{equation} \label{equation: (t,24)=2 2||f sum 1 n=1}
  \begin{split}
   &\JS2n\sum_{(t,24)=2}\JS8{t'}\left(
    \delta_3(t')L_{1,1}(n,t)+i\delta_1(t')\JS{-4}rL_{-1,1}(n,t)\right) \\
   &\qquad=\frac14\lambda_1(n)\sum_{4|u,3\nmid u}\left(
    M_{1,0}(n,u)-M'_{1,0}(n,u)\right) \\
   &\qquad\qquad+\frac14\sum_{12|u}\left(1+\JS{n}3\right)
    \left(M_{1,0}(n,u)-M'_{1,0}(n,u)\right)
  \end{split}
  \end{equation}
  and when $n\equiv 3\mod 4$,
  \begin{equation} \label{equation: (t,24)=2 2||f sum 1 n=3}
  \begin{split}
   &\JS2n\sum_{(t,24)=2}\JS8{t'}\left(
    \delta_3(t')L_{1,1}(n,t)+i\delta_1(t')\JS{-4}rL_{-1,1}(n,t)\right) \\
   &\qquad=-\frac14\lambda_1(n)\sum_{2\|u,3\nmid u}\left(
    M_{1,0}(n,u)-M'_{1,0}(n,u)\right) \\
   &\qquad\qquad-\frac14\sum_{2\|u,3|u}\left(1+\JS{n}3\right)
    \left(M_{1,0}(n,u)-M'_{1,0}(n,u)\right),
  \end{split}
  \end{equation}
  where the sums run over all positive integers $u$ satisfying
  $u^2<4n$ and the specified conditions and $\lambda_1(n)$ is defined
  by \eqref{equation: lambda}.

  Likewise, $L_{3,1}(n,t)$ (respectively, $L_{-3,1}(n,t)$) is nonzero
  only when $(t+2n)/3$ (respectively, $(2n-t)/3$) is a 
  square. If $(t+2n)/3=u^2$ (respectively, $(2n-t)/3=u^2$ is indeed a
  square, then $t'+n\equiv 0,6\mod 8$ (respectively, $n-t'\equiv
  0,6\mod 8$). The condition $\delta_1(t')\neq 0$ (respectively,
  $\delta_3(t')\neq 0$) forces that
  $$
    \begin{cases}
    2\|u, &\text{if }n\equiv 1\mod 4, \\
    4|u, &\text{if }n\equiv 3\mod 4, \end{cases}
  $$
  and also checking case by case, we find
  $$
    \JS{24}{t'}=\JS{24}n.
  $$
  Then by Lemma \ref{lemma: elliptic main lemma}, for $n\equiv 1\mod 4$,
  \begin{equation} \label{equation: (t,24)=2 2||f sum 2 n=1}
  \begin{split}
   &\JS2n\sum_{(t,24)=2}\JS{24}{t'}\left(
    \delta_1(t')L_{3,1}(n,t)+i\delta_3(t')\JS{-4}rL_{-3,1}(n,t)\right) \\
   &\qquad=\frac14\JS{12}n\sum_{2\|u}\left(
    M_{3,0}(n,u)-M'_{3,0}(n,u)\right),
  \end{split}
  \end{equation}
  and for $n\equiv 3\mod 4$,
  \begin{equation} \label{equation: (t,24)=2 2||f sum 2 n=3}
  \begin{split}
   &\JS2n\sum_{(t,24)=2}\JS{24}{t'}\left(
    \delta_1(t')L_{3,1}(n,t)+i\delta_3(t')\JS{-4}rL_{-3,1}(n,t)\right) \\
   &\qquad=\frac14\JS{12}n\sum_{4|u}\left(
    M_{3,0}(n,u)-M'_{3,0}(n,u)\right),
  \end{split}
  \end{equation}
  where the sums run over all positive integers $u$ such that
  $9u^2<12n$ and the specified conditions are met. Substituting
  \eqref{equation: (t,24)=2 2||f sum 1 n=1},
  \eqref{equation: (t,24)=2 2||f sum 1 n=3},
  \eqref{equation: (t,24)=2 2||f sum 2 n=1}, and
  \eqref{equation: (t,24)=2 2||f sum 2 n=3} into
  \eqref{equation: (t,24)=2 2||f J} and simplifying, we obtain the
  claimed formula.
\end{proof}
\end{subsubsection}

\begin{subsubsection}{Case $(t,24)=2$ and $4\|f$}
\begin{Lemma} We have
\begin{equation*}
\begin{split}
 &\sum_{(t,24)=2}\sum_{4\|f}\sum_{\gamma\in\Gamma_{n,t,f}/\SL(2,\Z)}J(\gamma) \\
 &=-\frac{n^{3/2-k}}{32}\begin{cases}
  \left(2A_{1,1}(n)-C_{2,0}(n)+C_{3,0}^\ast(n)\right),
    &\text{if }n\equiv 1\mod 24, \\
  \left(C_{2,0}(n)-C_{3,0}^\ast(n)\right),
    &\text{if }n\equiv 5\mod 24, \\
  \left(A_{2,0}(n)-A_{3,0}^\ast(n)+2C_{1,1}(n)\right),
    &\text{if }n\equiv 7\mod 24, \\
  2C_{1,1}(n), &\text{if }n\equiv 11\mod 24, \\
  \left(2A_{1,1}(n)+C_{2,0}(n)-C_{3,0}^\ast(n)\right),
    &\text{if }n\equiv 13\mod 24, \\
  \left(-C_{2,0}(n)+C_{3,0}^\ast(n)\right),
    &\text{if }n\equiv 17\mod 24, \\
  \left(-A_{2,0}(n)+A_{3,0}^\ast(n)+2C_{1,1}(n)\right),
    &\text{if }n\equiv 19\mod 24, \\
  2C_{1,1}(n), &\text{if }n\equiv 23\mod 24.
  \end{cases}
\end{split}
\end{equation*}
\end{Lemma}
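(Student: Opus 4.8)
The computation runs in complete parallel to the case $(t,24)=2$, $2\|f$ treated in Lemma~\ref{lemma: (t,24)=2 2||f summary}, the only essential difference being that the multiplier is now supplied by Part~(2) rather than Part~(1) of Lemma~\ref{lemma: eta 2||t}. The plan is as follows. First I would choose class representatives $\gamma=\SM abcd$ of $\Gamma_{n,t,f}/\SL(2,\Z)$ as in Lemma~\ref{lemma: class representatives}; since $4\|f$ these satisfy $d\equiv t'\mod16$ with $t'=t/2$, and the entry $c$ satisfies $4\|c$ with odd part $c'$. By Lemma~\ref{lemma: eta 2||t}(2) and \eqref{equation: J(gamma)}, the contribution of such a class is
\begin{equation*}
  J(\gamma)=-\frac{n^{k+1/2}}{w_{n,t,f}}(-1)^{(t^2-4n^2)/64}\JS{c'}d
  e^{-2\pi irc't'/3}e^{-2\pi ir(t'-1)/8}\frac{\rho^{1/2-k}}{\rho-\overline\rho},
\end{equation*}
with $\rho=(t+\sqrt{t^2-4n^2})/2$. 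The genuinely new feature, absent in the $2\|f$ case, is the sign $(-1)^{(t^2-4n^2)/64}$, which I expect to be the source both of the refinement from residues modulo $12$ to residues modulo $24$ and of the alternating signs in the statement.

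Next I would expand the two roots of unity by Lemma~\ref{lemma: roots of unity}, treating $3\nmid f$ and $3\mid f$ separately and applying the reciprocity identity \eqref{equation: reciprocity}, exactly as in the derivation of \eqref{equation: (t,24)=2 2||f J}. Because $4\|f$ forces $4\|c$, the result is a linear combination of the level-two sums $L_{e,2}(n,t)$ for $e\in\{\pm1,\pm3\}$, with coefficients assembled from $\JS8{t'}$, $\JS{24}{t'}$, $\JS{-4}r$, $\JS{12}r$, and the indicators $\delta_1(t'),\delta_3(t')$ of \eqref{equation: delta}. I would then evaluate each $L_{e,2}(n,t)$ by Part~(3) of Lemma~\ref{lemma: elliptic main lemma} with $\ell=2$: the sum vanishes unless $e(t+2n)$ (respectively $e(2n-t)$) is a square $eu^2$, in which case it equals $\tfrac12\mu_e(n,u)M_{e,1}(n,u)/2$ when $2\|u$ and $\tfrac12\mu_e(n,u)M_{e,0}(n,u)/4$ when $4\mid u$ (and similarly with the $M'$-variants for the negative values).

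The square condition $t+2n=eu^2$ (or $2n-t=eu^2$), the congruence $d\equiv t'\mod16$ acting through $\delta_1(t'),\delta_3(t')$, and the parity of $(t^2-4n^2)/64$ together pin down $n$ modulo $24$: one checks that this parity is controlled by the residue of $u$ modulo $8$, so that the sign $(-1)^{(t^2-4n^2)/64}$ separates the subcases $4\|u$ and $8\mid u$. Grouping the $M_{e,0}$- and $M_{e,1}$-contributions according to the $2$-adic valuation of $u$ fixed by Lemma~\ref{lemma: elliptic main lemma}(3) and unfolding the definitions in Notations~\ref{notation: LM} and~\ref{notation: ABCD}, the $4\|u$ terms assemble the unstarred $A_{2,0}(n)$ or $C_{2,0}(n)$, the $8\mid u$ terms the starred $A^\ast_{3,0}(n)$ or $C^\ast_{3,0}(n)$, and the $2\|u$ terms (which occur only in the appropriate residue of $n$ modulo $4$) the quantities $A_{1,1}(n)$ or $C_{1,1}(n)$; the $e=1$ ($A$) and $e=3$ ($C$) families are governed by the opposite pairing of $\delta_1,\delta_3$ and hence by the opposite dependence on $n\bmod4$. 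The overall factor $1/32$ and the coefficients $2$ on $A_{1,1}$ and $C_{1,1}$ record the weights $1/2$, $1/4$ above together with the passage from positive-$u$ sums to the signed-$u$ sums of Notation~\ref{notation: ABCD}.

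The main obstacle is entirely in this last bookkeeping: for each of the eight residues of $n$ modulo $24$ one must determine the admissible $2$- and $3$-adic valuations of $u$, evaluate $(-1)^{(t^2-4n^2)/64}$, and verify that the surviving $M$-sums combine into exactly the listed signed combination of the starred and unstarred $A$'s and $C$'s. Since this sign is the only ingredient not already present in Lemma~\ref{lemma: (t,24)=2 2||f summary}, I would organise the verification as a case-by-case check over $n\bmod24$, in each case reusing \eqref{equation: reciprocity} and the evaluations from Lemma~\ref{lemma: elliptic main lemma}(3). Confirming that this sign produces precisely the eight expressions in the statement, with the correct relative signs between the $A$- and $C$-terms, is the step that will demand the most care.
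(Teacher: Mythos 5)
Your proposal is correct and follows essentially the same route as the paper's own proof: the multiplier from Lemma~\ref{lemma: eta 2||t}(2) with the sign $\mu_{n,t}=(-1)^{(t^2-4n^2)/64}$, reduction via Lemma~\ref{lemma: roots of unity} and \eqref{equation: reciprocity} to the sums $L_{\pm1,2}(n,t)$ and $L_{\pm3,2}(n,t)$, evaluation by Lemma~\ref{lemma: elliptic main lemma}(3) with exactly the weights you quote, and a case analysis over $n\bmod 24$ assembling $A_{1,1}$, $A_{2,0}$, $A_{3,0}^\ast$, $C_{1,1}$, $C_{2,0}$, $C_{3,0}^\ast$. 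One harmless imprecision: in the $2\|u$ branch the parity of $(t^2-4n^2)/64$ is governed by $n\bmod 8$ rather than by $u$ (the paper finds $\mu_{n,t}=\JS8n$ there, cancelling against $\JS8{t'}=\JS8n$), and it is only in the $4\mid u$ branch that the sign separates $4\|u$ from $8\mid u$; your planned case-by-case check would surface this automatically.
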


\begin{proof} Let class representatives $\gamma=\SM abcd$ be chosen as
  in Lemma \ref{lemma: class representatives}. For convenience, set
  $$
    \mu_{n,t}=(-1)^{(t^2-4n^2)/64}.
  $$
  By Lemma \ref{lemma: eta 2||t}, \eqref{equation: J(gamma)}, and
  \eqref{equation: reciprocity}, the term $J(\gamma)$ in Proposition
  \ref{proposition: Shimura trace} is equal to
  \begin{equation} \label{equation: (t,24)=2 4||f J 1}
  \begin{split}
    J(\gamma)&=-\frac{n^{k+1/2}}{w_{n,t,f}}\mu_{n,t}
      \JS{c'}de^{-2\pi irc't'/3}e^{-2\pi ir(t'-1)/8}
      \frac{\rho^{1/2-k}}{\rho-\overline\rho} \\
   &=-\frac{n^{k+1/2}}{w_{n,t,f}}\mu_{n,t}
      \JS d{c'}\JS8{t'}e^{-2\pi irc't'/3}
      \left(\delta_1(t')+i\JS{-4}{rc'}\delta_3(t')\right)
      \frac{\rho^{1/2-k}}{\rho-\overline\rho},
  \end{split}
  \end{equation}
  where $c'$ and $t'$ denote the odd parts of $c$ and $t$,
  respectively, $\rho=(t+\sqrt{t^2-4n^2})/2$, and $\delta_j(t')$ are
  defined by \eqref{equation: delta}.

  When $3\nmid f$, we have $3\nmid c'$ and
  \begin{equation} \label{equation: (t,24)=2 4||f cubic}
    e^{-2\pi irc't'/3}=-\frac12-\frac{i\sqrt3}2\JS{-3}{rc't'}.
  \end{equation}
  When $3|f$, we have $3|c$ and $e^{-2\pi irc't'/3}=1$. Thus,
  \begin{equation} \label{equation: (t,24)=2 4||f J}
  \begin{split}
  &\sum_{(t,24)=2}\sum_{4\|f}\sum_\gamma J(\gamma) \\
  &\quad=\frac{n^{k+1/2}}2\sum_{(t,24)=2}
   \mu_{n,t}\JS8{t'}\left(\delta_1(t')L_{1,2}(n,t)
    +i\delta_3(t')\JS{-4}rL_{-1,2}(n,t)\right) \\
  &\quad\qquad+\frac{\sqrt3}2n^{k+1/2}\JS{12}r\sum_{(t,24)=2}
   \mu_{n,t}\JS{-24}{t'} \\
  &\quad\qquad\qquad\times\left(-\delta_3(t')L_{3,2}(n,t)
   +i\delta_1(t')\JS{-4}rL_{-3,2}(n,t)\right)
  \end{split}
  \end{equation}
  By Lemma \ref{lemma: elliptic main lemma}, $L_{1,2}(n,t)$ is nonzero
  only when $t+2n=u^2$ is a square. Then $\delta_1(t')\neq 0$ forces
  \begin{equation} \label{equation: (t,24)=2 4||f u}
    \begin{cases}
    2\|u, &\text{if }n\equiv 1\mod 4, \\
    4|u, &\text{if }n\equiv 3\mod 4. \end{cases}
  \end{equation}
  Likewise, $L_{-1,2}(n,t)$ is nonzero only when $2n-t=u^2$ is a
  square. Then the condition $\delta_3(t')\neq 0$ implies
  \eqref{equation: (t,24)=2 4||f u} as well. Note that since $3\nmid
  t$, when $n\equiv 2\mod 3$, the integer $u$ must be a multiple of $3$.

  When $n\equiv 1\mod 4$, $\delta_1(t')=1$, and $t+2n$ is indeed a
  square, we have $t'+n\equiv 2\mod 8$ so that
  \begin{equation} \label{equation: (t,24)=2 4||f temp 1}
    \JS8{t'}=\JS8n
  \end{equation}
  and
  \begin{equation} \label{equation: (t,24)=2 4||f temp 2}
    \mu_{n,t}=(-1)^{(t^2-4n^2)/64}=(-1)^{(t'-n)/8}
   =(-1)^{(1-n)/4}=\JS8n.
  \end{equation}
  Similarly, \eqref{equation: (t,24)=2 4||f temp 1} and
  \eqref{equation: (t,24)=2 4||f temp 2} also hold when $n\equiv 1\mod
  4$, $\delta_3(t')=1$, and $2n-t$ is indeed a square. Then, by
  Lemma \ref{lemma: elliptic main lemma}, when $n\equiv 1\mod 4$,
  \begin{equation} \label{equation: (t,24)=2 4||f sum 1 n=1}
  \begin{split}
  &\sum_{(t,24)=2}\mu_{n,t}\JS8{t'}\left(\delta_1(t')L_{1,2}(n,t)
   +i\delta_3(t')\JS{-4}rL_{-1,2}(n,t)\right) \\
  &\qquad=\frac{\lambda_1(n)}4\sum_{2\|u,3\nmid u}
   \left(M_{1,1}(n,u)-M'_{1,1}(n,u)\right) \\
  &\qquad\qquad+\frac14\sum_{2\|u,3|u}
   \left(1+\JS n3\right)\left(M_{1,1}(n,u)-M'_{1,1}(n,u)\right),
  \end{split}
  \end{equation}
  where $\lambda_1(n)$ is defined by \eqref{equation: lambda}.

  When $n\equiv 3\mod 4$, $\delta_1(t')=1$, and $t+2n=u^2$ is
  indeed a square, we have $t'+n\equiv 0\mod 8$ so that
  \begin{equation} \label{equation: (t,24)=2 4||f temp 100}
    \JS8{t'}=\JS8n
  \end{equation}
  and
  \begin{equation} \label{equation: (t,24)=2 4||f mu}
    \mu_{n,t}=(-1)^{(t'+n)/8}=\begin{cases}
    -1, &\text{if }4\|u, \\
    1,  &\text{if }8|u. \end{cases}
  \end{equation}
  Similar conclusions hold for $\delta_3(t')L_{-1,2}(n,t)$. Thus, by
  Lemma \ref{lemma: elliptic main lemma}, when $n\equiv 3\mod 4$,
  \begin{equation} \label{equation: (t,24)=2 4||f sum 1 n=3}
  \begin{split}
  &\sum_{(t,24)=2}\mu_{n,t}\JS8{t'}\left(\delta_1(t')L_{1,2}(n,t)
   +i\delta_3(t')\JS{-4}rL_{-1,2}(n,t)\right) \\
  &\qquad=-\frac{\lambda_1(n)}8\JS8n\sum_{4\|u,3\nmid u}
   \left(M_{1,0}(n,u)-M'_{1,0}(n,u)\right) \\
  &\qquad\qquad+\frac{\lambda_1(n)}8\JS8n\sum_{8|u,3\nmid u}
   \left(M_{1,0}(n,u)-M'_{1,0}(n,u)\right) \\
  &\qquad\qquad-\frac18\JS8n\sum_{4\|u,3|u}
   \left(1+\JS n3\right)\left(M_{1,0}(n,u)-M'_{1,0}(n,u)\right) \\
  &\qquad\qquad+\frac18\JS8n\sum_{8|u,3|u}
   \left(1+\JS n3\right)\left(M_{1,0}(n,u)-M'_{1,0}(n,u)\right).
  \end{split}
  \end{equation}

  We now consider the second sum in \eqref{equation: (t,24)=2 4||f
    J}. By Lemma \ref{lemma: elliptic main lemma}, $L_{3,2}(n,t)$ is
  nonzero only when $(t+2n)/3=u^2$ is a square. The condition
  $\delta_3(t')=1$ implies that
  \begin{equation} \label{equation: (t,24)=2 4||f uu}
    \begin{cases}
    4|u, &\text{if }n\equiv 1\mod 4, \\
    2\|u, &\text{if }n\equiv 3\mod 4. \end{cases}
  \end{equation}
  Similarly, $L_{-3,2}(n,t)$ is nonzero only when $(2n-t)/3=u^2$ is a
  square and the integer $u$ has the same property as above.

  When $n\equiv 1\mod 4$, $\delta_3(t')=1$, and $(t+2n)/3$ is
  indeed a square, we have
  $$
    \JS{-24}{t'}=-\JS{-24}n
  $$
  since $t'+n\equiv 0\mod 24$. Moreover, \eqref{equation: (t,24)=2
    4||f mu} remains valid in this case. For the sum $L_{-3,2}(n,t)$,
  when $\delta_1(t')=1$ and $(2n-t)/3$ is indeed a square, we have
  $$
    \JS{-24}{t'}=\JS{-24}n
  $$
  since $n-t'\equiv 0\mod 24$. Thus, by Lemma \ref{lemma:
    elliptic main lemma}, when $n\equiv 1\mod 4$,
  \begin{equation} \label{equation: (t,24)=2 4||f sum 2 n=1}
  \begin{split}
   &\sum_{(t,24)=2}\mu_{n,t}\JS{-24}{t'}
    \left(-\delta_3(t')L_{3,2}(n,t)+i\delta_1(t')\JS{-4}rL_{-3,2}(n,t)\right)\\
   &\qquad=-\frac18\JS{-24}n\sum_{4\|u}\left(M_{3,0}(n,u)-M'_{3,0}(n,u)\right)\\
   &\qquad\qquad+\frac18\JS{-24}n\sum_{8|u}
    \left(M_{3,0}(n,u)-M'_{3,0}(n,u)\right).
  \end{split}
  \end{equation}

  Now assume that $n\equiv 3\mod 4$. For the sum $L_{3,2}(n,t)$, if
  $\delta_3(t')=1$ and $(t+2n)/3$ is indeed a square, then
  $t'+n\equiv 6\mod 48$ and
  $$
    \JS{-24}{t'}=\JS{-3}{t'}\JS8{t'}=-\JS{-3}n\JS8n=-\JS{-24}n
  $$
  and
  $$
    \mu_{n,t}=(-1)^{(t'-n)/8}=(-1)^{(6-2n)/8}=\JS{-8}n.
  $$
  For the sum $L_{-3,2}(n,t)$, if $\delta_1(t')=1$ and $(2n-t)/3$
  is indeed a square, then $n-t'\equiv 6\mod 24$ and
  $$
    \JS{-24}{t'}=\JS{-24}n
  $$
  and
  $$
    \mu_{n,t}=(-1)^{(t'+n)/8}=(-1)^{(n-3)/4}=\JS{-8}n.
  $$
  Then by Lemma \ref{lemma: elliptic main lemma}
  \begin{equation} \label{equation: (t,24)=2 4||f sum 2 n=3}
  \begin{split}
   &\sum_{(t,24)=2}\mu_{n,t}\JS{-24}{t'}
    \left(-\delta_3(t')L_{3,2}(n,t)+i\delta_1(t')\JS{-4}rL_{-3,2}(n,t)\right)\\
   &\qquad=\frac1{4}\JS{12}n\sum_{2\|u}\left(M_{3,1}(n,u)-M'_{3,1}(n,u)\right).
  \end{split}
  \end{equation}
  Substituting \eqref{equation: (t,24)=2 4||f sum 1 n=1} and
  \eqref{equation: (t,24)=2 4||f sum 2 n=1} into
  \eqref{equation: (t,24)=2 4||f J} in the case $n\equiv 1\mod 4$,
  \eqref{equation: (t,24)=2 4||f sum 1 n=3} and
  \eqref{equation: (t,24)=2 4||f sum 2 n=3} into
  \eqref{equation: (t,24)=2 4||f J} in the case $n\equiv 3\mod 4$, and
  simplifying, we get the lemma. (The reader is reminded again that
  all the sums over $u$ in the proof are running over only positive
  $u$, but the sums defining $A_{\ell,m}(n)$ and $C_{\ell,m}(n)$ are running
  over both positive and negative $u$.)
\end{proof}
\end{subsubsection}

\begin{subsubsection}{Case $(t,24)=2$ and $8|f$}
  \begin{Lemma} \label{lemma: (t,24)=2 8|f summary}
  Assume that $v\ge 3$. When $n\equiv 1\mod 4$,
  \begin{equation*}
  \begin{split}
   &\sum_{(t,24)=2}\sum_{2^v\|f}\sum_{\gamma\in\Gamma_{n,t,f}/\SL(2,\Z)}J(\gamma)\\
   &\qquad=n^{3/2-k}\JS8n^{v-1}\left(\frac{\lambda_1(n)}{16}A_{1,v-1}(n)
   -\frac{(-1)^{v-1}}{2^{v+2}}C_{v-1,1}(n)
   -\frac{(-1)^{v-1}}{2^{v+3}}C_{v,0}^\ast(n)\right),
  \end{split}
  \end{equation*}
  and when $n\equiv 3\mod 4$,
  \begin{equation*}
  \begin{split}
   &\sum_{(t,24)=2}\sum_{2^v\|f}\sum_{\gamma\in\Gamma_{n,t,f}/\SL(2,\Z)}J(\gamma)\\
   &\qquad=n^{3/2-k}\JS8n^{v-1}\left(-\frac{\lambda_1(n)}{2^{v+2}}A_{v-1,1}(n)
   -\frac{\lambda_1(n)}{2^{v+3}}A_{v,0}^\ast(n)
   +\frac{(-1)^{v-1}}{16}C_{1,v-1}(n)\right),
  \end{split}
  \end{equation*}
  where $\lambda_1(n)$ is defined by \eqref{equation: lambda}.
  \end{Lemma}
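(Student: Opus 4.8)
The plan is to mirror the computation of the case $4\|f$, the one genuinely new feature being the dependence of the answer on the parity of $v$. First I would choose class representatives $\gamma=\SM abcd$ of $\Gamma_{n,t,f}/\SL(2,\Z)$ as in Lemma \ref{lemma: class representatives}, so that $c=fp$ for a prime $p>4n$, $(c,d)=1$, $2^v\|c$, and $d$ lies in the residue class modulo $8$ prescribed there. Writing $c'$ and $t'$ for the odd parts of $c$ and $t$, Part (3) of Lemma \ref{lemma: eta 2||t} gives
$$
  \epsilon(a,b,c,d)=\JS2{t'}^v\JS{c'}d
  e^{2\pi i2^{v-2}c't'/3}e^{2\pi i(t'-1)/8},
$$
and substituting this into \eqref{equation: J(gamma)} writes $J(\gamma)$ as $\frac{n^{k+1/2}}{w_{n,t,f}}\epsilon(a,b,c,d)^{-r}\rho^{1/2-k}/(\rho-\overline\rho)$ with $\rho=(t+\sqrt{t^2-4n^2})/2$.

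The next step is to break $\epsilon(a,b,c,d)^{-r}$ into Jacobi symbols. The quadratic part $\JS{c'}de^{-2\pi ir(t'-1)/8}$ is rewritten by the reciprocity identity \eqref{equation: reciprocity}, while for the cubic part I would use the congruence $2^{v-2}\equiv(-1)^v\pmod 3$, so that $e^{-2\pi i2^{v-2}rc't'/3}=e^{-2\pi i(-1)^vrc't'/3}$; expanding this by the $e^{2\pi it/3}$ formula of Lemma \ref{lemma: roots of unity} (treating $3\nmid f$ and $3\mid f$ separately, where in the latter case $3\mid c'$ and the cubic factor is $1$) produces a term $\JS{-3}{rc't'}$ carrying the sign $(-1)^v$. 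The remaining power $\JS2{t'}^v$ evaluates, on the support of the surviving terms, to the prefactor $\JS8n^{v-1}$ appearing in the statement. Collecting these pieces exactly as in the $4\|f$ case, the sum $\sum_{(t,24)=2}\sum_{2^v\|f}\sum_\gamma J(\gamma)$ becomes a combination of the quantities $L_{e,v}(n,t)$ for $e=\pm1,\pm3$, weighted by $\delta_1(t')$ and $\delta_3(t')$ (see \eqref{equation: delta}) and by the above sign and Jacobi-symbol factors, the second index now being $v\ge 3$.

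It then remains to evaluate each $L_{e,v}(n,t)$ by Part (3) of Lemma \ref{lemma: elliptic main lemma}. As there, $L_{1,v}(n,t)$, $L_{-1,v}(n,t)$, $L_{3,v}(n,t)$, $L_{-3,v}(n,t)$ are nonzero only when $t+2n$, $2n-t$, $(t+2n)/3$, $(2n-t)/3$, respectively, equals $e$ times a square $eu^2$; and the value then depends on whether $2\|u$, $2^{v-1}\|u$, or $2^v\mid u$, yielding, up to explicit powers of $2$, the terms $M_{e,v-1}(n,u)$, $M_{e,1}(n,u)$, $M_{e,0}(n,u)$ (and their $M'$ counterparts). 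For each surviving case I would read off the residues of $t'+n$ and $n-t'$ modulo $8$ and $24$ forced by the square condition together with $2\|t$, evaluate $\JS8{t'}$ and $\JS{-24}{t'}$ in terms of $\JS8n$ and $\JS{12}n$ as in the preceding cases, and separate $3\nmid u$ from $3\mid u$ to recover the factor $1+\JS n3$ (and the parameter $\lambda_1(n)$ of \eqref{equation: lambda}). Assembling the three $2$-adic valuation classes of $u$ into $A_{1,v-1}(n)$, $A_{v-1,1}(n)$, $A_{v,0}^\ast(n)$ for $e=\pm1$ and into $C_{1,v-1}(n)$, $C_{v-1,1}(n)$, $C_{v,0}^\ast(n)$ for $e=\pm3$ (Notation \ref{notation: ABCD}), and recording the $\JS8n^{v-1}$ and $(-1)^{v-1}$ prefactors, produces the two displayed formulas according to $n\equiv 1$ or $n\equiv 3\pmod 4$.

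I expect the main obstacle to be the two--sided $v$--bookkeeping: keeping the power $\JS8n^{v-1}$ coming from $\JS2{t'}^v$ compatible with the sign $(-1)^{v-1}$ coming from the cubic root of unity, while simultaneously matching each $2$-adic valuation class of $u$ to the correct indices of $A_{\cdot,\cdot}$, $C_{\cdot,\cdot}$ and to the starred tail sums $A_{v,0}^\ast$, $C_{v,0}^\ast$. The passage between $n\equiv 1$ and $n\equiv 3\pmod 4$, which interchanges the roles of the $A$-- and $C$--contributions and shifts the forced valuation of $u$, is where sign errors are most likely, so I would cross-check the outcome against the already established $2\|f$ and $4\|f$ cases.
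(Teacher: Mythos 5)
Your proposal is correct and takes essentially the same route as the paper's own proof: the paper likewise chooses the representatives of Lemma \ref{lemma: class representatives}, combines Part (3) of Lemma \ref{lemma: eta 2||t} with \eqref{equation: reciprocity} and the $(-1)^v$ sign from the cubic root of unity to reduce the sum to $\JS{8}{t'}^{v-1}$-weighted combinations of $L_{\pm1,v}(n,t)$ and $L_{\pm3,v}(n,t)$, and then evaluates these by Part (3) of Lemma \ref{lemma: elliptic main lemma}, splitting according to the forced $2$-adic valuation of $u$ and the residue of $n$ modulo $4$ exactly as you describe. Your separate bookkeeping of the power $\JS{2}{t'}^v$ is equivalent to the paper's (which folds it together with the reciprocity factor into $\JS{8}{t'}^{v-1}$ at the outset), since these symbols are $\pm1$ and their squares drop out.
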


  \begin{proof} The computation is almost the same as the case $4\|f$.
    Let class representatives $\gamma=\SM abcd$ be chosen according to
    Lemma \ref{lemma: class representatives}. By Lemma \ref{lemma: eta
      2||t} and \eqref{equation: reciprocity}, instead of
    \eqref{equation: (t,24)=2 4||f J 1}, we have
  $$
    J(\gamma)=\frac{n^{k+1/2}}{w_{n,t,f}}\JS d{c'}\JS8{t'}^{v-1}
    e^{-2\pi i2^{v-2}rc't'/3}\left(\delta_1(t')+i\JS{-4}{rc'}
    \delta_3(t')\right).
  $$
  Also, \eqref{equation: (t,24)=2 4||f cubic} is replaced by
  $$
    e^{-2\pi i2^{v-2}rc't'/3}
  =-\frac12-\frac{i\sqrt3}2\JS{-3}{2^vrc't'}
  =-\frac12-(-1)^v\frac{i\sqrt3}2\JS{-3}{rc't'}
  $$
  so that
  \begin{equation} \label{equation: (t,24)=2 8|f J}
  \begin{split}
  &\sum_{(t,24)=2}\sum_{2^v\|f}\sum_\gamma J(\gamma) \\
  &\quad=-\frac{n^{k+1/2}}2\sum_{(t,24)=2}
   \JS8{t'}^{v-1}\left(\delta_1(t')L_{1,v}(n,t)
    +i\delta_3(t')\JS{-4}rL_{-1,v}(n,t)\right) \\
  &\quad\qquad-(-1)^v\frac{\sqrt3}2n^{k+1/2}\JS{12}r\sum_{(t,24)=2}
   \JS{8}{t'}^{v-1}\JS{-3}{t'} \\
  &\quad\qquad\qquad\times\left(-\delta_3(t')L_{3,v}(n,t)
   +i\delta_1(t')\JS{-4}rL_{-3,v}(n,t)\right).
  \end{split}
  \end{equation}

  Consider first the sums involving $L_{1,v}(n,t)$. We have
  $$
    \begin{cases}2\|u, &\text{if }n\equiv 1\mod 4, \\
    2^{v-1}|u, &\text{if }n\equiv 3\mod 4, \end{cases}
  $$
  in place of \eqref{equation: (t,24)=2 4||f u}. For the case
  $n\equiv 1\mod 4$, \eqref{equation: (t,24)=2 4||f temp 1} remains
  valid. The same things hold for $L_{-1,v}(n,t)$. Then we have
  \begin{equation} \label{equation: (t,24)=2 8|f sum 1 n=1}
  \begin{split}
  &\sum_{(t,24)=2}\JS8{t'}^{v-1}\left(\delta_1(t')L_{1,v}(n,t)
   +i\delta_3(t')\JS{-4}rL_{-1,v}(n,t)\right) \\
  &\qquad=\frac{\lambda_1(n)}4\JS8n^{v-1}\sum_{2\|u,3\nmid u}
   \left(M_{1,v-1}(n,u)-M'_{1,v-1}(n,u)\right) \\
  &\qquad\qquad+\frac14\JS8n^{v-1}\sum_{2\|u,3|u}
   \left(1+\JS n3\right)\left(M_{1,v-1}(n,u)-M'_{1,v-1}(n,u)\right).
  \end{split}
  \end{equation}
  For the case $n\equiv 3\mod 4$, \eqref{equation: (t,24)=2 4||f temp
    100} is also valid. Then instead of \eqref{equation: (t,24)=2 4||f
    sum 1 n=3}, we have
  \begin{equation} \label{equation: (t,24)=2 8|f sum 1 n=3}
  \begin{split}
  &\sum_{(t,24)=2}\JS8{t'}^{v-1}\left(\delta_1(t')L_{1,v}(n,t)
   +i\delta_3(t')\JS{-4}rL_{-1,v}(n,t)\right) \\
  &\qquad=\frac{\lambda_1(n)}{2^v}\JS8n^{v-1}\sum_{2^{v-1}\|u,3\nmid u}
   \left(M_{1,1}(n,u)-M'_{1,1}(n,u)\right) \\
  &\qquad\qquad+\frac{\lambda_1(n)}{2^{v+1}}\JS8n^{v-1}\sum_{2^v|u,3\nmid u}
   \left(M_{1,0}(n,u)-M'_{1,0}(n,u)\right) \\
  &\qquad\qquad+\frac1{2^v}\JS8n^{v-1}\sum_{2^{v-1}\|u,3|u}\left(1+\JS n3\right)
   \left(M_{1,1}(n,u)-M'_{1,1}(n,u)\right) \\
  &\qquad\qquad+\frac1{2^{v+1}}\JS8n^{v-1}\sum_{2^v|u,3|u}\left(1+\JS n3\right)
   \left(M_{1,0}(n,u)-M'_{1,0}(n,u)\right).
  \end{split}
  \end{equation}

  We now consider the sums involving $L_{3,v}(n,t)$. We have
  $$
    \begin{cases}
    2^{v-1}|u, &\text{if }n\equiv 1\mod 4, \\
    2\|u, &\text{if }n\equiv 3\mod 4 \end{cases}
  $$
  instead of \eqref{equation: (t,24)=2 4||f uu}. For the case $n\equiv
  1\mod 4$, we have $24|(t'+n)$ so that
  $$
    \JS8{t'}=\JS8n, \qquad\JS{-3}{t'}=-\JS{-3}n=-\JS{12}n.
  $$
  For the case $n\equiv 3\mod 4$, we have $t'+n\equiv 6\mod 24$ and
  $$
    \JS8{t'}=\JS8n, \qquad\JS{-3}{t'}=-\JS{-3}n=\JS{12}n.
  $$
  For the sum $L_{-3,v}(n,t)$, when $n\equiv 1\mod 4$, we have
  $24|(n-t')$ and
  $$
    \JS8{t'}=\JS8n, \qquad \JS{-3}{t'}=\JS{-3}n=\JS{12}n,
  $$
  and when $n\equiv 3\mod 4$, we have $n-t'\equiv 6\mod 24$ and
  $$
    \JS8{t'}=\JS8n, \qquad\JS{-3}{t'}=\JS{-3}n=-\JS{12}n.
  $$
  Then when $n\equiv 1\mod 4$,
  \begin{equation} \label{equation: (t,24)=2 8|f sum 2 n=1}
  \begin{split}
   &\sum_{(t,24)=2}\JS{8}{t'}^{v-1}\JS{-3}{t'}
    \left(-\delta_3(t')L_{3,v}(n,t)+i\delta_1(t')\JS{-4}rL_{-3,v}(n,t)\right)\\
   &\qquad=\frac1{2^v}\JS{8}n^{v-1}\JS{12}n
    \sum_{2^{v-1}\|u}\left(M_{3,1}(n,u)-M'_{3,1}(n,u)\right)\\
   &\qquad\qquad+\frac1{2^{v+1}}\JS{8}n^{v-1}\JS{12}n\sum_{2^v|u}
    \left(M_{3,0}(n,u)-M'_{3,0}(n,u)\right),
  \end{split}
  \end{equation}
  and when $n\equiv 3\mod 4$,
  \begin{equation} \label{equation: (t,24)=2 8|f sum 2 n=3}
  \begin{split}
  &\sum_{(t,24)=2}\JS{8}{t'}^{v-1}\JS{-3}{t'}
    \left(-\delta_3(t')L_{3,v}(n,t)+i\delta_1(t')\JS{-4}rL_{-3,v}(n,t)\right)\\
  &=-\frac14\JS8n^{v-1}\JS{12}n
     \sum_{2\|u}\left(M_{3,v-1}(n,u)-M_{3,v-1}'(n,u)\right).
  \end{split}
  \end{equation}
  Substituting \eqref{equation: (t,24)=2 8|f sum 1 n=1} and
  \eqref{equation: (t,24)=2 8|f sum 2 n=1} into
  \eqref{equation: (t,24)=2 8|f J} in the case $n\equiv 1\mod 4$, and
  \eqref{equation: (t,24)=2 8|f sum 1 n=3} and
  \eqref{equation: (t,24)=2 8|f sum 2 n=3} into
  \eqref{equation: (t,24)=2 8|f J} in the case $n\equiv 3\mod 4$ and
  simplifying, we obtain the formulas.
  \end{proof}
\end{subsubsection}

\begin{subsubsection}{Case $(t,24)=6$ and $2\nmid f$}
\begin{Lemma}
  We have
  $$
    \sum_{(t,24)=6}\sum_{f\text{ odd}}
    \sum_{\gamma\in\Gamma_{n,t,f}/\SL(2,\Z)}J(\gamma)
  =-\frac{n^{3/2-k}}{8}\begin{cases}
     B_1^\ast(n), &\text{if }n\equiv 1\mod 3, \\
     0, &\text{if }n\equiv 2\mod 3. \end{cases}
  $$
\end{Lemma}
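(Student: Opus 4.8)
The plan is to mirror the computations of the preceding cases, the closest analogue being the case $(t,24)=2$ with $f$ odd treated in Lemma~\ref{lemma: (t,24)=2 f odd summary}. First I would fix, for each conjugacy class in $\Gamma_{n,t,f}$, a representative $\gamma=\SM abcd$ as in Lemma~\ref{lemma: class representatives}, so that $c=fp$ with $p>4n$ prime and $(c,d)=1$. Since $(t,24)=6$ forces $2\|t$ and $3\mid t$, and $f$ is odd, the entry $c$ is odd; moreover one checks that $bd(1-c^2)\equiv0\mod24$. Feeding this into Lemma~\ref{lemma: eta multiplier} and \eqref{equation: J(gamma)} gives
\[
  J(\gamma)=\frac{n^{k+1/2}}{w_{n,t,f}}\JS dc\,e^{2\pi irc/8}e^{-2\pi irct/24}\frac{\rho^{1/2-k}}{\rho-\overline\rho},\qquad \rho=\frac{t+\sqrt{t^2-4n^2}}2.
\]

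The decisive simplification comes from the hypothesis $3\mid t$. Writing $t=6s$ with $s$ odd, the two phases combine into a single primitive eighth root of unity,
\[
  e^{2\pi irc/8}e^{-2\pi irct/24}=e^{2\pi irc(3-t)/24}=e^{2\pi irc(1-2s)/8},
\]
so that by the $/8$ case of Lemma~\ref{lemma: roots of unity} it expands only into $\JS8{c}$ and $\JS{-8}{c}$ terms (and \emph{not} into $\JS{24}{c}$ or $\JS{-24}{c}$). This is precisely why only the symbols $\JS2c$ and $\JS{-2}c$ survive, and hence why only the sums $L_{2,0}(n,t)$ and $L_{-2,0}(n,t)$ of Notation~\ref{notation: LM} appear; the $e=\pm6$ ($D$-type) contributions present in the $(t,24)=2$ case are absent here. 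Collecting the pieces I expect to reach
\[
  \sum_{(t,24)=6}\sum_{f\text{ odd}}\sum_\gamma J(\gamma)
  =\frac{\sqrt2}2\,n^{k+1/2}\sum_{(t,24)=6}\JS{-4}{t/6}\Big(\JS2rL_{2,0}(n,t)-i\JS{-2}rL_{-2,0}(n,t)\Big),
\]
where, via $\JS2r=\JS8r$ and the extra $\frac i2\JS{-4}r$ carried by the $e=-2$ case of Lemma~\ref{lemma: elliptic main lemma}, both terms ultimately contribute the factor $\JS8r$ matching $B_\ell(n)$.

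Next I would invoke Lemma~\ref{lemma: elliptic main lemma}(1): $L_{2,0}(n,t)$ is nonzero only when $t+2n=2u^2$ and $L_{-2,0}(n,t)$ only when $2n-t=2u^2$, each producing $M_{2,0}(n,u)$, resp.\ $M'_{2,0}(n,u)$. Two arithmetic observations then govern the outcome. For the residue modulo $3$: from $t+2n=2u^2$ with $3\mid t$ one gets $u^2\equiv n\mod3$, impossible when $n\equiv2\mod3$ (yielding the stated $0$) and forcing $3\nmid u$ when $n\equiv1\mod3$, so the factor $\mu_2(n,u)$ is simply $1$. For the $2$-adic valuation: since $2\|t$ and $n$ is odd, $t/2+n=u^2$ is even, hence $u$ is even, so only the range $\ell\ge1$ occurs; moreover $u$ even gives $\Delta=4u^2-8n=8\cdot(\text{odd})$, so only odd $g$ survive in the Hurwitz sums, matching the odd-$g$ restriction built into $M_{2,0}$. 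As $t$ ranges over all admissible values, $u$ ranges over all even integers, and these terms assemble exactly into $\sum_{j\ge1}B_j(n)=B_1^\ast(n)$.

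Finally I would convert $M_{2,0}(n,u)-M'_{2,0}(n,u)$ into $P_k(2,n,u)$ using the identity $\frac{\tau^{1-2k}-\overline\tau^{1-2k}}{\tau-\overline\tau}=-(2n)^{1-2k}P_k(2,n,u)$, which turns the prefactor $n^{k+1/2}$ into $n^{3/2-k}$ and reproduces the normalization $\frac1{2^{k-1}}\JS{12}n\JS8r$ of $B_\ell(n)$, giving $-\tfrac{n^{3/2-k}}8B_1^\ast(n)$. The main obstacle is not conceptual but the sign bookkeeping: one must track the quadratic-reciprocity factors $\JS{-4}n$, $\JS8n$, $\JS{12}n$ through each surviving term, confirm that they collapse to the single constant of $B_1^\ast$, and verify that the valuation analysis places every contributing class in the range $\ell\ge1$, so that exactly $B_1^\ast(n)$ (and not $B_0(n)$ or a finite truncation) is obtained.
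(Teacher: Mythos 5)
Your proposal is correct and follows essentially the same route as the paper: the paper likewise reduces the phase to eighth roots of unity (so that only $L_{\pm2,0}(n,t)$ survive and no $e=\pm6$ terms appear), obtains the intermediate formula $\frac{n^{k+1/2}}{\sqrt2}\JS8r\sum_{(t,24)=6}\JS{-4}{t'}\left(-L_{2,0}(n,t)+i\JS{-4}rL_{-2,0}(n,t)\right)$, which matches yours after the identity $\JS{-4}{t'}=-\JS{-4}{t/6}$, and then invokes the same observation that $3\mid t$ forces $n\equiv1\bmod3$ for $(t\pm2n)/2$ to be a square, deferring the remaining bookkeeping to the $(t,24)=2$, $f$ odd case exactly as you do. Your extra checks (evenness of $u$ forcing the range $\ell\ge1$, hence $B_1^\ast(n)$ rather than $B_0(n)$, and the collapse of the reciprocity factors via $\JS{-4}n=\JS{12}n$ when $n\equiv1\bmod3$) are precisely the omitted details and are consistent with the paper.
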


\begin{proof} If class representatives $\gamma=\SM abcd$ are chosen
  according to Lemma \ref{lemma: class representatives}, then by Lemma
  \ref{lemma: eta multiplier},
  $$
    J(\gamma)=\frac{n^{k+1/2}}{w_{n,t,f}}\JS dce^{2\pi irc/8}
    e^{-2\pi irc(t'/3)/4}\frac{\rho^{1/2-k}}{\rho-\overline\rho},
  $$
  where $t'=t/2$. Now
  $$
    e^{2\pi irc/8}e^{-2\pi irc(t'/3)/4}
   =\frac i{\sqrt 2}\JS{-4}{rct'}\left(\JS8{rc}+i\JS{-8}{rc}\right)
  $$
  and consequently,
  \begin{equation*}
  \begin{split}
   &\sum_{(t,24)=6}\sum_{(f,2)=1}\sum_\gamma J(\gamma) \\
   &\qquad=\frac{n^{k+1/2}}{\sqrt2}\JS8r\sum_{(t,24)=6}
    \JS{-4}{t'}\left(-L_{2,0}(n,t)+i\JS{-4}rL_{-2,0}(n,t)\right).
  \end{split}
  \end{equation*}
  The rest of computation is similar to \eqref{equation: (t,24)=2 f
    odd 1}. The only difference is that here because $3|t$, if
  $(t+2n)/2$ is a square, then $n$ has to be congruent to $1$ modulo
  $3$. We omit the details.
\end{proof}
\end{subsubsection}

\begin{subsubsection}{Case $(t,24)=6$ and $2\|f$}
\begin{Lemma} \label{lemma: (t,24)=6, 2||f} We have
  $$
    \sum_{(t,24)=6}\sum_{2\|f}\sum_{\gamma\in\Gamma_{n,t,f}/\SL(2,\Z)}J(\gamma)
  =-\frac{n^{3/2-k}}8
    \begin{cases}
    0, &\text{if }n\equiv 1\mod 3, \\
    A_{2,0}^\ast(n), &\text{if }n\equiv 5\mod 12,  \\
    A_{1,0}(n), &\text{if }n\equiv 11\mod 12. \end{cases}
  $$
\end{Lemma}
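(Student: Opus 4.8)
The plan is to run the proof of Lemma \ref{lemma: (t,24)=2 2||f summary} essentially verbatim up to the point where the cube‑root‑of‑unity factor is expanded, because the condition $(t,24)=6$ still forces $2\|t$, so the starting expression for $J(\gamma)$ is literally the same. First I would choose class representatives $\gamma=\SM abcd$ of $\Gamma_{n,t,f}/\SL(2,\Z)$ as in Lemma \ref{lemma: class representatives}; writing $t'=t/2$ (odd, and now divisible by $3$ since $3\mid t$) and letting $c'$ be the odd part of $c$, part (1) of Lemma \ref{lemma: eta 2||t} applies because $2\|f$, giving $\epsilon(a,b,c,d)=\JS2n\JS{c'}de^{-2\pi ic't'/12}e^{2\pi i(t'-1)/8}$. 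Raising to the power $-r$ and substituting into \eqref{equation: J(gamma)} yields
$$
  J(\gamma)=\frac{n^{k+1/2}}{w_{n,t,f}}\JS2n\JS{c'}d
  e^{2\pi irc't'/12}e^{-2\pi ir(t'-1)/8}\frac{\rho^{1/2-k}}{\rho-\overline\rho},
$$
which is exactly the formula occurring in that earlier lemma.

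The one genuinely new ingredient is the evaluation of $e^{2\pi irc't'/12}$. In the case $(t,24)=2$ one had $3\nmid t'$, so this factor split via the $(t,12)=1$ formula of Lemma \ref{lemma: roots of unity} into a $\sqrt3\JS{12}{rc't'}$ piece and a $\JS{-4}{rc't'}$ piece, producing both the $e=\pm3$ sums $L_{\pm3,1}(n,t)$ and the $e=\pm1$ sums $L_{\pm1,1}(n,t)$. Here, however, $3\mid t'$, so $rc't'$ is an odd multiple of $3$ and $e^{2\pi irc't'/12}$ is merely a fourth root of unity: when $3\nmid f$ I would write $e^{2\pi irc't'/12}=e^{2\pi irc'(t'/3)/4}=i\JS{-4}{rc'}\JS{-4}{t'/3}$ using the $(t,4)=1$ formula of Lemma \ref{lemma: roots of unity}, and when $3\mid f$ the analogous reduction holds with $c'$ replaced by $c'/3$. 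Crucially there is no $\sqrt3$ component, so only $L_{\pm1,1}(n,t)$ survive. Combining this with the reciprocity relation \eqref{equation: reciprocity} collapses the sum to
$$
  \sum_{(t,24)=6}\sum_{2\|f}\sum_\gamma J(\gamma)
  =n^{k+1/2}\JS2n\sum_{(t,24)=6}\JS8{t'}\JS{-4}{t'/3}
   \left(-\delta_3(t')L_{1,1}(n,t)+i\delta_1(t')\JS{-4}rL_{-1,1}(n,t)\right).
$$
This already explains why only the $A$‑type functions, and neither $B$, $C$, nor $D$, can appear in the answer.

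It then remains to evaluate the surviving terms by Lemma \ref{lemma: elliptic main lemma}. The term $L_{1,1}(n,t)$ is nonzero only when $t+2n=u^2$, and $L_{-1,1}(n,t)$ only when $2n-t=u^2$; in either case $u^2\equiv2n\mod3$, which (as $3\mid t$ and $(n,6)=1$) is solvable precisely when $n\equiv2\mod3$, whence the value $0$ for $n\equiv1\mod3$. Assuming $n\equiv2\mod3$, I would write $t'+n=2u_1^2$ (respectively $n-t'=2u_1^2$) and read off the $2$‑adic valuation of $u$ from the constraint $\delta_3(t')=1$ (respectively $\delta_1(t')=1$) together with $n\bmod4$: for $n\equiv5\mod12$ this forces $4\mid u$, and for $n\equiv11\mod12$ it forces $2\|u$. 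Applying parts (1) and (2) of Lemma \ref{lemma: elliptic main lemma} replaces $L_{1,1}$ by $\tfrac14\mu_1(n,u)M_{1,0}(n,u)$ and $L_{-1,1}$ by $\tfrac{i}4\JS{-4}r\mu_1(n,u)M'_{1,0}(n,u)$, with $\mu_1(n,u)=1$ here since $3\nmid u$. Converting the $\tau^{1-2k}$‑type expressions into $P_k(1,n,u)$ through the relation used in the proof of Lemma \ref{lemma: (t,24)=1 summary} (cf.\ \eqref{equation: rho 1} and \eqref{equation: tau bar}) and assembling the resulting $M_{1,0}-M'_{1,0}$ differences over the appropriate $u$‑ranges gives $A_{2,0}^\ast(n)$ for $n\equiv5\mod12$ and $A_{1,0}(n)$ for $n\equiv11\mod12$.

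The main obstacle is bookkeeping rather than any new idea. I expect the delicate points to be: correctly evaluating the $t'$‑dependent symbol $\JS8{t'}\JS{-4}{t'/3}$ at $t=u^2-2n$ versus $t=2n-u^2$ (the sign flip here is what turns $M_{1,0}$ and $M'_{1,0}$ into the genuine difference $M_{1,0}-M'_{1,0}$, exactly as in \eqref{equation: (t,24)=2 2||f temp 2}); tracking the factor of $\tfrac12$ produced by passing from the positive‑$u$ sums appearing here to the signed‑$u$ sums defining $A_{\ell,m}(n)$; and matching the valuation constraints $4\mid u$ and $2\|u$ to the subscript conventions $A_{2,0}^\ast=\sum_{j\ge2}A_{j,0}$ and $A_{1,0}$, so that the overall constant reduces to $-n^{3/2-k}/8$ as claimed.
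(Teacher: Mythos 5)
Your proposal is correct and takes essentially the same route as the paper's own proof: the same reduction via Lemma \ref{lemma: class representatives}, Lemma \ref{lemma: eta 2||t}, and \eqref{equation: reciprocity} to a sum involving only $L_{\pm1,1}(n,t)$ (your displayed formula, once $\JS{-4}{t'/3}$ is absorbed using $\delta_1(t')$ and $\delta_3(t')$, coincides exactly with the paper's), followed by the same application of Lemma \ref{lemma: elliptic main lemma}. The paper in fact omits the final assembly by citing the analogy with the case $(t,24)=2$, $2\|f$, and the details you outline (vanishing for $n\equiv 1\mod 3$ since $3|t$, the dichotomy $4\mid u$ versus $2\|u$ according to $n\mod 4$, and the bookkeeping leading to the constant $-n^{3/2-k}/8$) are accurate.
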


\begin{proof} Let class representatives $\gamma=\SM abcd$ of
  $\Gamma_{n,t,f}$ be chosen as in Lemma \ref{lemma: class representatives}.
  By Lemma \ref{lemma: eta 2||t},
  $$
    J(\gamma)=\frac{n^{k+1/2}}{w_{n,t,f}}\frac{\rho^{1/2-k}}
    {\rho-\overline\rho}\JS2n\JS{c'}de^{2\pi irc'(t'/3)/4}
    e^{-2\pi ir(t'-1)/8},
  $$
  where $t'=t/2$. Using \eqref{equation: reciprocity}, we find
  $$
    \JS{c'}de^{2\pi irc'(t'/3)/4}e^{-2\pi ir(t'-1)/8}
   =-i\JS d{c'}\JS8{t'}\JS{-4}{rc't'}\left(\delta_3(t')+i\JS{-4}{rc'}
    \delta_1(t')\right)
  $$
  and
  \begin{equation*}
  \begin{split}
   &\sum_{(t,24)=6}\sum_{2\|f}\sum_\gamma J(\gamma) \\
   &\qquad=-n^{k+1/2}\JS2n\sum_{(t,24)=6}\JS8{t'}\left(
    \delta_3(t')L_{1,1}(n,t)+i\delta_1(t')\JS{-4}rL_{-1,1}(n,t)\right).
  \end{split}
  \end{equation*}
  The rest of computation is similar to
  \eqref{equation: (t,24)=2 2||f sum 1 n=1} and
  \eqref{equation: (t,24)=2 2||f sum 1 n=3}. The only major difference
  is that here the sums are nonzero only when $n\equiv 2\mod 3$
  because $3|t$. We omit the details.
\end{proof}
\end{subsubsection}

\begin{subsubsection}{Case $(t,24)=6$ and $4\|f$}
\begin{Lemma} We have
\begin{equation*}
\begin{split}
  \sum_{(t,24)=6}\sum_{4\|f}\sum_\gamma J(\gamma)
=-\frac{n^{3/2-k}}{16}
  \begin{cases}
  0, &\text{if }n\equiv 1\mod 3, \\
  2A_{1,1}(n), &\text{if }n\equiv 5\mod 12, \\
  \left(-A_{2,0}(n)+A_{3,0}^\ast(n)\right), &\text{if }n\equiv 11\mod
  24, \\
  \left(A_{2,0}(n)-A_{3,0}^\ast(n)\right), &\text{if }n\equiv 23\mod
  24. \end{cases}
\end{split}
\end{equation*}
\end{Lemma}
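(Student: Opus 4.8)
The plan is to imitate the computation of the case $(t,24)=2$ and $4\|f$, exploiting the fact that $3\mid t$ produces two simplifications. I would first fix class representatives $\gamma=\SM abcd$ of each class in $\Gamma_{n,t,f}/\SL(2,\Z)$ as in Lemma \ref{lemma: class representatives}; since $4\|f$, the multiplier is given by Part (2) of Lemma \ref{lemma: eta 2||t}. Writing $t=2t'$ with $t'$ odd and $3\mid t'$, the cubic factor $e^{2\pi ic't'/3}$ occurring there equals $1$, so that after forming $\epsilon(a,b,c,d)^{-r}$ and invoking \eqref{equation: reciprocity} one gets
$$
  J(\gamma)=-\frac{n^{k+1/2}}{w_{n,t,f}}\mu_{n,t}\JS d{c'}\JS8{t'}
   \left(\delta_1(t')+i\JS{-4}{rc'}\delta_3(t')\right)
   \frac{\rho^{1/2-k}}{\rho-\overline\rho},
$$
with $\mu_{n,t}=(-1)^{(t^2-4n^2)/64}$ and $c'$ the odd part of $c$. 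Crucially there is no $\sqrt3$ (i.e.\ $e=\pm3$) contribution, which is exactly why only the functions $A_{\ell,m}(n)$, and none of $C_{\ell,m}(n)$, will appear. Recognizing the $e=\pm1$ pieces and summing over $\gamma$ and over $f$ with $4\|f$ yields
$$
  \sum_{(t,24)=6}\sum_{4\|f}\sum_\gamma J(\gamma)
  =-n^{k+1/2}\sum_{(t,24)=6}\mu_{n,t}\JS8{t'}
   \left(\delta_1(t')L_{1,2}(n,t)+i\delta_3(t')\JS{-4}rL_{-1,2}(n,t)\right);
$$
here I would also note that since $3\mid t$ and $(n,6)=1$ we have $t^2-4n^2\equiv2\pmod3$, so $3\nmid f$ automatically and the second sums defining $L_{\pm1,2}$ are empty.

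Next I would evaluate $L_{1,2}(n,t)$ and $L_{-1,2}(n,t)$ by Part (3) of Lemma \ref{lemma: elliptic main lemma} with $\ell=2$. They vanish unless $t+2n$ (respectively $2n-t$) is a perfect square $u^2$; because $3\mid t$, such a square forces $u^2\equiv2n\pmod3$, which is solvable precisely when $n\equiv2\pmod3$, and then $3\nmid u$. This immediately yields the value $0$ when $n\equiv1\pmod3$, and for $n\equiv2\pmod3$ it makes $\mu_1(n,u)=1$ and annihilates the $3\mid u$ and $3\mid g$ portions of $A_{\ell,m}(n)$ (indeed $\Delta=u^2-4n\equiv2\pmod3$ as well). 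The remaining work splits by $n\bmod4$: for $n\equiv1\pmod4$ one has $2\|u$, so Lemma \ref{lemma: elliptic main lemma} outputs $M_{1,1}(n,u)$, while for $n\equiv3\pmod4$ one has $4\mid u$, so it outputs $M_{1,0}(n,u)$ weighted by $\mu_{n,t}=(-1)^{(t'+n)/8}$, which is $-1$ for $4\|u$ and $+1$ for $8\mid u$ as in \eqref{equation: (t,24)=2 4||f mu}. In every subcase the value of $\JS8{t'}$ is recorded in terms of $\JS8n$ exactly as in the case $(t,24)=2$, $4\|f$.

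Finally I would assemble the answer through the identity $M_{e,\ell}(n,u)-M'_{e,\ell}(n,u)=-e^{1/2-k}n^{1-2k}P_k(e,n,u)\sum_gH$, which rests on $\tau\overline\tau=en$, together with the observation that the sums over $u$ above range only over positive $u$ whereas the definitions in Notation \ref{notation: ABCD} range over all $u$; this contributes a factor $\frac12$ and restores the $\JS{12}n$ built into $A_{\ell,m}(n)$. For $n\equiv1\pmod4$ (hence $n\equiv5\pmod{12}$), the $2\|u$ terms collect into $A_{1,1}(n)$ with total coefficient $-\frac{n^{3/2-k}}{16}\cdot2$, using $\JS{12}n=-1$. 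For $n\equiv3\pmod4$ the $4\|u$ and $8\mid u$ terms collect into $A_{2,0}(n)$ and $A_{3,0}^\ast(n)$ with opposite signs, and the leftover factor $\JS8n\JS{12}n$ equals $-1$ for $n\equiv11\pmod{24}$ and $+1$ for $n\equiv23\pmod{24}$, producing the stated alternation $-A_{2,0}+A_{3,0}^\ast$ versus $A_{2,0}-A_{3,0}^\ast$.

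The main obstacle I anticipate is bookkeeping of sign factors: correctly correlating the $2$-adic valuation of $u$ with the level $M_{1,1}$ or $M_{1,0}$ selected by Lemma \ref{lemma: elliptic main lemma}, and matching the products $\JS8{t'}$, $\mu_{n,t}$, $\JS8n$, $\JS{12}n$ to the residue of $n$ modulo $24$. No new ingredient beyond those used for $(t,24)=2$, $4\|f$ is required; in fact the computation is shorter, since $3\mid t$ removes the cubic-character expansion and all $3$-divisibility subtleties in $f$, $u$, and $g$.
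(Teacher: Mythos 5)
Your proposal is correct and follows essentially the same route as the paper: the paper likewise derives the displayed formula for $J(\gamma)$ from Lemma \ref{lemma: eta 2||t}(2) and \eqref{equation: reciprocity} (with the cubic factor trivial because $3\mid t'$), reduces to $\delta_1(t')L_{1,2}(n,t)+i\delta_3(t')\JS{-4}rL_{-1,2}(n,t)$, and then repeats the $(t,24)=2$, $4\|f$ computation \eqref{equation: (t,24)=2 4||f u}--\eqref{equation: (t,24)=2 4||f sum 1 n=3}, noting the sums vanish unless $n\equiv 2\pmod 3$. Your sign bookkeeping ($\mu_{n,t}$, $\JS8{t'}=\JS8n$, the factor $\JS{12}n$ and the halving from positive $u$) checks out against the stated cases modulo $24$.
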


\begin{proof} If class representatives $\gamma=\SM abcd$ are chosen
  according to Lemma \ref{lemma: class representatives}, then by Lemma
  \ref{lemma: eta 2||t} and \eqref{equation: reciprocity},
  $$
    J(\gamma)=-\frac{n^{k+1/2}}{w_{n,t,f}}\mu_{n,t}
    \JS d{c'}\JS8{t'}\left(\delta_1(t')
  +i\JS{-4}{rc'}\delta_3(t')\right)
    \frac{\rho^{1/2-k}}{\rho-\overline\rho},  
  $$
  where $\mu_{n,t}=(-1)^{(t^2-4n^2)/64}$, $c'$ and $t'$ denote the odd
    parts of $c$ and $t$, respectively. (Cf. \eqref{equation: (t,24)=2
      4||f J 1}.) Thus,
  \begin{equation*}
  \begin{split}
   &\sum_{(t,24)=6}\sum_{4\|f}\sum_\gamma J(\gamma) \\
   &\qquad=-n^{k+1/2}\sum_{(t,24)=6}\mu_{n,t}\JS8{t'}
    \left(\delta_1(t')L_{1,2}(n,t)+i\delta_3(t')\JS{-4}r
    L_{-1,2}(n,t)\right).
  \end{split}
  \end{equation*}
  Following the computation in
  \eqref{equation: (t,24)=2 4||f u}--\eqref{equation: (t,24)=2 4||f sum
    1 n=3} and noting that here the sums are nonzero only when
  $n\equiv 2\mod 3$, we get the claimed formula.
\end{proof}
\end{subsubsection}

\begin{subsubsection}{Case $(t,24)=6$ and $8|f$}
\begin{Lemma} \label{lemma: (t,24)=6 8|f summary}
Let $v\ge 3$ be an integer.
When $n\equiv 1\mod 3$,
$$
  \sum_{(t,24)=6}\sum_{2^v\|f}\sum_\gamma J(\gamma)=0.
$$
When $n\equiv 5\mod 12$,
$$
  \sum_{(t,24)=6}\sum_{2^v\|f}\sum_\gamma J(\gamma)=n^{3/2-k}\JS8n^{v-1}
  \frac{A_{1,v-1}(n)}8.
$$
When $n\equiv 11\mod 12$,
$$
  \sum_{(t,24)=6}\sum_{2^v\|f}\sum_\gamma J(\gamma)=-n^{3/2-k}\JS8n^{v-1}
  \left(\frac{A_{v-1,1}(n)}{2^{v+1}}+\frac{A_{v,0}(n)}{2^{v+2}}\right).
$$
\end{Lemma}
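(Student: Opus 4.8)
The plan is to follow the same line of computation as in the case $(t,24)=2$ and $8\mid f$ (Lemma~\ref{lemma: (t,24)=2 8|f summary}) and in the case $(t,24)=6$, $4\|f$, exploiting the simplifications forced by $3\mid t$. First I would choose class representatives $\gamma=\SM abcd$ of $\Gamma_{n,t,f}/\SL(2,\Z)$ as in Lemma~\ref{lemma: class representatives}; since $(t,24)=6$ we have $2\|t$, so writing $t=2t'$ the integer $t'$ is odd with $3\mid t'$, and the congruence conditions on $d$ prescribed there apply. Using Lemma~\ref{lemma: eta 2||t}(3) to evaluate $\epsilon(a,b,c,d)$ for $2^v\|f$ with $v\ge 3$, together with \eqref{equation: J(gamma)} and the reciprocity identity \eqref{equation: reciprocity}, I would obtain an expression for $J(\gamma)$ of the same shape as in the $(t,24)=2$ case.

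The key structural point is that, because $3\mid t'$, the cubic exponential $e^{-2\pi i2^{v-2}rc't'/3}$ that appeared in the $(t,24)=2$ computation is now identically $1$. Consequently, expanding the remaining roots of unity by Lemma~\ref{lemma: roots of unity} produces only the terms carrying $L_{\pm1,v}(n,t)$, i.e.\ the $e=\pm1$ (hence $A$-type) contributions, while the $e=\pm3$ terms that generated the $C$-functions in Lemma~\ref{lemma: (t,24)=2 8|f summary} are absent; this is exactly the phenomenon already seen for $(t,24)=6$, $4\|f$ and for $2\|f$ in Lemma~\ref{lemma: (t,24)=6, 2||f}. Thus after summing over $t$ and over representatives I expect an identity of the form
$$
  \sum_{(t,24)=6}\sum_{2^v\|f}\sum_\gamma J(\gamma)
  =\pm\,n^{k+1/2}\!\!\sum_{(t,24)=6}\JS8{t'}^{v-1}
  \bigl(\delta_1(t')L_{1,v}(n,t)+i\delta_3(t')\JS{-4}rL_{-1,v}(n,t)\bigr),
$$
where $\delta_1,\delta_3$ are as in \eqref{equation: delta}, and the precise pairing of $\delta$'s and the overall sign are dictated by the reciprocity bookkeeping in \eqref{equation: reciprocity}.

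Next I would invoke Lemma~\ref{lemma: elliptic main lemma}(3) to evaluate each $L_{\pm1,v}(n,t)$. The square conditions $t+2n=u^2$ and $2n-t=u^2$ now interact with $3\mid t$: reducing modulo $3$ gives $u^2\equiv 2n\pmod 3$, which, since $(n,6)=1$, is solvable only when $n\equiv 2\pmod 3$ and $3\nmid u$. This explains the vanishing of the sum for $n\equiv 1\pmod 3$ and leaves the residues $n\equiv 5$ and $n\equiv 11\pmod{12}$. The split between them is governed by the $2$-adic valuation of $u$: from $u^2=2(t'+n)$ with $t'$ odd, one rewrites $\JS8{t'}$ in terms of $\JS8n$ exactly as in \eqref{equation: (t,24)=2 4||f temp 1}, and the forced valuation of $u$ is $2\|u$ when $n\equiv 1\pmod 4$ and $2^{v-1}\mid u$ when $n\equiv 3\pmod 4$. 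By Lemma~\ref{lemma: elliptic main lemma}(3) this yields $M_{1,v-1}$ in the first case and a mixture of $M_{1,1}$ and $M_{1,0}$ in the second. Converting $\tfrac{\tau^{1-2k}-\overline\tau^{1-2k}}{\tau-\overline\tau}$ into $P_k(1,n,u)$ and reassembling into the functions of Notation~\ref{notation: ABCD} then produces $A_{1,v-1}(n)$ for $n\equiv 5\pmod{12}$ and the combination $A_{v-1,1}(n)/2^{v+1}+A_{v,0}(n)/2^{v+2}$ for $n\equiv 11\pmod{12}$.

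The main obstacle will be the purely mechanical one: tracking the powers of $2$ and of $\JS8n$ as $v$ varies, correctly matching each $M_{1,\ell'}$ and $M'_{1,\ell'}$ (with its $2$-adic condition on $u$) against the definitions of $A_{1,v-1}$, $A_{v-1,1}$, and $A_{v,0}$, and accounting for the factor $1/2$ coming from restricting the sums to positive $u$. Since every step is identical to the corresponding step in Lemma~\ref{lemma: (t,24)=2 8|f summary} with the $C$-terms simply suppressed, I would record the resulting formulas and omit the repetitive verification, as is done for the other analogous cases in this section.
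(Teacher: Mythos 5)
Your proposal matches the paper's proof essentially step for step: the paper likewise takes representatives from Lemma \ref{lemma: class representatives}, applies Lemma \ref{lemma: eta 2||t}(3) (where $3\mid t'$ trivializes the cubic exponential, so only the $e=\pm1$ terms survive and no $C$-functions appear), and uses \eqref{equation: J(gamma)} together with \eqref{equation: reciprocity} to reduce to $n^{k+1/2}\sum_{(t,24)=6}\JS8{t'}^{v-1}\left(\delta_1(t')L_{1,v}(n,t)+i\delta_3(t')\JS{-4}rL_{-1,v}(n,t)\right)$, exactly the expression you predict (with the sign you leave as $\pm$ coming out positive). The remaining work you describe — the mod-$3$ argument forcing vanishing for $n\equiv 1\pmod 3$, the $2$-adic split $2\|u$ versus $2^{v-1}\mid u$ according to $n\bmod 4$, and the reassembly into the $A$-functions — is precisely what the paper disposes of by saying the computation is "similar to that in Lemma \ref{lemma: (t,24)=2 8|f summary} up to \eqref{equation: (t,24)=2 8|f sum 1 n=3}" and omitting the details.
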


\begin{proof} We have
  $$
    J(\gamma)=\frac{n^{k+1/2}}{w_{n,t,f}}\JS d{c'}\JS8{t'}^{v-1}
    \left(\delta_1(t')+i\JS{-4}{rc'}\delta_3(t')\right)
  $$
  so that
  \begin{equation*}
  \begin{split}
   &\sum_{(t,24)=6}\sum_{2^v\|f}\sum_\gamma J(\gamma) \\
   &\qquad=n^{k+1/2}\sum_{(t,24)=6}\JS8{t'}^{v-1}
    \left(\delta_1(t')L_{1,v}(n,t)+i\delta_3(t')\JS{-4}r
    L_{-1,v}(n,t)\right).
  \end{split}
  \end{equation*}
  The computation is similar to that in Lemma \ref{lemma: (t,24)=2 8|f
    summary} up to \eqref{equation: (t,24)=2 8|f sum 1 n=3}. We skip
  the details.
\end{proof}
\end{subsubsection}
\end{subsection}
\end{section}

\begin{section}{Traces of Hecke operators on $S_{2k}^\new(6)$}
\label{section: trace, integral weight}

Let $W_e$, $e=1,2,3,6$, be the Atkin-Lehner involutions on $S_{2k}(6)$.
For $\epsilon_2,\epsilon_3\in\{\pm 1\}$, let
$S_{2k}(6,\epsilon_2,\epsilon_3)$ be the Atkin-Lehner eigensubspace of
$S_{2k}(6)$ with eigenvalues $\epsilon_2$ and $\epsilon_3$ for $W_2$
and $W_3$, respectively, and let
$S_{2k}^\new(6,\epsilon_2,\epsilon_3)$ be its newform subspace. We
have
\begin{equation} \label{equation: trace, integral weight 1}
\begin{split}
 &\tr(T_n|S_{2k}^\new(6,\epsilon_2,\epsilon_3)) \\
=&\tr(T_n|S_{2k}(6,\epsilon_2,\epsilon_3))
 -\tr(T_n|S_{2k}(3,\epsilon_3))
 -\tr(T_n|S_{2k}(2,\epsilon_2))+\tr(T_n|S_{2k}(1)) \\
=&\frac14\Big(\tr(T_n|S_{2k}(6))
   +\epsilon_2\tr(T_nW_2|S_{2k}(6))+\epsilon_3\tr(T_nW_3|S_{2k}(6)) \\
&\qquad+\epsilon_6\tr(T_nW_6|S_{2k}(6))\Big)
-\frac12\Big(\tr(T_n|S_{2k}(3))+\epsilon_3\tr(T_nW_3|S_{2k}(3))\Big)\\
&\qquad-\frac12\Big(\tr(T_n|S_{2k}(2))+\epsilon_2\tr(T_nW_2|S_{2k}(2))\Big)
  +\tr(T_n|S_{2k}(1)),
\end{split}
\end{equation}
where $\epsilon_6=\epsilon_2\epsilon_3$. Thus, in order to obtain a
formula for the trace of $T_n$ on
$S_{2k}^\new(6,\epsilon_2,\epsilon_3)$, we need to evaluate
$\tr(T_nW_e|S_{2k}(N))$ for various $N$ and $e$.

The traces of $\tr(T_n|S_{2k}(N))$ have been computed by
\cite{Eichler-LNM,Hijikata}, while those of $\tr(T_nW_e|S_{2k}(N))$
were given by \cite{Yamauchi}. The formulas are summarized in the
following proposition. Note that in \cite{Yamauchi}, the numbers of
optimal embeddings are simply expressed as the number of solutions to
some congruence equations. Here we use the formula for the numbers
of optimal embedding from \cite{Ogg}. Note also that here we only give
the formulas in cases relevant to our discussion; the general cases
are much more complicated.

\begin{Proposition}
  \label{proposition: trace formula}
  Let $N$ be a positive squarefree integer, $e$ a
  positive divisor of $N$, and $k$ an integer greater than $1$. For a
  discriminant $\Delta<0$ of an imaginary quadratic order, we let
  $\Delta_0$ denote the discriminant of the field $\Q(\sqrt\Delta)$
  and for a prime $p$, let
  $$
    \alpha(\Delta,p)=\begin{cases} \displaystyle
    1+\JS{\Delta_0}p, &\text{if }p\nmid(\Delta/\Delta_0), \\
    2, &\text{if }p|(\Delta/\Delta_0). \end{cases}
  $$
  Then for a positive divisor $e$ of $N$ and a positive integer $n$
  relatively prime to $N$, we have
  \begin{equation*}
  \begin{split}
 &\tr(T_nW_e|S_{2k}(N)) \\
 &\quad=-\frac1{2e^{k-1}}\sum_{\Delta=e^2u^2-4en<0}\sum_{\substack{g^2|(\Delta/\Delta_0),\\
     (g,e)=1}}
    \prod_{p|(N/e)}\alpha(g^2\Delta_0,p)\cdot
  H(g^2\Delta_0)\frac{\tau^{2k-1}-\overline\tau^{2k-1}}{\tau-\overline\tau}
  \\
 &\qquad\qquad-2^{\omega(N)-1}\delta_1(e)\sum_{a|n}\min(a,n/a)^{2k-1}
 +\frac{2k-1}{12}\delta_2(e,n)n^{k-1}\prod_{p|N}(p+1),
  \end{split}
  \end{equation*}
  where $\tau=(eu+\sqrt{\Delta})/2$, $\omega(N)$ is the number of
  prime divisors of $N$,
  $$
    \delta_1(e)=\begin{cases}
    1, &\text{if }e=1, \\
    0, &\text{else}, \end{cases} \qquad
    \delta_2(e,n)=\begin{cases}
    1, &\text{if }e=1\text{ and }n\text{ is a square}, \\
    0, &\text{else}. \end{cases}
  $$
\end{Proposition}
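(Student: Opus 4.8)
The plan is to obtain the formula by specializing the general Eichler–Selberg trace formula, in the form given by Yamauchi \cite{Yamauchi} for the twisted operators $T_nW_e$, to the squarefree level $N$, and then evaluating the resulting optimal-embedding numbers by Ogg's formula \cite{Ogg}. Since all the analytic input is already available in the literature, the work is essentially organizational: one collects the elliptic, hyperbolic, and scalar contributions and rewrites them in the compact shape asserted.

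First I would recall Yamauchi's expression for $\tr(T_nW_e|S_{2k}(N))$ when $(n,N)=1$ and $e\mid N$. It is a sum of three kinds of terms. The elliptic part ranges over conjugacy classes of matrices whose characteristic polynomial is $X^2-tX+en$ with $t^2-4en<0$; the twist by $W_e$ is what replaces the determinant $n$ by $en$. Writing $t=eu$ turns the discriminant into $\Delta=e^2u^2-4en$, and the root $\tau=(eu+\sqrt\Delta)/2$ satisfies $X^2-euX+en=0$, so each class is weighted by the weight-$2k$ Gegenbauer factor $\frac{\tau^{2k-1}-\overline\tau^{2k-1}}{\tau-\overline\tau}$ (the quantity $P_k(e,n,u)$ of Notation \ref{notation: ABCD}), together with the number of optimal embeddings of the order of discriminant $\Delta$ into the relevant Eichler order, suitably normalized.

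The substantive step is the evaluation of these embedding numbers, and here I would invoke Ogg's formula \cite{Ogg}, which for squarefree $N$ factors the global count into local factors indexed by the primes $p\mid N$. At a prime $p\mid(N/e)$ the local order is a split Eichler order, and Ogg's count is exactly the $\alpha(\Delta,p)$ of the statement, namely $1+\JS{\Delta_0}p$ when $p\nmid(\Delta/\Delta_0)$ and $2$ when $p\mid(\Delta/\Delta_0)$; these assemble into the product $\prod_{p\mid(N/e)}\alpha(g^2\Delta_0,p)$. At a prime $p\mid e$, where $W_e$ interchanges the two maximal orders containing the Eichler order, the local embedding number behaves differently: it survives only for conductors prime to $p$, which globally imposes the condition $(g,e)=1$ in the inner sum, and it contributes the normalizing weight $e^{-(k-1)}$ together with the overall $\tfrac12$. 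Summing $H(g^2\Delta_0)$ over the admissible conductors $g$ with $g^2\mid(\Delta/\Delta_0)$ then produces the stated double sum.

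It remains to account for the hyperbolic and scalar contributions. The hyperbolic terms come from elements with distinct rational eigenvalues; a divisor $a\mid n$ contributes $\min(a,n/a)^{2k-1}$, and the factor $2^{\omega(N)-1}$ is half the number $2^{\omega(N)}$ of cusps of $\Gamma_0(N)$ (for squarefree $N$). The scalar term is the usual identity contribution $\tfrac{2k-1}{12}n^{k-1}[\SL(2,\Z):\Gamma_0(N)]$, and for squarefree $N$ one has $[\SL(2,\Z):\Gamma_0(N)]=\prod_{p\mid N}(p+1)$. Both boundary terms carry, in Yamauchi's formula, the indicators $\delta_1(e)$ and $\delta_2(e,n)$: a nontrivial $W_e$ (that is, $e>1$) admits no scalar or cusp-fixing representative of the required shape, so the hyperbolic term appears only for $e=1$ and the scalar term only for $e=1$ with $n$ a perfect square. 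Assembling the three pieces yields the proposition. I expect the main obstacle to be the precise bookkeeping of the local factors at the primes dividing $e$: one must reconcile Yamauchi's normalization of $T_nW_e$ with Ogg's normalization of the embedding numbers and verify that the primes $p\mid e$ contribute exactly the power $e^{-(k-1)}$, the restriction $(g,e)=1$, and nothing to the product over $p\mid(N/e)$. It is this matching of normalizations, rather than the structural form of the formula, that requires care.
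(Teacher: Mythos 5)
Your proposal follows exactly the paper's route: the paper offers no independent derivation of this proposition, but presents it as a summary of the known trace formulas (Eichler, Hijikata, and especially Yamauchi for $T_nW_e$) with the optimal-embedding numbers rewritten via Ogg's formula, which is precisely the specialization-and-bookkeeping argument you outline. The specific points you identify---elliptic classes with trace $eu$ and discriminant $\Delta=e^2u^2-4en$, the local factors $\alpha(g^2\Delta_0,p)$ at primes $p\mid(N/e)$, the restriction $(g,e)=1$ and the weight $e^{-(k-1)}$ at primes dividing $e$, and the vanishing of the hyperbolic and scalar terms when $e>1$---are exactly the paper's intended reading of those sources.
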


Using this formula, we now compute the trace of $T_n$ on
$S_{2k}(6,\epsilon_2,\epsilon_3)$.

\begin{Proposition} \label{proposition: trace, integral weight}
  Let $n$ be a positive integer relatively prime to $6$.
  Then for $\epsilon_2,\epsilon_3\in\{\pm1\}$, the trace of $T_n$ on
  $S_{2k}^\new(\Gamma_0(6),\epsilon_2,\epsilon_3)$ is
  \begin{equation*}
  \begin{split}
  &-\frac18\sum_{u^2<4n,(g,6)=1}\left(1-\JS{\Delta_0}2\right)
    \left(1-\JS{\Delta_0}3\right)H(g^2\Delta_0)P_k(1,n,u) \\
  &\qquad+\frac{\epsilon_2}{8\cdot2^{k-1}}\sum_{4u^2<8n,(g,3)=1}
    \left(1-\JS{\Delta_0}3\right)H(g^2\Delta_0)P_k(2,n,u) \\
 &\qquad+\frac{\epsilon_3}{8\cdot3^{k-1}}\sum_{9u^2<12n,(g,2)=1}
    \left(1-\JS{\Delta_0}2\right)H(g^2\Delta_0)P_k(3,n,u) \\
 &\qquad-\frac{\epsilon_2\epsilon_3}{8\cdot6^{k-1}}
    \sum_{36u^2<24n,g}H(g^2\Delta_0)P_k(6,n,u)+\frac{2k-1}{24}\delta(n)n^{k-1},
  \end{split}
  \end{equation*}
  where
  $$
    \delta(n)=\begin{cases}
  1, &\text{if }n\text{ is a square}, \\
  0, &\text{else}. \end{cases}
  $$
\end{Proposition}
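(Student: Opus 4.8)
The plan is to substitute the trace formula of Proposition \ref{proposition: trace formula} into the decomposition \eqref{equation: trace, integral weight 1} and collect the resulting pieces according to the Atkin--Lehner index $e\in\{1,2,3,6\}$. Expanding \eqref{equation: trace, integral weight 1}, the quantity $\tr(T_n|S_{2k}^\new(6,\epsilon_2,\epsilon_3))$ is a fixed rational combination of nine traces $\tr(T_nW_e|S_{2k}(N))$ with $N\in\{1,2,3,6\}$ and $e\mid N$, and I would expand each of them by Proposition \ref{proposition: trace formula}. Since the elliptic contribution attached to a divisor $e$ always carries the prefactor $-1/(2e^{k-1})$, the discriminant $\Delta=e^2u^2-4en$, and the factor $(\tau^{2k-1}-\overline\tau^{2k-1})/(\tau-\overline\tau)=P_k(e,n,u)$, the elliptic parts sharing a given $e$ group into a single sum over $u$ and $g$, while the only difference among the constituent traces is the local factor $\prod_{p\mid(N/e)}\alpha(g^2\Delta_0,p)$.

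First I would treat $e=1$, which is fed by $\frac14\tr(T_n|S_{2k}(6))$, $-\frac12\tr(T_n|S_{2k}(3))$, $-\frac12\tr(T_n|S_{2k}(2))$, and $\tr(T_n|S_{2k}(1))$, with local factors $\alpha(g^2\Delta_0,2)\alpha(g^2\Delta_0,3)$, $\alpha(g^2\Delta_0,3)$, $\alpha(g^2\Delta_0,2)$, and $1$ respectively. Writing $a=\JS{\Delta_0}2$, $b=\JS{\Delta_0}3$ and using $\alpha(g^2\Delta_0,p)=1+\JS{\Delta_0}p$ when $p\nmid g$, the bracket $\frac14(1+a)(1+b)-\frac12(1+a)-\frac12(1+b)+1$ collapses to $\frac14(1-a)(1-b)$, which after the prefactor $-\frac12$ is exactly $-\frac18(1-\JS{\Delta_0}2)(1-\JS{\Delta_0}3)$. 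The decisive observation is that whenever $2\mid g$ or $3\mid g$ the corresponding $\alpha$ equals $2$ and the same bracket vanishes identically, so only $g$ with $(g,6)=1$ survive. The cases $e=2,3,6$ go the same way: for $e=2$ the constraint $(g,e)=1$ forces $g$ odd, the $3\mid g$ terms cancel, and $\frac14\alpha(g^2\Delta_0,3)-\frac12$ reduces to $-\frac14(1-b)$, yielding $\frac{\epsilon_2}{8\cdot2^{k-1}}(1-\JS{\Delta_0}3)$; by symmetry $e=3$ gives $\frac{\epsilon_3}{8\cdot3^{k-1}}(1-\JS{\Delta_0}2)$, and $e=6$ (with $N/e=1$, so no local factor) gives $-\frac{\epsilon_2\epsilon_3}{8\cdot6^{k-1}}$ summed over all $g$.

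There remain the non-elliptic terms, both of which occur only for $e=1$. The hyperbolic contribution $-2^{\omega(N)-1}\delta_1(e)\sum_{a\mid n}\min(a,n/a)^{2k-1}$ enters with coefficients $\frac14(-2)+(-\tfrac12)(-1)+(-\tfrac12)(-1)+1\cdot(-\tfrac12)=0$, so it cancels entirely, as it must for a newform trace. The scalar term $\frac{2k-1}{12}\delta_2(e,n)n^{k-1}\prod_{p\mid N}(p+1)$ survives only when $n$ is a square; here the weighted sum over $N=6,3,2,1$ equals $(2k-1)\bigl(\frac14-\frac16-\frac18+\frac1{12}\bigr)=\frac{2k-1}{24}$, matching the last term of the statement.

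The step I expect to be the main obstacle is the precise bookkeeping of the conditions on $g$. One must reconcile the constraint $(g,e)=1$ coming from Proposition \ref{proposition: trace formula}, the divisibility $g^2\mid(\Delta/\Delta_0)$, and the vanishing of the $\alpha$-brackets when a prime dividing $N/e$ divides $g$, and then confirm that these together reproduce the conditions $(g,6)=1$, $(g,3)=1$, $(g,2)=1$, and $g$ unrestricted appearing in the four elliptic sums. Because $\Delta=e^2u^2-4en$ carries the factor $e$, the delicate point is to verify that, for $e=2,3,6$, the prime divisors of $e$ already cannot divide $\Delta/\Delta_0$ (for instance, $\Delta=4u^2-8n$ forces $\Delta/\Delta_0$ to be odd, so $(g,2)=1$ is automatic), while for $e=1$ the restriction to $(g,6)=1$ is produced genuinely by the $\alpha$-cancellation. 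Once these coprimality facts are checked, the rest is routine algebraic simplification.
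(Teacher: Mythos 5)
Your proposal is correct and takes essentially the same route as the paper's own proof: substitute Proposition \ref{proposition: trace formula} into \eqref{equation: trace, integral weight 1}, group the elliptic terms by the Atkin--Lehner index $e$, collapse the $\alpha$-brackets (to $\tfrac14\left(1-\JS{\Delta_0}2\right)\left(1-\JS{\Delta_0}3\right)$ for $e=1$, with the terms having $2\mid g$ or $3\mid g$ cancelling), and check that the hyperbolic contributions cancel while the scalar contributions sum to $\frac{2k-1}{24}\delta(n)n^{k-1}$. Your explicit attention to why the conductor of $\Delta=e^2u^2-4en$ is automatically prime to the divisors of $e$ for $e=2,3,6$ is a detail the paper passes over silently, but it does not change the argument.
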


\begin{proof}
  We first consider the terms
  \begin{equation} \label{equation: e=1, all 1}
    \frac14\tr(T_n|S_{2k}(6))-\frac12\tr(T_n|S_{2k}(3))-\frac12\tr(T_n|S_{2k}(2))
   +\tr(T_n|S_{2k}(1))
  \end{equation}
  in \eqref{equation: trace, integral weight 1}. By Proposition
  \ref{proposition: trace formula}, we have
  \begin{equation} \label{equation: e=1, N=1}
  \begin{split}
    \tr(T_n|S_{2k}(1))
  &=-\frac12\sum_{u}\sum_gH(g^2\Delta_0)P_k(1,n,u)
    -\frac12\sum_{a|n}\min(a,n/a)^{2k-1} \\
  &\qquad\qquad+\frac{2k-1}{12}\delta(n)n^{k-1},
  \end{split}
  \end{equation}
  \begin{equation} \label{equation: e=1, N=2}
  \begin{split}
    \tr(T_n|S_{2k}(2))
  &=-\frac12\sum_u\left(2\sum_{2|g}H(g^2\Delta_0)+
    \left(1+\JS{\Delta_0}2\right)\sum_{2\nmid g}H(g^2\Delta_0)\right) \\
  &\qquad\times P_k(1,n,u)-\sum_{a|n}\min(a,n/a)^{2k-1}
   +\frac{2k-1}4\delta(n)n^{k-1},
  \end{split}
  \end{equation}
  \begin{equation} \label{equation: e=1, N=3}
  \begin{split}
    \tr(T_n|S_{2k}(3))
  &=-\frac12\sum_u\left(2\sum_{3|g}H(g^2\Delta_0)+
    \left(1+\JS{\Delta_0}3\right)\sum_{3\nmid g}H(g^2\Delta_0)\right) \\
  &\qquad\times P_k(1,n,u)-\sum_{a|n}\min(a,n/a)^{2k-1}
   +\frac{2k-1}3\delta(n)n^{k-1},
  \end{split}
  \end{equation}
  \begin{equation} \label{equation: e=1, N=6}
  \begin{split}
    \tr(T_n|S_{2k}(6))
  &=-\frac12\sum_u\Bigg(4\sum_{(g,6)=6}H(g^2\Delta_0)
    +2\sum_{(g,6)=3}\left(1+\JS{\Delta_0}2\right)H(g^2\Delta_0) \\
  &\qquad+2\sum_{(g,6)=2}\left(1+\JS{\Delta_0}3\right)H(g^2\Delta_0)
  \\
  &\qquad+\sum_{(g,6)=1}\left(1+\JS{\Delta_0}2\right)
   \left(1+\JS{\Delta_0}3\right)H(g^2\Delta_0)\Bigg)P_k(1,n,u) \\
  &\qquad+2\sum_{a|n}\min(a,n/a)^{2k-1}+(2k-1)\delta(n)n^{k-1},
  \end{split}
  \end{equation}
  where
  $$
    \delta(n)=\begin{cases}
    1, &\text{if }n\text{ is a square}, \\
    0, &\text{else}. \end{cases}
  $$
  Substituting \eqref{equation: e=1, N=1}--\eqref{equation: e=1, N=6}
  into \eqref{equation: e=1, all 1} and simplifying, we get
  \begin{equation} \label{equation: e=1, all 2}
  \begin{split}
   &\frac14\tr(T_n|S_{2k}(6))-\frac12\tr(T_n|S_{2k}(3))-\frac12\tr(T_n|S_{2k}(2))
   +\tr(T_n|S_{2k}(1)) \\
   &\qquad=-\frac18\sum_u\sum_{(g,6)=1}\left(1-\JS{\Delta_0}2\right)
    \left(1-\JS{\Delta_0}3\right)H(g^2\Delta_0)P_k(1,n,u) \\
   &\qquad\qquad+\frac{2k-1}{24}\delta(n)n^{k-1}.
  \end{split}
  \end{equation}
  We next consider the terms
  $$
    \frac{\epsilon_2}4\tr(T_nW_2|S_{2k}(6)-\frac{\epsilon_2}2
    \tr(T_nW_2|S_{2k}(2))
  $$
  in \eqref{equation: trace, integral weight 1}. We have, by
  Proposition \ref{proposition: trace formula},
  \begin{equation*}
  \begin{split}
    \tr(T_nW_2|S_{2k}(6))&=-\frac1{2\cdot2^{k-1}}\sum_{4u^2<8n}
    \Bigg(2\sum_{3|g}H(g^2\Delta_0) \\
  &\qquad\qquad+\sum_{3\nmid g}
    \left(1+\JS{\Delta_0}3\right)H(g^2\Delta_0)\Bigg)P_k(2,n,u)
  \end{split}
  \end{equation*}
  and
  $$
    \tr(T_nW_2|S_{2k}(2))=-\frac1{2\cdot2^{k-1}}\sum_{4u^2<8n}
    \sum_gH(g^2\Delta_0)P_k(2,n,u).
  $$
  Thus,
  \begin{equation} \label{equation: e=2, all}
  \begin{split}
   &\frac{\epsilon_2}4\tr(T_nW_2|S_{2k}(6)-\frac{\epsilon_2}2
    \tr(T_nW_2|S_{2k}(2)) \\
   &\qquad=\frac{\epsilon_2}{8\cdot2^{k-1}}\sum_{4u^2<8n}
    \sum_{(g,3)=1}\left(1-\JS{\Delta_0}3\right)H(g^2\Delta_0)
    P_k(2,n,u).
  \end{split}
  \end{equation}
  Similarly, we have
  \begin{equation} \label{equation: e=3, all}
  \begin{split}
   &\frac{\epsilon_3}4\tr(T_nW_3|S_{2k}(6)-\frac{\epsilon_3}2
    \tr(T_nW_3|S_{2k}(3)) \\
   &\qquad=\frac{\epsilon_2}{8\cdot3^{k-1}}\sum_{9u^2<12n}
    \sum_{(g,2)=1}\left(1-\JS{\Delta_0}2\right)H(g^2\Delta_0)
    P_k(3,n,u)
  \end{split}
  \end{equation}
  and
  \begin{equation} \label{equation: e=6}
    \frac{\epsilon_6}4\tr(T_nW_6|S_{2k}(6))
  =-\frac{\epsilon_6}{8\cdot6^{k-1}}\sum_{36u^2<24n}\sum_g
    H(g^2\Delta_0)P_k(6,n,u).
  \end{equation}
  Summarizing \eqref{equation: trace, integral weight 1},
  \eqref{equation: e=1, all 2}, \eqref{equation: e=2, all},
  \eqref{equation: e=3, all}, and \eqref{equation: e=6}, we obtain the
  claimed formula. This proves the proposition.
\end{proof}
\end{section}

\begin{section}{Comparison of traces}
\label{section: comparison}

In this section, we will prove
$$
  \tr(T_{n^2}|\S_{r,s}(1))=\JS{12}n\tr\left(T_n\Big|S_{2k}^\new\left(6,
  -\JS8r,-\JS{12}r\right)\right)
$$
for positive integers $n$ relatively prime to $6$ and thereby
establish Theorem \ref{theorem: 1}. The verification is done case by
case according to the residue of $n$ modulo $24$. Here we only work
out the cases $n\equiv 1\mod 24$ and $n\equiv 11\mod 24$ and omit the
proof for the other cases.

Recall that the Hecke operator $T_{n^2}$ on $\S_{r,s}(1)$ is defined by
$$
  T_{n^2}:f\longmapsto
  n^{k-3/2}\sum_{ad=n,a|d}af\big|[\MM_{(d/a)^2}^\ast]
 =n^{k-3/2}\sum_{a|m}af\big|[\MM_{(n/a^2)^2}^\ast],
$$
where $m$ is the positive integer such that $n/m^2$ is squarefree and
the action of $[\MM_{(n/a^2)^2}^\ast]=[\MM_{(n/a^2)^2}(1)^\ast]$ is
defined as in Lemma \ref{lemma: action of M}. We first consider the
case $n\equiv 1\mod 24$. Write $\ell=n/a^2$. Note that we have
$\ell\equiv 1\mod 24$. According to
Propositions \ref{proposition: scalar}, \ref{proposition: parabolic},
\ref{proposition: hyperbolic}, Lemmas \ref{lemma: elliptic prelim},
\ref{lemma: (t,24)=1 summary}--\ref{lemma: (t,24)=2 f odd summary} and
\ref{lemma: (t,24)=2 2||f summary}--\ref{lemma: (t,24)=6 8|f summary},
we have
\begin{equation} \label{equation: trace ell 0}
\begin{split}
  \tr[\MM_{\ell^2}^\ast]
&=\frac{\sqrt\ell}8\sum_{e=1,2,3,6}\JS{-4e}r
  \left(1-\JS{-e}3\right)(H(-4e\ell)-H(-e\ell)) \\
&\qquad-\frac{\ell^{3/2-k}}4(A_{0,0}(\ell)+C_{0,0}(\ell))-\frac{\ell^{3/2-k}}{8}D_0(\ell)
 -\frac{\ell^{3/2-k}}4B_0(\ell) \\
&\qquad-\frac{\ell^{3/2-k}}{8}D_1^\ast(\ell)
  -\frac{\ell^{3/2-k}}{8}(A_{2,0}^\ast(\ell)+C_{1,0}(\ell)) \\
&\qquad-\frac{\ell^{3/2-k}}{16}(2A_{1,1}(\ell)-C_{2,0}(\ell)+C_{3,0}^\ast(\ell))
\\
&\qquad+\ell^{3/2-k}\sum_{v\ge 3}\left(\frac1{8}A_{1,v-1}(\ell)
  +\frac{(-1)^v}{2^{v+1}}C_{v-1,1}(\ell)+\frac{(-1)^v}{2^{v+2}}C_{v,0}^\ast(\ell)\right) \\
&\qquad-\frac{\ell^{3/2-k}}4B_1^\ast(\ell)+\delta_1(\ell)\frac{2k-1}{24},
\end{split}
\end{equation}
where
$$
  \delta_1(\ell)=\begin{cases}
  1, &\text{if }\ell=1, \\
  0, &\text{else}. \end{cases}
$$
For the first sum above, we observe that
$$
  P_k(e,\ell,0) 
 =(e\ell)^{k-1}\frac{i^{2k-1}-i^{1-2k}}{2i}=(-e\ell)^{k-1}=-\JS{-4}r(e\ell)^{k-1}
$$
and the sum, including the factor $\sqrt\ell/8$ in front, can be written as
$$
  -\frac{\ell^{3/2-k}}8\sum_{e=1,2,3,6}\frac1{e^{k-1}}\JS
  er\left(1-\JS{-e}3\right)(H(-4e\ell)-H(-e\ell))P_k(e,\ell,0).
$$
Now we have $H(-e\ell)=0$ except for $e=3$, in which case we have
$$
  H(-12\ell)-H(-3\ell)=\left(1-\JS{-3\ell}2\right)H(-3\ell).
$$
Thus,
\begin{equation} \label{equation: trace ell 1}
\begin{split}
 &\frac{\sqrt\ell}8\sum_{e=1,2,3,6}\JS{-4e}r
  \left(1-\JS{-e}3\right)(H(-4e\ell)-H(-e\ell)) \\
 &\qquad=-\frac{\ell^{3/2-k}}8\Bigg(
  \left(1-\JS{-4\ell}2\right)\left(1-\JS{-4\ell}3\right)H(-4\ell)P_k(1,\ell,0)
  \\
 &\qquad\qquad+\frac1{2^{k-1}}\JS8r\left(1-\JS{-8\ell}3\right)H(-8\ell)P_k(2,\ell,0)
  \\
 &\qquad\qquad+\frac1{3^{k-1}}\JS{12}r\left(1-\JS{-3\ell}2\right)H(-3\ell)P_k(3,\ell,0)
  \\
 &\qquad\qquad+\frac1{6^{k-1}}\JS{24}rH(-24\ell)P_k(6,\ell,0)\Bigg).
\end{split}
\end{equation}
We next consider the terms $A_{i,j}(\ell)$ in \eqref{equation: trace
  ell 0}. Here we remind the reader that the summations $\sum_u\sum_g$
in the subsequent discussion are all subject to the condition
$(\ell,u,\Delta/g^2\Delta_0)=1$ inherited from the definition of $A_{i,j}(\ell)$.

For an integer $u$ contributing to the sums $A_{i,j}(\ell)$,
let $\Delta=u^2-4\ell$ and $\Delta_0$ be the discriminant of the field
$\Q(\sqrt{\Delta})$. For $A_{0,0}(\ell)$, we have
$$
  \JS{\Delta_0}2=\JS52=-1,
$$
and by Lemma \ref{lemma: alternative AB},
\begin{equation} \label{equation: trace ell A00}
  A_{0,0}(\ell)=\frac12\sum_{2\nmid u}
  \left(1-\JS{\Delta_0}2\right)\left(1-\JS{\Delta_0}3\right)
  \sum_{(g,6)=1}H(g^2\Delta_0)P_k(1,\ell,u).
\end{equation}
For $A_{2,0}^\ast(\ell)$, we have $4|\Delta_0$ and
\begin{equation} \label{equation: trace ell A20}
  A_{2,0}^\ast(\ell)=\sum_{4|u}\left(1-\JS{\Delta_0}2\right)
  \left(1-\JS{\Delta_0}3\right)\sum_{(g,6)=1}H(g^2\Delta_0)
  P_k(1,\ell,u).
\end{equation}
For $A_{1,j}(\ell)$, we let $v=\ord_2(\Delta/\Delta_0)/2$, i.e., the
$2$-adic valuation of the conductor of $\Delta$. If $v=1$, then since
$32|(u^2-4n)$ for $u$ with $2\|u$, we have $8|\Delta_0$ and
\begin{equation} \label{equation: trace ell A1j 1}
  -\sum_{2\|g}H\JS{\Delta}{g^2}=-\left(1-\JS{\Delta_0}2\right)
   \sum_{2\nmid g}H(g^2\Delta_0).
\end{equation}
If $v\ge 2$, then
\begin{equation} \label{equation: trace ell A1j 2}
\begin{split}
 &-\sum_{2\|g}H\JS{\Delta}{g^2}+\sum_{4|g}H\JS\Delta{g^2}
 =-\sum_{2^{v-1}\|g}H(g^2\Delta_0)+\sum_{j=0}^{v-2}\sum_{2^j\|g}H(g^2\Delta_0) \\
&\qquad=\sum_{2\nmid g}\left(-2^{v-2}\left(2-\JS{\Delta_0}2\right)
  +\sum_{j=1}^{v-2}2^{j-1}\left(2-\JS{\Delta_0}2\right)+1\right)H(g^2\Delta_0)\\
&\qquad=-\sum_{2\nmid g}\left(1-\JS{\Delta_0}2\right)H(g^2\Delta_0).
\end{split}
\end{equation}
It follows from \eqref{equation: trace ell A1j 1}, \eqref{equation:
  trace ell A1j 2}, and Lemma \ref{lemma: alternative AB} that
\begin{equation} \label{equation: trace ell A1j}
\begin{split}
 &-A_{1,1}(\ell)+\sum_{2\le j<\infty}A_{1,j}(\ell) \\
 &\qquad=-\sum_{2\|u}\sum_{(g,6)=1}\left(1-\JS{\Delta_0}2\right)\left(1-\JS{\Delta_0}3\right)
  H(g^2\Delta_0)P_k(1,\ell,u).
\end{split}
\end{equation}
In summary, from \eqref{equation: trace ell A00}, \eqref{equation:
  trace ell A20}, and \eqref{equation: trace ell A1j}, we get
\begin{equation} \label{equation: trace ell A}
\begin{split}
  &-\frac14A_{0,0}(\ell)-\frac18A_{2,0}^\ast(\ell)-\frac18A_{1,1}(\ell)
   +\frac18\sum_{j\ge 2}A_{1,j}(\ell) \\
  &\qquad=-\frac18\sum_{u\neq 0}\sum_{(g,6)=1}
   \left(1-\JS{\Delta_0}2\right)\left(1-\JS{\Delta_0}3\right)
   H(g^2\Delta_0)P_k(1,\ell,u),
\end{split}
\end{equation}
where the outer summation runs over all nonzero integers $u$ with
$u^2<4\ell$ and the inner summation runs over all positive integers
$g$ dividing the conductor of $\Delta=u^2-4\ell$ such that $(g,6)=1$ and
$(\ell,u,\Delta/g^2\Delta_0)=1$.

The terms $B_j(\ell)$ in \eqref{equation: trace ell 0}
are easy to deal with. By Lemma \ref{lemma: alternative AB}, we have
\begin{equation} \label{equation: trace ell B}
  -B_0(\ell)-B_1^\ast(\ell)=-\frac1{2\cdot2^{k-1}}\JS8r\sum_{u\neq 0}\sum_{(g,3)=1}
  \left(1-\JS{\Delta_0}3\right)H(g^2\Delta_0)P_k(2,\ell,u)
\end{equation}
Here, as above, the outer summation runs over all nonzero integers $u$
with $4u^2<8\ell$ and the inner summation runs over all positive
integers $g$ dividing the conductor of $\Delta=4u^2-8\ell$ such that
$(g,3)=1$ and $(\ell,u,\Delta/g^2\Delta_0)=1$.

We next consider $C_{i,j}(\ell)$ in \eqref{equation: trace ell 0}. If
$\Delta=9u^2-12n$ with $2\nmid u$, then $\Delta_0\equiv 5\mod 8$ and
we have
\begin{equation} \label{equation: trace ell C00}
  C_{0,0}(\ell)=\frac1{2\cdot3^{k-1}}\JS{12}r\sum_{2\nmid u}\left(1-\JS{\Delta_0}2\right)
  \sum_gH(g^2\Delta_0)P_k(3,\ell,u).
\end{equation}
If $\Delta=9u^2-12n$ with $2\|u$, then $4|\Delta_0$ and
\begin{equation} \label{equation: trace ell C10}
  C_{1,0}(\ell)=\frac1{2\cdot3^{k-1}}\JS{12}r\sum_{2\|u}\left(1-\JS{\Delta_0}2\right)
  \sum_gH(g^2\Delta_0)P_k(3,\ell,u).
\end{equation}
If $\Delta=9u^2-12n$ with $2^2\|u$, then $\Delta_0\equiv 1\mod 8$ and
for any odd integer $g$, we have $H(4g^2\Delta_0)=H(g^2\Delta_0)$. Thus,
\begin{equation} \label{equation: trace ell C20}
  C_{2,0}(\ell)-C_{2,1}(\ell)=0=\frac1{3^{k-1}}\JS{12}r\sum_{2^2\|u}
  \left(1-\JS{\Delta_0}2\right)\sum_{(g,2)=1}H(g^2\Delta_0)P_k(3,\ell,u).
\end{equation}
If $\Delta=9u^2-12n$ with $8|u$, then $\Delta_0\equiv 5\mod 8$ and for
any odd integer $g$, we have $H(4g^2\Delta_0)=3H(g^2\Delta_0)$. It
follows that $C_{j,0}(\ell)=3C_{j,1}(\ell)$ and
\begin{equation} \label{equation: trace ell C30}
\begin{split}
 &-\frac1{16}C_{3,0}^\ast(\ell)
  -\sum_{3\le j<\infty}\frac{(-1)^j}{2^{j+2}}C_{j,1}(\ell)
  +\sum_{3\le j<\infty}\frac{(-1)^j}{2^{j+2}}C_{j,0}^\ast(\ell) \\
 &\qquad=-\frac3{16}C_{3,1}^\ast(\ell)
  -\sum_{3\le j<\infty}\frac{(-1)^j}{2^{j+2}}C_{j,1}(\ell)
  +\sum_{3\le j<\infty}\frac{3(-1)^j}{2^{j+2}}
   \sum_{j\le i<\infty}C_{i,1}(\ell) \\
 &\qquad=\sum_{3\le i<\infty}C_{i,1}(\ell)\left(-\frac3{16}
  -\frac{(-1)^i}{2^{i+2}}+\sum_{3\le j\le
    i}\frac{3(-1)^j}{2^{j+2}}\right)
  =-\frac14\sum_{3\le i<\infty}C_{i,1}(\ell) \\
 &\qquad=-\frac1{8\cdot3^{k-1}}\JS{12}r\sum_{8|u,u\neq0}\left(1-\JS{\Delta_0}2\right)
  \sum_{(g,2)=1}H(g^2\Delta_0)P_k(3,\ell,u).
\end{split}
\end{equation}
Combining \eqref{equation: trace ell C00}--\eqref{equation: trace ell
  C30}, we get
\begin{equation} \label{equation: trace ell C}
\begin{split}
 &-\frac14C_{0,0}(\ell)-\frac18C_{1,0}(\ell)+\frac1{16}C_{2,0}(\ell)
  -\frac1{16}C_{3,0}^\ast(\ell) \\
 &\qquad\qquad+\sum_{v\ge 3}\left(\frac{(-1)^v}{2^{v+1}}
   C_{v-1,1}(\ell)+\frac{(-1)^v}{2^{v+2}}C_{v,0}^\ast(\ell)\right) \\
 &\qquad=-\frac1{8\cdot3^{k-1}}\JS{12}r\sum_{u\neq 0}\sum_{(g,2)=1}\left(1-\JS{\Delta_0}2\right)
  H(g^2\Delta_0)P_k(3,\ell,u).
\end{split}
\end{equation}
Again, here the outer summation runs over all nonzero integers $u$
with $9u^2<12\ell$ and the inner summation runs over all positive
divisors of the conductor of $\Delta$ satisfying $(g,2)=1$ and
$(\ell,u,\Delta/g^2\Delta_0)=1$.

The terms $D_j(\ell)$ in \eqref{equation: trace ell 0} are easy. We have
\begin{equation} \label{equation: trace ell D}
  -D_0(\ell)-D_1^\ast(\ell)=-\frac1{6^{k-1}}\JS{24}r\sum_{u\neq 0}\sum_g
  H(g^2\Delta_0)P_k(6,\ell,u).
\end{equation}
Here the summation over $g$ is subject to the condition
$(\ell,u,\Delta/g^2\Delta_0)=1$.

Substituting \eqref{equation: trace ell 1}, \eqref{equation: trace ell
  A}, \eqref{equation: trace ell B}, \eqref{equation: trace ell C}, and
\eqref{equation: trace ell D} into \eqref{equation: trace ell 0}, we
get
\begin{equation} \label{equation: trace ell temp}
\begin{split}
  \tr[\MM_{\ell^2}]&=
 -\frac{\ell^{3/2-k}}8\sum_u\sum_{(g,6)=1}\left(1-\JS{\Delta_0}2\right)
  \left(1-\JS{\Delta_0}3\right)H(g^2\Delta_0)P_k(1,\ell,u) \\
&\qquad-\frac{\ell^{3/2-k}}{8\cdot2^{k-1}}\JS8r\sum_u\sum_{(g,3)=1}
   \left(1-\JS{\Delta_0}3\right)H(g^2\Delta_0)P_k(2,\ell,u) \\
&\qquad-\frac{\ell^{3/2-k}}{8\cdot3^{k-1}}\JS{12}r\sum_u\sum_{(g,2)=1}
   \left(1-\JS{\Delta_0}2\right)H(g^2\Delta_0)P_k(3,\ell,u) \\
&\qquad-\frac{\ell^{3/2-k}}{8\cdot6^{k-1}}\JS{24}r\sum_u\sum_g
   H(g^2\Delta_0)P_k(6,\ell,u)+\delta_1(\ell)\frac{2k-1}{24}.
\end{split}
\end{equation}
Here the summations $\sum_g$ are all subject to the condition
$(\ell,u,\Delta/g^2\Delta_0)=1$. Now recall from the definition of
$T_{n^2}$ that
\begin{equation*}
 \tr(T_{n^2}|\S_{r,s}(1))=n^{k-3/2}\sum_{a|m}a\tr[\MM_{(n/a^2)^2}],
\end{equation*}
where $m$ is the integer such that $n/m^2$ is squarefree. Thus, we
need to sum up \eqref{equation: trace ell temp} over all $\ell=n/a^2$.
Observe that
$$
  P_k(e,n/a^2,u)=a^{2-2k}P_k(e,n,au).
$$
Take the fourth sum in \eqref{equation: trace ell temp} for
example. We find
\begin{equation*}
\begin{split}
 &n^{k-3/2}\sum_{a|m}a(n/a^2)^{3/2-k}\sum_u\sum_{(n/a^2,u,\Delta/g^2\Delta_0)=1}H(g^2\Delta_0)
  P_k(6,n/a^2,u) \\
 &\qquad=\sum_{a|m}\sum_u\sum_{(n,au,ah)=a}H\JS{a^2u^2-24n}{(ah)^2}P_k(6,n,au)
 \\
 &\qquad=\sum_u\sum_gH(g^2\Delta_0)P_k(6,n,u),
\end{split}
\end{equation*}
where there is no longer any restriction to $g$ in the inner
sum. Similar formulas hold for other three sums in \eqref{equation:
  trace ell temp}. Also,
$$
  n^{k-3/2}\sum_{a|m}a\delta_1(n/a^2)=
  \begin{cases}
  n^{k-1}, &\text{if }n\text{ is a square}, \\
  0, &\text{else}. \end{cases}
$$
Therefore, we find
\begin{equation*}
\begin{split}
  \tr(T_{n^2}|\S_{r,s}(1))&=
 -\frac18\sum_u\sum_{(g,6)=1}\left(1-\JS{\Delta_0}2\right)
  \left(1-\JS{\Delta_0}3\right)H(g^2\Delta_0)P_k(1,n,u) \\
&\quad-\frac1{8\cdot2^{k-1}}\JS8r\sum_u\sum_{(g,3)=1}
   \left(1-\JS{\Delta_0}3\right)H(g^2\Delta_0)P_k(2,n,u) \\
&\quad-\frac1{8\cdot3^{k-1}}\JS{12}r\sum_u\sum_{(g,2)=1}
   \left(1-\JS{\Delta_0}2\right)H(g^2\Delta_0)P_k(3,n,u) \\
&\quad-\frac1{8\cdot6^{k-1}}\JS{24}r\sum_u\sum_g
   H(g^2\Delta_0)P_k(6,n,u)+\frac{2k-1}{24}\delta(n)n^{k-1},
\end{split}
\end{equation*}
Comparing the trace of $T_n$ on $S_{2k}(6,\epsilon_2,\epsilon_3)$
given in Proposition \ref{proposition: trace, integral weight}, we
find
$$
  \tr(T_{n^2}|\S_{r,s}(1))=\JS{12}n\tr\left(T_n\Big|S_{2k}\left(6,-\JS8r,-\JS{12}r\right)
  \right).
$$
This proves the case $n\equiv 1\mod 24$. (Note that in this case
$\JS{12}n=1$.) We now consider the case $n\equiv 11\mod 24$.

Assume that $n\equiv 11\mod 24$. By
Propositions \ref{proposition: scalar}, \ref{proposition: parabolic},
\ref{proposition: hyperbolic}, Lemmas \ref{lemma: elliptic prelim},
\ref{lemma: (t,24)=1 summary}--\ref{lemma: (t,24)=2 f odd summary} and
\ref{lemma: (t,24)=2 2||f summary}--\ref{lemma: (t,24)=6 8|f summary},
we have
\begin{equation*}
\begin{split}
  \tr[\MM_{\ell^2}^\ast]
  &=\frac{\sqrt\ell}8\sum_{e=1,2,3,6}\JS{-4e}r\left(1-\JS{-4e\ell}3\right)
    (H(-4e\ell)-H(-e\ell)) \\
  &\qquad+\ell^{3/2-k}\Bigg(-\frac12A_{0,0}(\ell)-\frac14A_{1,0}(\ell)
  +\frac18A_{2,0}(\ell)-\frac18A_{3,0}^\ast(\ell) \\
  &\qquad+\sum_{3\le v<\infty}\left(\frac{(-1)^v}{2^v}
   A_{v-1,1}(\ell)+\frac{(-1)^v}{2^{v+1}}A_{v,0}(\ell)\right)-\frac18B_0^\ast(\ell)\\
  &\qquad-\frac14C_{0,0}(\ell)-\frac18C_{2,0}^\ast(\ell)-\frac18C_{1,1}(\ell)
   +\frac18\sum_{v\ge 2}C_{1,v}(\ell)-\frac18D_0^\ast(\ell)\Bigg).
\end{split}
\end{equation*}
The computation for $A_{i,j}(\ell)$ is parallel to that for
$C_{i,j}(\ell)$ in the case of $n\equiv 1\mod 24$, while the
computation for $C_{i,j}(\ell)$ is parallel to that for
$A_{i,j}(\ell)$ in the case of $n\equiv 1\mod 24$. The computation for
$B_0^\ast(\ell)$ and $D_0^\ast(\ell)$ is almost the same as
before. (The reader is reminded that there is a difference of $1/2$
between the case of $n\equiv 1\mod 3$ and the case of $n\equiv 2\mod
3$ in the formulas for $A_{i,j}(\ell)$ and $B_j(\ell)$ in Lemma
\ref{lemma: alternative AB}.) For the first sum above, instead of
\eqref{equation: trace ell 1}, we have
\begin{equation*}
\begin{split}
 &\frac{\sqrt\ell}8\sum_{e=1,2,3,6}\JS{-4e}r
  \left(1-\JS{-e\ell}3\right)(H(-4e\ell)-H(-e\ell)) \\
 &\qquad=-\frac{\ell^{3/2-k}}8\Bigg(
  \left(1-\JS{-\ell}2\right)\left(1-\JS{-\ell}3\right)H(-\ell)P_k(1,\ell,0)
  \\
 &\qquad\qquad+\frac1{2^{k-1}}\JS8r\left(1-\JS{-8\ell}3\right)H(-8\ell)P_k(2,\ell,0)
  \\
 &\qquad\qquad+\frac1{3^{k-1}}\JS{12}r\left(1-\JS{-12\ell}2\right)H(-12\ell)P_k(3,\ell,0)
  \\
 &\qquad\qquad+\frac1{6^{k-1}}\JS{24}rH(-24\ell)P_k(6,\ell,0)\Bigg).
\end{split}
\end{equation*}
Therefore, \eqref{equation: trace ell temp} remains valid, from which
we conclude that
$$
  \tr(T_{n^2}|\S_{r,s}(1))=\tr\left(T_n\Big|S_{2k}\left(6,-\JS8r,-\JS{12}r\right)\right).
$$
This proves the case $n\equiv 11\mod 24$. We skip the proof of the
other cases.
\end{section}

\end{document}